\documentclass[11pt,leqno]{amsart}
\usepackage{amsmath,amsfonts,latexsym,graphicx,amssymb,url, color}
\usepackage{hyperref}
\usepackage{txfonts} 
\newcommand{\average}{{\mathchoice {\kern1ex\vcenter{\hrule
height.4pt width 6pt depth0pt} \kern-9.7pt}
{\kern1ex\vcenter{\hrule height.4pt width 4.3pt depth0pt}
\kern-7pt} {} {} }}

\setlength{\headheight}{15pt} \setlength{\topmargin}{10pt}
\setlength{\headsep}{30pt} 
\setlength{\textwidth}{15cm} \setlength{\textheight}{21.5cm}
\setlength{\oddsidemargin}{1cm} 
\setlength{\evensidemargin}{1cm} 

\begin{document}

\newcommand{\dist}{\text{dist}} 
\newcommand{\abs}[1]{\left\vert#1\right\vert}
\newcommand{\diam}{\text{diam}}
\newcommand{\trace}{\text{trace}}
\newcommand{\R}{{\mathbb R}} 
\newcommand{\C}{{\mathbb C}}
\newcommand{\Z}{{\mathbb Z}}
\newcommand{\N}{{\mathbb N}}
\newcommand{\gradg}{\nabla_{\G}}
\newcommand{\e}{\epsilon} 
\newcommand{\bH}{\mathrm{\bf H}}
\newcommand{\calF}{{\mathcal F}}
\newcommand{\calO}{{\mathcal O}}
\newcommand{\calS}{{\mathcal S}}
\newcommand{\calM}{{\mathcal M}}
\newcommand{\calL}{\mathcal L}
\newcommand{\calW}{{\mathrm W}}
\newcommand{\bh}{{\bar h}}


\newcommand{\Rn}{\mathbb R^n}
\newcommand{\Rm}{\mathbb R^m}
\newcommand{\lA}{A^{\mbox{loc}}}
\renewcommand{\L}[1]{\mathcal L^{#1}}
\newcommand{\G}{\mathbb G}
\newcommand{\U}{\mathcal U}
\newcommand{\M}{\mathcal M}
\newcommand{\eps}{\epsilon}
\newcommand{\BVG}{BV_{\G}(\Omega)}
\newcommand{\no}{\noindent}
\newcommand{\ro}{\varrho}
\newcommand{\p}{\partial}
\newcommand{\rn}[1]{{\mathbb R}^{#1}}
\newcommand{\res}{\mathop{\hbox{\vrule height 7pt width .5pt depth 0pt
\vrule height .5pt width 6pt depth 0pt}}\nolimits}
\newcommand{\hhd}[1]{{\mathcal H}_d^{#1}} \newcommand{\hsd}[1]{{\mathcal
S}_d^{#1}} \renewcommand{\H}{\mathbb H}
\newcommand{\BVGL}{BV_{\G,{\rm loc}}}
\newcommand{\GH}{H\G}
\renewcommand{\diam}{\mbox{diam}\,}
\renewcommand{\div}{\mbox{div}\,}
\newcommand{\divg}{\mathrm{div}_{\G}\,}
\newcommand{\norm}[1]{\|{#1}\|_{\infty}} \newcommand{\modul}[1]{|{#1}|}
\newcommand{\per}[2]{|\partial {#1}|_{\G}({#2})}
\newcommand{\Per}[1]{|\partial {#1}|_{\G}} \newcommand{\scal}[3]{\langle
{#1} , {#2}\rangle_{#3}} \newcommand{\Scal}[2]{\langle {#1} ,
{#2}\rangle}
\newcommand{\fron}{\partial^{*}_{\G}}
\newcommand{\hs}[2]{S^+_{\H}({#1},{#2})} \newcommand{\test}{\mathbf
C^1_0(\G,\GH)} \newcommand{\Test}[1]{\mathbf C^1_0({#1},\GH)}
\newcommand{\card}{\mbox{card}}
\newcommand{\bom}{\bar{\gamma}}
\newcommand{\shpiu}{S^+_{\G}}
\newcommand{\shmeno}{S^-_{\G}}
\newcommand{\CG}{\mathbf C^1_{\G}}
\newcommand{\CH}{\mathbf C^1_{\H}}
\newcommand{\di}{\mathrm{div}_{X}\,}
\newcommand{\norma}{\vert\!\vert}
\newenvironment{myindentpar}[1]%
{\begin{list}{}%
         {\setlength{\leftmargin}{#1}}%
         \item[]%
}
{\end{list}}
%

\renewcommand{\baselinestretch}{1.2}
\theoremstyle{plain}
\newtheorem{theorem}{Theorem}[section]
\newtheorem{corollary}[theorem]{Corollary}
\newtheorem{lemma}[theorem]{Lemma}
\newtheorem{proposition}[theorem]{Proposition}
\newtheorem{definition}[theorem]{Definition}
\newtheorem{remark}[theorem]{Remark}
\providecommand{\bysame}{\makebox[3em]{\hrulefill}\thinspace}
\renewcommand{\theequation}{\thesection.\arabic{equation}}

\date{}
\title[Global $W^{2,p}$ estimates]{
Global $W^{2,p}$ estimates for solutions to the linearized Monge--Amp\`ere equations
}
\author{Nam Q. Le}
\author{Truyen Nguyen}
\thanks{Nam Q. Le\\
Department of Mathematics, Columbia University, New York, NY 10027, USA\\
email:namle@math.columbia.edu}
\thanks{
Truyen Nguyen\\
Department of Mathematics, The University of Akron, Akron, OH 44325, USA\\
email:tnguyen@uakron.edu}

\begin{abstract}
 In this paper, we establish global $W^{2,p}$ estimates for solutions to the linearized Monge-Amp\`ere equations under 
natural assumptions on the domain, Monge-Amp\`ere measures and boundary data. Our estimates are affine invariant analogues
of the global $W^{2,p}$ estimates of Winter for fully nonlinear, uniformly elliptic equations, and also linearized counterparts of Savin's global $W^{2,p}$ estimates for the
Monge-Amp\`ere equations.  

\end{abstract}

\maketitle
\noindent
{Mathematics Subject Classification (2010)}: 35J70, 35B65, 35B45, 35J96.\\
{Keywords and phrases:} linearized Monge-Amp\`ere equations, localization theorem, global $W^{2,p}$ estimates, power decay estimates, covering theorem.

\tableofcontents

\setcounter{equation}{0}

\section{\bf Introduction and Statement of the Main Results}

In this paper we consider  the linearized Monge-Amp\`ere equations and investigate 
 global $L^p$ estimates for the second derivatives of their solutions. 
Let $\Omega\subset \R^n$ be a bounded convex domain and  $\phi$ be a locally uniformly convex function on  $\Omega$. The linearized  Monge-Amp\`ere equation corresponding to $\phi$ is 
\begin{equation}\label{LMA-eq}
\mathcal{L}_{\phi} u:= \sum_{i, j=1}^{n} \Phi^{ij} u_{ij}= f\quad \mbox{in}\quad \Omega,
\end{equation}
where $\Phi=\big(\Phi^{ij}\big)_{1\leq i, j\leq n} := (\det D^2 \phi )~ (D^2\phi)^{-1}
$ is the matrix of cofactors of the Hessian matrix $D^2\phi$.
As the coefficient matrix  $\Phi$ is positive semi-definite, $\mathcal{L}_{\phi}$ is a linear elliptic partial differential operator, possibly degenerate. The operator $\mathcal{L}_{\phi}$ appears in
several contexts including affine maximal surface equation in affine geometry \cite{TW, TW1, TW2, TW3}, Abreu's equation in the context of existence 
of K\"ahler metric of constant scalar curvatures
in complex geometry \cite{Don1, Don2, Don3, Don4, Zh}, and semigeostrophic equations in fluid mechanics \cite{B, CNP, Loe}. Solutions of many important problems in these contexts 
require a deep understanding of interior and boundary behaviors of solutions to  \eqref{LMA-eq}.

The regularity theory for the linearized Monge-Amp\`ere equation was initiated in the fundamental paper \cite{CG2} by Caffarelli and Guti\'errez. They 
 established an interior Harnack 
inequality for nonnegative solutions to the homogeneous
equation $\mathcal{L}_{\phi}u=0$ in terms of the pinching of the Hessian determinant $\lambda\leq \det D^{2} \phi\leq \Lambda$.
Their theory is an affine invariant version of the classical Harnack inequality for uniformly elliptic equations with measurable coefficients.
This result played a crucial role in 
Trudinger-Wang's resolution \cite{TW1} of Chern's conjecture in affine geometry concerning affine maximal hypersurfaces in $\R^3$ and in Donaldson's interior estimates for Abreu's equation 
in complex geometry \cite{Don2}.
 Another contribution to the regularity theory comes
from \cite{GT} where Guti\'errez and Tournier derived interior $W^{2,\delta}$ estimates for small $\delta$. The
interior regularity for equation \eqref{LMA-eq}  was further developed by
Guti\'errez and the second author in \cite{GN1, GN2} where the (sharp) interior $C^{1,\alpha}$ and $W^{2,p}$ estimates, respectively, were obtained.

Regarding the global regularity, by using Caffarelli-Guti\'errez's interior 
Harnack estimates and Savin's localization theorem,
Savin and the first author \cite{LS} established boundary H\"older gradient estimates  for solutions to the linearized Monge-Amp\`ere equation. 
Furthermore, the first author \cite{L} proved 
global H\"older estimates for solutions to \eqref{LMA-eq} in uniformly convex domains, which are
the global counterpart of Caffarelli-Guti\'errez's interior H\"older estimates  \cite{CG2}.

As mentioned above, Guti\'errez and the second author derived in \cite{GN2} the interior $W^{2,p}$ estimates for solutions of  \eqref{LMA-eq} in terms of the $L^{q}$-norm of $f$ where $q>\max\{n, p\}$, the 
pinching of the Hessian determinant
$\lambda\leq \det D^{2} \phi\leq \Lambda$ and the continuity of the Monge-Amp\`ere measure $\det D^2 \phi$. 
The purpose of our paper is to establish global $W^{2,p}$ estimates for solutions to the linearized Monge-Amp\`ere equation
\eqref{LMA-eq}  under natural assumptions on the domain, Monge-Amp\`ere measures and boundary data.\\

 Our first main theorem is concerned with global $W^{2,p}$ estimates for the linearized equation \eqref{LMA-eq} when the Monge-Amp\`ere measure
 $\det D^2\phi$ is close to a constant. 
 \begin{theorem}\label{main-result-constant}
Let $\Omega$ be a bounded, uniformly  convex domain with $\partial \Omega\in C^3$, and let $\phi\in C(\overline \Omega)$ be a convex function satisfying $\phi=0$ on $\p\Omega$.
Let $u\in C(\overline{\Omega})\cap W^{2,n}_{loc}(\Omega)$ be the solution to the linearized
Monge-Amp\`ere equation
\begin{equation*}
\left\{\begin{array}{rl}
\calL_\phi u &=f \qquad \mbox{ in}\quad \Omega,\\
u  &=0\ \ \ \ \ \ \ \mbox{ on}\quad \partial \Omega,
\end{array}\right.
\end{equation*}
where $f\in L^{q}(\Omega)$ with $n<q<\infty$. Then, for any $p\in (1, q)$, there exist $0<\e<1$ and $C>0$ depending only 
on $n,  p, q$ and $\Omega$ such that
\begin{equation*}
\|u\|_{W^{2,p}(\Omega)}\leq
C\|f\|_{L^{q}(\Omega)}
\end{equation*}
provided that the Monge-Amp\`ere measure of $\phi$ satisfies
$$1-\e\leq \det D^2\phi\leq 1 + \e\quad\mbox{in}\quad \Omega.$$
\end{theorem}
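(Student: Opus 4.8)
\medskip
\noindent\emph{Strategy of the proof.} The plan is to adapt to the global setting the Calder\'on--Zygmund scheme used by Caffarelli--Guti\'errez \cite{CG2} and by Guti\'errez and the second author \cite{GN2} for the interior $W^{2,p}$ estimates, the new ingredient being the control of sections centered at points of $\partial\Omega$ via Savin's localization theorem. By the affine invariance of $\det D^2\phi$ and of $\calL_\phi$, the analysis is carried out on sections $S:=S_\phi(x_0,t)$ of $\phi$ after normalization by an affine map $T_S$: for $x_0$ in the interior one has $B_1\subset T_S(S)\subset B_n$, while for $x_0\in\partial\Omega$ the normalized set $T_S(S)$ is trapped between two half-balls centered at $T_S x_0\in T_S(\partial\Omega)$, with $T_S(\partial\Omega)$ nearly flat near $T_S x_0$. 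That such normalizations exist uniformly in $x_0$ and $t$, that the sections satisfy the engulfing property up to the boundary, and that $\phi$ is strictly convex in $\overline\Omega$, all follow from Savin's localization theorem together with the results of \cite{LS,L}, whose hypotheses hold here because $\Omega$ is uniformly convex with $\partial\Omega\in C^3$, $\phi=0$ on $\partial\Omega$, and $1-\e\le\det D^2\phi\le 1+\e$. The pinching also forces $\phi$ to be $C^{1,\alpha}$-close, on small sections, to a quadratic polynomial of determinant one, the closeness being quantified by $\e$.

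The first main step is an approximation lemma, valid for interior and boundary sections alike. Fix $S=S_\phi(x_0,t)$ with $t$ small, normalize by $T_S$, and let $\bar\phi,\bar u,\bar f$ be the rescalings of $\phi,u,f$ (with $\bar\phi$ renormalized to have determinant close to one). A direct computation of how $\calL_\phi$ transforms under an affine map gives $\bar f=(\det T_S)^{-2/n}\,f\circ T_S^{-1}$, hence $\|\bar f\|_{L^q(T_S(S))}=|\det T_S|^{1/q-2/n}\,\|f\|_{L^q(S)}$; since $q>n$ the exponent is negative, so $\|\bar f\|_{L^q}$ is small on small sections and, crucially, decays geometrically under passage to sub-sections. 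Because $\det D^2\bar\phi$ is $\e$-close to one and $T_S(S)$ is close to $B_1$ (resp.\ to a half-ball), the cofactor matrix $\bar\Phi$ is close to a constant matrix and $\calL_{\bar\phi}$ is a small perturbation of the constant-coefficient operator $L_0:=\trace(\bar\Phi(T_Sx_0)D^2\,\cdot\,)$. Comparing $\bar u$ with the solution $h$ of $L_0 h=0$ on a fixed fraction of $T_S(S)$ sharing the boundary values of $\bar u$ --- in the boundary case one uses that $\bar u=0$ on the nearly-flat portion of $T_S(\partial\Omega)$ and the global H\"older estimates of \cite{L,LS} to control $\bar u$ along the curved portion --- and invoking interior and boundary $C^{1,1}$ estimates for $h$, one concludes that on a smaller concentric section $\bar u$ is, after normalization, within $L^\infty$-distance $\delta(\e)+C\,\|f\|_{L^q(S)}$ of a paraboloid of universally bounded opening, where $\delta(\e)\to 0$ as $\e\to 0$. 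This is the step where $\det D^2\phi$ being close to a constant, rather than merely pinched, is essential: solutions of the homogeneous linearized equation are in general only $C^{1,\alpha}$, so one must perturb off a genuinely $C^{1,1}$ limiting equation.

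The second step converts the approximation lemma into a power decay for the distribution function of $D^2u$, which is meaningful since $u\in W^{2,n}_{loc}(\Omega)$. Working with Caffarelli's touching sets $G_M(u,\Omega)$ --- the set of $x\in\Omega$ at which $u$ is trapped between two paraboloids of opening $M$ with common vertex, on all of $\Omega$ --- the approximation lemma yields a density estimate $|S\setminus G_{N_0}(u,\Omega)|\le\gamma\,|S|$, with universal $\gamma<1$, once the relevant rescaled quantities are below a threshold $\delta_0=\delta_0(n,p,q,\Omega)$. Normalizing $\|f\|_{L^q(\Omega)}=1$ by homogeneity, this threshold is met on all sections of sufficiently high generation thanks to the geometric decay of $\|\bar f\|_{L^q}$ above; covering bad sets by sections using the Vitali-type covering lemma for the family $\{S_\phi(x,t)\}$, and iterating with the engulfing property and the (near-Lebesgue) doubling property of $\det D^2\phi$ --- this is the ``covering theorem'' of the keywords --- one upgrades the density estimate to $|\{x\in\Omega:|D^2u(x)|>N_0^{\,k}\}|\le C\gamma^{\,k}$ for all $k$, the finitely many coarse generations contributing only a bounded amount. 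Choosing $\e$ small enough, depending on $p$ (and $n,q,\Omega$), that $\gamma$ satisfies $N_0^{\,p}\gamma<1$, one sums $\sum_k N_0^{\,kp}\gamma^{\,k}<\infty$ to get $\|D^2u\|_{L^p(\Omega)}\le C$, i.e.\ $\|D^2u\|_{L^p(\Omega)}\le C\|f\|_{L^q(\Omega)}$ after undoing the normalization; the estimates for $\|u\|_{L^\infty(\Omega)}$ and $\|Du\|_{L^p(\Omega)}$ follow from the ABP maximum principle (using $\det D^2\phi\approx 1$) and interpolation.

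The main obstacle is the boundary analysis. One has to check that Savin's localization theorem applies, uniformly and quantitatively, at every point of $\partial\Omega$ under the stated hypotheses, so that boundary sections inherit exactly the good-shape, engulfing and doubling properties available in the interior; and one has to run both the comparison argument and the Calder\'on--Zygmund iteration in these boundary sections despite the normalized domains being only close to, and not equal to, half-balls and despite the boundary data of $\bar u$ on the curved portion being controlled only through the global H\"older estimate. Ensuring that every constant entering the covering and the iteration depends on $n,p,q$ and $\Omega$ alone is the technical heart of the argument.
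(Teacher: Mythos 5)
Your overall architecture (normalize interior and boundary sections via Savin's localization theorem, prove an approximation lemma, convert it into a density estimate for the sets $G_M$, iterate with a Vitali-type covering and the engulfing property, and finally choose $\e$ so that a geometric series in the distribution function converges) is the same as the paper's. The genuine gap is in the approximation step. You claim that, since $\det D^2\bar\phi$ is $\e$-close to $1$ and the normalized section is close to a (half-)ball, ``the cofactor matrix $\bar\Phi$ is close to a constant matrix'' and that $\phi$ is $C^{1,\alpha}$-close to a quadratic polynomial ``quantified by $\e$''. Neither statement is true: the pinching $1-\e\le\det D^2\phi\le 1+\e$ gives no pointwise or $L^\infty$ control of $D^2\phi$ (only $W^{2,p}$-type bounds, cf.\ Wang's examples), and on a normalized section $\bar\phi$ is close only to the solution $w$ of $\det D^2w=1$ with the \emph{same boundary values}, whose Hessian is smooth but not constant --- in particular on boundary sections, where $w$ carries the rescaled data $\phi_h$ on the curved part of the boundary. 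Consequently the perturbation $\bar\Phi-\bar\Phi(T_Sx_0)$ is not small in any norm usable in the comparison argument, and freezing coefficients does not produce the required $C^{1,1}$ approximating solutions. What is actually available, and what the paper proves, is smallness of $\|\Phi-\calW\|_{L^n}$, where $\calW$ is the cofactor matrix of $D^2w$; establishing this (Proposition 3.13 and Lemma 3.14) already requires the global $W^{2,\delta}$ estimates of Theorem 3.8 and a boundary covering by maximal sections, and the $C^{1,1}$ bound for the approximating solutions $h$ of $\calW^{ij}h_{ij}=0$ rests on the quadratic separation of $w$ from its tangent planes and the boundary Pogorelov/$C^{2,\alpha}$ estimates for $w$ on rescaled sections (Lemma 2.5, Proposition 2.7). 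None of this is supplied by your ``small perturbation of a constant-coefficient operator'' claim, so the central lemma of your scheme is unsupported.

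A secondary point: the paper's sets $G_M(u,\Omega)$ are defined through quasi-paraboloids generated by the quasi-distance $d_\phi$, not Euclidean paraboloids; membership in $G_M$ then controls $|D^2u|$ only on the set where $d_\phi(x,\bar x)$ is comparable to $|x-\bar x|$, and a separate power-decay estimate for the complement of that set (Theorem 3.5, which is where the explicit dependence on $\e$ enters) is needed before the distribution function of $D^2u$ can be bounded. If you insist on Euclidean paraboloids instead, the density and propagation lemmas do not behave well under the strongly anisotropic affine maps $A_h$ normalizing boundary sections (whose norms blow up like $|\log h|$), so this step also needs the paper's formulation rather than the classical one.
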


As a corollary of our method of the proof of Theorem \ref{main-result-constant}, we obtain global $W^{2,p}$ estimates for equation \eqref{LMA-eq}   when 
the Monge-Amp\`ere measure $\det D^2\phi$ is continuous. Our second main theorem states
as follows.
\begin{theorem}\label{main-result}
Let $\Omega$ be a bounded, uniformly  convex domain with $\partial \Omega\in C^3$, and let $\phi\in C(\overline \Omega)$ be the convex solution to the Monge-Amp\`ere equation
\begin{equation*}
\left\{\begin{array}{rl}
\det D^2 \phi &=g \qquad \mbox{ in}\quad \Omega,\\
\phi  &=0\ \ \ \ \ \ \ \mbox{ on}\quad \partial \Omega,
\end{array}\right.
\end{equation*}
where
$g\in C(\overline\Omega)$ is a continuous function satisfying $0<\lambda \leq g(x)\leq \Lambda$ in $\Omega$. Let $u\in C(\overline{\Omega})\cap W^{2,n}_{loc}(\Omega)$ be the solution to the linearized
Monge-Amp\`ere equation
\begin{equation*}
\left\{\begin{array}{rl}
\calL_\phi u &=f \qquad \mbox{ in}\quad \Omega,\\
u  &=\varphi\ \ \ \ \ \ \ \mbox{ on}\quad \partial \Omega,
\end{array}\right.
\end{equation*}
where $\varphi\in W^{2, s}(\Omega)$,  $f\in L^{q}(\Omega)$ with $n<q<s<\infty$. Then, for any $p\in (1, q)$, there exists $C>0$ depending only 
on $\lambda, \Lambda, n,  p, q,s, \Omega$ and the modulus of continuity of $g$ such that
\begin{equation*}
\|u\|_{W^{2,p}(\Omega)}\leq
C\left(\|\varphi\|_{W^{2, s}(\Omega)} + \|f\|_{L^{q}(\Omega)}\right).
\end{equation*}
\end{theorem}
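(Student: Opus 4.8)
The plan is to deduce Theorem~\ref{main-result} from Theorem~\ref{main-result-constant} by a localization-and-rescaling argument, reducing the general continuous case to the near-constant case at small scales. First I would normalize the boundary data: since $\varphi\in W^{2,s}(\Omega)$ with $s>n$, one can subtract from $u$ a suitable extension of $\varphi$ (for instance the $W^{2,s}$ solution of an auxiliary problem, or simply $\varphi$ itself viewed on $\overline\Omega$), so that the new unknown $w=u-\varphi$ vanishes on $\p\Omega$ and satisfies $\calL_\phi w = f - \calL_\phi\varphi =: \tilde f$. The key point is that $\|\calL_\phi\varphi\|_{L^q(\Omega)}\le \|\Phi\|_{L^\infty}\,\|D^2\varphi\|_{L^q}$ is controlled, because the cofactor matrix $\Phi=(\det D^2\phi)(D^2\phi)^{-1}$ is bounded in $L^\infty$ (indeed $\mathrm{trace}\,\Phi = \Delta\phi\,\cdot(\text{lower order})$ — more precisely $\Phi^{ij}$ are bounded once $\det D^2\phi$ is pinched and $\phi$ has the appropriate $C^{1,1}$-type global bounds coming from Savin's theory); hence $\|\tilde f\|_{L^q(\Omega)}\le \|f\|_{L^q(\Omega)} + C\|\varphi\|_{W^{2,s}(\Omega)}$ since $s\ge q$. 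Thus it suffices to estimate $\|w\|_{W^{2,p}(\Omega)}$ in terms of $\|\tilde f\|_{L^q(\Omega)}$, i.e., to treat the zero-boundary-data case with a general right-hand side in $L^q$.

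Next I would cover $\overline\Omega$ by maximal interior sections $S_\phi(x_0, t)$ of the solution $\phi$ of the Monge–Ampère equation. By the localization theorem of Savin and the global properties of solutions of $\det D^2\phi=g$ with $g$ continuous and pinched between $\lambda$ and $\Lambda$, these sections — both interior and boundary ones, the latter in the sense of the localization theorem — have good shape (they are equivalent to ellipsoids of controlled eccentricity after an affine renormalization). On each such section, after the John-type affine transformation $T$ that normalizes it to have comparable inner and outer radii, the rescaled potential $\tilde\phi(y) := (\det T)^{-2/n}\,\phi(T^{-1}y + x_0)$ solves a Monge–Ampère equation whose right-hand side $\tilde g$ has the same bounds $\lambda\le\tilde g\le\Lambda$ \emph{and the same modulus of continuity} as $g$; moreover, on a small enough section the oscillation of $\tilde g$ is small, so after a further scaling (dividing by a constant close to the value of $\tilde g$ at the center) we arrive at a potential whose Monge–Ampère measure lies in $[1-\e, 1+\e]$, precisely the hypothesis of Theorem~\ref{main-result-constant}. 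On that normalized section, Theorem~\ref{main-result-constant} (or rather its interior analogue, which follows from the same proof applied to interior sections) yields a $W^{2,p}$ estimate for the rescaled solution $\tilde w$ in terms of the $L^q$-norm of the rescaled right-hand side.

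The third step is to unwind the scalings to produce, on each section $S_\phi(x_0,t)$ of a fixed small scale, a power-decay estimate for the distribution function of $\|D^2 w\|$ — this is the standard Caffarelli–Gutiérrez machinery (a density/covering argument in the Besicovitch-type engine adapted to sections, exactly as used interiorly in \cite{GN2} and globally in the constant case) — and then to sum these local $L^p$ bounds over a finite covering of $\overline\Omega$ by such sections, using that the sections at a fixed scale have bounded overlap by the engulfing property of sections. Summing produces $\|D^2 w\|_{L^p(\Omega)}\le C\|\tilde f\|_{L^q(\Omega)}$; combined with interior gradient and sup bounds (from \cite{LS}, \cite{L}) to control the lower-order terms $\|w\|_{W^{1,p}}$, this gives $\|w\|_{W^{2,p}(\Omega)}\le C\|\tilde f\|_{L^q(\Omega)}$, and then the normalization of Step~1 converts this into the asserted bound $\|u\|_{W^{2,p}(\Omega)}\le C(\|\varphi\|_{W^{2,s}(\Omega)}+\|f\|_{L^q(\Omega)})$. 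The main obstacle I expect is the interface between the boundary sections and the affine renormalization: one must check that the modulus of continuity of $g$ is genuinely preserved (not merely boundedness) under the renormalizing affine maps given by Savin's localization theorem, that these maps have controlled determinant and eccentricity uniformly up to the boundary, and that the covering/overlap count remains finite near $\p\Omega$ where sections become elongated; controlling all of this uniformly is what forces the $C^3$ regularity of $\p\Omega$ and the uniform convexity hypotheses, and is where most of the technical work will lie.
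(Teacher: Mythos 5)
Your overall strategy (reduce to zero boundary data, then localize in small boundary sections where the oscillation of $g$ is at most $\lambda\e$, rescale via the Localization Theorem, apply the $\e$-pinched result of Theorem~\ref{main-result-constant} on each rescaled section, and sum over a covering, using the interior estimates of \cite{GN2} away from $\p\Omega$) is essentially the route the paper takes. However, your first step contains a genuine error. You justify $\|\calL_\phi\varphi\|_{L^q(\Omega)}\le \|\Phi\|_{L^\infty}\|D^2\varphi\|_{L^q}$ by asserting that the cofactor matrix $\Phi$ is bounded in $L^\infty$ because $\phi$ has ``$C^{1,1}$-type global bounds coming from Savin's theory.'' This is false under the hypotheses of the theorem: with $g$ merely continuous (and pinched between $\lambda$ and $\Lambda$), Savin's theory gives global $W^{2,p}$ estimates for every finite $p$, but not global $C^{1,1}$ bounds --- indeed Wang's counterexamples show $D^2\phi$ can be unbounded for non-Dini right-hand sides, and the entries of $\Phi$ are $(n-1)$-fold products of second derivatives of $\phi$, hence only in $L^r$ for finite $r$. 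Consequently your inequality, and with it the claim $\|\tilde f\|_{L^q}\le \|f\|_{L^q}+C\|\varphi\|_{W^{2,s}}$ ``since $s\ge q$,'' does not follow; note also that if an $L^\infty$ bound on $\Phi$ were available, the strict hypothesis $q<s$ in the statement would be superfluous, which is a warning sign. The correct bookkeeping, and the one the paper uses, is: Savin's global $W^{2,p}$ estimates for $\det D^2\phi=g$ with $g\in C(\overline\Omega)$ give $\phi\in W^{2,\frac{(n-1)qs}{s-q}}(\Omega)$, hence $\Phi^{ij}\in L^{\frac{qs}{s-q}}(\Omega)$, and then H\"older's inequality with $\frac1q=\frac1s+\frac{s-q}{qs}$ yields $\Phi^{ij}\varphi_{ij}\in L^q(\Omega)$ with norm controlled by $\|\varphi\|_{W^{2,s}(\Omega)}$; this is exactly where the hypothesis $n<q<s<\infty$ and the dependence of $C$ on $s$ enter.

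Two smaller remarks on the localization step. First, what is actually needed is not that the modulus of continuity of $g$ is preserved under the renormalizing affine maps, but only that the oscillation of $g$ over a section of a fixed small height is at most $\lambda\e$; since the rescaled right-hand side is $g$ composed with the affine map, the pinching $1-\e\le g_s\le 1+\e$ transfers verbatim, so this particular worry dissolves. Second, the genuinely delicate point in carrying Theorem~\ref{main-result-constant} over to the rescaled triples $(\Omega_s,\phi_s,U_s)$ is the analogue of the boundary $C^{2,\alpha}$ bound of Lemma~\ref{lm:Holder-on-bottom-boundary} for the auxiliary function built from $u_s$: its proof uses the uniform convexity of $\p\Omega$, which degenerates for $\p\Omega_s$ as $s\to0$. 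The paper resolves this by composing the two affine maps so that the rescaled test function coincides with the one associated with the original domain $\Omega$, for which the uniform convexity is available; your proposal flags the right region of difficulty but does not supply this fix, and it is needed to make the covering argument close.
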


 Our estimates are affine invariant analogues
of the global $W^{2,p}$ estimates of Winter \cite{Wi} for fully nonlinear, uniformly elliptic equations, and are also linearized counterparts of Savin's global $W^{2,p}$ estimates for the
Monge-Amp\`ere equation \cite{S3}. We note that the continuity condition on the Monge-Amp\`ere measure in Theorem~\ref{main-result} is sharp in view of  Wang's counterexample \cite{W}
for solutions to the Monge-Amp\`ere equation and the fact that $\calL_\phi \phi = n \det D^2\phi =ng$.
The global second derivative estimates in Theorem~\ref{main-result-constant} and Theorem~\ref{main-result} depend only on the bounds on the Hessian determinant $\det D^{2} \phi$ and its 
continuity or closeness to a constant, the geometry of $\Omega$ and the quadratic separation of
$\phi$ from its tangent planes on the boundary $\p\Omega$. This quadratic separation  is
guaranteed by the $C^{3}$ character of boundary domain $\p\Omega$, data $\phi\mid_{\p\Omega}$ and the uniform convexity 
of $\Omega$ (see Proposition~\ref{pro:quadsep}). Under 
the assumptions in the main theorems, the linearized Monge-Amp\`ere operator $\mathcal{L}_{\phi}$ is not uniformly 
elliptic, i.e., the eigenvalues of $\Phi = (\Phi^{ij})$ are not necessarily bounded away 
from $0$ and $\infty.$ Moreover, $\mathcal{L}_{\phi}$ can be possibly singular near the boundary. The degeneracy and 
singularity of $\mathcal{L}_{\phi}$ are the main difficulties in establishing our boundary regularity results. We handle 
the degeneracy of $\mathcal{L}_{\phi}$ by working as in \cite{CG2, GN1, GN2, LS, L} with sections of solutions to 
the Monge-Amp\`ere equations. These sections have the same role as Euclidean balls have in the classical theory. To 
overcome the singularity of $\mathcal{L}_{\phi}$ near the boundary, we use a Localization Theorem at the 
boundary for solutions to the Monge-Amp\`ere equations which was obtained by Savin \cite{S1,S2}. In order to obtain the desired
global second derivative estimates for solutions $u$ of $\mathcal{L}_{\phi}$, we need to have good global decay estimates for the distribution function of the second
derivatives of $u$. To this end, we approximate $u$ by
solutions of $\mathcal{L}_{w}$ where $w$ solves  the standard Monge-Amp\`ere equation  $\det D^2 w=1$ with appropriate boundary conditions,  and use fine geometric properties of 
boundary sections  for solutions to the Monge-Amp\`ere equation which were  obtained recently in \cite{LN}. \\

Though the statements of our main theorems are rather succinct, their proofs are quite delicate. There are essentially two main steps for the proof of the main estimates:

{\bf Step 1:} We consider the quasi distance $d(x,\bar x)$ induced by the solution $\phi$ to the Monge-Amp\`ere equation and is defined by $d(x,\bar x)^2 :=  \phi(x) -\phi(\bar x) - \nabla \phi(\bar x)\cdot (x -\bar x)$.
We then bound the distribution function of the second derivative $D^{2} u$ by the Lebesgue measures of the ``bad'' sets on whose 
complements the quasi 
distance $d(x, \bar{x})$  is comparable to the Euclidean distance $|x-\bar x|$ in a controllable manner
and the graph of $u$ is touched from above and below by ``quasi paraboloids'' generated by the quasi 
distance. Intuitively, the better the regularity of $\phi$ is, the faster these decay estimates can 
be expected. When $\phi(x) = \abs{x}^2/2$, the Monge-Amp\`ere measure $\det D^2\phi$ is the usual Lebesgue measure and $d(x, \bar{x})$ corresponds to 
the Euclidean distance. In this step, we establish
preliminary power decay estimates for the bad sets under natural assumptions on the domain $\Omega$ and the boundary data of $\phi$. As a result, we obtain 
global $W^{2,\delta}(\Omega)$ estimates 
for $u$ where  $\delta>0$ is small under these natural assumptions provided that the Monge-Amp\`ere measure $\det D^2\phi$ is close to a constant.  We also 
give a more direct proof of  
global $W^{2,\delta}$ estimates for solutions to the linearized Monge-Amp\`ere equations when the Monge-Amp\`ere measure is only assumed to be 
bounded away from $0$ and $\infty$. This direct proof is based on interior estimates without resorting to decay estimates of the distribution function of the
second derivatives. These estimates, that are of independent interest, are global counterparts of Guti\'errez-Tournier's interior $W^{2,\delta}$ 
estimates for solutions to the linearized
 equation \eqref{LMA-eq}. Our idea, which is similar to Savin's arguments in \cite{S3}, is rather simple but useful for the second step and can be roughly described as follows:
$$\text{local estimates} + \text{appropriate covering results}\Longrightarrow \text{global estimates}.$$ 
 
{\bf Step 2:} We 
improve the power decay estimates obtained in {\bf Step~1} assuming in addition that $\det D^2\phi$ is sufficiently close to $1$. This 
will involve two main auxiliary results: 
\begin{myindentpar}{1cm}
 1) a global 
stability of cofactor matrices: we prove that the cofactor matrices of the Hessian matrices of two convex functions defined on the same domain are close if their Monge-Amp\`ere measures and boundary values are close in the $L^{\infty}
$ norm;\\
2) a global approximation result: we approximate the solution $u$ by smooth solutions of linearized Monge-Amp\`ere equations
associated with convex functions whose Monge-Amp\`ere measures and boundary data are close to those of $\phi$. 
\end{myindentpar}
The main estimates will then follow from a covering 
theorem for boundary sections and a strong-type $p-p$ estimate for the maximal function corresponding to  boundary sections. \\

Without going into details, we now indicate key technical points that entail for  getting global $W^{2,p}$ estimates. First, we show that the distribution function 
$|\{x: |D^2 u| >\beta\}|$
of the second derivatives of
the solution $u$ to $\mathcal{L}_{\phi} u=f$ has some  decay of the form $C\beta^{-\tau}$ with $\tau>0$ small and $C>0$ depending only on the structural constants in our equation; see Proposition~\ref{Initial-Estimate} and
Proposition~\ref{power-decay-at-boundary}. In the next step, we refine these decay estimates by working in very small regions of the domain and by rescaling our equation and domain. In this rescaled
setting, the constant $C$ above can be improved, roughly by a factor of $\|\Phi-W\|_{L^{n}} + (\fint \abs{f}^n)^{1/n}$; see Lemma~\ref{lm:acceleration}. Here $W$ is the matrix of the cofactors
of $D^2 w$ where $w$ is the solution to the standard Monge-Amp\`ere equation $\det D^2 w=1$ having the same boundary values as $\phi$ in small regions. When $\det D^2\phi$ is close to $1$,
the term $\|\Phi-W\|_{L^{n}}$ can be made as small as we want thanks to the stability of cofactor matrices in Proposition~\ref{global-convergence}. The term 
$(\fint \abs{f}^n)^{1/n}$ is invariant under a rescaling of our equation that almost preserves the $L^{\infty}$-norm of the second derivative $D^2u.$ There are two natural rescalings
of our equation to be explained in Section~\ref{data_sec} but the aforementioned rescaling is the most crucial. As a consequence, $(\fint \abs{f}^n)^{1/n}$ can be made as small as
we want provided that $f$ has higher integrability than $L^{n}$, but this is the assumption in our main theorems.\\

The rest of the paper is organized as follows. In Section~\ref{data_sec}, we recall the main tool used in our proof: the Localization Theorem at the boundary for 
solutions to the Monge-Amp\`ere equation, and state relevant results on the geometry of their sections. 
We also discuss properties of solutions to the Monge-Amp\`ere equation and its linearization under suitable rescalings 
using the Localization Theorem. In addition, we establish boundary $C^{2,\alpha}$ estimates for solutions to the standard Monge-Amp\`ere equations $\det D^2 w=1$
having the same boundary values as $\phi$ on its rescaled sections at the boundary. In Section~\ref{small_decay_sec}, we derive preliminary power decay estimates for the distribution
function of the second derivatives of solutions to the linearized Monge-Amp\`ere equations \eqref{LMA-eq}.
We also establish the global $W^{2,\delta}$ estimates for solutions to \eqref{LMA-eq}, paving the way for proving the global stability of cofactor 
matrices in Subsection~\ref{cofactor_sec}. Moreover, applying the 
global stability of cofactor matrices, we obtain in Subsection~\ref{convex_sec} global $W^{2, 1+\e}$ estimates for convex solutions to the linearized Monge-Amp\`ere
equations when the Monge-Amp\`ere measure is only assumed to be bounded away from zero and infinity. These estimates can be viewed as affine 
invariant versions of results obtained by De Phillipis-Figalli-Savin and Schmidt.
In Subsection~\ref{holder_sec}, we prove  the global H\"older continuity property of 
solutions to \eqref{LMA-eq}. This property together with the boundary $C^{2, \alpha}$ estimates in Section~\ref{data_sec} will be 
instrumental in the global approximation 
lemmas in Subsection~\ref{appro_sec}. In the last section, Section~\ref{density_sec}, by combining these approximation lemmas with the 
preliminary power decay estimates, we obtain  density estimates, which improve the power decay estimates in 
Section~\ref{small_decay_sec}, when the Monge-Amp\`ere measure $\det D^2\phi$ is close to a constant. The proofs of the main results will be 
given at the end of this section using these density estimates, a covering theorem and a strong-type $p-p$ estimate for the maximal function 
with respect to sections.

\section{\bf The Localization Theorem and Geometry of the Monge-Amp\`ere Equation}
\label{data_sec}

The results in this section hold under 
the following global information on the convex domain $\Omega$ and the convex function $\phi$.
We assume there exists $\rho>0$ such that
\begin{equation}\label{global-tang-int}
\Omega\subset B_{1/\rho}, ~\text{and for each $y\in \partial\Omega$ there is a ball $B_\rho(z)\subset \Omega$ that is  tangent to}~ \p 
\Omega~ \text{at $y$}.
\end{equation}
Let $\phi : \overline \Omega \rightarrow \R$, $\phi \in C^{0,1}(\overline 
\Omega) 
\cap 
C^2(\Omega)$  be a convex function satisfying
\begin{equation}\label{eq_u}
\det D^2 \phi =g, \quad \quad 0 <\lambda \leq g \leq \Lambda \quad \text{in}\quad \Omega.
\end{equation}
Assume further that on $\p \Omega$, $\phi$ 
separates quadratically from its 
tangent planes,  namely
\begin{equation}
\label{global-sep}
 \rho\abs{x-x_{0}}^2 \leq \phi(x)- \phi(x_{0})-\nabla \phi(x_{0}) \cdot (x- x_{0})
 \leq \rho^{-1}\abs{x-x_{0}}^2, ~\forall x, x_{0}\in\p\Omega.
\end{equation}
The section of $\phi$ centered at $x\in \overline \Omega$ with height $h$ is defined by
\begin{equation*}
 S_{\phi} (x, h) :=\Big\{y\in \overline \Omega: \quad \phi(y) < \phi(x) + \nabla \phi(x) \cdot (y- x) +h\Big\}.
\end{equation*}
For  $x\in \Omega$, we denote by $\bar{h}(x)$ the maximal height of all sections of $\phi$ centered at $x$ and contained in $\Omega$, that is,
$$\bar{h}(x): =\sup{\big\{h\geq 0| \quad S_{\phi}(x, h)\subset \Omega\big\}}.$$
In this case, $S_{\phi}(x, \bar{h}(x))$  is called the maximal interior section of $\phi$ with center $x\in\Omega$. 
\begin{remark}
In this paper, we denote by $c, \bar{c}, C, C_{1}, C_{2}, \theta_{0}, \theta_{\ast}, \cdots$, positive constants depending only on $\rho$, $\lambda$, $\Lambda$, 
$n$, and their values may change from line to line whenever 
there is no possibility of confusion. We refer to such constants as {\it universal constants}. Small universal constants decrease when $\lambda$ decreases and/or $\Lambda$ increases. 
Large universal constants increase when $\lambda$ decreases and/or $\Lambda$ increases, etc. Therefore, when $1-\e\leq \det D^2\phi\leq 1+\e$ with $0<\e<1/2$, we can suppress the dependence
of universal constants on $\e$.
\end{remark}

\subsection{The Localization Theorem} 
In this subsection, we recall the main tool to study geometric properties of boundary sections of 
solutions to the Monge-Amp\`ere equation: the Localization theorem at the boundary for solution 
to the Monge-Amp\`ere equation (Theorem \ref{main_loc}). Throughout this subsection, we assume that the convex domain $\Omega$ and the 
convex function $\phi$ satisfy \eqref{global-tang-int}--\eqref{global-sep}.
We now focus on sections centered at a point on the boundary  $\p\Omega$ and describe their geometry. Assume this boundary point to be $0$ and by \eqref{global-tang-int}, we can also assume that
\begin{equation}\label{om_ass}
B_\rho(\rho e_n) \subset \, \Omega \, \subset \{x_n \geq 0\} \cap B_{\frac 1\rho},
\end{equation}
where $\rho>0$ is the constant given by condition \eqref{global-tang-int}. 
After subtracting a linear function, we can assume further that
\begin{equation}\label{0grad}
\phi(0)=0\quad \mbox{and} \quad \nabla \phi(0)=0.
\end{equation}
If the boundary data has quadratic growth near $\{x_n=0\}$ then, as $h \rightarrow 0$, $S_{\phi}(0, h)$ is equivalent to a half-ellipsoid centered at 0. This is the content
of the Localization Theorem proved by Savin in \cite{S1,S2}. Precisely, this theorem reads as follows.

\begin{theorem}[Localization Theorem \cite{S1,S2}]\label{main_loc}
 Assume that $\Omega$ satisfies \eqref{om_ass} and $\phi$ satisfies 
\eqref{eq_u},\eqref{0grad}, and
\begin{equation*}\label{commentstar}\rho |x|^2 \leq \phi(x) \leq \rho^{-1} 
|x|^2 \quad \text{on \, $\p \Omega \cap \{x_n \leq \rho\}.$}\end{equation*}
Then there exists  a constant $k=k(\rho, \lambda, \lambda, n)>0$   such that  for each $h \leq k$ there is an ellipsoid $E_h$ of volume $\omega_{n}h^{n/2}$ 
satisfying
$$kE_h \cap \overline \Omega \, \subset \, S_{\phi}(0, h) \, \subset \, k^{-1}E_h \cap \overline \Omega.$$
Moreover, the ellipsoid $E_h$ is obtained from the ball of radius $h^{1/2}$ by a
linear transformation $A_h^{-1}$ (sliding along the $x_n=0$ plane)
$$A_hE_h= h^{1/2}B_1,\, \det A_{h} =1,\,  A_h(x) = x - \tau_h x_n, \,  \tau_h = (\tau_1, \tau_2, \ldots, 
\tau_{n-1}, 0) \,\mbox{ and }\, |\tau_{h}| \leq k^{-1} |\log h|.$$
\end{theorem}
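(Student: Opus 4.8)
The plan is to prove the theorem by a dyadic renormalization scheme, constructing the affine maps $A_h$ and the ellipsoids $E_h$ recursively at heights $h=2^{-j}$ and carrying the two-sided inclusion together with the bound on $\tau_h$ as an induction hypothesis. At the top scale $h_0$ (a fixed universal height) one first checks, by a soft compactness argument based only on \eqref{om_ass}, \eqref{eq_u}, \eqref{0grad} and the quadratic growth of $\phi$ on $\p\Omega$, that $S_{\phi}(0,h_0)$ is a bounded convex body trapped between two comparable half-balls centered at $0$; John's ellipsoid lemma then furnishes $A_{h_0}$ with $\det A_{h_0}=1$ and $E_{h_0}$. For the inductive step, assuming $S_{\phi}(0,h)$ is comparable (with the universal constant $k$) to $E_h=A_h^{-1}(h^{1/2}B_1)$ with $\det A_h=1$ and $A_hx=x-\tau_hx_n$, renormalize by $\phi_h(y):=h^{-1}\phi(h^{1/2}A_h^{-1}y)$ on $\Omega_h:=h^{-1/2}A_h\Omega$. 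Since $\det A_h=1$ this preserves $\lambda\le\det D^2\phi_h\le\Lambda$, the normalizations $\phi_h(0)=0$, $\nabla\phi_h(0)=0$ and the boundary quadratic growth (with universal constants), it makes $S_{\phi_h}(0,1)$ comparable to $B_1\cap\{y_n\ge0\}$, and --- because the inductive bound $|\tau_h|\le k^{-1}|\log h|$ forces $|\tau_h|\ll h^{-1/2}$ --- it preserves (indeed improves) the interior tangent ball condition at $0$.

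The heart of the matter is a \emph{gain lemma}: there exist a universal constant $C_0$ and a shear $\tilde Ay=y-sy_n$ with $\det\tilde A=1$ and $|s|\le C_0$ such that $S_{\phi_h}(0,1/2)$ is comparable, \emph{with the same universal constant $k$}, to $\tilde A^{-1}(2^{-1/2}B_1)\cap\{y_n\ge0\}$. Granting this, one sets $A_{h/2}:=\tilde AA_h$ and $E_{h/2}:=A_{h/2}^{-1}((h/2)^{1/2}B_1)$; translating through the renormalization gives $S_{\phi}(0,h/2)$ comparable to $E_{h/2}$ with the same constant, and, since $\tau_h$ has vanishing $n$-th component, $A_{h/2}x=x-(\tau_h+s)x_n$, i.e.\ $\tau_{h/2}=\tau_h+s$. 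Iterating from $h_0$ down to $h$ takes $\sim|\log h|$ steps, each adding a tangential increment of size $\le C_0$, whence $|\tau_h|\le k^{-1}|\log h|$; the volume identity $|E_h|=\omega_nh^{n/2}$ is immediate from $\det A_h=1$.

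The two-sided trapping inside the gain lemma (everything except the value of $s$) I would get from explicit barriers for $\phi_h$ near $0$: an upper barrier given by the solution of $\det D^2w=\Lambda$ with the same boundary data as $\phi_h$, and a lower ``sliding paraboloid'' barrier of the form $c(|y'|^2+My_n^2)-Ny_n$, chosen so that its Monge--Amp\`ere measure exceeds $\Lambda\ge\det D^2\phi_h$ and so that it lies below $\phi_h$ on $\p\Omega_h\cap\{y_n\le\rho\}$ by the boundary quadratic growth, hence below $\phi_h$ inside by the comparison principle; the persistent interior ball together with $\det D^2\phi_h\ge\lambda$ then prevents $S_{\phi_h}(0,t)$ from becoming asymptotically flat (collapsing onto $\{y_n=0\}$), via the Aleksandrov estimate and the resulting lower bound on section volumes. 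Together these pin $S_{\phi_h}(0,t)$, $t\in[1/2,1]$, between comparable half-ellipsoids; John's lemma applied to $S_{\phi_h}(0,1/2)$ then identifies $\tilde A$, with $|s|$ bounded by the (universal) eccentricity.

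The main obstacle is proving the gain lemma \emph{with a universal output constant that does not degrade along the iteration}: a purely barrier-based estimate would lose a factor at every dyadic step and be useless after $|\log h|$ of them. I would close this by compactness and contradiction --- if a sequence $\phi_m$ satisfying all hypotheses with $S_{\phi_m}(0,1)$ comparable to $B_1\cap\{y_n\ge0\}$ admitted no rebalancing shear of the required universal size, one extracts a locally uniform limit $\phi_\infty$, convex with $\lambda\le\det D^2\phi_\infty\le\Lambda$ on a limit domain still carrying an interior tangent ball and the boundary quadratic growth; then $S_{\phi_\infty}(0,1/2)$ would be a genuine bounded convex body (no flat piece, since $\det D^2\phi_\infty\ge\lambda>0$ excludes a singular Hessian, and no escape to $\{y_n=0\}$ since $\phi_\infty>0$ there), contradicting the supposed degeneration. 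Making the inputs to this compactness --- the interior ball and the boundary growth --- quantitatively stable under infinitely many renormalizations, which is exactly what the logarithmic (rather than bounded) control of $\tau_h$ buys, is the delicate technical point of the whole argument.
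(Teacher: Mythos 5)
Note first that the paper does not prove Theorem~\ref{main_loc} at all: it is quoted verbatim as Savin's Localization Theorem and attributed to \cite{S1,S2}, so there is no internal proof to compare against; your sketch has to be measured against Savin's argument. Its architecture --- dyadic renormalization $\phi_h(y)=h^{-1}\phi(h^{1/2}A_h^{-1}y)$, a one-step ``gain lemma'' producing a shear of universally bounded size with no loss in the comparability constant, and the bound $|\tau_h|\leq k^{-1}|\log h|$ obtained by summing $\sim|\log h|$ bounded tangential increments --- is indeed essentially the structure of the proof in \cite{S1}, and the bookkeeping you describe (preservation of $\lambda\le\det D^2\phi_h\le\Lambda$ and of the quadratic separation near $0$, the volume identity $|E_h|=\omega_n h^{n/2}$ from $\det A_h=1$) is correct.

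The genuine gap is exactly at the point you flag as delicate, and the fix you propose does not work as stated. In the compactness argument for the gain lemma you assert that the limit section cannot degenerate because ``$\det D^2\phi_\infty\ge\lambda>0$ excludes a singular Hessian''; this is false. Pogorelov's example shows that a convex function with Monge--Amp\`ere measure pinched between positive constants can vanish along a line segment, so an interior lower bound on $\det D^2\phi$ gives no strict convexity and, in particular, does not prevent a boundary section from collapsing onto a segment in $\{y_n=0\}$ or stretching without bound in a tangential direction. The only hypothesis that rules this out is the quadratic separation of $\phi$ from its tangent plane on $\p\Omega$, and extracting non-degeneracy from it is precisely the content of the theorem, so a soft compactness-and-contradiction argument risks circularity: you must first show, quantitatively, that the normalized sections stay in a fixed compact class of convex bodies (no flattening, no escape of mass to $\{y_n=0\}$) uniformly along the iteration. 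In \cite{S1} this is done not by compactness but by explicit barrier constructions exploiting the boundary quadratic growth together with Aleksandrov-type volume estimates, which yield two-sided bounds on the tangential and normal extent of $S_{\phi_h}(0,t)$ with constants depending only on $\rho,\lambda,\Lambda,n$; the shear and the non-degrading constant are then read off from these bounds rather than from John's lemma applied to an unquantified limit. Until the gain lemma is established by such a quantitative route (or the compactness class is shown closed under the renormalizations with uniform constants, including tracking the shrinking neighborhood of $\p\Omega_h$ on which the rescaled quadratic separation is valid), the induction does not close and the $|\log h|$ bound on $\tau_h$ is unsupported.
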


From Theorem \ref{main_loc} we also control the shape of sections that are tangent to $\p \Omega$ at the origin. 

\begin{proposition}\label{tan_sec}
Let $\phi$ and $\Omega$ satisfy the hypotheses of the Localization Theorem \ref{main_loc} at the 
origin. Assume that for some $y \in \Omega$ the section $S_{\phi}(y, h) \subset \Omega$
is tangent to $\p \Omega$ at $0$, i.e., $\p S_{\phi}(y, h)\cap
\p\Omega =\{0\}$, for some $h \le c$ with $c$ universal. Then there exists a small positive
 constant $k_0<k$ depending on $\lambda$, $\Lambda$, $\rho $ and $n$ such that
$$ \nabla \phi(y)=a e_n 
\quad \mbox{for some} \quad   a \in [k_0 h^{1/2}, k_0^{-1} h^{1/2}],$$
$$k_0 E_h \subset S_{\phi}(y, h) -y\subset k_0^{-1} E_h, \quad \quad k_0 h^{1/2} \le \dist(y,\p \Omega) \le k_0^{-1} h^{1/2}, \quad $$
with $E_h$ and $k$ the ellipsoid and constant defined in Theorem~\ref{main_loc}.
\end{proposition}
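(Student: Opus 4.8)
The plan is to deduce Proposition~\ref{tan_sec} from the Localization Theorem~\ref{main_loc} by reducing the tangent section $S_\phi(y,h)$ to the setting of a section centered at the boundary point $0$. First I would exploit the tangency hypothesis $\partial S_\phi(y,h)\cap\partial\Omega=\{0\}$: since the section $S_\phi(y,h)$ touches $\partial\Omega$ only at the origin and lies inside $\Omega$, the supporting hyperplane of the convex set $S_\phi(y,h)$ at $0$ must coincide with the supporting hyperplane of $\Omega$ at $0$, which by \eqref{om_ass} (after the normalization \eqref{0grad}) is $\{x_n=0\}$. Concretely, $0\in\partial S_\phi(y,h)$ means $\phi(0)=\phi(y)+\nabla\phi(y)\cdot(0-y)+h$, i.e. $\phi(y)-\nabla\phi(y)\cdot y=-h$, and the fact that the affine function $\ell(x)=\phi(y)+\nabla\phi(y)\cdot(x-y)+h$ satisfies $\ell\ge\phi$ nowhere violated on $\overline\Omega$ near $0$ forces $\nabla\phi(y)$ to be a nonnegative multiple of $e_n$; write $\nabla\phi(y)=ae_n$ with $a\ge0$. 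This is the geometric heart of the argument.

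Next I would observe that with $\nabla\phi(y)=ae_n$ and the height relation above, the section $S_\phi(y,h)$ coincides with a section of $\phi$ centered at $0$ but with a modified height: indeed, for $x$ near $0$,
\[
\phi(x)<\phi(y)+\nabla\phi(y)\cdot(x-y)+h=ax_n+\big(\phi(y)-\nabla\phi(y)\cdot y+h\big)=ax_n,
\]
so after subtracting the linear function $ax_n$ from $\phi$ (which does not change $D^2\phi$, hence preserves all hypotheses of Theorem~\ref{main_loc}), $S_\phi(y,h)$ becomes $S_{\tilde\phi}(0,0^+)$-like; more precisely one checks $S_\phi(y,h)=S_{\tilde\phi}(0,t)$ for the appropriate small height $t$ comparable to $h$, where $\tilde\phi=\phi-ax_n$. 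To pin down $t\sim h$ one uses that the maximal interior section argument together with \eqref{global-sep}/\eqref{global-tang-int} controls $a$: testing the inclusion $S_\phi(y,h)\subset\Omega\subset B_{1/\rho}$ and the interior ball condition gives both an upper bound $a\le k_0^{-1}h^{1/2}$ (otherwise the section would be too elongated in the $e_n$ direction to fit) and a lower bound $a\ge k_0h^{1/2}$ (otherwise the section, being tangent at $0$, could not reach height $h$ while staying in $\Omega$). Then applying Theorem~\ref{main_loc} to $\tilde\phi$ at height $t\sim h$ produces the ellipsoid inclusions $k_0E_h\subset S_\phi(y,h)-y\subset k_0^{-1}E_h$ after absorbing the comparability $E_t\sim E_h$ into the constant; the distance estimate $k_0h^{1/2}\le\dist(y,\partial\Omega)\le k_0^{-1}h^{1/2}$ then follows because $y$ is the center of an ellipsoid of the stated dimensions whose boundary touches $\partial\Omega$ exactly at $0$, so $\dist(y,\partial\Omega)=\dist(y,0)$ up to the shape constants of $E_h$, and $\dist(y,0)$ is comparable to the $e_n$-semiaxis of $E_h$, which is $\sim h^{1/2}$ (the $A_h$ transformation slides along $\{x_n=0\}$, so it does not distort the $e_n$-direction).

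The main obstacle I anticipate is making rigorous the step where the tangent section centered at an interior point $y$ is identified with a boundary-centered section: one must verify that the only contact point forces $\nabla\phi(y)\parallel e_n$ and simultaneously extract the two-sided bound on $a=|\nabla\phi(y)|$ in terms of $h^{1/2}$, using only the Localization Theorem and the structural hypotheses \eqref{global-tang-int}--\eqref{global-sep}. The upper bound on $a$ is the delicate direction: if $a$ were too large, the affine function $ax_n$ would dominate $\phi$ on a large portion of $\{x_n=0\}\cap\overline\Omega$ forcing the section to be unexpectedly large, contradicting $S_\phi(y,h)\subset\Omega$ and the volume normalization $|E_h|=\omega_nh^{n/2}$ from Theorem~\ref{main_loc}. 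Once $a\sim h^{1/2}$ is established, everything else is a bookkeeping exercise in renaming constants, and the ellipsoid and distance estimates drop out of Theorem~\ref{main_loc} applied to the normalized function $\tilde\phi=\phi-ax_n$.
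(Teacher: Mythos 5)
The paper itself does not prove Proposition~\ref{tan_sec} (it cites Savin's paper [S3]), so your sketch must stand on its own, and its opening move is indeed the right one and the same as in [S3]: tangency plus the normalization $\phi(0)=0$, $\nabla\phi(0)=0$ forces the tangential part of $\nabla\phi(y)$ to vanish, so $\nabla\phi(y)=ae_n$ with $a\ge 0$ and the affine function $\ell$ defining the section is $\ell(x)=ax_n$, i.e. $S_{\phi}(y,h)=\{x\in\Omega:\ \phi(x)<ax_n\}$. But the central reduction that follows is wrong. Sections are invariant under subtracting an affine function, so $S_{\tilde\phi}(0,t)=S_{\phi}(0,t)=\{\phi<t\}$ for $\tilde\phi=\phi-ax_n$, and this set is \emph{never} equal to $\{\phi<ax_n\}$: by the upper bound in the quadratic separation, $S_{\phi}(0,t)$ contains the whole boundary patch $\{z\in\partial\Omega:\ \phi(z)<t\}\supset\partial\Omega\cap B_{(\rho t)^{1/2}}$, whereas by hypothesis $S_{\phi}(y,h)$ meets $\partial\Omega$ only at the origin; only the one--sided inclusion $S_{\phi}(y,h)\subset S_{\phi}(0,Ch)$ is available. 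Moreover Theorem~\ref{main_loc} cannot be applied to $\tilde\phi$ at $0$: $\nabla\tilde\phi(0)=-ae_n\neq 0$, and $\tilde\phi\ge\rho|x|^2-a|x|$ is negative on $\partial\Omega$ for $|x|<a/\rho$, so the lower quadratic hypothesis fails. Consequently your argument does not produce the inner inclusion $k_0E_h\subset S_{\phi}(y,h)-y$, which is the main content of the proposition, nor the lower bound $\dist(y,\partial\Omega)\ge k_0h^{1/2}$ (note that $E_h$ only contains a ball of radius $\approx h^{1/2}/|\log h|$, so even comparability with $E_h$ would not by itself give that bound), and the comparability $E_t\sim E_h$ for $t\sim h$ that you propose to ``absorb into the constant'' is itself a statement about the sliding maps $A_t$ that requires proof.

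The two--sided bound $a\sim h^{1/2}$ also needs repair. Your mechanism for the upper bound cannot work: on $\{x_n=0\}$ one has $ax_n=0\le\phi$ for every $a$, so $ax_n$ never ``dominates $\phi$ on a large portion of $\{x_n=0\}\cap\overline\Omega$''. The correct route uses interior points: since $\ell-\phi$ is concave and attains its maximum $h$ at $y$, one has $\max_{\overline\Omega}(ax_n-\phi)=h$; the inclusion $kE_t\cap\overline\Omega\subset S_{\phi}(0,t)$ together with the interior tangent ball $B_\rho(\rho e_n)$ supplies, for $t\le c$, a point $x\in S_{\phi}(0,t)$ with $x_n\ge kt^{1/2}$ and $\phi(x)\le t$, whence $akt^{1/2}\le h+t$; taking $t=h$ gives $a\le Ch^{1/2}$. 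For the lower bound, set $M:=\sup_{S_\phi(y,h)}x_n$; then $S_{\phi}(y,h)\subset S_{\phi}(0,aM)\subset k^{-1}E_{aM}\cap\overline\Omega$ forces $M\le k^{-1}(aM)^{1/2}$, i.e. $M\le k^{-2}a$, and $h=ay_n-\phi(y)\le aM\le k^{-2}a^2$ gives $a\ge kh^{1/2}$. In summary, the overall plan (exploit tangency and compare with boundary sections via the Localization Theorem) is the right one, but the identification $S_{\phi}(y,h)=S_{\tilde\phi}(0,t)$, the stated mechanism for the upper bound on $a$, the inner ellipsoid inclusion, and the distance estimates are genuine gaps as written.
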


Proposition \ref{tan_sec} is a consequence of Theorem \ref{main_loc} and was proved in \cite{S3}. \\

The quadratic separation from tangent planes on the boundary for $\phi$
is a crucial assumption in the Localization Theorem (Theorem~\ref{main_loc}). This is the case for solutions to 
the Monge-Amp\`ere equation with the right hand side bounded away from $0$ and $\infty$ on uniformly convex domains and smooth boundary data 
as proved in \cite[Proposition 3.2]{S2}.

\begin{proposition}\label{pro:quadsep}
Let $\Omega\subset\R^{n}$ be a uniformly convex domain satisfying \eqref{global-tang-int}
and $\|\partial\Omega\|_{C^3}\leq 1/\rho$. Let $\phi: \overline{\Omega}\rightarrow \R$, $\phi\in C^{0,1}(\overline{\Omega})\cap C^{2}(\Omega)$  be a convex function 
satisfying $\phi \mid_{\p\Omega}\in C^{3}$ and
$$0<\lambda\leq \det D^{2} \phi \leq \Lambda<\infty\quad \mbox{in}\quad \Omega.
$$
Then, on $\p \Omega$, 
$\phi$ separates quadratically from its tangent planes, that is,
\begin{equation*}\label{eq_u1}
 \rho_{0}\abs{x-x_{0}}^2 \leq \phi(x)- \phi(x_{0})-\nabla \phi(x_{0}) \cdot (x- x_{0}) \leq 
\rho_{0}^{-1}\abs{x-x_{0}}^2,~\forall x_{0}, x\in\p\Omega
\end{equation*}
for some constant $\rho_{0}>0$ depending only on $n, \rho,\lambda, \Lambda$, $\|\phi\|_{C^{3}(\p\Omega)}$ and the uniform convexity of $\Omega$. 
\end{proposition}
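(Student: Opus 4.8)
The plan is to establish the quadratic separation at a fixed boundary point $x_0 \in \partial\Omega$, with constants that are uniform in $x_0$, and then combine these local estimates. After a rotation and translation we may take $x_0 = 0$, $\nabla\phi(0) = 0$, and assume $\Omega \subset \{x_n \ge 0\}$ with the inner tangent ball $B_\rho(\rho e_n) \subset \Omega$ from \eqref{global-tang-int}; since $\partial\Omega$ is uniformly convex with $\|\partial\Omega\|_{C^3} \le 1/\rho$, near $0$ the boundary is a graph $x_n = q(x') + O(|x'|^3)$ where $q$ is a positive-definite quadratic form with eigenvalues controlled above and below by universal multiples of $\rho$. The function $\bar\phi(x) := \phi(x) - \phi(0) - \nabla\phi(0)\cdot(x - 0) = \phi(x)$ vanishes at the origin and we must show $\rho_0|x|^2 \le \bar\phi(x) \le \rho_0^{-1}|x|^2$ for $x \in \partial\Omega$.

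\emph{Upper bound.} Build an explicit convex supersolution. Let $w$ solve $\det D^2 w = \lambda$ in $\Omega$ with $w = \phi$ on $\partial\Omega$; by comparison $w \le \phi$ in $\Omega$, but for the boundary separation we instead construct a smooth convex barrier from above. Since $\phi|_{\partial\Omega} \in C^3$ and $\partial\Omega \in C^3$, one can extend $\phi|_{\partial\Omega}$ to a $C^{2,1}$ function on $\overline\Omega$ and add a large multiple $N$ of a defining function of $\Omega$ (e.g. $N(|x - \rho e_n|^2 - \rho^2)$ adjusted, or more robustly $N d_{\partial\Omega}$ made convex using uniform convexity) to produce $P$ with $P = \phi$ on $\partial\Omega$, $D^2 P \ge c\,\mathrm{Id}$, hence $\det D^2 P \ge c^n > \Lambda \ge \det D^2\phi$ for $N$ large universal. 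By the comparison principle $\phi \le P$ in $\Omega$; restricting to $\partial\Omega$ and using $P(0) = 0$, $\nabla P(0) = \nabla\phi(0) = 0$ together with $\|P\|_{C^{1,1}} \le C$ gives $\phi(x) = P(x) \le \frac{C}{2}|x|^2$ on $\partial\Omega$, which is the upper bound with $\rho_0^{-1} = C/2$.

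\emph{Lower bound.} This is the harder direction and where the main obstacle lies, because it requires controlling $\phi$ from below on the boundary using only the lower bound $\det D^2\phi \ge \lambda$, the uniform convexity, and the $C^3$ boundary data. The strategy: first show $\bar h(x)$ does not degenerate too fast, i.e. the sections satisfy the hypotheses of the Localization Theorem \ref{main_loc} at $0$ — for this one needs exactly that $\phi$ has quadratic growth on $\partial\Omega \cap \{x_n \le \rho\}$, so there is a bootstrapping subtlety one resolves by first obtaining a one-sided statement. Concretely, suppose the lower bound fails; then there is a sequence of boundary points $x_k \to 0$ (after rescaling, $x_k$ on $\partial\Omega$ with $|x_k| = r_k \to 0$) with $\phi(x_k) \le \eta_k r_k^2$, $\eta_k \to 0$. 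Consider the rescalings $\phi_k(x) = r_k^{-2}\phi(r_k x)$ on $\Omega_k = r_k^{-1}\Omega$; these converge (along a subsequence, using $\lambda \le \det D^2\phi_k \le \Lambda$ and the Alexandrov estimate for the uniform non-degeneracy near $0$) to a convex limit $\phi_\infty$ on the half-space $\{x_n \ge 0\}$ with $\phi_\infty(0) = 0$, $\nabla\phi_\infty(0) = 0$, $\phi_\infty = q(x')$ on $\{x_n = 0\}$ (the limiting boundary), $\lambda \le \det D^2\phi_\infty \le \Lambda$, and $\phi_\infty$ vanishing at a unit boundary point $x_\infty \in \{x_n = 0\}$, contradicting $\phi_\infty \ge q(x') \ge c|x'|^2$ there. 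The positivity $\phi_\infty(x') \ge q(x')$ on the boundary comes from the $C^3$ data $\phi|_{\partial\Omega}(x') = q(x') + O(|x'|^3)$ being preserved under this parabolic rescaling, and the one remaining point — that $\phi_\infty$ is genuinely the solution with these boundary values and not something smaller — follows because the maximal interior sections at $0$ stay comparable to half-balls (Alexandrov/maximum principle applied on $B_\rho(\rho e_n)$), preventing collapse. This compactness argument, carried out carefully, is essentially the content of \cite[Proposition 3.2]{S2}; the main obstacle is making the limit extraction rigorous and checking the limiting domain is a half-space with the inherited quadratic boundary trace, after which the uniformity of $\rho_0$ in $x_0$ follows from the compactness of $\partial\Omega$ and the uniform $C^3$ bounds.
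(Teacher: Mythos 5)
Your lower-bound argument — which you rightly identify as the heart of the matter — is circular at exactly the point where the hypotheses $\lambda\leq\det D^2\phi\leq\Lambda$ must do work. After normalizing $x_0=0$, $\phi(0)=0$, $\nabla\phi(0)=0$, the boundary values of $\phi$ in graph coordinates are a $C^3$ function $h(x')=\phi(x',g(x'))$ with $h(0)=0$, $Dh(0)=0$, so $h(x')=\tfrac12 x'^{T}D^2h(0)x'+O(|x'|^3)$; but nothing in the hypotheses tells you $D^2h(0)\geq c\,I$. That uniform positivity is precisely the infinitesimal form of the inequality you are trying to prove: in unnormalized terms the relevant quadratic form is (tangential Hessian of the data) $-\,\phi_\nu(x_0)D^2g(0)$, and the whole content of the proposition is that the Monge--Amp\`ere bounds force the normal derivative to compensate possible tangential flatness of the raw data (e.g.\ data vanishing to fourth order in some tangential direction is allowed a priori). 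Your blow-up contradiction uses ``$\phi_\infty\geq q(x')\geq c|x'|^2$ on $\{x_n=0\}$'', justified by ``$\phi|_{\partial\Omega}(x')=q(x')+O(|x'|^3)$'' with $q$ the second-order shape of $\partial\Omega$ — but the boundary data of $\phi$ has no relation to the second fundamental form of $\partial\Omega$, and if the limiting trace were known to be uniformly convex you would not need any blow-up: the conclusion near $x_0$ would follow directly from Taylor expansion. In addition, the compactness step fails as stated: under the parabolic scaling $\phi_k(x)=r_k^{-2}\phi(r_kx)$ you have no local upper bound at interior points of the limiting half-space, since a priori $\phi(t\nu)$ is only $o(t)$ (from $\nabla\phi(0)=0$) and $\lesssim t$ (from convexity and the boundary bound); growth like $t^{3/2}$ is not excluded at this stage, so $\phi_k$ may blow up off $\{x_n=0\}$. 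Ruling that out (i.e.\ $\phi(t\nu)\lesssim t^2$ up to logarithms) is essentially the Localization Theorem~\ref{main_loc}, whose hypothesis is the very quadratic separation being proved; likewise the ``no loss of boundary values in the limit'' claim is asserted, not proved, and the uniformity in $x_0$ would require the contradiction sequence to range over points and functions, not a fixed $\phi$ and $x_0=0$.

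The upper bound is also not right as written: with $\det D^2P>\Lambda\geq\det D^2\phi$ and $P=\phi$ on $\partial\Omega$ the comparison principle gives $P\leq\phi$, not $\phi\leq P$, and the identity $\nabla P(0)=\nabla\phi(0)$ is false for your $P$ (the normal derivatives differ). What the upper bound actually requires, besides the $C^2$ bound of the data and the vanishing of its tangential gradient at $x_0$, is a lower bound $\phi_\nu(x_0)\geq -C$ on the (inner) normal derivative, since the subtracted tangent plane contributes $-\phi_\nu(x_0)(x-x_0)\cdot\nu\sim-\phi_\nu(x_0)|x-x_0|^2$ on the uniformly convex boundary; that bound is obtained from a convex lower barrier of the type used in Lemma~\ref{lm:quadratic-separation-U} (the function $w^-$), using $\det D^2\phi\leq\Lambda$, and your sketch never addresses it. For the record, the paper does not prove Proposition~\ref{pro:quadsep} at all: it quotes it from Savin \cite[Proposition 3.2]{S2}, whose proof is a direct barrier/normal-derivative argument rather than a compactness argument; so the comparison here is with that cited proof, and your proposal, while correctly locating the difficulty, does not close it.
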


\subsection{Properties of the rescaled functions and boundary regularity estimates}
\label{rescale-sec}
In this subsection, we discuss properties of solutions to the Monge-Amp\`ere equation and its linearization under suitable rescalings and then use these properties to establish a boundary
$C^{2, \alpha}$ estimates for solutions to the standard Monge-Amp\`ere equation $\det D^2 w=1$ in our rescaled setting. \\

Let $\Omega$ and $\phi$  satisfy the hypotheses of the Localization Theorem at the 
origin. We know that for all $h \le k$,
 $S_{\phi}(0, h)$ satisfies 
\begin{equation}k E_h \cap \overline \Omega  \subset S_{\phi}(0, h) \subset k^{-1} E_h\cap \overline{\Omega},
 \label{Sh}
\end{equation}
with $A_h$ being a linear transformation 
and
$$\det A_{h} = 1,\quad E_h=A^{-1}_hB_{h^{1/2}}, \quad  A_hx=x-\tau_hx_n,\quad \tau_h \cdot e_n=0, \quad \|A_h^{-1}\|, \,\|A_h\| \le k^{-1} |\log h|.$$
This gives for all $h\leq k$
\begin{equation}\label{small-sec}
 \overline{\Omega} \cap B^{+}_{h^{2/3}}\subset\overline \Omega \cap B^{+}_{ch^{1/2}/\abs{\log h}}\subset S_{\phi}(0, h) 
\subset \overline{\Omega} \cap B^{+}_{C h^{1/2} \abs{\log h}}\subset B^{+}_{h^{1/3} }.
\end{equation}
We denote the rescaled function of $\phi$ and the rescaled domain of $\Omega$ by \begin{equation}
 \phi_h(x):=\frac{\phi(h^{1/2}A^{-1}_hx)}{h}\quad \mbox{and}\quad \Omega_h:= h^{-1/2}A_h \Omega.
\label{phi-omega-h}
 \end{equation}
The function $\phi_h$, defined in $\overline \Omega_h$, is continuous and solves the Monge-Amp\`ere equation
 $$\det D^2 \phi_h=g_h(x), \quad \quad \lambda \le g_h(x):=g(h^{1/2}A_h^{-1}x) \le \Lambda.$$
By \eqref{Sh}, the section of $\phi_h$   centered at the origin and with height 1 satisfies
\begin{equation}
B_k^{+} \cap \overline \Omega_h \subset S_{\phi_{h}}(0, 1)=h^{-1/2}A_hS_{\phi} (0, h)\subset B_{k^{-1}}^+ \cap \overline{\Omega_{h}}.
\label{size-u-h}
\end{equation}
In what follows, we denote
\begin{equation}
U_{h}= S_{\phi_{h}}(0,1).
\label{u-phi-h}
\end{equation}

Now, we discuss two natural rescalings for the linearized Monge-Amp\`ere equation 
$$\mathcal{L}_{\phi}u:= \Phi^{ij} u_{ij}= f \quad \mbox{in}\quad \Omega.$$
We focus on the boundary section $S_{\phi}(0, h)$ in the present setting of Theorem \ref{main_loc}.\\
{\bf $L^{\infty}$-norm preserving rescaling.}  Consider the following rescaling of functions:
$$u_{h}(x):= u(h^{1/2} A^{-1}_{h}x)\, \mbox{ and }\, f_{h}(x):= h f(h^{1/2} A^{-1}_{h}x),\quad \mbox{for }\, x\in\Omega_h.$$
Simple computation gives
\begin{equation*}
 D^2 \phi_h= (A_{h}^{-1})^{t} D^2 \phi A_{h}^{-1},\quad  D^2 u_h= h(A_{h}^{-1})^{t} D^2 u A_{h}^{-1}, 
\end{equation*}
and
$$\Phi_h: = (\det D^2 \phi_h) (D^2\phi_h)^{-1} = (\det D^2\phi) A_{h} (D^2\phi)^{-1} (A_{h})^{t} = A_h \Phi (A_h)^{t}.$$
Therefore, we find that
$$\mathcal{L}_{\phi_{h}} u_{h}= trace (\Phi_h D^2 u_h)=f_{h}~\text{ in }~\Omega_{h},\quad
\mbox{and}\quad \|u_h\|_{L^{\infty}(\Omega_h)} = \|u\|_{L^{\infty}(\Omega)}.$$
Thus this rescaling preserves the $L^{\infty}$-norm of $u$.
Since $\|f_h\|_{L^{n}(\Omega_h)}= h^{1/2}\|f\|_{L^{n}(\Omega)}$ is small if $f\in L^{n}(\Omega)$ and $h$ small, we can expect that $u_h$ has some nice
second derivative estimates, say their boundedness. Given this and  as
$$D^{2} u (h^{1/2} A^{-1}_{h} x) = h^{-1} (A_{h})^{t} \, D^{2} u_{h}(x) \, A_{h},$$
it is again quite natural to expect that $|D^2u |$ behaves like $\frac{1}{h}$ in some part of the section $S_{\phi}(0, h).$ This is what we will prove in 
Lemma~\ref{lm:improved-density-I}.\\
{\bf Almost $W^{2,\infty}$-norm preserving rescaling.} The next rescaling almost preserves the $L^{\infty}$-norm of $D^2u$. Under the following rescaling of functions
$$\tilde u_{h}(x):= h^{-1}u(h^{1/2} A^{-1}_{h}x) \, \mbox{ and }\, \tilde f_{h}(x):= f(h^{1/2} A^{-1}_{h}x)\quad  \mbox{for }\,x\in\Omega_h,$$
we have
$\mathcal{L}_{\phi_{h}} \tilde u_{h}= \tilde f_{h}~\text{ in }~\Omega_{h}$
with
 $$\fint_{\Omega_h}\abs{\tilde f_{h}}^n = \fint_{\Omega} \abs{f}^n,$$
by changing variables and recalling that $\det A_h =1.$
As
$$D^{2} \tilde{u}_{h}(x) = (A_{h}^{-1})^{t}D^{2} u( h^{1/2} A^{-1}_{h} x) A_{h}^{-1}, $$
the present rescaling almost preserves the $L^{\infty}$-norm of $D^2u$  since
$$\|D^2\tilde u_h\|_{L^{\infty}(\Omega_h)}\leq k^{-2}\abs{\log h}^2 \|D^2 u\|_{L^{\infty}(\Omega)}.$$

In principle, the $L^{\infty}$-norm preserving rescaling allows us to find some good points with controlled second derivatives for $u$. Having found them, we would like to propagate 
them by finding more similar points near by, maybe at the cost of a slightly larger bound on the second derivatives. This is the key technical point of the paper and  almost $W^{2,\infty}$-norm preserving 
rescaling is the means for this; see Lemmas~\ref{lm:improved-density-I} and \ref{lm:furthercriticaldensitytwo}.

  A variant of the {\bf $L^{\infty}$-norm preserving rescaling} is the following which applies to sections tangent to the boundary. \\
\noindent
{\bf $L^{\infty}$-norm preserving rescaling in a section tangent to the boundary.} Consider a prototype section $S_{\phi}(y, h)$
with $h:= \bar{h}(y)\leq c.$ By applying Proposition~\ref{tan_sec}  
to $S_{\phi}(y, h)$, we see that it is equivalent to an ellipsoid $E_h$, i.e.,
$$k_0 E_h \subset S_{\phi}(y, h)-y \subset k_{0}^{-1} E_h,$$
where
$$E_h := h^{1/2}A_{h}^{-1}B_1 \quad \mbox{with} \quad \det A_{h}=1,\quad  \|A_{h}\|, \, \|A_h^{-1} \| \le C |\log h|.$$
We use the following rescalings:
$$\tilde{\Omega}_{h}  := h^{-1/2} A_{h} (\Omega-y),$$
and  for $ x\in \tilde{\Omega}_{h}$
$$\tilde{u}_{h}(x) := u ( y + h^{1/2} A^{-1}_{h} x),\quad  \tilde{\phi}_{h}(x) := 
h^{-1} \left[\phi ( y + h^{1/2} A^{-1}_{h} x)-\phi(y)-\nabla\phi(y)\cdot(h^{1/2}A^{-1}_h x) - h\right].$$
Then
$$B_{k_0}\subset \tilde{U}_h\equiv S_{\tilde{\phi}_{h}} (0, 1)\equiv h^{-1/2} A_{h} \big(S_{\phi}(y, h) -y\big)\subset B_{k_0^{-1}}.$$
We have
$$\det D^{2} \tilde{\phi}_{h} (x)= \tilde{g}_{h}(x):= g ( y + h^{1/2} A^{-1}_{h} x), \quad \tilde{\phi}_{h} =0 \, \mbox{ on }\, \partial  S_{\tilde{\phi}_{h}} (0, 1)$$
and
\[
\min_{ S_{\tilde{\phi}_{h}} (0, 1)}{\tilde{\phi}_{h}}=-1 =\tilde{\phi}_{h}(0).
\]
Also
$$\tilde{\Phi}_{h}^{ij}(\tilde{u}_{h})_{ij} = \tilde{f}_{h}(x) : = hf ( y + h^{1/2} A^{-1}_{h} x).$$

Some properties of the rescaled function $\phi_h$ was established in \cite{S2} and \cite[Lemma~4.2, Lemma~5.4]{LS}. For later use, we record them here.
\begin{lemma}\label{sep-lem}
There exists a small  constant $c=c(n,\rho,\lambda,\Lambda)>0$ such that if $h\leq c$, then

a) for any 
$x,x_{0}\in\p\Omega_{h}\cap B_{2/k}$ we have
\begin{equation}\label{sep-near0}
 \frac{\rho}{4}\abs{x-x_{0}}^2 \leq \phi_h(x) - \phi_h(x_{0}) -\nabla \phi_h(x_{0}) \cdot (x- x_{0})\leq 4\rho^{-1} \abs{x- x_{0}}^2.
\end{equation}

b) if $r \le c$ small, we have $$|\nabla \phi_h| \le 
C r |\log r|^2 \quad \mbox{in} \quad \overline \Omega_h \cap B_r.$$

c) $\p \Omega_h \cap B_{2/k}$ is a graph 
in the $e_n$ direction whose $C^{1,1}$ norm 
is bounded by $C h^{1/2}.$

d) $\phi_{h}$ satisfies in $U_{h}\equiv S_{\phi_{h}}(0,1)$  the hypotheses of Theorem~\ref{main_loc} at all points  on $\p U_{h}\cap B_{c}$.

e) If $y\in U_{h}\cap B_{c^2}$ then the maximal interior section $S_{\phi_h}(y, \bar{h}(y))$ of $\phi_h$ in $U_{h}$ satisfies: $$c\geq \bar{h}(y)\geq k_0^2 \, \dist^2 (y, \p U_h)\quad \mbox{and}\quad 
S_{\phi_h}(y, \bar{h}(y))\subset U_h\cap B_c.$$
\end{lemma}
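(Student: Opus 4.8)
Here is the plan I would follow. Every assertion is a statement about the rescaled pair $(\Omega_h,\phi_h)$, and the natural strategy is to push it back to the original pair $(\Omega,\phi)$ through the affine map $T_hx:=h^{1/2}A_h^{-1}x$, using only two inputs: the standing hypotheses \eqref{om_ass}--\eqref{global-sep} on $(\Omega,\phi)$ at $0$, and the control $\|A_h\|,\|A_h^{-1}\|\le k^{-1}|\log h|$, $\det A_h=1$, $\tau_h\cdot e_n=0$ furnished by Theorem \ref{main_loc}. Writing $\hat x=T_hx$, $\hat x_0=T_hx_0$, the chain-rule identities recorded above the lemma give
\[
\phi_h(x)-\phi_h(x_0)-\nabla\phi_h(x_0)\cdot(x-x_0)=\tfrac1h\big[\phi(\hat x)-\phi(\hat x_0)-\nabla\phi(\hat x_0)\cdot(\hat x-\hat x_0)\big],\qquad
\nabla\phi_h(x)=h^{-1/2}(A_h^{-1})^t\nabla\phi(\hat x),
\]
together with $\hat x-\hat x_0=h^{1/2}A_h^{-1}(x-x_0)$, so parts (a)--(c) become bookkeeping of these displays against the geometry near $0$; they are precisely \cite[Lemma 4.2, Lemma 5.4]{LS} (see also \cite{S2}), and I would reproduce those arguments. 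For (c): since $\Omega$ satisfies \eqref{global-tang-int}, near $0$ the boundary is a convex graph $x_n=\gamma(x')$ with $\gamma(0)=0$, $\nabla\gamma(0)=0$ and $0\le D^2\gamma\le C(\rho)$ (convexity for the lower bound, the interior tangent ball of radius $\rho$ for the upper), hence $\gamma\in C^{1,1}$ with universal norm; a point $z\in\partial\Omega_h$ satisfies $z_n=h^{-1/2}\gamma\big(h^{1/2}(z'+\tau_h'z_n)\big)$ with $\tau_h=(\tau_h',0)$, which for $z\in B_{2/k}$ forces $z_n=O(h^{1/2})$ and, once solved as an implicit equation (admissible because $|\tau_h'|h^{1/2}\to0$), exhibits $\partial\Omega_h\cap B_{2/k}$ as a graph whose $C^{1,1}$ norm is $h^{-1/2}$ times that of $\gamma(h^{1/2}\cdot)$, i.e. $O(h^{1/2})$.

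Granting (c), part (a) is immediate: for $x,x_0\in\partial\Omega_h\cap B_{2/k}$ the images $\hat x,\hat x_0$ lie in $\partial\Omega$, so \eqref{global-sep} applies to the bracket on the right of the first display; since $\hat x-\hat x_0=h^{1/2}A_h^{-1}(x-x_0)$, assertion \eqref{sep-near0} reduces to the two-sided bound $\tfrac12|x-x_0|\le|A_h^{-1}(x-x_0)|\le2|x-x_0|$, and this holds because $A_h^{-1}z=z+\tau_hz_n$ while, by (c), the vertical increment obeys $|(x-x_0)_n|\le Ch^{1/2}|x-x_0|$, so $|\tau_h|\,|(x-x_0)_n|\le Ch^{1/2}|\log h|\,|x-x_0|\le\tfrac12|x-x_0|$ for $h$ small. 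Part (b) is a convexity argument: $\phi_h$ is convex with $\phi_h(0)=0$, $\nabla\phi_h(0)=0$, and by (a) it separates quadratically from $0$ along $\partial\Omega_h\cap B_{2/k}$; a barrier built from these boundary quadratic bounds controls the growth of $\phi_h$ near $0$ (up to the logarithmic distortions of $A_h$), and combining the interior gradient estimate for convex functions on balls $B_{r/2}(x)\subset\Omega_h$ with the boundary bound (using (c) for the boundary piece) yields $|\nabla\phi_h|\le Cr|\log r|^2$ on $\overline\Omega_h\cap B_r$, exactly as in \cite[Lemma 5.4]{LS}.

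The substantive part, and what I expect to be the main obstacle, is (d)--(e). For (d) one must check that $\phi_h$, viewed on the bounded convex set $U_h=S_{\phi_h}(0,1)$ (which by \eqref{size-u-h} satisfies $B_k^+\cap\overline\Omega_h\subset U_h\subset B_{k^{-1}}^+$, so $\partial U_h\cap B_c=\partial\Omega_h\cap B_c$ once $c<k$), fulfils \emph{all} hypotheses of Theorem \ref{main_loc} at \emph{every} $x_0\in\partial U_h\cap B_c$: after translating $x_0$ to $0$ and rotating the inner normal of $\partial U_h$ at $x_0$ to $e_n$, one needs (i) $\det D^2\phi_h=g_h\in[\lambda,\Lambda]$, immediate from $\det A_h=1$; (ii) the normalization $\phi_h(x_0)=0$, $\nabla\phi_h(x_0)=0$ after subtracting the supporting plane; (iii) a uniform interior tangent ball at $x_0$ and a universal enclosing ball for $U_h$, which hold because, by (c), $\partial U_h$ is nearly flat near $x_0$ while $U_h\subset B_{k^{-1}}^+$; and (iv) the quadratic growth of $\phi_h$ along $\partial U_h$ near $x_0$, which is (a) recentered at $x_0$. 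With (d) in hand, (e) follows from Proposition \ref{tan_sec}: for $y\in U_h\cap B_{c^2}$ the maximal section $S_{\phi_h}(y,\bar h(y))$ is internally tangent to $\partial U_h$ at some $x_0$; either it is small, in which case $x_0\in\partial\Omega_h\cap B_c$ and Proposition \ref{tan_sec} (applicable via (d)) gives $\dist(y,\partial U_h)\le k_0^{-1}\bar h(y)^{1/2}$, hence $\bar h(y)\ge k_0^2\dist^2(y,\partial U_h)$, together with $S_{\phi_h}(y,\bar h(y))\subset U_h\cap B_c$ (its diameter is $\lesssim\bar h(y)^{1/2}|\log\bar h(y)|$ and $y\in B_{c^2}$); or it is not small, in which case $\bar h(y)$ exceeds a universal constant while $\dist(y,\partial U_h)\le\diam U_h\le2k^{-1}$, so the inequality holds after shrinking $k_0$; finally $\bar h(y)\le c$ because a taller section centered at $y\in B_{c^2}$ cannot fit inside $U_h\subset B_{k^{-1}}^+$. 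The delicate points — the reason I flag (d)--(e) as the crux — are verifying the interior-ball and quadratic-separation hypotheses \emph{uniformly} at generic boundary points of $U_h$, not just at $0$, and making the smallness dichotomy in (e) quantitative; both are carried out in \cite{S2, LS}, and I would follow those proofs.
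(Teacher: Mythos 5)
Your treatment of (a)--(d) is essentially the paper's: the paper simply cites \cite[Lemma 4.2]{LS} for (a)--(c) and notes that its proof yields (d), and your reductions through the affine map are consistent with that. The real issue is (e), which the paper proves in full and where your sketch has two genuine gaps. First, your justification of $\bar{h}(y)\leq c$ --- that ``a taller section centered at $y\in B_{c^2}$ cannot fit inside $U_h\subset B^+_{k^{-1}}$'' --- does not work: sections of height up to a universal constant do fit inside $B^+_{k^{-1}}$, so the size of $U_h$ is not the obstruction; what forces $\bar{h}(y)$ to be small is the proximity of $y$ to the boundary point $0$. The paper's argument is: by (d) and \eqref{small-sec}, $y\in S_{\phi_h}(0,c^3)$, hence by \cite[Lemma 4.1]{LN} one has $S_{\phi_h}(0,c^3)\subset S_{\phi_h}(y,\theta_0c^3)$, and since $0\in\p U_h$ this section already reaches $\p U_h$, so $\bar{h}(y)\leq\theta_0c^3\leq c$. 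In particular your ``not small'' branch must be vacuous --- and it has to be dealt with, because in that branch you could obtain neither $\bar{h}(y)\leq c$ nor $S_{\phi_h}(y,\bar{h}(y))\subset B_c$, both of which are part of the statement; shrinking $k_0$ rescues only the distance inequality.

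Second, before invoking Proposition~\ref{tan_sec} you must know that the tangency point $x_0\in\p S_{\phi_h}(y,\bar{h}(y))\cap\p\Omega_h$ lies in $B_c$, where (d) supplies the hypotheses of Theorem~\ref{main_loc}; you assert this (``in which case $x_0\in\p\Omega_h\cap B_c$'') but do not prove it, and your route to $S_{\phi_h}(y,\bar{h}(y))\subset B_c$ via the diameter bound $\lesssim\bar{h}(y)^{1/2}|\log\bar{h}(y)|$ is circular, since that bound is itself an output of Proposition~\ref{tan_sec} applied at $x_0$. The paper breaks the circle by first using the crude containment $S_{\phi_h}(y,\bar{h}(y))\subset B(y,K\bar{h}(y)^b)$ from \cite[Theorem 3.3.8]{G} (which needs only interior balancing, not localization at $x_0$), and then combining the boundary quadratic separation \eqref{sep-near0} at $0$ with the gradient bound (b) to get $\frac{\rho}{4}|x_0|^2\leq\phi_h(x_0)=\phi_h(y)+\nabla\phi_h(y)\cdot(x_0-y)+\bar{h}(y)\leq c^3+CK|y|\,\bar{h}(y)^b\big|\log|y|\big|^2+\bar{h}(y)$, whence $|x_0|<c$; only then does Proposition~\ref{tan_sec} apply and give both $k_0^2\,\dist^2(y,\p U_h)\leq\bar{h}(y)$ and, via the ellipsoid containment, $S_{\phi_h}(y,\bar{h}(y))\subset B_c$. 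You correctly flag these as the delicate points and defer to \cite{S2,LS}, but as written your own argument for (e) does not close without these steps.
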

\begin{proof}
 \cite[Lemma 4.2]{LS} contains (a)--(c) while its proof implies (d). The statement (e) can be proved as in \cite[Lemma 5.4]{LS} and we give a 
 complete proof   here. Let $y\in U_{h}\cap B_{c^2}$. Then it follows from property (d)  and \eqref{small-sec} that $y\in S_{\phi_h}(0, c^3)$. 
 Hence, $\phi_h(y)\leq c^3.$ By \cite[Lemma~4.1]{LN} we obtain
 $S_{\phi_h}(0, c^3) \subset S_{\phi_h}(y, \theta_0 c^3)$ and consequently 
 \begin{equation}\label{barh-is-small}
 \bh(y)\leq \theta_0 c^3.
 \end{equation}
 Thus, 
 $\bar{h}(y)\leq c$
 if $c$ is small.
   Since $S_{\phi_h}(y, \bar{h}(y))$ is balanced around $y$, we can use Theorem~3.3.8 in \cite{G} to conclude that 
  \begin{equation}\label{section-inside-a-ball}
  S_{\phi_h}(y, \bh(y)) \subset B(y, K\,\bh(y)^b)
  \end{equation}
  for some universal constants $K, b>0$.
  
  From \eqref{barh-is-small} and \eqref{section-inside-a-ball} we see that for $c$  small the section $S_{\phi_h}(y, \bar{h}(y))$ is 
  tangent to $\partial \Omega_h$. Let 
 $x_0 \in \p S_{\phi_h}(y, \bar{h}(y)) \cap\p \Omega_h$.
Applying \eqref{sep-near0} to $x_0$ and $0$, and using  property (b) and \eqref{section-inside-a-ball}, we have
\begin{align*}
\frac{\rho}{4} |x_0|^2 
&\leq
\phi_h(x_0)= \phi_h(y)+\nabla \phi_h(y)\cdot (x_0 -y) +\bh(y)\\
&\leq c^3 + CK\abs{y}\bh(y)^b\big|\log{|y|}\big|^2+\bh(y).
\end{align*}
This together with the assumption $|y|< c^2$ and \eqref{barh-is-small} implies that $|x_0|<c$. Now, thanks to (d) we can apply Proposition~\ref{tan_sec} at $x_0$ and obtain
 $$k_0^2 \dist^2 (y, \p U_h)\leq \bar{h}(y)\leq k_{0}^{-2} \dist^2 (y, \p U_h).
 $$
 Since $S_{\phi_h}(y, \bar{h}(y))-y \subset k_{0}^{-1} E_h$, we find from the definition of $E_h$ and  $\bh(y)\leq \theta_0 c^3$
 that 
$$S_{\phi_h}(y, \bar{h}(y))\subset y + k_{0}^{-1} E_h\subset B_{c^2 + k_0^{-1}k^{-1} \abs{\bar{h}(y)}^{1/2}\abs{\log \bar{h}(y)}}\subset B_c$$
if $c$ is universally small. 
\end{proof}
\begin{remark}
\label{choosingc}
 From now on, we fix a universally small constant $c\leq k/2$, $c\ll 1$ depending only on $n, \rho, \lambda, \Lambda$ as in the Lemma \ref{sep-lem}.
\end{remark}

The rest of this subsection is devoted to establishing boundary $C^{2,\alpha}$ estimates for the convex solution $w$ to the standard Monge-Amp\`ere equation
\begin{equation}\label{MA-RHS=1}
\left\{\begin{array}{rl}
\det D^2 w &=1 \qquad \mbox{ in}\quad U_h :=S_{\phi_{h}}(0, 1),\\
w  &=\phi_h\ \ \ \ \ \ \mbox{ on}\quad \partial U_h.
\end{array}\right.
\end{equation}
For this, we first show in the next lemma that $w$ separates quadratically from its tangent planes on  the boundary of $U_h$.

\begin{lemma}\label{lm:quadratic-separation-U}
Let  $\Omega_h$, $\phi_h$ and $U_h$ be as in (\ref{phi-omega-h}) and (\ref{u-phi-h}) with $h\leq c$. Let $w\in C(\overline{U_h})$ be the convex solution to \eqref{MA-RHS=1}.
Then there exist universal constants $\delta, \theta>0$ depending only on $n, \rho, \lambda, \Lambda$ such that for any
 $x_0 \in \partial U_h\cap B_{c}$,
\begin{equation*}
 x_{n+1}= \phi_h(x_0) + \langle \nabla\phi_h(x_0) -2\delta^{1-n} k^{-1} \nu_{x_0}, x-x_0\rangle =: \bar l_{x_0}(x) 
\end{equation*}
 is a supporting hyperplane  in $\overline{U_h}$ to $w$ at $x_0$, and
 \begin{equation}\label{quadratic-separation}
 \theta \, |x - x_0|^2 \leq w(x) - \bar l_{x_0}(x)  \leq \theta^{-1} \, | x- x_0|^2\quad \mbox{for all}\quad x\in \partial U_h. 
 \end{equation}
 Here $\nu_{x_0}$ denotes the unit inner normal to $\partial\Omega_h$ at $x_0$.
 \end{lemma}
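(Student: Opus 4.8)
\emph{Plan.} First I would record the following two facts. Since $c\le k/2$ and, by \eqref{size-u-h} together with $\phi_h(0)=0$, $\nabla\phi_h(0)=0$, we have $\phi_h<1$ on $B_k\cap\overline{\Omega_h}$, no point of $\partial U_h$ lying in $B_k$ can belong to the ``free'' part $\{\phi_h=1\}$; hence $\partial U_h\cap B_c\subset\partial\Omega_h$, the inner unit normal $\nu_{x_0}$ is well defined, and by convexity of $\Omega_h$ one has $\langle\nu_{x_0},x-x_0\rangle\ge 0$ for all $x\in\overline{U_h}\subset\overline{\Omega_h}$. Moreover, from $\phi_h(0)=0$ and Lemma~\ref{sep-lem}(b), $|\phi_h(x_0)|\le Cc^2|\log c|^2$ and $|\nabla\phi_h(x_0)|\le Cc|\log c|^2$ are universally small, and $\partial U_h\subset\overline{B_{1/k}}\subset B_{2/k}$, so Lemma~\ref{sep-lem}(a),(c) apply on $\partial U_h\cap\partial\Omega_h$. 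For the quadratic separation \eqref{quadratic-separation}, I use $w=\phi_h$ on $\partial U_h$ and write, with $l_{x_0}(x):=\phi_h(x_0)+\langle\nabla\phi_h(x_0),x-x_0\rangle$,
\[
w(x)-\bar l_{x_0}(x)=\big[\phi_h(x)-l_{x_0}(x)\big]+2\delta^{1-n}k^{-1}\langle\nu_{x_0},x-x_0\rangle ,
\]
both terms being nonnegative. For $x\in\partial U_h\cap\partial\Omega_h$, Lemma~\ref{sep-lem}(a) bounds the first bracket between $\tfrac{\rho}{4}|x-x_0|^2$ and $4\rho^{-1}|x-x_0|^2$, while Lemma~\ref{sep-lem}(c) gives $0\le\langle\nu_{x_0},x-x_0\rangle\le Ch^{1/2}|x-x_0|^2\le C|x-x_0|^2$; for $x\in\partial U_h\setminus\partial\Omega_h\subset\{\phi_h=1\}$ one has $|x|\ge k$, hence $|x-x_0|\ge k/2$, and $w(x)-\bar l_{x_0}(x)=1-l_{x_0}(x)+2\delta^{1-n}k^{-1}\langle\nu_{x_0},x-x_0\rangle\in[\tfrac12,C]$, comparable to $|x-x_0|^2\in[k^2/4,4k^{-2}]$. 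Collecting these bounds yields \eqref{quadratic-separation} with a universal $\theta$.

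For the supporting hyperplane property it remains, since $w(x_0)=\phi_h(x_0)=\bar l_{x_0}(x_0)$, to prove $w\ge\bar l_{x_0}$ on $\overline{U_h}$. The point $x_0$ lies in $\partial\Omega_h\cap B_c\subset B_{k/2}$, in the interior of the $C^{1,1}$ portion $\partial\Omega_h$ of $\partial U_h$, away from the (convex) corner of $U_h$; there $\partial\Omega_h$ has $C^{1,1}$ norm $\le Ch^{1/2}$ (Lemma~\ref{sep-lem}(c)), $\phi_h|_{\partial U_h}$ has a universally controlled $C^{1,1}$ bound near $x_0$ (Lemma~\ref{sep-lem}(a),(b)), and $U_h$ contains and is contained in comparable half-balls near $x_0$ by \eqref{size-u-h}. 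Hence the standard boundary gradient estimate for the Monge--Amp\`ere equation $\det D^2w=1$ gives a universal bound $|p|\le C_0$ for every subgradient $p$ of $w$ at $x_0$. Fix such a $p$ and set $L(x):=\phi_h(x_0)+\langle p,x-x_0\rangle$, so that $L\le w$ on $\overline{U_h}$ with $L=w$ at $x_0$; since $L\le w=\phi_h$ on $\partial U_h$ and $\phi_h$ separates quadratically from $l_{x_0}$ along the $C^{1,1}$ surface $\partial\Omega_h$ near $x_0$ (Lemma~\ref{sep-lem}(a),(c)), a short argument (taking $x-x_0$ a small multiple of the tangential part of $p-\nabla\phi_h(x_0)$) forces that tangential part to vanish: $p=\nabla\phi_h(x_0)+s\,\nu_{x_0}$ with $|s|\le|p|+|\nabla\phi_h(x_0)|\le C_0+Cc|\log c|^2=:C_1$. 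Now choose the universal constant $\delta\in(0,1)$ so small that $2\delta^{1-n}k^{-1}\ge C_1$ (possible since $\delta^{1-n}\to\infty$ as $\delta\to0^+$); then for every $x\in\overline{U_h}$,
\[
w(x)-\bar l_{x_0}(x)\ \ge\ L(x)-\bar l_{x_0}(x)\ =\ \big(s+2\delta^{1-n}k^{-1}\big)\langle\nu_{x_0},x-x_0\rangle\ \ge\ 0,
\]
using $s+2\delta^{1-n}k^{-1}\ge -C_1+2\delta^{1-n}k^{-1}\ge 0$ and $\langle\nu_{x_0},x-x_0\rangle\ge 0$. Thus $\bar l_{x_0}$ is a supporting hyperplane to $w$ at $x_0$ in $\overline{U_h}$.

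\emph{Main obstacle.} The delicate ingredient is the universal boundary gradient bound $|p|\le C_0$ for subgradients of $w$ at $x_0$. The soft lower bounds for $w$ (crude paraboloid/Alexandrov estimates) only control how far $w$ dips below its tangent plane in terms of $\operatorname{dist}(\,\cdot\,,\partial U_h)^{1/n}$, which is too weak near $x_0$; one genuinely needs the $C^{1,1}$-flatness of $\partial\Omega_h$ near $x_0$ and the quadratic control of $\phi_h|_{\partial U_h}$ coming from the Localization Theorem (Lemma~\ref{sep-lem}), via Monge--Amp\`ere barrier constructions as in \cite{LN}. Once this estimate and the (elementary) tangential-gradient matching are available, the large purely normal tilt $-2\delta^{1-n}k^{-1}\nu_{x_0}$ merely absorbs the universal constant $C_1$, and the supporting hyperplane property follows at once; the two-sided separation \eqref{quadratic-separation} is then essentially a restatement of Lemma~\ref{sep-lem}(a) on $\partial U_h$.
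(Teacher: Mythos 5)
Your handling of the two easier ingredients is fine: since $w=\phi_h$ on $\partial U_h$ and $\partial U_h\cap B_c\subset\partial\Omega_h$, the decomposition $w-\bar l_{x_0}=(\phi_h-l_{x_0})+2\delta^{1-n}k^{-1}\langle\nu_{x_0},x-x_0\rangle$ together with Lemma~\ref{sep-lem}(a),(c) on $\partial\Omega_h$ and crude bounds on the free part $\{\phi_h=1\}$ does give the two-sided bound \eqref{quadratic-separation} once $\delta$ is fixed universally (this is in fact more direct than the paper's upper barrier $w^+$, which the authors use only to restrict to $\partial U_h$ at the end). Likewise, the tangential-matching argument and the absorption of the normal slope $s$ by the tilt $-2\delta^{1-n}k^{-1}\nu_{x_0}$ are sound, \emph{provided} $w$ admits a finite subgradient at $x_0$ with a universal bound.

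That proviso is the genuine gap, and it is the heart of the lemma rather than a citable ``standard boundary gradient estimate.'' A convex function continuous up to the boundary need not have any supporting hyperplane at a boundary point (e.g.\ $-\sqrt{1-|x|^2}$ on the unit ball), and here $U_h$ is a domain with a corner where the nearly flat piece of $\partial\Omega_h$ meets the level set $\{\phi_h=1\}$, with boundary data controlled only through the quadratic separation of Lemma~\ref{sep-lem}(a); no off-the-shelf gradient estimate covers this situation. The paper supplies exactly the missing ingredient by an explicit sub-barrier: with $\delta=\min\{\rho/4,\,k^2r_0/4\}$ it sets
\[
w^-(x)=l_{x_0}(x)+\delta\big[|x-x_0|^2-|(x-x_0)\cdot\nu_{x_0}|^2\big]+\delta^{1-n}\big[|(x-x_0)\cdot\nu_{x_0}|^2-2k^{-1}(x-x_0)\cdot\nu_{x_0}\big],
\]
checks $\det D^2w^-=2^n>1=\det D^2w$ and $w^-\le\phi_h=w$ on $\partial U_h$ (using Lemma~\ref{sep-lem}(a) on $\partial\Omega_h$ and the strict-convexity bound \eqref{strict-convexity}, itself obtained from the Localization Theorem, on the free part), so the comparison principle yields $w\ge w^-\ge\bar l_{x_0}+\delta|x-x_0|^2$ in all of $\overline{U_h}$ --- the supporting-hyperplane property and the lower half of \eqref{quadratic-separation} in one stroke. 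Your ``main obstacle'' paragraph defers precisely to a barrier construction of this type, so as written the proposal reduces the lemma to its own key step instead of proving it; to complete your route you would have to carry out such a barrier argument at $x_0$, at which point the subgradient and tilt bookkeeping becomes superfluous.
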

\begin{proof}
For $x_0 \in \partial U_h\cap B_{c}$, let $l_{x_0}(x) := \phi_h(x_0) +\nabla \phi_h(x_0) \cdot (x- x_0)$. Then by Lemma~\ref{sep-lem}(a), 
\begin{equation}\label{separation-1}
\frac{\rho}{4} |x-x_0|^2 \leq \phi_h(x) - l_{x_0}(x)\leq \frac{4}{\rho} |x-x_0|^2 \quad \forall x\in \partial U_h \cap \partial\Omega_h.
\end{equation}
By Lemma~\ref{sep-lem}(d) and a consequence of the Localization Theorem \ref{main_loc} (see (\ref{small-sec})), there is $r_0>0$ universally small depending only on 
$n, \rho, \lambda, \Lambda$ such that
\[
S_{\phi_h}(x_0, r_0)\subset B(x_0, \frac{k}{2})\cap \overline{U_h}\subset B_{k}\cap \overline{U_h}.
\]
This gives $\phi_h(x) \geq  l_{x_0}(x) + r_0$ for all $x\in \partial U_h \setminus \partial \Omega_h$, and consequently, by (\ref{size-u-h})
\begin{equation}\label{strict-convexity}
\phi_h(x) \geq l_{x_0}(x) + \frac{k^2 r_0}{4} |x- x_0|^2\quad \forall x\in \partial U_h \setminus \partial \Omega_h.
\end{equation}
Define
\[
w^-(x) :=  l_{x_0}(x) + \delta \, \Big[|x - x_0|^2
- |(x - x_0) \cdot \nu_{x_0}|^2\Big] + \delta^{1-n} \, \Big[ |(x - x_0) \cdot\nu_{x_0}|^2 -2 k^{-1} (x - x_0) \cdot\nu_{x_0}\Big]\quad \forall x\in \overline{U_h},
\]
where 
\[
\delta :=\min{\{\frac{\rho}{4}, \frac{k^2 r_0}{4}\}}.
\]
Then $w^-$ is a convex function in $\overline{U_h}$ satisfying $D^2 w^- = 2\delta \, [ I + (\delta^{-n} -1) \, \nu_{x_0} \otimes \nu_{x_0}]$. Therefore,
\begin{equation}\label{determinant-1}
\det D^2 w^- = (2\delta)^n  \delta^{-n}
= 2^n> 1 =\det D^2 w \quad \mbox{in}\quad U_h.
\end{equation}
For $x\in \partial U_h\cap \partial \Omega_h$,   we obtain  from  $0\leq (x - x_0) \cdot\nu_{x_0} \leq 2 k^{-1}$ and the first inequality in \eqref{separation-1}  that 
\[
w^-(x) \leq   l_{x_0}(x) + \delta |x - x_0|^2
\leq \phi_h(x)-\frac{\rho}{4} |x-x_0|^2   +\delta |x - x_0|^2\leq \phi_h(x)= w(x).
\]
On the other hand, for  $x\in \partial U_h\setminus \partial \Omega_h$ by using  \eqref{strict-convexity} we have
\begin{align*}
w^-(x)\leq  l_{x_0}(x) + \delta |x - x_0|^2
\leq  l_{x_0}(x) + \frac{k^2 r_0}{4} |x- x_0|^2
\leq \phi_h(x)=w(x).
\end{align*}
Therefore, $w\geq w^-$ on $\partial U_h$. It follows from this, \eqref{determinant-1} and the comparison principle that
$w(x)\geq w^-(x)$ in $\overline{U_h}$. Hence,
\begin{align}\label{lower-separation}
w(x)
&\geq \bar l_{x_0}(x) +  \delta \, \Big[|x - x_0|^2
- |(x - x_0) \cdot \nu_{x_0}|^2\Big] + \delta^{1-n} \,  |(x - x_0) \cdot\nu_{x_0}|^2 \\
&\geq \bar l_{x_0}(x) + \delta |x - x_0|^2   \qquad\qquad\mbox{in}\quad \overline{U_h}.\nonumber
\end{align} 
In particular, $w(x)\geq \bar l_{x_0}(x)$ for all $x\in \overline{U_h}$. Since $\bar l_{x_0}(x_0) = \phi_h(x_0) = w(x_0)$, we then conclude 
that $x_{n+1}=\bar l_{x_0}(x)$ is a supporting hyperplane in $ \overline{U_h}$ to $w$ at $x_0$.

We now show the second inequality in \eqref{quadratic-separation}. For this, we first recall that $0\leq \phi_h\leq 1$ in $U_{h}$ and by 
Lemma~\ref{sep-lem}(b), we find that for $M:= 1 + 2k^{-1} C \,c\abs{\log c}^2,$
\begin{equation}\label{U-is-a-big-section}
\phi_h(x)\leq  \phi_h(x_0) +\nabla \phi_h(x_0) \cdot (x- x_0) + M\equiv l_{x_0}(x) + M\quad \forall x\in \overline{U_h}.
\end{equation}
We now compare $w$ with $w^{+}$ defined by
\[
w^+(x) :=  l_{x_0}(x)+ 2\Theta k^{-1}\, (x - x_0)\cdot \nu_{x_0}  + \Theta \, \Big[|x - x_0|^2
 - | (x- x_0)\cdot \nu_{x_0}|^2\Big]  
\quad \forall x\in \overline{U_h},
\]
where 
\[
\Theta := \max{\Big\{\frac{4}{\rho}, \, \frac{4 M}{k^2}\Big\}}.
\]
Clearly, $w^+$ is a convex function in $\overline{U_h}$ satisfying
\begin{equation}\label{determinant-2}
\det D^2 w^+ 
=0< 1 =\det D^2 w \quad \mbox{in}\quad U_h.
\end{equation}
For $x\in \partial U_h\cap \partial \Omega_h$, we obtain from the second inequality in \eqref{separation-1} and $\Theta\geq \frac{4}{\rho}$ that 
\begin{align*}
w^+(x) 
&=  l_{x_0}(x) + \Theta |x - x_0|^2
+  \Theta  \Big[2 k^{-1} \, (x - x_0) \cdot\nu_{x_0} - |(x - x_0) \cdot\nu_{x_0}|^2\Big]\\
&\geq \phi_h(x)-\frac{4}{\rho} |x-x_0|^2   +\Theta |x - x_0|^2\geq \phi_h(x)= w(x).
\end{align*}
For  $x\in \partial U_h\setminus \partial \Omega_h$, we have $\abs{x-x_0}\geq k/2$ and thus, by using  \eqref{U-is-a-big-section} we obtain
\begin{align*}
w^+(x)
\geq  l_{x_0}(x)+  \Theta |x - x_0|^2
\geq \phi_h(x) - M + \frac{k^2 \Theta}{4}
\geq \phi_h(x)=w(x).
\end{align*}
Therefore, $w\leq w^+$ on $\partial U_h$. It follows from this, \eqref{determinant-2} and the comparison principle that $w\leq w^+$ in $\overline{U_h}$. In particular,
\begin{align*}\label{w-upper-tangent-est}
w(x)
&\leq l_{x_0}(x)+ 2\Theta k^{-1}\, (x - x_0)\cdot \nu_{x_0}  + \Theta \, |x - x_0|^2\\
&=
\bar l_{x_0}(x)+ 2 k^{-1} \big(\delta^{1-n} +\Theta \big)\, (x - x_0)\cdot \nu_{x_0}  + \Theta \, |x - x_0|^2  \quad 
  \forall x\in\overline{U_h}.\nonumber
\end{align*}
 We then use Lemma~\ref{sep-lem}(c) for $x\in \partial U_h \cap \partial\Omega_h$ and the fact that $k/2\leq |x - x_0| \leq 2/k$ for $x\in \partial U_h\setminus 
\partial\Omega_h,$ to conclude that 
\begin{align*}
w(x)
\leq \bar l_{x_0}(x) + C|x - x_0|^2  \quad
 \forall x\in \partial U_h.
\end{align*} 
This together with \eqref{lower-separation} 
gives the quadratic separation in \eqref{quadratic-separation}.
\end{proof}

Thanks to the quadratic separation property of $w$ in Lemma~\ref{lm:quadratic-separation-U}, we can now 
apply Savin's boundary $C^{2,\alpha}$ estimates
for solutions to the Monge-Amp\`ere equations \cite{S2} to get boundary $C^{2,\alpha}$ estimates for $w$ when
$\p\Omega\cap B_{\rho}$ and $\phi\mid_{\p\Omega\cap B_{\rho}}$ are $C^{2,\alpha}$ and $h$ is small. 

\begin{proposition}\label{D^2w-est}
Let $\Omega$ and $\phi$  satisfy the hypotheses of the Localization Theorem~\ref{main_loc} at the 
origin.
 Assume in addition that $\p\Omega\cap B_{\rho}$ is $ C^{2,\alpha}$ and $\phi\in C^{2,\alpha}(\p\Omega\cap B_{\rho})$ for some $\alpha\in (0,1)$. Let  $\Omega_h$, $\phi_h$, $U_h$ and $w$
be as in Lemma~\ref{lm:quadratic-separation-U}. Then there exists $h_0>0$ depending on $n, \lambda,
\Lambda, \rho, \alpha$, $ \|\p\Omega\cap B_{\rho}\|_{C^{2,\alpha}}$ and $ \| \phi\|_{C^{2,\alpha}(\p\Omega\cap B_{\rho})}$ such that for any  $h\leq h_0$, we have
\begin{equation}\label{eq:Boundary-Pogorelov}
\|w\|_{C^{2,\alpha}(\overline{B_{c}\cap U_h})}\leq c_0^{-1}
\quad \mbox{and}\quad 
c_{0}I_{n}\leq D^2 w\leq c_{0}^{-1} I_{n}~\text{ in }~ 
B_{c}\cap U_h
\end{equation}
for some $c_{0}>0$ depending only on $n, \lambda, \Lambda, \alpha$ and $\rho$. 
\end{proposition}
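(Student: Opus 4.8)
The plan is to reduce the result to Savin's global/pointwise boundary $C^{2,\alpha}$ estimate for the Monge--Amp\`ere equation (\cite{S2}), for which the two inputs needed are (i) a uniform quadratic separation of the boundary data from its tangent planes — already supplied by Lemma~\ref{lm:quadratic-separation-U} — and (ii) a $C^{2,\alpha}$ control of the boundary portion of the domain and of the boundary data in the rescaled picture. So the first step is to track how the $C^{2,\alpha}$ regularity of $\p\Omega\cap B_{\rho}$ and of $\phi\mid_{\p\Omega\cap B_{\rho}}$ transforms under the rescaling $x\mapsto h^{1/2}A_h^{-1}x$ that produces $\Omega_h$, $\phi_h$, $U_h$ from $\Omega$, $\phi$. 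From Lemma~\ref{sep-lem}(c), $\p\Omega_h\cap B_{2/k}$ is a graph in the $e_n$-direction with $C^{1,1}$ norm bounded by $Ch^{1/2}$; combining this with the linear, volume-preserving nature of $A_h$ (whose norm grows only like $\abs{\log h}$) and the hypothesis $\p\Omega\cap B_{\rho}\in C^{2,\alpha}$, one gets that $\p\Omega_h\cap B_{2/k}$ has $C^{2,\alpha}$ norm bounded by a universal constant plus $C h^{1/2}\abs{\log h}^{2+\alpha}\|\p\Omega\cap B_{\rho}\|_{C^{2,\alpha}}$, hence $\le$ a universal constant once $h\le h_0$. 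Similarly, $\phi_h(x)=h^{-1}\phi(h^{1/2}A_h^{-1}x)$ restricted to $\p\Omega_h$, after subtracting the supporting plane $\bar l_{x_0}$, inherits a $C^{2,\alpha}$ bound on $\p U_h\cap B_c$ that is universal for $h\le h_0$, again using $\phi\mid_{\p\Omega}\in C^{2,\alpha}(\p\Omega\cap B_\rho)$ and the logarithmic growth of $\|A_h\|$, $\|A_h^{-1}\|$. (Here one must be a little careful: near the ``free'' part $\p U_h\setminus\p\Omega_h$ there is no domain regularity to quote, but that part stays a definite distance $\ge k/2$ away from $x_0\in\p U_h\cap B_c$, so it never enters the local boundary estimate.)

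The second step is to invoke Savin's boundary $C^{2,\alpha}$ estimate for the Monge--Amp\`ere equation. We have $\det D^2 w=1$ in $U_h$, $w=\phi_h$ on $\p U_h$; by Lemma~\ref{lm:quadratic-separation-U}, $w$ separates quadratically from the supporting plane $\bar l_{x_0}$ at every $x_0\in\p U_h\cap B_c$ with a universal constant $\theta$; by Step~1 the relevant boundary portion $\p U_h\cap B_c$ and the boundary datum are $C^{2,\alpha}$ with universal norms for $h\le h_0$. These are exactly the hypotheses of \cite[Theorem~2.1 and its localized version]{S2}, whose conclusion is a bound $\|w\|_{C^{2,\alpha}(\overline{U_h}\cap B_{c'})}\le c_0^{-1}$ for some smaller universal radius; shrinking $c$ (consistently with Remark~\ref{choosingc}, since $c$ was only fixed up to a further universal decrease) we may call this radius $c$. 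From $\det D^2 w=1$ together with the upper bound $\|D^2w\|_{C^{0}}\le c_0^{-1}$ we get, for the ordered eigenvalues, $\mu_1\cdots\mu_n=1$ and each $\mu_i\le c_0^{-1}$, whence $\mu_i\ge c_0^{n-1}$; this yields $c_0 I_n\le D^2 w\le c_0^{-1}I_n$ in $B_c\cap U_h$ after relabelling $c_0$, which is the displayed inequality \eqref{eq:Boundary-Pogorelov}.

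The main obstacle I expect is Step~1, the careful propagation of boundary regularity through the rescaling: the affine maps $A_h$ degenerate as $h\to0$ (their norms blow up logarithmically), so one has to check that the $C^{2,\alpha}$ seminorms of the rescaled boundary and boundary data really are controlled by $h^{1/2}$ times polylogarithmic factors — this is what forces $h_0$ to depend on $\alpha$, $\|\p\Omega\cap B_\rho\|_{C^{2,\alpha}}$ and $\|\phi\|_{C^{2,\alpha}(\p\Omega\cap B_\rho)}$, while keeping the final constant $c_0$ purely universal. Concretely one writes the graph function of $\p\Omega_h$ as $x_n=\gamma_h(x')$ with $\gamma_h(x')=h^{-1/2}[A_h$-image of the graph of $\gamma]$, differentiates twice, and estimates the resulting Hölder difference quotients using $\|A_h^{\pm1}\|\le k^{-1}\abs{\log h}$ and the fact that $\gamma$ is $C^{2,\alpha}$ with $\gamma(0)=0$, $\nabla\gamma(0)=0$; the quadratic vanishing at $0$ is what produces the saving factor $h^{1/2}$. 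Everything else — the eigenvalue manipulation, the choice of $c$, the citation of \cite{S2} — is routine once Step~1 is in place.
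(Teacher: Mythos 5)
Your proposal follows essentially the same route as the paper: the rescaling bookkeeping in your Step~1 is exactly what the paper packages as Lemma~\ref{small-M} and Remark~\ref{sig-rem} (the quadratic vanishing at the origin producing the $h^{1/2}$ gain against the $|\log h|$ growth of $A_h^{\pm 1}$), the quadratic separation input is Lemma~\ref{lm:quadratic-separation-U} as you say, and the deduction of $c_0I_n\le D^2w\le c_0^{-1}I_n$ from $\det D^2w=1$ plus the $C^{2,\alpha}$ bound is the same elementary eigenvalue argument.

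The one loose point is in your Step~2: the result you invoke from \cite{S2} is a \emph{pointwise} boundary estimate, and what it (in the form of \cite[Corollary~7.2]{S2}) yields is a $C^{2,\alpha}$ bound for $w$ on nontangential cones $\mathcal{C}_{x_0}\cap B_\delta(x_0)$ at each $x_0\in\p\Omega_h\cap \overline{B_c}$, not directly a bound on the full neighborhood $\overline{B_{c}\cap U_h}$. To cover the points of $B_c\cap U_h$ lying outside all such cones one still needs an interior ingredient; the paper closes this by combining the cone estimates with Caffarelli's interior $C^{2,\alpha}$ estimates \cite{C3}. This is a standard patching step rather than a new idea, but as written your proof attributes to Savin's theorem a conclusion stronger than it gives, so you should add the interior-estimate step (or justify a genuinely local-up-to-the-boundary version of the estimate) to make the argument complete.
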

Now,  let us assume in addition  that $\p\Omega$ and $\phi|_{\p\Omega}$ are $C^{2,\alpha}$ at the origin for some $\alpha\in (0,1)$, that is, we  assume that for $x=(x', x_n)\in \p\Omega\cap B_{\rho}$, we have
\begin{equation*}
\abs{x_{n}- q(x')}\leq M\abs{x'}^{2+\alpha} \quad\mbox{and}\quad
\abs{\phi- p(x')}\leq M\abs{x'}^{2+\alpha},
\end{equation*}
where $p(x')$ and $q(x')$ are homogeneous quadratic polynomials. \\

If $h$ is sufficiently small, then the corresponding rescaling $\phi_{h}$ satisfies the hypotheses of $\phi$ in which the constant $M$ is replaced by an arbitrary small constant $\sigma$.
\begin{lemma}(\cite[Lemma 7.4]{S2})
\label{small-M}
Given any $\sigma>0$, there exists a small positive constant $h = h_{0}(M,\sigma, \alpha, n,\lambda,\Lambda,\rho)$ such that on $\p\Omega_{h}\cap B_{k^{-1}}$, we have
$$\abs{x_{n}- q_{h}(x')}\leq \sigma\abs{x'}^{2+\alpha},\quad \abs{q_{h}(x')}\leq \sigma\quad\text{and}\quad
\abs{\phi_{h}- p(x')}\leq \sigma\abs{x'}^{2+\alpha},$$
where $q_{h}(x') := h^{1/2} q(x')$  is a homogeneous quadratic polynomial.
\end{lemma}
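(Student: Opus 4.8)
The plan is to perform the rescaling change of variables explicitly and to track how the second‑order and higher‑order parts of the boundary data of $\Omega$ and of $\phi\mid_{\p\Omega}$ transform under it. Recall from the Localization Theorem~\ref{main_loc} that $A_h z = z - \tau_h z_n$ with $\tau_h\cdot e_n = 0$ and $\abs{\tau_h}\le k^{-1}\abs{\log h}$, so that $A_h^{-1} w = w + \tau_h w_n$. Hence a boundary point $x=(x',x_n)\in\p\Omega_h$ corresponds, via $z:= h^{1/2}A_h^{-1}x\in\p\Omega$, to
\[
z' = h^{1/2}\big(x' + \tau_h' x_n\big),\qquad z_n = h^{1/2}x_n,
\]
where $\tau_h'\in\R^{n-1}$ is the horizontal part of $\tau_h$, while $\phi_h(x) = h^{-1}\phi(z)$. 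If $x\in B_{k^{-1}}$, then $\abs{z}\le h^{1/2}\|A_h^{-1}\|\,k^{-1}\le h^{1/2}k^{-2}\abs{\log h}\le\rho$ once $h$ is small (depending only on $n,\lambda,\Lambda,\rho$), so the assumed $C^{2,\alpha}$ expansions $\abs{z_n - q(z')}\le M\abs{z'}^{2+\alpha}$ and $\abs{\phi(z) - p(z')}\le M\abs{z'}^{2+\alpha}$ hold at $z$. Note also that the coefficients of $p$ and $q$ are bounded in terms of the given data.

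The first preliminary step is a crude a priori bound for the rescaled boundary near the origin: $0\le x_n\le C h^{1/2}\abs{x'}^2$ on $\p\Omega_h\cap B_{k^{-1}}$, with $C$ depending only on $n,\rho,\lambda,\Lambda$ (and on $M,\alpha$). Indeed, $\Omega\subset\{x_n\ge 0\}$ forces the quadratic part $q$ of $\p\Omega$ at $0$ to be positive semidefinite with $\|q\|\le C$, hence $0\le z_n\le C\abs{z'}^2$ on $\p\Omega\cap B_\rho$; substituting $z' = h^{1/2}(x'+\tau_h'x_n)$, $z_n = h^{1/2}x_n$ and using $\abs{x}\le k^{-1}$ and $\abs{\tau_h}\le k^{-1}\abs{\log h}$ to absorb the $\tau_h$‑term into the left side yields the claim after shrinking $h$; in particular $\abs{x'+\tau_h'x_n}\le 2\abs{x'}$ there. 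Alternatively, one may invoke Lemma~\ref{sep-lem}(c) for the $C^{1,1}$ graph structure of $\p\Omega_h\cap B_{2/k}$, together with the fact that $\{x_n=0\}$ is its tangent plane at $0$ (because $\Omega_h\subset\{x_n\ge 0\}$ and $\p\Omega_h$ is differentiable at $0$), which gives vanishing gradient there and hence the same quadratic bound.

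The core step is then to plug $z = h^{1/2}A_h^{-1}x$ into the two $C^{2,\alpha}$ expansions, use the $2$‑homogeneity $q(h^{1/2}w)=h\,q(w)$, $p(h^{1/2}w)=h\,p(w)$ together with $\phi_h(x)=h^{-1}\phi(z)$, divide by the appropriate power of $h$, and finally replace the shifted argument $x'+\tau_h'x_n$ by $x'$ using the bilinearity of $p,q$ and the bound $\abs{x_n}\le Ch^{1/2}\abs{x'}^2$. Since $q_h(x') = h^{1/2}q(x')$, this produces, on $\p\Omega_h\cap B_{k^{-1}}$,
\[
\abs{x_n - q_h(x')} + \abs{\phi_h(x) - p(x')}\;\le\; C\big(M h^{\alpha/2} + h^{1/2}\abs{\log h} + h\abs{\log h}^2\big)\,\abs{x'}^{2+\alpha},
\]
where the bounded powers $\abs{x'}^{1-\alpha},\abs{x'}^{2-\alpha}\le C(n,\rho)$ are used to reduce the cubic and quartic powers of $\abs{x'}$ that arise to $\abs{x'}^{2+\alpha}$. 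Choosing $h_0 = h_0(M,\sigma,\alpha,n,\lambda,\Lambda,\rho)$ small makes the bracket $\le\sigma$; and $\abs{q_h(x')} = h^{1/2}\abs{q(x')}\le Ch^{1/2}\abs{x'}^2\le Ch^{1/2}k^{-2}\le\sigma$ for $h$ small. This is the desired conclusion.

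The only genuinely delicate point is the logarithmic blow‑up $\abs{\tau_h}\le k^{-1}\abs{\log h}$ of the shearing matrix $A_h$, which enters each time one replaces $x'+\tau_h'x_n$ by $x'$; the mechanism that controls it is precisely the quadratic vanishing $\abs{x_n}\le Ch^{1/2}\abs{x'}^2$ of the rescaled boundary near the origin, so that every occurrence of $\tau_h$ is paired with an extra factor $\abs{x'}$ and a positive power of $h^{1/2}$, which beats the logarithm. This is essentially the argument of \cite[Lemma~7.4]{S2}, reproduced here for the reader's convenience.
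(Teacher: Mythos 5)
Your proposal is correct: the rescaling bookkeeping (the crude bound $0\le x_n\le Ch^{1/2}|x'|^2$ on $\p\Omega_h\cap B_{k^{-1}}$, the $2$-homogeneity of $p,q$, and the pairing of every $\tau_h$-term with an extra factor of $|x'|$ and a positive power of $h^{1/2}$ to beat $|\log h|$) all checks out, and it yields the stated bounds after shrinking $h$. The paper itself gives no proof of this lemma but simply quotes \cite[Lemma~7.4]{S2}; your computation is essentially that argument, and in fact it also recovers the sharper rates $Ch^{(1+\alpha)/2}$, $Ch^{1/2}$, $Ch^{\alpha/2}$ recorded in Remark~\ref{sig-rem}.
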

\begin{remark} 
\label{sig-rem}
By inspecting the proof of Lemma~7.4 in \cite{S2}, we see that the following more precise statement holds true: 
There exists $C= C(M, n, \lambda,\Lambda, \rho)>0$ such that for any $h\leq c$, on $\p\Omega_{h}\cap B_{k^{-1}}$ we have 
\begin{align*}
\abs{x_{n}- q_{h}(x')}\leq C h^{\frac{1+\alpha}{2}}\abs{x'}^{2+\alpha},\quad \abs{q_{h}(x')}\leq C h^{\frac12}~ \text{ and } ~
\abs{\phi_{h}- p(x')}\leq C h^{\frac{\alpha}{2}}\abs{x'}^{2+\alpha}.
\end{align*}
\end{remark}

\begin{proof}[Proof of Proposition \ref{D^2w-est}]
Let $M :=max{\big\{ \|\p\Omega\cap B_{\rho}\|_{C^{2,\alpha}}, \| \phi\|_{C^{2,\alpha}(\p\Omega\cap B_{\rho})}\big\}}$ and
let $h_0$ be the small constant  in Lemma~\ref{small-M} corresponding to $M$ and  $\sigma=1$. Then by our assumptions, Lemma~\ref{small-M}, Remark~\ref{sig-rem} and  
Lemma~\ref{lm:quadratic-separation-U}, we can apply   \cite[Corollary~7.2]{S2} to conclude that there exist $C, \delta>0$ depending on $n,\lambda, \Lambda, \alpha$ and $\rho$  such that
\[
\|w\|_{C^{2,\alpha}(\mathcal{C}_0 \cap B_\delta(0))}\leq C,
\]
where $\mathcal{C}_0 := \{x\in \R^n_+ :\, |x'|\leq x_n\}$ is  the cone at the origin with opening $\theta= \pi/4$. 

By varying the point under consideration, we then conclude in the similar fashion that
\begin{equation}\label{C^{2,alpha}-at-a-point}
\|w\|_{C^{2,\alpha}(\mathcal{C}_{x_0} \cap B_\delta(x_0))}\leq C \quad \forall x_0\in \partial \Omega_h \cap \overline{B_{c}}.
\end{equation}
Here  $\mathcal{C}_{x_0} := \{x\in \R^n_+ :\, |x - x_0|^2\leq 2 |(x- x_0)\cdot \nu_{x_0}|^2\}$ is the cone at $x_0$ with 
opening $\theta= \pi/4$ and in the direction of $\nu_{x_0}$, the unit inner normal to $\partial\Omega_h$ at $x_0$. As a consequence of \eqref{C^{2,alpha}-at-a-point} and 
Caffarelli's interior $C^{2,\alpha}$ estimates \cite{C3}, we obtain  the first estimate in \eqref{eq:Boundary-Pogorelov} from which the second estimate in \eqref{eq:Boundary-Pogorelov} follows.
\end{proof}

\subsection{The classes $\mathcal{P}_{\lambda,\Lambda, \rho, \kappa,\alpha}$ and  
$\mathcal{P}_{\lambda,\Lambda, \rho, \kappa, \ast}$ } Fix $n, \rho, \lambda, \Lambda, \kappa$ and $\alpha$.
We define the classes $\mathcal{P}_{\lambda,\Lambda, \rho, \kappa, \alpha}$ and $\mathcal{P}_{\lambda,\Lambda, \rho, \kappa, \ast}$ consisting of the 
triples $(\Omega, \phi, U)$ satisfying the following sets of conditions $(i)-(vii)$ and $(i)-(vi)$, respectively:
\begin{myindentpar}{1cm}
(i) $0 \in \p\Omega, U\subset\Omega\subset \R^{n}$ are bounded convex domains such that
$$B_{k}^{+}\cap \overline{\Omega}\subset\overline{U}\subset B_{k^{-1}}^{+}\cap\overline{\Omega}.$$
(ii) $\phi:\overline{\Omega}\rightarrow \R^{+}$ is convex satisfying $\phi=1$ on $\p U\cap\Omega$ and
$$\phi(0)=0, \quad \nabla \phi(0)=0, \quad \lambda \leq \det D^{2} \phi \leq \Lambda \,\mbox{ in }\, \Omega,\quad 
\p\Omega\cap\{\phi<1\}= \p U\cap \{\phi<1\}.$$
(iii) (quadratic separation) $$\frac \rho 4\abs{x-x_{0}}^2 \leq \phi(x)- \phi(x_{0})-\nabla \phi(x_{0}) 
\cdot (x- x_{0}) \leq \frac 4\rho \abs{x-x_{0}}^2 \quad \quad \forall x, x_{0}\in  \p \Omega\cap B_{\frac{2}{k}}.$$
(iv) (flatness)  $$\p \Omega \cap \{\phi<1\} \subset G \subset  \{x_n \le \kappa \}$$
 where $G\subset B_{2/k}$ is a graph in the $e_n$ direction  and its $C^{1,1}$ norm is bounded by $\kappa$. \\
 (v) (localization and gradient estimates)  $\phi$ satisfies in $U$ the hypotheses of the Localization Theorem \ref{main_loc} at all points  on $\p U\cap B_{c}$ and
$$\abs{\nabla \phi}\leq C_{0}~\text{in}~U\cap B_{c}.$$
(vi) (Maximal sections around the origin)  If $y\in U\cap B_{c^2}$ then the maximal interior section of $\phi$ in $U$ satisfies:$$c\geq \bar{h}(y)\geq k_0^2\, \dist^2 (y, \p U) \quad\mbox{and}\quad  
S_{\phi_h}(y, \bar{h}(y))\subset U\cap B_c.$$
 (vii) (Pogorelov estimates)
 $$\|\p U\cap B_c\|_{C^{2,\alpha}}\leq c_{0}^{-1}$$
 and if $w$ is  the convex solution to 
\begin{equation}
\label{1_MA}
\left\{\begin{array}{rl}
\det D^2 w &=1 \qquad \mbox{ in}\quad U\\
w  &=\phi \ \ \ \ \ \ \ \mbox{ on}\quad \partial U,
\end{array}\right.
\end{equation}
then
\begin{equation*}
\|w\|_{C^{2,\alpha}(\overline{B_{c}\cap U})}\leq c_0^{-1}
\quad \mbox{and}\quad 
c_{0}I_{n}\leq D^2 w\leq c_{0}^{-1} I_{n}~\text{ in }~ 
B_{c}\cap U.
\end{equation*}
\end{myindentpar}
The constants $k, k_0, c, C_0$ above depend only on $n, \rho, \lambda, \Lambda$ and $c_0$ depends also on $\alpha$.
\begin{remark}
\label{quad-rk}
 If $(\Omega, \phi, U)\in \mathcal{P}_{\lambda, \Lambda, \rho, \kappa, \ast}$ then the Pogorelov estimates in $(vii)$ might not hold. However, $\phi$ satisfies in $U$ the hypotheses
 of the Localization Theorem \ref{main_loc} at all points on $\p U\cap B_{c}$. Thus, if $w$ is the convex solution to \eqref{1_MA}, then by inspecting
 the proof of Lemma \ref{lm:quadratic-separation-U}, we see that $w$ separates quadratically from its tangent planes at any point $x_0\in \p U\cap B_c$, that is,
 $$\theta \, |x - x_0|^2 \leq w(x) - w(x_0)-\nabla w(x_0)\cdot (x-x_0)  \leq \theta^{-1} \, | x- x_0|^2\quad \mbox{for all}\quad x\in \partial U.$$
\end{remark}

We summarize the discussion at the end of Subsection~\ref{rescale-sec}, Lemma~\ref{sep-lem}, Lemma~\ref{small-M} and Proposition~\ref{D^2w-est}
in the following proposition.

\begin{proposition}\label{P-prop}
Let $\Omega$ and $\phi$  satisfy the hypotheses of 
Theorem~\ref{main_loc} at the 
origin. Assume in addition that $\p\Omega\cap B_{\rho}$ is $ C^{2,\alpha}$ and $\phi\in C^{2,\alpha}(\p\Omega\cap B_{\rho})$ for some $\alpha\in (0,1)$. 
Then there exists $h_0>0$ depending only on  $n, \lambda, \Lambda, \rho, \alpha,$ $ \|\p\Omega\cap B_{\rho}\|_{C^{2,\alpha}}$ and $ \| \phi\|_{C^{2,\alpha}(\p\Omega\cap B_{\rho})}$ such that for  $h\leq h_0$ we have
\[
\big(\Omega_{h}, \phi_{h}, S_{\phi_{h}}(0, 1)\big)\in \mathcal{P}_{\lambda,\Lambda, \rho, C h^{1/2}, \alpha}\quad\text{and}\quad\|\p\Omega_{h}\cap B_{1/k}\|_{C^{2,\alpha}}\leq C'\,  h^{1/2}.
\]
Here $C$ depends only on $n, \lambda,\Lambda$ and $\rho$; $C'$ depends only on $n, \lambda,\Lambda, \rho,$ 
$ \|\p\Omega\cap B_{\rho}\|_{C^{2,\alpha}},$ and $ \| \phi\|_{C^{2,\alpha}(\p\Omega\cap B_{\rho})}$.
\end{proposition}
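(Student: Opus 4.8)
The plan is simply to check, condition by condition, that the rescaled triple $\big(\Omega_h,\phi_h,U_h\big)$ with $U_h=S_{\phi_h}(0,1)$ lies in $\mathcal{P}_{\lambda,\Lambda,\rho,Ch^{1/2},\alpha}$ once $h$ is below a threshold $h_0$ of the advertised dependence, and then to read off the bound on $\|\partial\Omega_h\cap B_{1/k}\|_{C^{2,\alpha}}$. All the analytic content has already been established above; what remains is bookkeeping, plus fixing $h_0$ as the minimum of finitely many thresholds.

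Concretely, I would argue as follows. Conditions $(i)$ and $(ii)$ are immediate from the definitions \eqref{phi-omega-h} of $\phi_h,\Omega_h$ and from the rescaled Localization Theorem \eqref{size-u-h}, using $\det A_h=1$ to get $\det D^2\phi_h=g_h\in[\lambda,\Lambda]$ and $U_h=\{\phi_h<1\}\cap\overline{\Omega_h}$, while $\phi_h(0)=0$, $\nabla\phi_h(0)=0$ are inherited from \eqref{0grad}. Condition $(iii)$ is exactly \eqref{sep-near0} in Lemma~\ref{sep-lem}(a). For $(iv)$ I would take $G=\partial\Omega_h\cap B_{2/k}$: by Lemma~\ref{sep-lem}(c) this is a graph in the $e_n$ direction with $C^{1,1}$ norm $\le Ch^{1/2}$, and since it is tangent to $\{x_n=0\}$ at the origin its height over $B_{2/k}$ is $O(h^{1/2})$, so $(iv)$ holds with $\kappa=Ch^{1/2}$; note $\partial\Omega_h\cap\{\phi_h<1\}\subset U_h\subset B_{k^{-1}}^+\subset B_{2/k}$ by \eqref{size-u-h}. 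Condition $(v)$ combines Lemma~\ref{sep-lem}(d) with Lemma~\ref{sep-lem}(b) (which gives $|\nabla\phi_h|\le Cr|\log r|^2$ on $\overline{\Omega_h}\cap B_r$, hence $|\nabla\phi_h|\le C_0$ on $U_h\cap B_c$). Condition $(vi)$ is Lemma~\ref{sep-lem}(e) verbatim.

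The substantive part, and the one place where the hypotheses $\partial\Omega\cap B_\rho\in C^{2,\alpha}$, $\phi\in C^{2,\alpha}(\partial\Omega\cap B_\rho)$ and the smallness of $h$ genuinely enter, is $(vii)$ together with the last estimate. I would first choose $h_0$ so small that $h_0^{1/2}k^{-2}|\log h_0|<\rho$; then $h^{1/2}A_h^{-1}(B_{1/k})\subset B_\rho$ for $h\le h_0$, so the $C^{2,\alpha}$ data on $\partial\Omega$ and $\phi$ can be transported to $\partial\Omega_h\cap B_{1/k}$. Lemma~\ref{small-M} together with its quantitative refinement Remark~\ref{sig-rem} then shows that $\partial\Omega_h\cap B_{1/k}$ is a $C^{2,\alpha}$ graph whose quadratic part has coefficients $O(h^{1/2})$ and whose $(2+\alpha)$-order remainder is $O(h^{(1+\alpha)/2})$, which yields $\|\partial\Omega_h\cap B_{1/k}\|_{C^{2,\alpha}}\le C'h^{1/2}$ — the final assertion of the proposition — with $C'$ depending also on $\max\{\|\partial\Omega\cap B_\rho\|_{C^{2,\alpha}},\|\phi\|_{C^{2,\alpha}(\partial\Omega\cap B_\rho)}\}$. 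Shrinking $h_0$ further makes $C'h^{1/2}\le c_0^{-1}$, and since $\partial U_h\cap B_c\subset\partial\Omega_h\cap B_c$ (because $c\le k/2$ and the free-boundary part $\{\phi_h=1\}\cap\partial U_h$ lies outside $B_k$ by \eqref{size-u-h}), this gives $\|\partial U_h\cap B_c\|_{C^{2,\alpha}}\le c_0^{-1}$. Finally, the $C^{2,\alpha}$ bound and the two-sided Hessian pinching for the solution $w$ of \eqref{1_MA} are exactly Proposition~\ref{D^2w-est}, whose proof in turn rests on the quadratic separation of $w$ from its tangent planes (Lemma~\ref{lm:quadratic-separation-U}) and Savin's boundary $C^{2,\alpha}$ estimates. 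Taking $h_0$ to be the minimum of the threshold from Lemma~\ref{sep-lem} (so that $h_0\le c$), the threshold making $h^{1/2}A_h^{-1}(B_{1/k})\subset B_\rho$, and the threshold from Proposition~\ref{D^2w-est} completes the argument. I expect no real difficulty beyond this $(vii)$/$h_0$ step — everything else is a direct transcription of Lemma~\ref{sep-lem} and \eqref{size-u-h} — but some care is needed to confirm that the dependence of $h_0$ is exactly the one stated (in particular, there is no independent dependence on $\kappa$, since $\kappa=Ch^{1/2}$ is itself determined by $h$).
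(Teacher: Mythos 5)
Your proposal is correct and follows essentially the same route as the paper: the paper's own proof is precisely the observation that the proposition summarizes Lemma~\ref{sep-lem} (conditions $(i)$--$(vi)$), Lemma~\ref{small-M} with Remark~\ref{sig-rem} (the $C^{2,\alpha}$ smallness of $\p\Omega_h\cap B_{1/k}$), and Proposition~\ref{D^2w-est} (condition $(vii)$), with $h_0$ taken below the finitely many thresholds these results require. Your extra bookkeeping (the choice of $G$, the inclusion $\p U_h\cap B_c\subset\p\Omega_h$, and the dependence of $h_0$) is consistent with what the paper leaves implicit.
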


\subsection{Geometric properties of 
boundary sections of solutions to Monge--Amp\`ere equation}
In this subsection, we recall some important properties of boundary 
sections of solutions to the Monge-Amp\`ere equations established in \cite{LN}: the engulfing and dichotomic  properties, volume 
estimates, a covering theorem and
strong type $p-p$ estimates for the maximal functions corresponding to small sections including boundary ones. 

The engulfing property and volume estimates are summarized in the following theorem. 
\begin{theorem}
\label{engulfing2}
Assume that $\Omega$ and $\phi$ 
satisfy \eqref{global-tang-int}--\eqref{global-sep}. Then,
\begin{myindentpar}{0.5cm}
a. (Engulfing property) There exists $\theta_{\ast}>0$ depending only 
on $\rho, \lambda, \Lambda$ and $n$ such that if $y\in S_{\phi}(x, t)$ with $x\in\overline{\Omega}$ and $t>0$, then $S_{\phi}(x, t)\subset S_{\phi}(y, \theta_{\ast}t).$\\
b. (Volume estimates) There exist constants $c_{\ast}, C_{1}, C_{2}$ depending only on $\rho, \lambda, \Lambda$ and $n$ 
such that for any section $S_{\phi}(x, t)$ with $x\in\overline{\Omega}$ and $t\leq c_{\ast}$, we have
\begin{equation*}
\label{big_sec_vol}
C_{1} t^{n/2}\leq |S_{\phi}(x, t)|\leq C_{2} t^{n/2}.
\end{equation*}
\end{myindentpar}
\end{theorem}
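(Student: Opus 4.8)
The final statement to prove is Theorem~\ref{engulfing2}, which asserts the engulfing property and the volume estimates for sections of $\phi$ under the standing hypotheses \eqref{global-tang-int}--\eqref{global-sep}. My plan is to reduce everything to the boundary case via the Localization Theorem~\ref{main_loc}, since interior sections are already handled by the classical Caffarelli--Guti\'errez theory.

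\textbf{Volume estimates.} First I would dispose of part (b). For a section $S_\phi(x,t)$ with $x$ well inside $\Omega$ (say $S_\phi(x,2t)\subset\subset\Omega$), the bound $C_1 t^{n/2}\le |S_\phi(x,t)|\le C_2 t^{n/2}$ is the standard interior volume estimate of Caffarelli, valid because $\lambda\le\det D^2\phi\le\Lambda$ (John's lemma normalization plus the Monge--Amp\`ere measure comparison). For a section whose center $x$ is close to $\p\Omega$, one uses Proposition~\ref{tan_sec} together with the Localization Theorem: after an affine normalization $A_h$ with $\det A_h=1$, the section $S_\phi(0,h)$ is comparable to $E_h\cap\overline\Omega$ where $|E_h|=\omega_n h^{n/2}$, and the inclusions $kE_h\cap\overline\Omega\subset S_\phi(0,h)\subset k^{-1}E_h\cap\overline\Omega$ give $|S_\phi(0,h)|\sim h^{n/2}$ with constants depending only on $n,k$; the tangential interior analogue follows from Proposition~\ref{tan_sec} in the same way. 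The remaining (intermediate) case, where $\dist(x,\p\Omega)$ is comparable to $t^{1/2}$, interpolates between these two by a covering/comparison argument. Throughout, the hypothesis $t\le c_\ast$ for $c_\ast$ universal is needed so that one can invoke the Localization Theorem (which requires small height).

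\textbf{Engulfing property.} For part (a), the interior engulfing property --- if $y\in S_\phi(x,t)$ then $S_\phi(x,t)\subset S_\phi(y,\theta_\ast t)$ --- goes back to Caffarelli--Guti\'errez and holds for sections contained in $\Omega$ with controlled geometry. The new content is the boundary version. Here I would argue as follows: by the Localization Theorem the rescaled function $\phi_h$ (as in \eqref{phi-omega-h}) on the normalized domain $\Omega_h$ satisfies $B_k^+\cap\overline{\Omega_h}\subset U_h\subset B_{k^{-1}}^+\cap\overline{\Omega_h}$ and, by Lemma~\ref{sep-lem}, retains quadratic separation on $\p\Omega_h\cap B_{2/k}$ with universal constants. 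On such a normalized configuration one proves the engulfing inequality directly from convexity and the quadratic separation: if $y$ lies in $S_{\phi_h}(0,1)$, the supporting plane of $\phi_h$ at $y$ differs from that at $0$ in a controlled way (using $|\nabla\phi_h|\le C$ from Lemma~\ref{sep-lem}(b) and the quadratic growth), so $S_{\phi_h}(0,1)\subset S_{\phi_h}(y,\theta_\ast)$. Undoing the rescaling --- which is affine with $\det A_h=1$ and hence maps sections to sections preserving heights up to the factor $h$ --- transfers this to $S_\phi(0,h)\subset S_\phi(y,\theta_\ast h)$. Finally, a general section $S_\phi(x,t)$ with arbitrary center is either an interior section (classical case) or, via Proposition~\ref{tan_sec} and the dichotomy between ``deep interior'' and ``tangent to the boundary,'' is comparable to a boundary section of the type just treated; combining the two regimes with a fixed universal $\theta_\ast$ (the max of the interior and boundary constants, possibly enlarged by the engulfing iteration) finishes the proof.

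\textbf{Main obstacle.} The delicate point is the \emph{uniformity} of $\theta_\ast$ and $c_\ast$ across all centers $x\in\overline\Omega$ and all scales $t\le c_\ast$ --- in particular, handling centers at intermediate distance from $\p\Omega$, where neither the purely interior theory nor the boundary Localization Theorem applies verbatim. The resolution is the standard one: the Localization Theorem's affine maps $A_h$ have $\|A_h^{\pm1}\|\le k^{-1}|\log h|$, so the normalized pictures degenerate only logarithmically, and since $c_\ast$ is chosen universally small, Lemma~\ref{sep-lem} guarantees that after rescaling the geometry is genuinely uniform; one then runs the interior engulfing/volume arguments on the rescaled domain. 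This ``dichotomy plus rescaling'' mechanism is exactly what is recorded in \cite{LN}, so the proof reduces to citing those results and checking that hypotheses \eqref{global-tang-int}--\eqref{global-sep} put us in their setting.
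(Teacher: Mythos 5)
Your proposal matches the paper's treatment: the paper gives no proof of Theorem~\ref{engulfing2} at all, but simply recalls it from \cite{LN}, which is exactly where your argument ends up ("cite those results and check that \eqref{global-tang-int}--\eqref{global-sep} put us in their setting"). Your preliminary sketch (interior Caffarelli--Guti\'errez theory plus the Localization Theorem, Proposition~\ref{tan_sec}, and a dichotomy/rescaling for intermediate centers) is a fair outline of how \cite{LN} actually proceeds, so there is no discrepancy with the paper's own approach.
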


Our next property is a dichotomy for sections of solutions to the Monge-Amp\`ere equations: any section  is either an interior section or 
included in a boundary section with a comparable height.
\begin{proposition} (Dichotomy)
\label{dicho}
Assume that $\Omega$ and $\phi$ satisfy \eqref{global-tang-int}--\eqref{global-sep}. 
Let $S_{\phi}(x, t)$ be a section of $\phi$ with $x\in\overline{\Omega}$ and $t>0$. Then  one of the following is true:
\begin{myindentpar}{1cm}
(i) $S_{\phi}(x, 2t)$ is an interior section, that is, $S_{\phi}(x, 2t)\subset \Omega$;\\
(ii) $S_{\phi}(x, 2 t)$ is included in a boundary section with comparable height, that is, 
there exists $z\in\partial\Omega$ such that
 $ S_{\phi}(x, 2t)\subset S_{\phi}(z, \bar{c}t)$ for some constant $\bar{c}=\bar{c}(\rho,\lambda, \Lambda, n)>0.$
\end{myindentpar}
\end{proposition}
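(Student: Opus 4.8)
The plan is to decide the dichotomy according to whether or not the dilated section $S_\phi(x,2t)$ stays inside $\Omega$. If $S_\phi(x,2t)\subset\Omega$, alternative (i) holds and we are done. So assume $S_\phi(x,2t)\not\subset\Omega$; I then want to produce a boundary point $z\in\partial\Omega$ and a universal constant $\bar c$ with $S_\phi(x,2t)\subset S_\phi(z,\bar c\, t)$, which is alternative (ii). The first step is to pick a witness point: since $S_\phi(x,2t)$ is not contained in $\Omega$, either it meets $\partial\Omega$ or, more robustly, there is a point $y\in\overline{S_\phi(x,2t)}\cap\partial\Omega$; take such a $y$. I would also note that we may freely assume $t$ is below the threshold $c_\ast$ of Theorem \ref{engulfing2} (for $t$ bounded below by a universal constant the statement is trivial after adjusting $\bar c$, since $\overline\Omega$ itself is a section of universally bounded height by \eqref{global-sep} and \eqref{global-tang-int}).

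The key step is the following engulfing-at-the-boundary mechanism. Because $y\in\overline{S_\phi(x,2t)}$, the engulfing property (Theorem \ref{engulfing2}a) gives $S_\phi(x,2t)\subset S_\phi(y,2\theta_\ast t)$ — strictly speaking one applies engulfing to a point $y'\in S_\phi(x,2t)$ arbitrarily close to $y$ and then passes to the limit, or one invokes the version of engulfing valid for $y\in\overline{S_\phi(x,t)}$, $x\in\overline\Omega$, which is how Theorem \ref{engulfing2}a is stated. Now $y\in\partial\Omega$, so $z:=y$ already lies on $\partial\Omega$ and $S_\phi(z,\bar c\, t)$ with $\bar c:=2\theta_\ast$ is a boundary section containing $S_\phi(x,2t)$. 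This is the whole argument in outline; the only real content is that engulfing is available with a center allowed to be a boundary point and with $y$ in the closure of the section, which is exactly the formulation recorded in Theorem \ref{engulfing2}a (valid under \eqref{global-tang-int}--\eqref{global-sep}).

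The step I expect to require the most care is the passage from ``$S_\phi(x,2t)\not\subset\Omega$'' to the existence of a boundary point $y$ lying in the closure of $S_\phi(x,2t)$ in a way that lets engulfing be applied cleanly. A section $S_\phi(x,2t)$ is by definition a subset of $\overline\Omega$, so ``not contained in $\Omega$'' means precisely $\overline{S_\phi(x,2t)}\cap\partial\Omega\neq\emptyset$; picking $y$ in this intersection is immediate. One then has to make sure the engulfing inequality $S_\phi(x,2t)\subset S_\phi(y,\theta_\ast\cdot 2t)$ genuinely holds for this boundary $y$: this is fine because $y\in\overline{S_\phi(x,2t)}$ and the sublevel-set definition of sections together with convexity and continuity of $\phi$ on $\overline\Omega$ let one approximate $y$ by interior points of the section and take limits in the defining inequality of $S_\phi(y,\cdot)$, or simply invoke Theorem \ref{engulfing2}a directly in its closed form. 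Finally I would record $\bar c=2\theta_\ast$, which depends only on $\rho,\lambda,\Lambda,n$, completing the proof.
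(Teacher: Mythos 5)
Your proposal is correct, and it is essentially the intended argument: the paper itself states Proposition \ref{dicho} without proof (it is recalled from \cite{LN}), and the derivation there is exactly the one you give, namely that the dichotomy is an immediate consequence of the boundary engulfing property. Two small remarks: since $S_{\phi}(x,2t)\subset\overline{\Omega}$ by definition, the failure of (i) gives directly a point $y\in S_{\phi}(x,2t)\cap\partial\Omega$, so Theorem \ref{engulfing2}(a) (which is stated for all $t>0$, centers in $\overline{\Omega}$, and points $y$ of the section itself) applies with no closure/limiting argument and yields $S_{\phi}(x,2t)\subset S_{\phi}(y,2\theta_{\ast}t)$, i.e., (ii) with $z=y$ and $\bar c=2\theta_{\ast}$; consequently your aside about reducing to $t\le c_{\ast}$ is unnecessary, since the engulfing property carries no height restriction (the threshold $c_{\ast}$ only enters the volume estimates in part (b)).
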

Our covering theorem states as follows.
\begin{theorem}(Covering theorem)\label{thm:covering}
Assume  $\Omega$ and $\phi$ satisfy \eqref{global-tang-int}--\eqref{global-sep}.
Let $\calO\subset \overline{\Omega}$ be a Lebesgue measurable set and $\e>0$ small. Suppose that for each $x\in \calO$ a section $S_{\phi}(x,t_x)$ is 
given with 
\[
\frac{|S_{\phi}(x,t_x) \cap \calO|}{|S_{\phi}(x,t_x)|} = \e.
\]
Then if $\sup\{t_{x}: x\in\mathcal{O}\}<\infty$, there exists a countable subfamily of 
sections $\{S_{\phi}(x_k,t_k)\}_{k=1}^\infty$ satisfying
$$
\calO \subset \bigcup_{k=1}^\infty{S_{\phi}(x_k, t_k)}\quad \text{and}\quad |\calO|\leq \sqrt{\e} \, \big| \bigcup_{k=1}^\infty{S_{\phi}(x_k, t_k)}\big|.$$
\end{theorem}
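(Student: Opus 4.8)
The plan is to deduce the statement from the Vitali covering property of sections --- a consequence of the engulfing property in Theorem~\ref{engulfing2}(a) --- combined with the doubling of Lebesgue measure with respect to sections and the density hypothesis. Note first that $|\calO|\le|\Omega|\le\omega_n\rho^{-n}<\infty$ (since $\Omega\subset B_{1/\rho}$) and $\sup_{x\in\calO}t_x<\infty$ by hypothesis, so all quantities below are finite. I would record two structural facts. \emph{(i) Nesting.} Applying Theorem~\ref{engulfing2}(a) twice yields a constant $\bar\theta=\bar\theta(n,\rho,\lambda,\Lambda)\ge1$ such that whenever $S_\phi(x,s)\cap S_\phi(y,t)\ne\emptyset$ and $s\le t$, then $S_\phi(x,s)\subset S_\phi(y,\bar\theta\, t)$. \emph{(ii) Doubling.} The volume estimates of Theorem~\ref{engulfing2}(b), valid for heights $\le c_*$ and complemented for larger heights by the crude bounds $|S_\phi(x,s)|\le|\Omega|\le\omega_n\rho^{-n}$ from above and $|S_\phi(x,s)|\ge|S_\phi(x,\min\{s,c_*\})|\ge C_1\min\{s,c_*\}^{n/2}$ from below, give a structural constant $D=D(n,\rho,\lambda,\Lambda)$ with $|S_\phi(x,2\bar\theta\, s)|\le D\,|S_\phi(x,s)|$ for every $x\in\overline\Omega$ and $s>0$.

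Put $K:=2\bar\theta$. I would next invoke the Vitali covering lemma for sections: for any family $\{S_\phi(x,r_x)\}_{x\in E}$ with $\sup_x r_x<\infty$ there is a countable subfamily $\{S_\phi(x_k,r_k)\}$ of pairwise disjoint sections such that $E\subset\bigcup_k S_\phi(x_k,K r_k)$. This follows from (i) by the standard greedy argument: organize the sections into dyadic generations $r_x\in(2^{-j-1}\sup r,\,2^{-j}\sup r]$, and processing generations in order, select in each one a maximal pairwise disjoint subcollection among the sections disjoint from all previously selected ones (a finite selection in each generation, since by the volume estimates such sections have measure bounded below and lie in the bounded set $\Omega$); if $x\in E$ is not selected, then $S_\phi(x,r_x)$ meets some selected $S_\phi(x_k,r_k)$ with $r_k>r_x/2$, and (i) applied with heights $r_x<2r_k$ gives $S_\phi(x,r_x)\subset S_\phi(x_k,\bar\theta\cdot2r_k)=S_\phi(x_k,K r_k)$. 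The key is to apply this with $E=\calO$ and the \emph{shrunken} radii $r_x:=t_x/K$: it produces points $x_k\in\calO$, with $r_k=t_k/K$ where $t_k:=t_{x_k}$, such that the sections $S_\phi(x_k,t_k/K)$ are pairwise disjoint and
\[
\calO\subset\bigcup_k S_\phi\!\Big(x_k,\,K\cdot\tfrac{t_k}{K}\Big)=\bigcup_k S_\phi(x_k,t_k),
\]
so $\{S_\phi(x_k,t_k)\}$ is a countable subfamily of the given family that covers $\calO$.

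It remains to estimate $|\calO|$. Using in turn countable subadditivity together with the covering, the density hypothesis applied to each $S_\phi(x_k,t_k)$ (legitimate since $t_k=t_{x_k}$), the doubling fact (ii), the pairwise disjointness of the cores $S_\phi(x_k,t_k/K)$, and the inclusion $S_\phi(x_k,t_k/K)\subset S_\phi(x_k,t_k)$, one gets
\[
|\calO|\le\sum_k|\calO\cap S_\phi(x_k,t_k)|=\e\sum_k|S_\phi(x_k,t_k)|\le\e D\sum_k\Big|S_\phi\!\Big(x_k,\tfrac{t_k}{K}\Big)\Big|=\e D\,\Big|\bigcup_k S_\phi\!\Big(x_k,\tfrac{t_k}{K}\Big)\Big|\le\e D\,\Big|\bigcup_k S_\phi(x_k,t_k)\Big|.
\]
Thus $|\calO|\le\e D\,\big|\bigcup_k S_\phi(x_k,t_k)\big|$, and since $\e$ is small --- it suffices that $\e\le D^{-2}$, which is the meaning of ``$\e$ small'' here --- we obtain $\e D\le\sqrt\e$ and hence the asserted bound. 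The only substantial ingredient is the Vitali covering lemma for sections, i.e.\ passing from the engulfing property to a disjointified covering; this is where the geometry of Monge--Amp\`ere sections genuinely enters, everything else being bookkeeping together with the elementary inequality $\e D\le\sqrt\e$. The one technical subtlety to watch is making the doubling constant $D$ uniform over all heights, including those exceeding $c_*$, which is why the a priori bound $|\Omega|\le\omega_n\rho^{-n}$ is used.
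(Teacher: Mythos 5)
The paper does not actually prove Theorem~\ref{thm:covering} here; it is quoted from \cite{LN}, and your argument is the same standard route used there: a Vitali-type selection built on the engulfing property (your nesting fact with $\bar\theta=\theta_*^{2}$), pairwise disjoint shrunken sections $S_{\phi}(x_k,t_k/K)$, a uniform doubling constant coming from the volume estimates (with the crude bounds $C_1\min\{s,c_*\}^{n/2}\leq |S_\phi(x,s)|\leq |\Omega|$ handling large heights), and the exact density hypothesis on each selected section, concluding via $\e D\leq\sqrt{\e}$ for $\e$ small. Your write-up is correct as it stands.
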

Finally, we have the following global strong-type $p-p$ estimates for the maximal function corresponding to small sections. 
\begin{theorem}(Strong-type p-p estimates)\label{strongtype}
Assume that $\Omega$ and $\phi$ satisfy \eqref{global-tang-int}--\eqref{global-sep}.
For $f\in L^1(\Omega)$,  define
\begin{equation*}\label{maximalfunction}
\mathcal M (f)(x)=\sup_{t\leq c}
\dfrac{1}{|S_\phi(x,t)|}\int_{S_\phi(x,t)}|f(y)|\, dy\quad \forall x\in \Omega.
\end{equation*}
Then, for any $1<p<\infty$, there exists  $C_p>0$ depending on $p$, $\rho$, $\lambda$, $\Lambda$ and $n$ such that $$\|\mathcal{M}(f)\|_{L^{p}(\Omega)}\leq C_p\, \|f\|_{L^{p}(\Omega)}.$$
\end{theorem}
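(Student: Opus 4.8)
The plan is to derive the strong-type $(p,p)$ bound by the Marcinkiewicz interpolation theorem from the trivial estimate $\|\mathcal{M}(f)\|_{L^{\infty}(\Omega)}\leq\|f\|_{L^{\infty}(\Omega)}$ together with the weak-type $(1,1)$ estimate
\[
\big|\{x\in\Omega:\ \mathcal{M}(f)(x)>\beta\}\big|\leq\frac{C}{\beta}\,\|f\|_{L^{1}(\Omega)}\qquad\text{for all }\beta>0.
\]
So the real content is the weak-$(1,1)$ bound, and that reduces to a Vitali-type covering lemma for sections — of a different and simpler nature than the covering Theorem~\ref{thm:covering} — whose proof uses only the engulfing property and the volume estimates of Theorem~\ref{engulfing2}. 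The lemma I would establish is: given any family $\{S_{\phi}(x_{i},t_{i})\}_{i\in I}$ of sections with $x_{i}\in\overline{\Omega}$ and $\sup_{i}t_{i}\leq c$, there is a countable pairwise disjoint subfamily $\{S_{\phi}(x_{i_{k}},t_{i_{k}})\}_{k}$ with $\bigcup_{i\in I}S_{\phi}(x_{i},t_{i})\subset\bigcup_{k}S_{\phi}(x_{i_{k}},\theta t_{i_{k}})$ for a universal constant $\theta$.

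To prove the covering lemma I would run the standard greedy selection (after the usual reduction to a finite subfamily exhausting a compact subset of the set to be covered): at each step choose a section of almost maximal height among those disjoint from all previously chosen ones. If a section $S_{\phi}(x,t)$ meets a chosen section $S_{\phi}(x_{k},t_{k})$ with $t_{k}\geq t/2$, and $y$ is a point of their intersection, then engulfing gives $S_{\phi}(x_{k},t_{k})\subset S_{\phi}(y,\theta_{\ast}t_{k})$, hence $x_{k}\in S_{\phi}(y,\theta_{\ast}t_{k})$; applying engulfing once more, using $y\in S_{\phi}(x,t)$ and the monotonicity of sections in the height, we obtain
\[
S_{\phi}(x,t)\subset S_{\phi}(y,\theta_{\ast}t)\subset S_{\phi}(y,2\theta_{\ast}t_{k})\subset S_{\phi}(x_{k},2\theta_{\ast}^{2}t_{k}),
\]
so $\theta:=2\theta_{\ast}^{2}$ works. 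Granting this, the weak-$(1,1)$ bound follows in the routine way: for $\beta>0$ and each $x\in E_{\beta}:=\{\mathcal{M}(f)>\beta\}$ pick a section $S_{\phi}(x,t_{x})$ with $t_{x}\leq c$ and $|S_{\phi}(x,t_{x})|<\beta^{-1}\int_{S_{\phi}(x,t_{x})}|f|$, apply the covering lemma to extract a disjoint subfamily $\{S_{\phi}(x_{k},t_{x_{k}})\}_{k}$ with $E_{\beta}\subset\bigcup_{k}S_{\phi}(x_{k},\theta t_{x_{k}})$, and use the volume estimates (assuming, as we may, $\theta c\leq c_{\ast}$) to get $|S_{\phi}(x_{k},\theta t_{x_{k}})|\leq C\,|S_{\phi}(x_{k},t_{x_{k}})|$; then by disjointness
\[
|E_{\beta}|\leq\sum_{k}\big|S_{\phi}(x_{k},\theta t_{x_{k}})\big|\leq C\sum_{k}\big|S_{\phi}(x_{k},t_{x_{k}})\big|\leq\frac{C}{\beta}\sum_{k}\int_{S_{\phi}(x_{k},t_{x_{k}})}|f|\leq\frac{C}{\beta}\,\|f\|_{L^{1}(\Omega)}.
\]
Interpolating with the $L^{\infty}$ bound gives the theorem; the scales $t$ bounded away from $0$ that are dropped when shrinking $c$ are harmless on $L^{p}(\Omega)$ since there $|S_{\phi}(x,t)|$ is bounded below and $\Omega$ is bounded.

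I expect the main obstacle to be the covering lemma: running the Besicovitch/Vitali greedy argument with the engulfing property standing in for the triangle inequality, and controlling the enlargement factor $\theta$ so that the dilated sections still lie in the range $t\leq c_{\ast}$ where the volume comparison (equivalently, the doubling of Lebesgue measure with respect to sections) holds. The interpolation step and the passage from the covering lemma to weak-$(1,1)$ are routine. A more conceptual alternative is to note that by the engulfing property the functional $\delta(x,y):=\inf\{t>0:\ y\in S_{\phi}(x,t)\}$ is a quasi-metric on $\overline{\Omega}$ whose quasi-balls are comparable to sections, and that Lebesgue measure is doubling with respect to them at small scales by Theorem~\ref{engulfing2}, so that $(\overline{\Omega},\{S_{\phi}(x,t)\},|\cdot|)$ is a space of homogeneous type and the Coifman–Weiss maximal theorem applies directly; but the hands-on argument above is more self-contained and matches the methods used elsewhere in the paper.
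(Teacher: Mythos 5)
Your proof is correct: the engulfing-based Vitali covering lemma (greedy selection by almost-maximal height, with the chain $S_\phi(x,t)\subset S_\phi(y,\theta_{\ast}t)\subset S_\phi(y,2\theta_{\ast}t_k)\subset S_\phi(x_k,2\theta_{\ast}^2t_k)$ giving the dilation factor $2\theta_{\ast}^2$), the resulting weak-$(1,1)$ estimate via the volume bounds of Theorem~\ref{engulfing2}(b), and Marcinkiewicz interpolation against the trivial $L^\infty$ bound constitute the standard argument, and you handle the only real pitfalls correctly (keeping the dilated heights below $c_{\ast}$ by shrinking the truncation parameter, and absorbing the discarded large scales using the lower volume bound and the boundedness of $\Omega$). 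Note that this paper does not prove Theorem~\ref{strongtype} itself but imports it from \cite{LN}, where the proof follows essentially this same engulfing/covering scheme for boundary sections, so your route matches the source rather than differing from it.
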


\section{\bf Global Power Decay and $W^{2,\delta}$ Estimates}
\label{small_decay_sec}
 In this section, we establish preliminary power decay estimates for the distribution
function of the second derivatives of solutions to the linearized Monge-Amp\`ere equations and also their global $W^{2,\delta}$ estimates. 
We also show under suitable geometric conditions,
the cofactor matrices of the Hessian matrices of two convex 
functions defined on the same domain are close if their Monge-Amp\`ere measures and boundary 
values are close in the $L^{\infty}$ norm.

We begin this section by recalling   the definitions, introduced in \cite{GN2}, of the quasi distance $d(x,x_0)$ generated by a convex function $\phi$ and 
the set $G_M(u, \Omega)$ where
the function $u$ is touched from above and below by ``quasi paraboloids'' generated by this quasi distance.

\begin{definition}\label{def:distance}
Let $\Omega$ be a bounded convex set in $\R^n$ and let $\phi\in
C^1(\Omega)$ be a convex function.
For any  $x\in \Omega$ and $x_0\in \Omega$, we define the quasi distance $d(x, x_0)$ by
\begin{equation*}
d(x,x_0)^2 := \phi(x) - \phi(x_0)
-\nabla \phi(x_0)\cdot (x-x_0).
\end{equation*}
\end{definition}
\begin{definition}\label{def:GMs}
Let $\Omega$ and $\phi$ be as in Definition~\ref{def:distance}.
For  $u\in C(\Omega)$ and $M>0$, we define 
\begin{align*}
G_M(u,\Omega)
= \big\{\bar x\in \Omega: \text{$u$ is differentiable at $\bar x$ and } |u(x)-u(\bar x)-
\nabla u(\bar x)\cdot (x-\bar x)|  \leq
\frac{M}{2} d(x,\bar x)^2\,\forall x\in \Omega\big\}.
\end{align*}
\end{definition}
We call $\frac{M}{2}d(x,\bar{x})^2$ and $-\frac{M}{2}d(x,\bar{x})^2$ quasi paraboloids of opening $M$ generated by $\phi$. 
When we would like to emphasize the dependence of $d(x, x_0)$ on $\phi$, we write $d_{\phi}(x, x_0).$ 
Likewise, we write $G_{M}(u,\Omega, \phi)$ to indicate the dependence on $\phi$ of the set $G_{M}(u,\Omega)$. 
Notice that for $\phi(x) =\abs{x}^2$, we have $d(x, \bar{x})=\abs{x-\bar{x}}$
is the Euclidean distance. 

In the next lemma, we show that if the quasi distance $d(x, x_0)$ is bounded from below by the Euclidean distance $\abs{x-x_0}$ around $x_0$ then it is
also bounded from above by a multiple of this Euclidean distance around $x_0$. This lemma is a slight modification of  \cite[Lemma~6.2.1]{G}.
\begin{lemma}\label{lm:belowimplyabove}
Assume  $\Omega$ satisfies \eqref{global-tang-int} and let $\phi\in C(\overline \Omega)$ be a convex function satisfying $\lambda \leq \det  D^2\phi\leq \Lambda$ in $\Omega$ and $\phi=0$ on $\partial\Omega$.
There exists $c=c(n,\lambda,\Lambda,\rho)>0$  such that
if $x_0\in \Omega$ and
$$
d(x, x_0)^2 \geq \sigma\, |x-x_0|^2~ \text{ in }  ~ B_r(x_0)\subset\Omega\quad \text{for some}\quad r>0,$$
then for all $x$ in a small neighborhood of $x_0$, we have 
\[
d(x,x_0)^2 \leq \frac{1}{c^2 \sigma^{n-1}}\, |x-x_0|^2.
\]

\end{lemma}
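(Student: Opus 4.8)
The plan is to localize the problem to a single section of $\phi$ and exploit the comparability between that section, an ellipsoid determined by $D^2\phi(x_0)$ (in a normalized frame), and a Euclidean ball. First I would reduce to the case $x_0$ fixed and, after subtracting the supporting plane $\ell_{x_0}(x)=\phi(x_0)+\nabla\phi(x_0)\cdot(x-x_0)$, work with $v(x):=\phi(x)-\ell_{x_0}(x)\geq0$, so that $d(x,x_0)^2=v(x)$ and $v(x_0)=0$, $\nabla v(x_0)=0$, $\lambda\leq\det D^2 v\leq\Lambda$. The hypothesis says $v(x)\geq\sigma|x-x_0|^2$ on $B_r(x_0)$. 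The target inequality $v(x)\leq c^{-2}\sigma^{-(n-1)}|x-x_0|^2$ near $x_0$ is then a statement about the shape of the small sections $S_v(x_0,t)=S_\phi(x_0,t)$: the lower bound forces $S_v(x_0,t)\subset B_{\sqrt{t/\sigma}}(x_0)$, i.e. the section is contained in a small ball; I want to conclude it also \emph{contains} a ball of comparable (up to the $\sigma^{n-1}$ loss) radius, which by definition of the section gives the upper bound on $v$.

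The key steps, in order, would be: (1) For $t$ small (so that $S_\phi(x_0,t)\subset B_r(x_0)\subset\Omega$, using that $\phi=0$ on $\partial\Omega$ and the bounds on $\det D^2\phi$ to guarantee $S_\phi(x_0,t)$ is compactly contained in $\Omega$ for $t\le \bar h(x_0)$, with $\bar h(x_0)$ bounded below universally in a neighborhood of $x_0$), the inclusion $S_\phi(x_0,t)\subset B_{\sqrt{t/\sigma}}(x_0)$ holds. (2) By John's lemma, normalize $S_\phi(x_0,t)$ by an affine map $T$ with $\det T$ controlled so that $B_1(0)\subset T(S_\phi(x_0,t)-x_0)\subset B_n(0)$; the volume estimate $|S_\phi(x_0,t)|\ge C_1 t^{n/2}$ (Theorem~\ref{engulfing2}, or directly the Aleksandrov-type estimate from $\det D^2 v\le\Lambda$) combined with step~(1)'s containment in a ball of radius $\sqrt{t/\sigma}$ forces $|\det T^{-1}|\le C (t/\sigma)^{n/2}/(C_1 t^{n/2})^{?}$— more precisely it forces the smallest semi-axis $\ell$ of the John ellipsoid $E$ of $S_\phi(x_0,t)$ to satisfy, since all semi-axes are $\le\sqrt{t/\sigma}$ and $\prod(\text{semi-axes})\approx|S_\phi(x_0,t)|\approx t^{n/2}$, the bound $\ell \cdot (\sqrt{t/\sigma})^{n-1}\gtrsim t^{n/2}$, i.e. $\ell\gtrsim \sqrt{t}\,\sigma^{(n-1)/2}$. (3) Hence $E\supset$ a ball of radius $\gtrsim\sqrt{t}\,\sigma^{(n-1)/2}$ centered at $x_0$, and since $S_\phi(x_0,t)\supset c_n E$ (John), we get $B_{c\sqrt t\,\sigma^{(n-1)/2}}(x_0)\subset S_\phi(x_0,t)$. (4) Translating back: if $|x-x_0|< c\sqrt t\,\sigma^{(n-1)/2}$ then $x\in S_\phi(x_0,t)$, i.e. $v(x)<t$; choosing $t$ so that $|x-x_0|=c\sqrt t\,\sigma^{(n-1)/2}$ (legitimate for $x$ near $x_0$) gives $v(x)\le t = |x-x_0|^2/(c^2\sigma^{n-1})$, which is exactly the claim with the stated constant $c=c(n,\lambda,\Lambda,\rho)$.

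The main obstacle is step~(2): turning the "thin section trapped in a small ball of known volume" picture into the lower bound $\ell\gtrsim\sqrt t\,\sigma^{(n-1)/2}$ on the shortest John semi-axis, and doing so with constants depending only on $n,\lambda,\Lambda,\rho$. This requires the lower volume bound $|S_\phi(x_0,t)|\ge C_1 t^{n/2}$ to be available for interior sections centered at $x_0$ with $t\le\bar h(x_0)$; that follows from $\det D^2 v\le\Lambda$ via the Aleksandrov maximum principle, and one must check it applies uniformly for $x_0$ in a small fixed neighborhood — here is where the hypotheses $\det D^2\phi\ge\lambda>0$, $\phi=0$ on $\partial\Omega$, and \eqref{global-tang-int} enter, ensuring $\bar h(x_0)\ge\bar h_0>0$ universally near $x_0$ so that the relevant range of $t$ is nonempty and the section stays inside $B_r(x_0)\cap\Omega$. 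The rest is the standard John-ellipsoid bookkeeping, which I would not grind through in detail.
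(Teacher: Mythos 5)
Your strategy is essentially the paper's: the paper also reduces to a section $S_\phi(x_0,\delta)\subset B_r(x_0)$ and then invokes the proof of \cite[Lemma~6.2.1]{G}, which is exactly the computation you outline (a section trapped in $B_{\sqrt{t/\sigma}}(x_0)$ with volume $\sim t^{n/2}$ must contain a ball of radius $\sim\sqrt{t}\,\sigma^{(n-1)/2}$ about $x_0$). Two remarks on your write-up, the second being the real issue. In step (1), compact containment of $S_\phi(x_0,t)$ in $\Omega$ is not what you need; you need $S_\phi(x_0,t)\subset B_r(x_0)$ for small $t$, i.e.\ that the sections shrink to $x_0$. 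This is where strict convexity enters (the paper says precisely this); it follows from Caffarelli's strict convexity theorem under the stated hypotheses, or even directly from your hypothesis, since the contact set $\{v=0\}$ is convex and would otherwise meet $B_r(x_0)\setminus\{x_0\}$, contradicting $v\geq\sigma|x-x_0|^2$ there.

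The genuine gap is in step (3): the John-ellipsoid bookkeeping gives a ball of radius $\gtrsim\sqrt{t}\,\sigma^{(n-1)/2}$ centered at the \emph{center of the ellipsoid}, not at $x_0$, and the lemma is precisely a statement at $x_0$. You must rule out that $x_0$ lies at distance $\ll\sqrt{t}\,\sigma^{(n-1)/2}$ from $\p S_\phi(x_0,t)$; this ``balancedness of sections about their center'' is not a consequence of John's lemma plus volume bounds, but a separate standard fact whose proof itself uses Aleksandrov's maximum principle together with both determinant bounds. The cleanest repair (and in effect how \cite[Lemma~6.2.1]{G} concludes) avoids John's lemma altogether: apply Aleksandrov's estimate at $x_0$ in $S:=S_\phi(x_0,t)$,
\[
t^n\leq C_n\,\dist(x_0,\p S)\,(\diam S)^{n-1}\,\Lambda\,|S|,
\]
and combine it with $\diam S\leq 2\sqrt{t/\sigma}$ and the \emph{upper} interior volume bound $|S|\leq C\,t^{n/2}$ (which uses $\det D^2\phi\geq\lambda$) to obtain $\dist(x_0,\p S)\geq c\,\sqrt{t}\,\sigma^{(n-1)/2}$ directly; your step (4) then finishes as written and produces the stated constant. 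With this (or with an explicit appeal to the balancing property of interior sections) your argument is complete; note that either way both bounds $\lambda\leq\det D^2\phi\leq\Lambda$ are genuinely used, not just the upper one as your step (2) suggests.
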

\begin{proof}
Let $\varphi(x) := \phi(x)-
\phi(x_0)- \nabla \phi(x_0)\cdot (x-x_0)$. Then the strict convexity of $\phi$ implies that there exists $\delta>0$ such that $S_\phi(x_{0}, \delta) := \{x\in \Omega: \varphi(x) <\delta\}\subset B_r(x_0)$. Therefore
by the proof of Lemma~6.2.1 in \cite{G},  we have $\varphi(x)\leq C(n,\lambda,\Lambda,\rho)\sigma^{-n+1}\, |x-x_0|^2$ for all $x\in\Omega$ satisfying $\varphi(x)\leq \delta$, which gives the conclusion of the lemma.
\end{proof}

The following lemma allows us to estimate the distribution function of $D^2u$.
It is the starting point for our proofs of Theorems~\ref{main-result-constant} and \ref{main-result} and the global version of \cite[Lemma~2.7]{GN2}.
\begin{lemma}\label{distribution-function}
Let $\Omega$, $\phi$ and $c$ be as in Lemma~\ref{lm:belowimplyabove}, and $u\in C^2(\Omega)$. Define
\[
A^{loc}_\sigma := \left\{x_0\in \Omega: d(x, x_0)^2\geq \sigma\, |x-x_0|^2, \mbox{ for all $x$ in some neighborhood of $x_0$}\right\}.
\]
 Then for any $m>1$ and $\beta>0$, we have
\begin{align}\label{eq:distributionsetofsecondderivatives}
\{x\in \Omega: |D_{i j} u(x)| > \beta^m\} 
\subset \big(\Omega \setminus \lA_{(c\beta^{\frac{m -1}{2}})^{\frac{-2}{n-1}}}\big) \cup \big(\Omega
\setminus G_\beta(u,\Omega)\big).
\end{align}
\end{lemma}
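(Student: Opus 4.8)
The plan is to prove the contrapositive-style inclusion: if $x_0$ lies in the intersection of the two "good" sets $\lA_{(c\beta^{(m-1)/2})^{-2/(n-1)}}$ and $G_\beta(u,\Omega)$, then $|D_{ij}u(x_0)|\leq \beta^m$ for each pair $i,j$. So fix such an $x_0$. Membership in $\lA_\sigma$ with $\sigma=(c\beta^{(m-1)/2})^{-2/(n-1)}$ means $d(x,x_0)^2\geq \sigma|x-x_0|^2$ for all $x$ in a neighborhood of $x_0$; this is precisely the hypothesis of Lemma~\ref{lm:belowimplyabove}, so I would first invoke that lemma to upgrade the lower bound to a two-sided comparison, namely $d(x,x_0)^2\leq \frac{1}{c^2\sigma^{n-1}}|x-x_0|^2$ for $x$ near $x_0$. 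Plugging in the chosen value of $\sigma$ gives $\frac{1}{c^2\sigma^{n-1}}=\frac{1}{c^2}\cdot (c\beta^{(m-1)/2})^{2}=\beta^{m-1}$, so that $d(x,x_0)^2\leq \beta^{m-1}|x-x_0|^2$ in a neighborhood of $x_0$.

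Next I would use membership in $G_\beta(u,\Omega)$: since $u$ is differentiable at $x_0$ (indeed $u\in C^2(\Omega)$), we have the quasi-paraboloid bound $|u(x)-u(x_0)-\nabla u(x_0)\cdot(x-x_0)|\leq \frac{\beta}{2}d(x,x_0)^2$ for all $x\in\Omega$. Combining this with the estimate from the previous step yields, for all $x$ in a neighborhood of $x_0$,
\[
|u(x)-u(x_0)-\nabla u(x_0)\cdot(x-x_0)|\leq \frac{\beta}{2}\,\beta^{m-1}|x-x_0|^2=\frac{\beta^m}{2}|x-x_0|^2.
\]
Thus $u$ is squeezed between the two genuine (Euclidean) paraboloids $\pm\frac{\beta^m}{2}|x-x_0|^2$ (after subtracting the affine part) near $x_0$. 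Since $u\in C^2$, a standard second-order Taylor expansion at $x_0$ then forces $|D^2u(x_0)|\leq \beta^m$ in the appropriate matrix/entrywise sense; concretely, for any unit vector $e$, looking at $x=x_0+te$ and letting $t\to 0$ in the inequality $|\frac{t^2}{2}D^2u(x_0)e\cdot e + o(t^2)|\leq \frac{\beta^m}{2}t^2$ gives $|D^2u(x_0)e\cdot e|\leq \beta^m$, hence $|D_{ij}u(x_0)|\leq \beta^m$ by polarization (or directly for $i=j$, and for $i\neq j$ by testing with $e=(e_i\pm e_j)/\sqrt2$). This shows $x_0\notin\{|D_{ij}u|>\beta^m\}$, which is the desired inclusion.

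The one point requiring a little care — and the mild obstacle — is the quantifier structure of "in a neighborhood of $x_0$": Lemma~\ref{lm:belowimplyabove} only gives the upper bound on $d$ in some small neighborhood depending on $x_0$, and likewise the definition of $\lA_\sigma$ is a pointwise-local condition. But this is harmless for the conclusion because the Taylor-expansion argument is itself purely infinitesimal at $x_0$, so having the paraboloid squeeze hold on any neighborhood of $x_0$ suffices. I would just make sure to state the chain of inclusions at the level of points: $\{x_0: |D_{ij}u(x_0)|>\beta^m\}\subset (\Omega\setminus\lA_\sigma)\cup(\Omega\setminus G_\beta(u,\Omega))$, equivalently $\lA_\sigma\cap G_\beta(u,\Omega)\subset\{|D_{ij}u|\leq\beta^m\}$, which is exactly what the computation above established. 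The only genuine input beyond bookkeeping is Lemma~\ref{lm:belowimplyabove}, already proved, together with the elementary fact that a $C^2$ function trapped between $\pm\frac{C}{2}|x-x_0|^2$ has Hessian bounded by $C$ at $x_0$.
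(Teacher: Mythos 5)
Your proof is correct and follows essentially the same route as the paper: with $\gamma=\beta^{(m-1)/2}$ you invoke Lemma~\ref{lm:belowimplyabove} to upgrade the one-sided bound defining $\lA_{(c\gamma)^{-2/(n-1)}}$ to $d(x,x_0)^2\leq\gamma^2|x-x_0|^2$ near $x_0$, combine it with the $G_\beta$ quasi-paraboloid squeeze to trap $u$ between Euclidean paraboloids of opening $\beta\gamma^2=\beta^m$, and read off $|D_{ij}u(x_0)|\leq\beta^m$ before taking complements. Your extra remarks on the local quantifier and the polarization step only make explicit what the paper leaves implicit.
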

\begin{proof}
Let $\gamma := \beta^{\frac{m -1}{2}}$.
If $\bar x\in \lA_{(c\gamma)^{\frac{-2}{n-1}}} \cap G_\beta(u,\Omega)$, then
\[
-\frac{\beta}{2} \,d(x,\bar x)^2 \leq u(x)-u(\bar x) -\nabla u(\bar x)\cdot
(x-\bar x) \leq \frac{\beta}{2} \,d(x,\bar x)^2
\]
for each $x\in \Omega$. Since $\bar x\in
\lA_{(c\gamma)^{\frac{-2}{n-1}}}$, these together with  Lemma~\ref{lm:belowimplyabove} yield
\[
-\frac{\beta \gamma^2}{2} |x-\bar x|^2 \leq u(x)-u(\bar x) -\nabla u(\bar x)\cdot
(x-\bar x) \leq \frac{\beta \gamma^2}{2} |x-\bar x|^2
\]
for all $x$ in a small neighborhood of $\bar x$, and so
$|D_{ij}u(\bar x)|\leq \beta \gamma^2 = \beta^{m}$. Thus we have proved that
\[
 \lA_{(c\gamma)^{\frac{-2}{n-1}}} \cap G_\beta(u,\Omega) \subset  \big\{x\in \Omega:
|D_{ij}u(x)|\leq \beta^m, \text{ for $i,j=1,\dots ,n$}\big\}
\]
and the lemma follows by taking complements.
\end{proof}

\subsection{Power decay estimates} In order to derive global $W^{2,p}$ estimates for solutions $u$  to the linearized Monge-Amp\`ere equation, we will  need to estimate the distribution function     
$$F(\beta) :=\big| \{x\in \Omega: |D_{ij}u(x)| > \beta^m\} \big|$$ for some suitable choice of $m>1$. It follows from Lemma~\ref{distribution-function} that this 
can be done if one can get  appropriate decay estimates for 
$$F_1(\beta) := |\Omega \setminus \lA_{(c\beta^{\frac{m -1}{2}})^{\frac{-2}{n-1}}}|\quad  \text{and}\quad F_2(\beta) := |\Omega
\setminus G_\beta(u,\Omega)|.$$

Notice  that the function $F_1(\beta)$ involves only the solution $\phi$ of the Monge-Amp\`ere equation and its power decay is given in the next  theorem. 

\begin{theorem}\label{thm:powerdecayMA}
Assume  $\Omega$  satisfies  
\eqref{global-tang-int} and $\partial \Omega\in C^{1,1}$. Let $\phi\in C(\overline{\Omega})$ be a convex function such that $1-\epsilon\leq \det D^2\phi\leq 1+\epsilon$ in $\Omega$ and  \eqref{global-sep} holds, where  $0<\epsilon < 1/2$.
Then there exists a positive constant $M$
depending only on  $n$ and $\rho$ such that
\begin{align}\label{eq:exponentialdecay}
\big|\Omega\setminus \lA_{s^{-2}}\big|
\leq C'(\epsilon,n,\rho, \|\partial\Omega\|_{C^{1,1}}) \quad s^{\dfrac{\ln \sqrt{C \epsilon}}{\ln
M}}\quad \mbox{ for all}\quad s>0.
\end{align}
In particular, for $s = (c\beta^{\frac{m-1}{2}})^{\frac{1}{n-1}}$, we get
$$F_{1}(\beta)\leq  C'(\epsilon,n,\rho, \|\partial\Omega\|_{C^{1,1}})
\,  \beta^{-\frac{m-1}{2(n-1) \ln M }\ln\frac{1}{\sqrt{C\e}}}\quad \forall \beta>0.$$
\end{theorem}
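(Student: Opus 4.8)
The plan is to estimate the measure of the bad set $\Omega \setminus \lA_{s^{-2}}$ by a dyadic/iterative scheme on the scale of sections, comparing $\phi$ locally to a solution $w$ of the standard Monge--Amp\`ere equation $\det D^2 w = 1$ with the same boundary data. The set $\lA_{s^{-2}}$ consists of points $x_0$ where the quasi-distance $d_\phi(x,x_0)$ dominates $s^{-1}|x-x_0|$ near $x_0$; equivalently, $\phi$ separates at least quadratically (with opening $\sim s^{-2}$) from its tangent plane at $x_0$ in a neighborhood. So a point is \emph{bad} precisely when $\phi$ is ``too flat'' at $x_0$, and flatness at scale $r$ means the section $S_\phi(x_0, r^2)$ is comparatively large. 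Thus I would first reduce to counting, at each dyadic height $t = M^{-j}$, the points $x_0$ whose maximal interior section around $x_0$ of height $\sim t$ fails a uniform lower density bound relative to the Euclidean ball, i.e.\ where $\phi$ detaches from a genuine quadratic at the $j$-th generation of scales.

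The engine is a single-step improvement: if on a section $S$ of height $t\le c$ the function $\phi$ is within $C\epsilon$ (in $L^\infty$, after the volume-normalizing affine rescaling $\phi_h$ from \eqref{phi-omega-h}) of the solution $w$ of $\det D^2 w=1$ with the same boundary values, and $\partial\Omega\in C^{1,1}$ supplies the quadratic separation \eqref{global-sep} via Proposition~\ref{pro:quadsep}, then $w$ enjoys uniform quadratic separation (Lemma~\ref{lm:quadratic-separation-U}) hence interior and boundary $C^{2,\alpha}$-type control --- in particular $c_0 I \le D^2 w \le c_0^{-1} I$ on a fixed fraction of $S$. Closeness $\|\phi_h - w\|_{L^\infty} \le C\epsilon$ then forces, on all but a small fraction of $S$, that $\phi$ itself separates quadratically with a \emph{fixed} (universal, $\epsilon$-independent) opening. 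Rescaling back, this says: except on a subset of measure $\le C\epsilon\,|S|$, every point of $S$ that was good at generation $j-1$ remains good at generation $j$. Iterating over $j$ generations and summing the geometric series of error measures (using the volume estimates $C_1 t^{n/2} \le |S_\phi(x,t)| \le C_2 t^{n/2}$ from Theorem~\ref{engulfing2} and a Vitali/covering argument to control overlaps) yields
\[
\big|\Omega\setminus \lA_{s^{-2}}\big| \le C'\,(C\epsilon)^{\,j}, \qquad \text{where } s \sim M^{-j/2},
\]
and solving $j \sim \ln(1/s^2)/\ln M$ for $j$ and substituting gives exactly the stated bound $C'\, s^{(\ln\sqrt{C\epsilon})/\ln M}$. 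The final ``in particular'' statement is then pure bookkeeping: set $s = (c\beta^{(m-1)/2})^{1/(n-1)}$ and compute the exponent.

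The main obstacle is making the one-step improvement quantitatively clean near the boundary $\partial\Omega$, where $\calL_\phi$ degenerates and the sections become long and thin. For interior sections the comparison with $w$ and the resulting propagation of quadratic separation is essentially the Caffarelli--Guti\'errez machinery; the delicate part is that when a section $S_\phi(x_0,t)$ touches $\partial\Omega$, one must invoke the Localization Theorem~\ref{main_loc} and the dichotomy Proposition~\ref{dicho} to replace $S_\phi(x_0,2t)$ by a boundary section $S_\phi(z,\bar c t)$ of comparable height, then work in the normalized coordinates where $\Omega_h$, $\phi_h$ and $U_h$ behave well (Lemma~\ref{sep-lem}, and the $C^{1,1}$ control of $\partial\Omega_h$ by $Ch^{1/2}$). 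One must check that the ``loss'' in passing through the affine maps $A_h$ --- whose norms grow like $|\log h|$ --- does not destroy the geometric decay, which it does not because the measure estimates are affine-invariant ($\det A_h = 1$) and only the pointwise separation opening is affected, and that opening is renormalized at each step. A secondary technical point is extracting the precise constant $M$ (depending only on $n,\rho$): it comes from the fixed ratio of consecutive dyadic heights at which the engulfing property and the uniform density estimate can be chained, and one simply records the value produced by Theorem~\ref{engulfing2} and Lemma~\ref{lm:belowimplyabove}.
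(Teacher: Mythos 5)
Your strategy is genuinely different from the paper's, and as written it has gaps that would need repair. The paper does not re-derive the power decay at all: it covers $\Omega$ by the maximal interior sections $S_{\phi}(y_i,\bar h(y_i)/2)$ from Lemma~\ref{Vitali-MA}, whose shapes are controlled by Proposition~\ref{tan_sec}, normalizes each one by $T=h^{-1/2}A_h(\cdot-y)$, and then directly invokes the \emph{interior} power decay estimate of Guti\'errez--Nguyen \cite[Theorem~2.8]{GN2} (this is where $M=M(n,\rho)$ and the exponent $p_\e=-\ln\sqrt{C\e}/\ln M$ come from); the $|\log h|$ loss in the opening caused by $\|A_h\|,\|A_h^{-1}\|\le k^{-1}|\log h|$ only contributes a factor $|\log h|^{p_\e}$, and the global estimate follows by summing over dyadic heights using the counting bound $M_d\le C_b d^{\frac12-\frac n2}$, which is the sole place where $\p\Omega\in C^{1,1}$ enters. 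Your plan instead re-proves the decay by an $\e$-closeness iteration against the solution $w$ of $\det D^2w=1$, in the spirit of Lemma~\ref{lm:tangent-paraboloid}.

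Three concrete problems. First, near the boundary you rely on quadratic separation and Pogorelov-type bounds $c_0I_n\le D^2w\le c_0^{-1}I_n$ for $w$; in this paper those are available only under $C^{2,\alpha}$ hypotheses on $\p\Omega$ and $\phi|_{\p\Omega}$ (Lemma~\ref{lm:quadratic-separation-U} needs the rescaled structure, and Proposition~\ref{D^2w-est} needs $\p\Omega\cap B_\rho\in C^{2,\alpha}$, $\phi\in C^{2,\alpha}(\p\Omega\cap B_\rho)$), whereas Theorem~\ref{thm:powerdecayMA} assumes only $\p\Omega\in C^{1,1}$ and \eqref{global-sep}; the paper's reduction to interior sections is precisely what avoids any boundary regularity for $w$. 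Second, your iteration bookkeeping does not produce the stated bound: with $s\sim M^{-j/2}$ and bad-set measure $(C\e)^j$ one gets $|\Omega\setminus \lA_{s^{-2}}|\lesssim s^{-4\ln\sqrt{C\e}/\ln M}$, an exponent of the wrong sign (the bound must decay as $s\to\infty$, i.e. larger generations must correspond to weaker openings, hence larger $s$); moreover the step ``every point good at generation $j-1$ remains good at generation $j$ off a set of measure $C\e|S|$'' is vacuous (a weaker opening is automatically inherited) and does not describe the true mechanism, which is a critical-density covering of the \emph{bad} set (as in Theorem~\ref{thm:covering}) combined with the one-step estimate inside each covering section. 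Third, the convergence of the global sum over sections near $\p\Omega$ --- the source of the dependence on $\|\p\Omega\|_{C^{1,1}}$ --- rests on the counting estimate $M_d\le C_bd^{\frac12-\frac n2}$ for the number of maximal sections with $\bar h(y_i)\sim d$, which makes $\sum_k (c2^{-k})^{n/2}|\log(c2^{-k})|^{p_\e}M_{c2^{-k}}$ finite; your appeal to ``volume estimates and a Vitali/covering argument'' does not identify this bound, and without it the per-section errors need not sum.
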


The small power decay estimates for $F_2(\beta)$ are given in the following proposition. It is the  boundary version of Proposition~3.4 in \cite{GN2}.  

\begin{proposition}\label{Initial-Estimate} Assume that  $\Omega$ and  $\phi$ satisfy the assumptions \eqref{global-tang-int}--\eqref{global-sep}. Assume in addition that $\p\Omega\in C^{1,1}$.
 Suppose $u\in  C^1(\Omega)\cap W^{2,n}_{loc}(\Omega)$, $|u|\leq 1$ in $\Omega$ and 
$\calL_\phi u=f$ in $\Omega$ with
$\|f\|_{L^n(\Omega)}\leq 1$.
Then there  exist $\tau=\tau(n,\lambda,\Lambda,\rho)\in (0, 1/2)$ and $C= C(\rho, \lambda, \Lambda, n, \|\partial\Omega\|_{C^{1,1}})>0$ such that
\begin{equation*}
F_2(\beta)=\big|\Omega\setminus G_{\beta}(u,\Omega)\big| \leq
\dfrac{C}{\beta^\tau}\quad \mbox{for all}\quad  \beta>0.
\end{equation*}
\end{proposition}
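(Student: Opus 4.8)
The proposition is a boundary analogue of the interior power-decay estimate \cite[Proposition 3.4]{GN2}, and I would prove it by combining the interior $W^{2,\delta}$-type machinery with a covering argument that respects the boundary geometry supplied by the Localization Theorem and the results of Subsection 2.4. The scheme is: \textbf{(1)} prove a \emph{density estimate} — for a suitable universal $\tau_0\in(0,1)$ and universal $C_0$, if $x_0\in\Omega$, $S_\phi(x_0,t)$ is a small section, and $G_\beta(u,\Omega)$ meets a fixed fraction of $S_\phi(x_0,t/2)$, then $|S_\phi(x_0,t)\setminus G_{N\beta}(u,\Omega)|\le C_0 N^{-\tau_0}|S_\phi(x_0,t)|$ for a universal $N>1$; \textbf{(2)} iterate this along a chain of sections using the engulfing property (Theorem~\ref{engulfing2}a) to upgrade it to a global statement; \textbf{(3)} invoke the covering theorem (Theorem~\ref{thm:covering}) and the volume estimates to convert the density estimate into the claimed polynomial decay of $F_2(\beta)=|\Omega\setminus G_\beta(u,\Omega)|$.

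\textbf{Carrying it out.} For step (1) I would distinguish the two cases of the Dichotomy (Proposition~\ref{dicho}). If $S_\phi(x_0,2t)$ is an interior section, the estimate is exactly the interior density estimate of \cite{GN2}, obtained by the Aleksandrov–Bakelman–Pucci/Calder\'on–Zygmund argument applied to the normal map of $u\pm \frac{M}{2}d(\cdot,\bar x)^2$, using $\det\Phi$ controlled by $\det D^2\phi$ and $\|f\|_{L^n}\le1$. If instead $S_\phi(x_0,2t)\subset S_\phi(z,\bar c\,t)$ for some $z\in\partial\Omega$, I would rescale the boundary section $S_\phi(z,\bar c\,t)$ via the $L^\infty$-norm preserving rescaling of Subsection~\ref{rescale-sec}: after the affine map normalizing $E_h$, the rescaled triple $(\Omega_h,\phi_h,U_h)$ lies in the class $\mathcal P_{\lambda,\Lambda,\rho,\kappa,\ast}$ (Proposition~\ref{P-prop} and Remark~\ref{quad-rk}), $\phi_h$ separates quadratically on $\partial\Omega_h\cap B_{2/k}$ by Lemma~\ref{sep-lem}(a), and $\|f_h\|_{L^n}$ is controlled; then I can run the ABP-type argument on the rescaled section, touching the rescaled solution $u_h$ from above/below by the quasi-paraboloids $\frac{M}{2}d_{\phi_h}(\cdot,\bar x)^2$ and estimating the measure of the contact set. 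The quantitative point is that on the good set the quasi distance controls the Euclidean distance from below (this is where $A^{loc}$-type sets enter), and the measure of the complement picks up a universal power of the opening parameter; Lemma~\ref{distribution-function} and Lemma~\ref{lm:belowimplyabove} are the technical glue. Because $|u|\le1$, the section heights $\bar h(x)$ are bounded (Lemma~\ref{sep-lem}(e)), so only finitely many scales of sections are relevant and $\sup_x t_x<\infty$ in Theorem~\ref{thm:covering}.

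\textbf{From density to decay.} Given the density estimate, fix the fraction $\e_0$ so that $C_0 N^{-\tau_0}\le \sqrt{\e_0}<1$. For each $\beta$, apply the covering theorem to $\calO=\{x: $ a section $S_\phi(x,t_x)$ with $|S_\phi(x,t_x)\cap(\Omega\setminus G_\beta)|/|S_\phi(x,t_x)|=\e_0\}$ (the existence of such $t_x$ follows from continuity in $t$ and the fact that for $t$ small $G_\beta$ has full density wherever it is non-degenerate). Iterating $\Omega\setminus G_{N^j\beta_0}$ against $\Omega\setminus G_{N^{j-1}\beta_0}$ gives $|\Omega\setminus G_{N^j\beta_0}|\le \sqrt{\e_0}\,|\Omega\setminus G_{N^{j-1}\beta_0}|$, hence $|\Omega\setminus G_{N^j\beta_0}|\le (\sqrt{\e_0})^j|\Omega|$, which is the geometric decay; choosing $\tau$ so that $(\sqrt{\e_0})^j = (N^j\beta_0)^{-\tau}$, i.e. $\tau = -\tfrac{\ln\sqrt{\e_0}}{\ln N}$, and absorbing the boundary constant $\|\partial\Omega\|_{C^{1,1}}$ into $C$ (it enters only through the uniform-convexity/tangent-ball constant $\rho$ needed to invoke the Localization Theorem via Proposition~\ref{pro:quadsep}) yields $F_2(\beta)\le C\beta^{-\tau}$ for all $\beta>0$ (the bound being trivial for bounded $\beta$ since $|\Omega|$ is finite).

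\textbf{Main obstacle.} The delicate part is step (1) in the boundary case: making the ABP/Calder\'on–Zygmund argument work uniformly in the degenerate, near-boundary regime. One must ensure that after the $L^\infty$-preserving rescaling the quasi-paraboloids generated by $\phi_h$ still have their normal-map images controlled with universal constants — this uses that $\det D^2\phi_h\in[\lambda,\Lambda]$ and that on the good set $d_{\phi_h}(x,\bar x)^2\gtrsim|x-\bar x|^2$, via Lemma~\ref{lm:belowimplyabove} — and that the contact set is genuinely a fixed proportion of the (boundary) section, with the loss controlled by a universal power of the opening, independent of how close to $\partial\Omega$ one is. This is precisely the affine-invariant, boundary counterpart of the interior argument in \cite{GN2}, and is where the geometry of boundary sections from \cite{LN} (engulfing, volume estimates, dichotomy) is indispensable.
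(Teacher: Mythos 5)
Your proposed route is not the paper's and, more importantly, its central step is not available under the hypotheses of Proposition \ref{Initial-Estimate}. The propagation/density estimate you posit in step (1) --- if $G_\beta(u,\Omega)$ meets a fixed fraction of $S_\phi(x_0,t/2)$ then all but a small fraction of $S_\phi(x_0,t)$ lies in $G_{N\beta}(u,\Omega)$ --- is exactly the kind of statement that, in this paper (Lemma \ref{lm:furthercriticaldensitytwo}) and in the interior lemma of \cite{GN2} quoted inside the proof of Lemma \ref{lm:improved-density-I}, requires $\det D^2\phi$ to be $\e$-close to $1$, $\p\Omega$ and $\phi|_{\p\Omega}$ of class $C^{2,\alpha}$, uniform convexity, and the approximation machinery of Sections 3--5 (comparison with $\det D^2 w=1$, stability of cofactor matrices, the quadratic-growth condition \eqref{u-far}, Lemma \ref{lm:acceleration}, Lemma \ref{lm:Holder-on-bottom-boundary}). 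Under the present hypotheses you only have $\lambda\le\det D^2\phi\le\Lambda$ and $\p\Omega\in C^{1,1}$, so none of this can be cited, and proving such a propagation lemma ``by running ABP on the rescaled boundary section'' is not a routine adaptation --- it is precisely the hard part that the paper only accomplishes in the near-constant-determinant regime. Worse, the tools you would lean on are developed after, and partly on top of, Proposition \ref{Initial-Estimate} (the global $W^{2,\delta}$ and H\"older estimates feed into the approximation lemma), so within the paper's logic your scheme is circular. Finally, your density-to-decay iteration via Theorem \ref{thm:covering} needs the defect in the propagation step to beat the stopping fraction $\e_0$, which again forces the small-defect version available only when $\det D^2\phi$ is close to $1$; with mere pinching you would also have to justify the interior case, since the interior density lemma of \cite{GN2} likewise assumes $1-\e\le\det D^2\phi\le 1+\e$, not just pinching.

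The paper's actual proof is far more direct and involves no boundary contact-set argument and no covering-theorem iteration: cover $\Omega$ by the maximal interior sections $S_\phi(y_i,\bar h(y_i)/2)$ from Savin's Vitali-type covering (Lemma \ref{Vitali-MA}); each such section is comparable to an ellipsoid by Proposition \ref{tan_sec}, so after the $L^\infty$-norm preserving rescaling tangent to the boundary one may apply the interior power decay \cite[Proposition 3.4]{GN2} (valid under pinching alone) to the rescaled solution; since $d_{\tilde\phi_h}(Tx,T\bar x)^2=h^{-1}d(x,\bar x)^2$ and $\|\tilde f_h\|_{L^n}=h^{1/2}\|f\|_{L^n}\le 1$, this transforms into $\big|S_\phi(y,h/2)\setminus G_\beta(u,\Omega)\big|\le C\beta^{-\tau}h^{\frac n2-\tau}$; one then sums over the covering using the counting bound $M_d\le C_b d^{\frac12-\frac n2}$ for the number of sections of height comparable to $d$ --- this is where $\|\p\Omega\|_{C^{1,1}}$ genuinely enters (through the volume of the $Cd^{1/2}$-collar of $\p\Omega$), not merely through $\rho$ as you suggest --- and the condition $\tau<1/2$ makes the dyadic series converge. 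So the ingredients actually needed are the interior estimate of \cite{GN2}, the shape of maximal sections, and the counting estimate; the dichotomy, the engulfing-based iteration and Theorem \ref{thm:covering} play no role in this proposition.
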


The next result is a variant of Proposition~\ref{Initial-Estimate} which  will be  important for the density and improved power decay estimates in Subsection~\ref{sub:improveddensity}.

\begin{proposition}\label{power-decay-at-boundary}
Let $(\Omega, \phi, U)$ be in the class $\mathcal{P}_{\lambda,\Lambda, \rho, \kappa,\ast}$.
Suppose $u\in  C(\Omega)\cap C^1(U)\cap W^{2,n}_{loc}(U)$, $|u|\leq 1$ in $\Omega$ and 
$\calL_\phi u=f$ in $U$ with
$\|f\|_{L^n(U\cap B_c)}\leq 1$.
Then there exist $\tau=\tau(n,\lambda,\Lambda,\rho)\in (0, 1/2)$ and $C= C(n,\lambda,\Lambda,\rho, \kappa)>0$ 
such that
\begin{equation*}
\big|(U\cap B_{c^2})\setminus G_{\beta}(u,\Omega)\big| \leq
\dfrac{C}{\beta^\tau} \, |U\cap B_{c^2}|\quad \mbox{for all}\quad  \beta>0.
\end{equation*}
The above inequality also holds if  $U\cap B_{c^2}$ is replaced  by  $S_{\phi}(0, r)$ for any universal constant $r$ satisfying $r\leq c^6$.
\end{proposition}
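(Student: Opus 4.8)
\emph{Strategy and reductions.} The plan is to carry out, inside the normalized configuration encoded by the class $\mathcal{P}_{\lambda,\Lambda,\rho,\kappa,\ast}$, the same argument that proves Proposition~\ref{Initial-Estimate} (the boundary analogue of \cite[Proposition~3.4]{GN2}), with the global information \eqref{global-tang-int}--\eqref{global-sep} replaced throughout by the class axioms (iii), (v), (vi) and the geometric tools of Theorem~\ref{engulfing2} and Theorem~\ref{thm:covering}. Two preliminary reductions. Since $|U\cap B_{c^2}|$ is bounded above and below by universal constants (it is sandwiched between $B_{c^2}^+\cap\Omega$ and $B_{c^2}$, whose measures are controlled by (i) and by the interior ball at $0$ contained in the Localization Theorem hypotheses of (v)), the asserted inequality is trivial for $\beta$ below a universal threshold, so we may assume $\beta$ large. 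Secondly, for $\bar x\in U\cap B_{c^2}$ the quasi-distance $d_\phi(x,\bar x)^2=\phi(x)-\phi(\bar x)-\nabla\phi(\bar x)\cdot(x-\bar x)$ is bounded on $\overline{\Omega}$ by a universal constant (using the bounds on $\phi$ and on $|\nabla\phi|$ near $0$ from the class), and membership of $\bar x$ in $G_\beta(u,\Omega)$ also forces $|\nabla u(\bar x)|\le C(1+\beta)$; consequently the defining inequality of $G_\beta$ holds automatically for every $x$ with $d_\phi(x,\bar x)^2\ge\delta_0$, $\delta_0$ universal, so that certifying $\bar x\in G_M(u,\Omega)$ only requires the two-sided touching by $\pm\tfrac M2 d_\phi(\cdot,\bar x)^2$ for $x$ in the small section $S_\phi(\bar x,\delta_0)$, which by (vi) lies well inside $U$. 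This is the only point where the hypothesis that $\mathcal{L}_\phi u=f$ in $U$, rather than in $\Omega$, is used.

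\emph{The density mechanism.} The analytic heart is a single-scale density estimate, obtained by running the Caffarelli--Guti\'errez ABP maximum principle on the convex and concave envelopes of $u\mp\tfrac M2 d_\phi(\cdot,\cdot)^2$ over the maximal interior sections $S_\phi(y,\bar h(y))$, $y\in U\cap B_{c^2}$. By (vi) each such section is contained in $U\cap B_c$ and is tangent to $\partial U\cap B_c\subset\partial\Omega$ (by (ii), (v)), hence by (v) and Proposition~\ref{tan_sec} is comparable to a half-ellipsoid normalized by a unimodular affine map; the normalized configuration is again a triple in $\mathcal{P}_{\lambda,\Lambda,\rho,\kappa',\ast}$ with $\kappa'$ universally controlled (Lemma~\ref{sep-lem}), $|u|\le1$ is preserved by the $L^\infty$-normalization, and the right-hand side stays admissible since $\|f\|_{L^n}$ only decreases under these rescalings while $\|f\|_{L^n(U\cap B_c)}\le1$. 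Combining the Alexandrov contact-set estimate (the normal image of the envelope covers a ball of universal radius, since $\mathrm{osc}\,u\le2$ and the normalized diameter is $\approx1$) with the volume and engulfing estimates of Theorem~\ref{engulfing2} produces a subset of universal proportion of each normalized section on which $u$ is touched both ways by quasi-paraboloids of universal opening; since $d_\phi^2$ transforms covariantly under the normalization, the estimate is self-improving across scales.

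\emph{Iteration, the $S_\phi(0,r)$ case, and the obstacle.} Feeding this into the stopping-time/covering iteration of \cite[Proposition~3.4]{GN2} --- at each stage the admissible opening is multiplied by a universal factor and the bad measure by a universal factor strictly less than $1$, with overlaps controlled by Theorem~\ref{thm:covering} and the engulfing property --- yields $|(U\cap B_{c^2})\setminus G_\beta(u,\Omega)|\le C\beta^{-\tau}|U\cap B_{c^2}|$ for all $\beta>0$, with $\tau=\tau(n,\lambda,\Lambda,\rho)\in(0,1/2)$ (possibly replacing the exponent produced by the iteration by a smaller one, which only weakens the bound) and $C$ depending in addition on $\kappa$. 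For the final assertion, when $r\le c^6$ the small-section estimate coming from (v) gives $S_\phi(0,r)\subset B_{r^{1/3}}^+\subset B_{c^2}$, while $|S_\phi(0,r)|\approx r^{n/2}\approx|U\cap B_{c^2}|$ up to universal constants (Theorem~\ref{engulfing2}), so the bound transfers at once. I expect the main obstacle to be the single-scale density estimate: making the ABP/contact-set argument run uniformly up to the degenerate and possibly singular portion $\partial U\cap\partial\Omega$ of the boundary, and simultaneously reconciling it with the facts that the equation holds only on $U$ whereas $G_\beta$ is taken over $\Omega$ (handled by the first reduction) and that the quasi-distance must be compared with the Euclidean distance (Lemma~\ref{lm:belowimplyabove}) when translating the touching information into the stated bound.
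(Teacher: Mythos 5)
Your plan is not the paper's argument, and as it stands it has real gaps rather than just missing routine details. The paper's proof is much lighter: it covers $U\cap B_{c^2}$ once by the Vitali-type family of maximal interior sections $S_{\phi}(y_i,\bar h(y_i)/2)$, $y_i\in U\cap B_{c^2}$, supplied by Lemma~\ref{covering_rk} (whose proof only needs interior engulfing for strictly convex solutions plus the class axioms), applies the already-iterated interior power decay \cite[Proposition~3.4]{GN2} in each section after the tangent-section rescaling --- under which $G_{\beta}(\tilde u_h,\tilde\Omega_h,\tilde\phi_h)$ pulls back to $G_{\beta h^{-1}}(u,\Omega)$, so each section contributes $C\beta^{-\tau}h^{n/2-\tau}$ --- and then sums over dyadic heights using the counting bound \eqref{Md-est-loc}, $M^{loc}_d\lesssim d^{(1-n)/2}$ (this is exactly where the $C^{1,1}$ bound $\kappa$ enters the constant) together with $\tau<1/2$; the $\|f\|_{L^n(U\cap B_c)}\le 1$ hypothesis is enough because every section lies in $U\cap B_c$ and the $L^n$ norm shrinks by $h^{1/2}$ under the rescaling; the $S_{\phi}(0,r)$ statement follows from $S_{\phi}(0,r)\subset U\cap B_{c^2}$ (by \eqref{small-sec}) and a universal lower volume bound. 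You correctly identify the maximal interior sections, the role of $\|f\|_{L^n(U\cap B_c)}$, and the final transfer to $S_{\phi}(0,r)$, but you never engage with the two quantitative ingredients that actually close the paper's argument: the counting estimate (hence the dependence on $\kappa$) and the convergence of the dyadic sum via $\tau<1/2$.

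Instead you propose to re-derive a single-scale density estimate up to $\partial U\cap\partial\Omega$ by ABP on envelopes and then re-run the stopping-time/covering iteration of \cite{GN2} at the boundary. Three problems. First, Theorem~\ref{engulfing2} and Theorem~\ref{thm:covering}, which your iteration and volume comparisons lean on, are proved under the global hypotheses \eqref{global-tang-int}--\eqref{global-sep}; a triple in $\mathcal{P}_{\lambda,\Lambda,\rho,\kappa,\ast}$ gives only local information near the origin ($\Omega$ and $\phi$ are completely uncontrolled away from $U\cap B_c$, with no bound on $\mathrm{diam}\,\Omega$), so these tools are not available as stated --- for the same reason your claim that $d_\phi(\cdot,\bar x)^2$ is universally bounded on $\overline\Omega$ is unjustified. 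Second, your reduction to local touching requires converting a touching certificate on a small section into a global one, which costs a bound on $|\nabla u(\bar x)|$; on a maximal section of height $h=\bar h(\bar x)$ this bound degenerates like $h^{-1/2}|\log h|$, i.e.\ the admissible opening blows up as $\bar x$ approaches the boundary. This scale-dependent loss is precisely what the paper's rescaling absorbs (the factor $h^{-1}$ in $G_{\beta h^{-1}}$, compensated by $h^{n/2-\tau}$ and the summation), and your sketch does not account for it; asserting that the estimate is ``self-improving across scales'' does not address it. Third, the ``analytic heart'' of your plan --- the boundary density estimate itself --- is exactly what you flag as the main obstacle and do not prove; note also that the normalized tangent-section configuration is the interior \cite{GN2} setting ($\tilde\phi_h=0$ on the boundary of a normalized convex domain), not another triple in $\mathcal{P}$, which is why the paper can quote the interior result directly and avoid any boundary density argument at this stage.
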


As a consequence of the power decay estimates for $F_1(\beta)$ and $F_2(\beta)$ in Theorem \ref{thm:powerdecayMA} and Proposition \ref{Initial-Estimate}, we find that the decay for $F(\beta)$ when $0<\e< 1/2$  is given by
$$F(\beta)\leq C(\epsilon,n,\rho, \|\partial\Omega\|_{C^{1,1}}) \, \beta^{-\frac{m-1}{2(n-1)\ln M }\ln\frac{1}{\sqrt{C\e}}} + C\beta^{-\tau}.$$
Since $\frac{m-1}{2(n-1)\ln M }\ln\frac{1}{\sqrt{C\e}} \rightarrow \infty$ as $\e\rightarrow 0$, we obtain global $W^{2,\delta}$ estimates for all $\delta<\tau/m<1/2$ for solutions to
the linearized Monge-Amp\`ere equation $\mathcal{L}_{\phi} u = f$ provided that $f\in L^{n}(\Omega)$ and $\e$ is small, that is, 
$\det D^2\phi$ is close to a constant.
However, in the next subsection, we offer a more direct proof of  
global $W^{2,\delta}$ estimates based on interior estimates without resorting to decay estimates of the distribution function of the
second derivatives. Another advantage of this proof is that it works for all Monge-Amp\`ere measures $\det D^2\phi$ bounded away from $0$ and $\infty$.
\begin{remark}
 It is now clear that the obstruction to higher integrability of $|D^2 u|$ is the small exponent $\tau$ in the decay estimates for $\big|\Omega\setminus G_{\beta}(u,\Omega)\big|$ given by Proposition~\ref{Initial-Estimate}.
 Most of the paper is devoted to developing tools to improve the decay estimates for $\big|\Omega\setminus G_{\beta}(u,\Omega)\big|$. In particular, the 
 global stability
 of cofactor matrices and an approximation lemma in the next two sections will be employed for this purpose.
\end{remark}

\subsection{Global $W^{2,\delta}$ estimates}
In this subsection, we obtain global  $W^{2,\delta}(\Omega)$ estimates ($\delta>0$ small) for solutions to the linearized 
Monge-Amp\`ere equation $\calL_{\phi}u=f$ when $\det D^{2}\phi$ is only bounded away from $0$ and $\infty$
and under natural assumptions on the domain $\Omega$ and the boundary data of $\phi$. 

Our main theorem in this subsection is the following.
\begin{theorem}\label{small-d2}
Assume  $\Omega$ and  $\phi$ satisfy the assumptions 
\eqref{global-tang-int}--\eqref{global-sep}. Assume in addition that $\partial \Omega\in C^{1,1}$. 
Let $u \in C(\overline{\Omega})\cap C^1(\Omega)\cap W^{2, n}_{loc}(\Omega)$  be a solution of 
\begin{equation*}
\left\{\begin{array}{rl}
\calL_\phi u &=f \qquad \mbox{ in}\quad \Omega,\\
u  &=0\ \ \ \ \ \ \mbox{ on}\quad \partial \Omega.
\end{array}\right.
\end{equation*}
Then there exist $p=p(\rho,\lambda,\Lambda,n)>0$ and $C= C(\rho, \lambda, \Lambda, n, \|\partial\Omega\|_{C^{1,1}})>0$ such that
\begin{equation*}
\|D^2 u\|_{L^{p}(\Omega)}\leq
C\|f\|_{L^{n}(\Omega)}.
\end{equation*}
\end{theorem}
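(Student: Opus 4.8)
The plan is to follow the paradigm announced in the introduction: \emph{local estimates plus a covering argument imply global estimates}. The local ingredient is already known --- this is the interior $W^{2,\delta}$ estimate of Guti\'errez--Tournier \cite{GT} (or its refinement in \cite{GN2}) applied on interior sections, together with the boundary decay of $|U\cap B_{c^2}\setminus G_\beta(u,\Omega)|$ furnished by Proposition~\ref{power-decay-at-boundary} applied to rescaled boundary sections. So the first step is to reduce everything to a power-decay statement for the distribution function of $D^2 u$, namely to show that there exist $\tau=\tau(n,\lambda,\Lambda,\rho)\in(0,1/2)$ and $C=C(n,\lambda,\Lambda,\rho,\|\partial\Omega\|_{C^{1,1}})>0$ such that, after normalizing $\|f\|_{L^n(\Omega)}\le 1$ and $|u|\le 1$ (which one may do by the ABP/maximum principle, since $\|u\|_{L^\infty(\Omega)}\le C\|f\|_{L^n(\Omega)}$, using \eqref{eq_u} and the geometry of $\Omega$),
\[
\big|\{x\in\Omega:\ |D^2 u(x)|>\beta\}\big|\le C\beta^{-\tau}\qquad\text{for all }\beta>0.
\]
Integrating $\beta^{p-1}$ against this for any $p<\tau$ then yields $\|D^2u\|_{L^p(\Omega)}\le C$, which unwinds to the claimed estimate.

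To prove the decay estimate I would combine a Vitali-type / Calder\'on--Zygmund covering with sections, in the spirit of Theorem~\ref{thm:covering}, with a localized gain on each section. Concretely: cover $\Omega$ by maximal interior sections $S_\phi(x,\bar h(x))$; by the Dichotomy (Proposition~\ref{dicho}), each doubled section is either an interior section contained in $\Omega$ or is contained in a boundary section $S_\phi(z,\bar c\,\bar h(x))$ of comparable height. On a genuinely interior section one rescales by the affine map normalizing the section to a ball of comparable size (using $\det A=1$ so Lebesgue measure is preserved), applies the interior $W^{2,\delta}$ estimate of \cite{GT,GN2} to the rescaled equation $\mathcal L_{\phi}u=f$ --- whose rescaled right-hand side has small $L^n$ norm by the scaling $\|f_h\|_{L^n}=h^{1/2}\|f\|_{L^n}$ discussed in Subsection~\ref{rescale-sec} --- and transfers the resulting decay for $|D^2 u|$ back. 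On a boundary section one instead uses the $L^\infty$-norm preserving rescaling tangent to the boundary from Subsection~\ref{rescale-sec}, checks via Proposition~\ref{P-prop} and Lemma~\ref{sep-lem} that the rescaled triple lies in the class $\mathcal P_{\lambda,\Lambda,\rho,\kappa,\ast}$, and then invokes Proposition~\ref{power-decay-at-boundary} to get the power decay of $|S_\phi(0,r)\setminus G_\beta(u,\Omega)|$; Lemma~\ref{distribution-function} and Lemma~\ref{lm:belowimplyabove} convert a good set $G_\beta$ bound into a pointwise bound on $|D^2u|$ off a small bad set. Summing the local estimates over the covering, with the overlap controlled by the engulfing property (Theorem~\ref{engulfing2}(a)) and the volume estimates (Theorem~\ref{engulfing2}(b)), and using Theorem~\ref{strongtype} (strong-type $p$--$p$ for the maximal function with respect to sections) to handle the $L^n$ norm of $f$ distributed over the covering, produces the global power decay with a small but universal exponent.

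The main obstacle, as usual in this circle of ideas, is the interaction between the singular/degenerate behavior of $\mathcal L_\phi$ near $\partial\Omega$ and the bookkeeping in the covering argument: one must ensure that the boundary sections used in the cover genuinely satisfy the hypotheses of the Localization Theorem after rescaling (this is where the quadratic separation \eqref{global-sep}, guaranteed by $\partial\Omega\in C^{1,1}$ together with the uniform convexity from \eqref{global-tang-int}, is essential), that the constants produced by Proposition~\ref{power-decay-at-boundary} are uniform over all such rescaled boundary sections, and that the small-$\delta$ exponents coming from the interior and boundary estimates can be taken to be the \emph{same} $\tau$, so that the two families of sections combine into a single decay estimate. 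A secondary technical point is justifying that one may assume $u\in C^2$ (or work with $W^{2,n}_{loc}$ second derivatives) when applying Lemma~\ref{distribution-function}; this is handled by the interior regularity $u\in W^{2,n}_{loc}(\Omega)$ in the hypothesis together with a standard approximation/mollification of $\phi$ (keeping $\det D^2\phi$ pinched and the boundary data fixed) and passing to the limit in the $L^p$ bound, which is stable.
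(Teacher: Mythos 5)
Your plan has a genuine gap at the point where you pass from ``good set'' information to a bound on $|D^2u|$. On boundary sections you propose to get decay of $|S_\phi(0,r)\setminus G_\beta(u,\Omega)|$ from Proposition~\ref{power-decay-at-boundary} and then convert it into a pointwise bound on $|D^2u|$ off a small set via Lemma~\ref{distribution-function} and Lemma~\ref{lm:belowimplyabove}. But Lemma~\ref{distribution-function} controls $\{|D_{ij}u|>\beta^m\}$ only up to the additional bad set $\Omega\setminus \lA_{(c\beta^{(m-1)/2})^{-2/(n-1)}}$, i.e.\ the set where the quasi distance is not bounded below by the Euclidean distance, and you never say how to bound its measure. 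The only tool in the paper for that is Theorem~\ref{thm:powerdecayMA}, whose decay exponent is $\ln(1/\sqrt{C\epsilon})/\ln M$ and is therefore useful only when $\det D^2\phi$ is $\epsilon$-close to a constant; under the hypotheses of Theorem~\ref{small-d2} one only has $\lambda\le\det D^2\phi\le\Lambda$, and no power decay of $|\Omega\setminus\lA_{\sigma^{-1}}|$ is available from the cited ingredients (one would need something like a global $W^{2,\delta}$ estimate for $\phi$ itself, which you do not invoke and which is not needed for this theorem). This is precisely why the paper states, just before Theorem~\ref{small-d2}, that it gives a ``more direct'' proof that avoids the distribution-function machinery: the distribution-function route only yields $W^{2,\delta}$ when $\det D^2\phi$ is close to a constant, whereas the theorem is claimed for general pinching.

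The paper's actual proof keeps only the part of your sketch that concerns interior sections and drops everything else. It covers $\Omega$ by maximal interior sections $S_\phi(y_i,\bar h(y_i)/2)$ via Lemma~\ref{Vitali-MA} (no dichotomy, no separate boundary sections, since maximal sections are tangent to $\partial\Omega$ and their shape is controlled by Proposition~\ref{tan_sec}); on each such section it applies Theorem~\ref{GT_loc} after the $L^\infty$-norm preserving rescaling tangent to the boundary, which is Lemma~\ref{w2p-small-local}; it then counts the sections with height comparable to a dyadic level $d$ using the disjointness of $S_\phi(y_i,\delta_0\bar h(y_i))$ together with the volume of the boundary strip of width $Cd^{1/2}$ (this is where $\partial\Omega\in C^{1,1}$ enters, giving the bound \eqref{Md-est}, $M_d\le C_b d^{(1-n)/2}$), sums over $d=c2^{-k}$, and finishes with the ABP estimate $\|u\|_{L^\infty}\le C\|f\|_{L^n}$. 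No covering theorem for boundary sections, no maximal-function strong-type estimate, and no good-set decay are used, and that is what makes the argument valid for all $\lambda,\Lambda$. If you want to salvage your outline, replace the boundary-section/good-set step by exactly this summation of the rescaled Guti\'errez--Tournier estimates over the maximal-section cover; as written, the step through Lemma~\ref{distribution-function} does not go through in the stated generality.
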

The rest of this subsection is devoted to proving this theorem. The idea is 
to cover $\Omega$ by maximal interior sections whose shapes are under 
control by Proposition~\ref{tan_sec} and then apply the 
interior $W^{2,\delta}$ estimates of Guti\'errez and Tournier \cite{GT} in these sections. 
Furthermore, since we can control the number of these sections within certain height due to the $C^{1,1}$ regularity of the boundary $\p\Omega$, the global estimates follow by adding
interior ones.\\

For reader's convenience, we recall Guti\'errez-Tournier's $W^{2,\delta}$ estimates.
\begin{theorem}(\cite[Theorem 6.3]{GT})
\label{GT_loc}
 Let $\Omega$ be a convex domain such that 
$B_{k_0}\subset \Omega\subset B_{k_{0}^{-1}}$.
Let $\phi\in C^{2}(\Omega)$ be a convex function satisfying
$\lambda\leq \det D^{2}\phi \leq \Lambda~\text{ in }~\Omega$ and $\phi =0~\text{ on }~\p\Omega.$
Let $u\in C^{1}(\Omega)\cap W^{2,n}_{loc}(\Omega)$ be a solution of $\mathcal{L}_{\phi} u=f$ in $\Omega$. 
Then, given $\alpha_{0}\in (0,1)$, there exist
positive constants $\delta$ and $C$ depending only on $\alpha_{0},  k_0, \lambda, \Lambda$ and $n$ such that
$$\|D^2 u\|_{L^{\delta}\big(S_{\phi}(x_0, -\alpha_0\phi(x_0))\big)}\leq C \Big(\|u\|_{L^{\infty}(\Omega)} + \|f\|_{L^{n}(\Omega)}\Big),$$
where $x_0\in \Omega$ is such that $\min_{\Omega}\phi =\phi(x_0)$.
\end{theorem}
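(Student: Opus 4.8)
The plan is to run Caffarelli's $W^{2,\delta}$ argument in the affine-invariant form of Caffarelli--Guti\'errez, replacing Euclidean balls by sections of $\phi$ and paraboloids by the quasi-paraboloids $\pm\frac M2\,d(\cdot,\bar x)^2$ of Definition~\ref{def:GMs}. First I would set up the right domain: since $x_0$ is an interior minimum, $\nabla\phi(x_0)=0$ and, for any $\alpha_0<\alpha_0'<1$, the section $S':=S_\phi(x_0,-\alpha_0'\phi(x_0))=\{\phi<(1-\alpha_0')\phi(x_0)\}$ is compactly contained in $\Omega$ because $(1-\alpha_0')\phi(x_0)<0=\phi|_{\partial\Omega}$. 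Subtracting the constant $(1-\alpha_0')\phi(x_0)$, dividing $u$ by $\|u\|_{L^\infty(\Omega)}+\|f\|_{L^n(\Omega)}$, and composing with an affine map of determinant $1$, I reduce to the situation where $B_{k_0}\subset S'\subset B_{k_0^{-1}}$, $\phi=0$ on $\partial S'$, $\lambda\le\det D^2\phi\le\Lambda$ in $S'$, $\|u\|_{L^\infty(S')}\le1$, $\|f\|_{L^n(S')}\le1$, and it suffices to bound $\|D^2u\|_{L^\delta}$ on the slightly smaller section $S:=S_\phi(x_0,-\alpha_0\phi(x_0))$; all quasi-distances and quasi-paraboloid comparisons below are taken inside $S'$, and the general statement follows by undoing the normalization.

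The backbone is the inclusion of Lemma~\ref{distribution-function}, applied with $(S',\phi|_{S'})$ in place of $(\Omega,\phi)$:
\[
\{x\in S':\ |D_{ij}u(x)|>\beta^m\}\ \subset\ \big(S'\setminus\lA_{(c\beta^{(m-1)/2})^{-2/(n-1)}}\big)\ \cup\ \big(S'\setminus G_\beta(u,S')\big),
\]
for a parameter $m>1$ at our disposal. Hence it is enough to prove two power-decay estimates with universal exponents: (i) $|S'\setminus\lA_\sigma|\le C\sigma^{\mu}$, involving $\phi$ only, and (ii) $|S'\setminus G_\beta(u,S')|\le C\beta^{-\tau}$. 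Given (i) and (ii) the right-hand side above is at most $C\beta^{-(m-1)\mu/(n-1)}+C\beta^{-\tau}$; taking $m:=1+\tau(n-1)/\mu$ equalizes the exponents, so $|\{x\in S':|D_{ij}u|>\beta^m\}|\le C\beta^{-\tau}$, that is $|\{|D^2u|>t\}\cap S'|\le C\,t^{-\tau/m}$, which gives $D^2u\in L^\delta(S)$ for every $\delta<\tau/m$ with $\delta$ and $C$ depending only on $\alpha_0,k_0,\lambda,\Lambda,n$; summing over $i,j$ finishes the proof.

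For (i) I would use the geometry of Monge--Amp\`ere sections: John's lemma and the volume bounds $ct^{n/2}\le|S_\phi(y,t)|\le Ct^{n/2}$ show that $y\notin\lA_\sigma$ forces the section $S_\phi(y,t)$ to have its shortest axis comparable to $\sigma^{1/2}t^{1/2}$, hence highly eccentric, at arbitrarily small heights $t$. A good/bad-section dichotomy — if a section is not too eccentric then a definite proportion of its subsections at the next dyadic height are not too eccentric either, a consequence of the engulfing property and the volume estimates — propagates the eccentric behaviour down the dyadic scales and yields the geometric decay $|S'\setminus\lA_\sigma|\le C\sigma^{\mu}$, using no second-derivative integrability of $\phi$ (which need not hold at this level of generality). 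For (ii), which is the interior counterpart of Proposition~\ref{Initial-Estimate}, one proves a two-sided critical-density lemma via the Aleksandrov--Bakelman--Pucci maximum principle for $\calL_\phi$: in a section $Q$ with $|u|\le1$ and $\|f\|_{L^n(Q)}$ small, if the graph of $u$ lies below (resp. above) an affine function somewhere in the central subsection of $Q$, then a fixed fraction of $Q$ consists of points where $u$ is touched from above (resp. below) by a quasi-paraboloid of a universal opening $M_0$. Here one uses that $\det\Phi=(\det D^2\phi)^{n-1}\in[\lambda^{n-1},\Lambda^{n-1}]$, so the ABP constant is universal, and that $\lambda\le\det D^2\phi\le\Lambda$ interchanges Lebesgue measure and the Monge--Amp\`ere measure on contact sets. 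Iterating this lemma through a Calder\'on--Zygmund decomposition in the space of homogeneous type formed by the sections of $\phi$ (admissible by engulfing and the volume estimates) yields $|Q\setminus G_{M_0^j}(u,S')|\le C\mu_0^{\,j}|Q|$ for all $j$, which is exactly (ii).

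The main obstacle is (ii): building the $W^{2,\delta}$ machinery for the degenerate operator $\calL_\phi$, i.e. proving the two-sided critical-density lemma with universal constants from the ABP inequality and then running the Calder\'on--Zygmund iteration in section geometry rather than with Euclidean cubes, since paraboloids must be replaced by quasi-paraboloids generated by $\phi$ and no Euclidean covering argument is available. Estimate (i) is softer but still delicate: its decay must be extracted purely from the eccentricity control of sections, and in particular one cannot appeal to $W^{2,1+\e}$ regularity of $\phi$, which is unavailable when $\det D^2\phi$ is merely pinched between $\lambda$ and $\Lambda$.
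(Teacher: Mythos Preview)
The paper does not prove this statement: Theorem~\ref{GT_loc} is quoted from Guti\'errez--Tournier \cite{GT} (``For reader's convenience, we recall Guti\'errez--Tournier's $W^{2,\delta}$ estimates'') and used as a black box in Lemma~\ref{w2p-small-local}, Theorem~\ref{small-d2}, and (in its one-sided form) in the proof of Proposition~\ref{global-convergence}. So there is no proof in the paper to compare your attempt against.

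That said, your sketch is a reasonable outline of the argument in \cite{GT}: after normalizing, split the superlevel set of $|D^2u|$ via Lemma~\ref{distribution-function} into a ``bad $\phi$'' piece $S'\setminus\lA_\sigma$ and a ``bad $u$'' piece $S'\setminus G_\beta(u,S')$; obtain power decay for the first from the good-shape theory of sections (this works under the mere pinching $\lambda\le\det D^2\phi\le\Lambda$, with an exponent depending on $\lambda,\Lambda,n,k_0$ --- contrast Theorem~\ref{thm:powerdecayMA}, where the exponent blows up only because one wants it arbitrarily large as $\epsilon\to0$); and obtain power decay for the second by the ABP-based critical-density lemma iterated through a Calder\'on--Zygmund decomposition in the quasi-metric of sections. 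Your identification of the main difficulty --- building the two-sided touching lemma for $\calL_\phi$ with universal constants and running the covering argument with sections rather than cubes --- is accurate, and this is indeed where most of the work in \cite{GT} lies.
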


Let $0<p< \min\{\delta, \frac{1}{2}\}$ where $\delta=\delta(\rho,\lambda, \Lambda, n)>0$ is a small number 
appearing in Theorem \ref{GT_loc}  corresponding to  $\alpha_{0} =1/2$ and $k_0=k_0(\rho, n, \lambda, \Lambda)$ given by Proposition \ref{tan_sec}. \\

We will show that the conclusion of Theorem~\ref{small-d2} holds for the above choice of $p$.  To achieve this, we first estimate the $L^{p}$ norm of $D^{2} u$ in the interior
of each maximal interior section.
\begin{lemma}
\label{w2p-small-local}
Assume  $\Omega$ and  $\phi$ satisfy the assumptions 
\eqref{global-tang-int}--\eqref{global-sep}. 
Let $u \in C(\overline{\Omega})\cap C^1(\Omega)\cap W^{2, n}_{loc}(\Omega)$  be a solution of 
\begin{equation*}
\calL_\phi u =f ~ \mbox{ in }~\Omega,\quad \mbox{and}\quad
u  =0 ~\mbox{ on }~ \partial \Omega.
\end{equation*}
Then, there exists a constant $C>0$ depending only on $p, \rho, \lambda, \Lambda$ and $n$ such that
\begin{eqnarray*}
\|D^{2} u\|_{L^{p}\big(S_{\phi}(y, \frac{\bar{h}(y)}{2})\big)}\leq C \bh(y)^{\frac{n}{2p}-1}|\log \bar{h}(y)|^{2} \, \Big(\|u\|_{L^{\infty}\big(S_{\phi}(y, \bar{h}(y))\big)} + 
\bar{h}(y)^{1/2}\|f\|_{L^{n}\big(S_{\phi}(y, \bar{h}(y))\big)} \Big)
\end{eqnarray*}
for all  $y\in\Omega$ satisfying $\bar{h}(y)\leq c$.
\end{lemma}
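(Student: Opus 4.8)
The plan is to reduce the estimate to an application of the interior $W^{2,\delta}$ estimate of Guti\'errez--Tournier (Theorem~\ref{GT_loc}) by rescaling the maximal interior section $S_{\phi}(y,\bar h(y))$ to a normalized domain of size $\sim 1$. Fix $y\in\Omega$ with $h:=\bar h(y)\leq c$. Since $S_{\phi}(y,h)$ is a maximal interior section, Proposition~\ref{tan_sec} applies (the section is tangent to $\p\Omega$ at some boundary point when $h$ is small, and for the general interior case one uses John's lemma together with the engulfing property of Theorem~\ref{engulfing2}): there is an affine map $T$ normalizing $S_{\phi}(y,h)$, i.e.\ $B_{k_0}\subset T\big(S_{\phi}(y,h)\big)\subset B_{k_0^{-1}}$, with $T x = h^{-1/2}A_h(x-y)$ where $\det A_h=1$ and $\|A_h\|,\|A_h^{-1}\|\le C|\log h|$. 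Set $\Omega':=T\big(S_{\phi}(y,h)\big)$, and for $z\in\Omega'$ define the rescaled functions
\[
\psi(z):=\frac{1}{h}\Big[\phi(T^{-1}z)-\phi(y)-\nabla\phi(y)\cdot(T^{-1}z-y)\Big],\qquad v(z):=\frac{1}{h}\,u(T^{-1}z).
\]
Then $\psi$ is convex on $\overline{\Omega'}$, $\psi=0$ on $\p\Omega'$, $\min_{\Omega'}\psi=\psi(0)=-1$ is attained at the interior point $0=Ty$ (after adjusting the additive constant so that the section of height $h$ becomes $\{\psi<0\}$; more precisely one works with $\psi+1$ so that $\{\psi+1<0\}=\emptyset$ and $\{\psi+1<1\}=\Omega'$), and the Monge--Amp\`ere measure is preserved: $\det D^2\psi = g(T^{-1}z)\in[\lambda,\Lambda]$ since $\det A_h=1$. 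A direct computation using $D^2\psi = h^{-1}(A_h^{-1})^t D^2\phi\,A_h^{-1}$, $D^2 v = h^{-1}(A_h^{-1})^t D^2 u\,A_h^{-1}$ shows that the cofactor matrices transform by $\Psi = A_h\Phi A_h^t$, hence
\[
\mathcal L_{\psi} v = \mathrm{trace}(\Psi D^2 v) = h\, f(T^{-1}z)=:\tilde f(z)\quad\text{in }\Omega'.
\]

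Now apply Theorem~\ref{GT_loc} to $v$ on $\Omega'$ with $\alpha_0=1/2$ and the normalization constant $k_0$: since $\min_{\Omega'}\psi=\psi(0)$ and $\psi(0)=-1$, the section $S_{\psi}\big(0,-\tfrac12\psi(0)\big)=S_{\psi}(0,\tfrac12)$ is precisely $T\big(S_{\phi}(y,\tfrac h2)\big)$ after undoing the vertical shift, and we obtain
\[
\|D^2 v\|_{L^{p}\big(T(S_{\phi}(y,h/2))\big)}\le \|D^2 v\|_{L^{\delta}\big(T(S_{\phi}(y,h/2))\big)}\le C\Big(\|v\|_{L^\infty(\Omega')}+\|\tilde f\|_{L^n(\Omega')}\Big),
\]
using $p<\delta$ and that $|T(S_\phi(y,h/2))|$ is bounded by a universal constant. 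Next I would unwind the rescaling. From $D^2 u(x) = h\,A_h^t\,D^2 v(Tx)\,A_h$ we get the pointwise bound $|D^2 u(x)|\le h\|A_h\|^2 |D^2 v(Tx)|\le C h|\log h|^2 |D^2 v(Tx)|$, and the change of variables $z=Tx$ introduces the Jacobian factor $|\det T| = h^{-n/2}$ (since $\det A_h=1$), so
\[
\|D^2 u\|_{L^p(S_\phi(y,h/2))} \le C\,h|\log h|^2\, h^{n/(2p)}\,\|D^2 v\|_{L^p(T(S_\phi(y,h/2)))}.
\]
Finally, $\|v\|_{L^\infty(\Omega')} = h^{-1}\|u\|_{L^\infty(S_\phi(y,h))}$ and, by the change of variables, $\|\tilde f\|_{L^n(\Omega')} = h\cdot h^{-n/(2n)}\|f\|_{L^n(S_\phi(y,h))} = h^{1/2}\|f\|_{L^n(S_\phi(y,h))}$. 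Substituting these in yields
\[
\|D^2 u\|_{L^p(S_\phi(y,h/2))} \le C\,h^{\frac{n}{2p}-1}|\log h|^2\Big(\|u\|_{L^\infty(S_\phi(y,h))} + h^{1/2}\|f\|_{L^n(S_\phi(y,h))}\Big),
\]
which is exactly the claimed inequality with $h=\bar h(y)$.

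I expect the main obstacle to be the careful bookkeeping of the normalization map and the verification that Theorem~\ref{GT_loc} applies with \emph{universal} constants. Two points need care: first, that the normalizing affine map $T$ for the maximal interior section $S_{\phi}(y,\bar h(y))$ satisfies $\|A_h\|,\|A_h^{-1}\|\le C|\log h|$ uniformly — this follows from Proposition~\ref{tan_sec} when the section is tangent to the boundary and from the engulfing property plus volume estimates (Theorem~\ref{engulfing2}) together with John's lemma in general, but one must check the two cases match up; second, that after rescaling, the normalized domain $\Omega'$ indeed satisfies $B_{k_0}\subset\Omega'\subset B_{k_0^{-1}}$ with the \emph{same} $k_0$ that enters Theorem~\ref{GT_loc}, so that the constants $\delta,C$ in that theorem depend only on $\rho,\lambda,\Lambda,n$. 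The rest is routine scaling arithmetic.
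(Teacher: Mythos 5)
Your strategy is exactly the paper's: normalize the maximal interior section $S_{\phi}(y,\bar h(y))$ via Proposition~\ref{tan_sec} (note that this section is by definition tangent to $\p\Omega$, so for $\bar h(y)\le c$ no separate John's-lemma/engulfing case is needed), apply the Guti\'errez--Tournier estimate of Theorem~\ref{GT_loc} with $\alpha_0=1/2$ in the normalized picture, and rescale back using $\det A_h=1$ and $\|A_h\|,\|A_h^{-1}\|\le C|\log h|$. The paper implements this with the $L^{\infty}$-norm preserving rescaling of Subsection~\ref{rescale-sec}, i.e.\ $\tilde u_h:=u\circ T^{-1}$ with right-hand side $h\,f\circ T^{-1}$, whereas you divide the solution by $h$; both choices work, since they differ only by the constant factor $h^{-1}$, which scales both sides of the Guti\'errez--Tournier inequality linearly.

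However, your scaling identities mix the paper's two rescalings and are mutually inconsistent, so the computation as written does not produce the stated exponents. With your choice $v=h^{-1}u\circ T^{-1}$ and $T^{-1}z=y+h^{1/2}A_h^{-1}z$, the chain rule gives $D^2v(z)=(A_h^{-1})^{t}D^2u(T^{-1}z)A_h^{-1}$ (no factor $h^{-1}$), hence $D^2u(x)=A_h^{t}\,D^2v(Tx)\,A_h$ rather than $h\,A_h^{t}D^2v\,A_h$; and since $\Psi=A_h\Phi A_h^{t}$, one gets $\mathcal{L}_{\psi}v=f\circ T^{-1}$ rather than $h\,f\circ T^{-1}$, so $\|\tilde f\|_{L^{n}(\Omega')}=h^{-1/2}\|f\|_{L^{n}(S_{\phi}(y,h))}$, not $h^{1/2}\|f\|_{L^{n}}$. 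If one inserts your stated formulas into your final substitution, the outcome is $C|\log h|^{2}\bigl(h^{\frac{n}{2p}}\|u\|_{L^{\infty}}+h^{\frac{n}{2p}+\frac32}\|f\|_{L^{n}}\bigr)$, which is off from the claimed bound by factors of $h$ and $h^{2}$ respectively (and would be ``better'' than the sharp scaling $|D^2u|\sim 1/h$, a sign of the error). With the corrected identities the same computation gives $C\,h^{\frac{n}{2p}}|\log h|^{2}\bigl(h^{-1}\|u\|_{L^{\infty}(S_{\phi}(y,h))}+h^{-1/2}\|f\|_{L^{n}(S_{\phi}(y,h))}\bigr)$, which is exactly the lemma. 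So the idea is right; the fix is simply to carry out one rescaling consistently: either $v=u\circ T^{-1}$ with $\mathcal{L}_{\psi}v=hf\circ T^{-1}$ and $D^2u=h^{-1}A_h^{t}D^2v\,A_h$ (the paper's route), or $v=h^{-1}u\circ T^{-1}$ with $\mathcal{L}_{\psi}v=f\circ T^{-1}$ and $D^2u=A_h^{t}D^2v\,A_h$. Also include the shift by $-h$ in the normalized potential (as in the paper's $\tilde\phi_h$) so that it vanishes on $\p\Omega'$ before invoking Theorem~\ref{GT_loc}; your parenthetical gestures at this but the displayed $\psi$ omits it.
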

\begin{proof}
Let $h:= \bar{h}(y)$ with $\bar{h}(y)\leq c.$ 
We now define the rescaled domain $\tilde \Omega_{h}$ and rescaled functions $\tilde \phi_{h}$,
$\tilde u_{h}$ and $\tilde f_h$ as in Subsection~\ref{rescale-sec} that {\it preserve
the $L^{\infty}$-norm in a section tangent to the boundary}.
For simplicity, let us  denote $\tilde{S}_{t}(0) := S_{\tilde{\phi}_{h}} (0, t)$ for $t>0$.
Then by Theorem~\ref{GT_loc}, we have
\begin{equation}
\|D^{2}\tilde{u}_{h}\|_{L^{p}(\tilde{S}_{\frac{1}{2}}(0))}\leq C(p,\rho,\lambda, \Lambda, n) \, \Big(\|\tilde{u}_{h}\|_{L^{\infty}(\tilde{S}_{1}(0))} + 
\|\tilde{f}_{h}\|_{L^{n}(\tilde{S}_{1}(0))}\Big).
\label{res-w2p}
\end{equation}
Using the fact
$$D^{2} u( y + h^{1/2} A^{-1}_{h} x) = h^{-1} (A_{h})^{t} \, D^{2} \tilde{u}_{h}(x) \, A_{h},$$
we obtain
\begin{eqnarray*}
\int_{S_{\phi}(y,\frac{h}{2})}|D^{2} u(z)|^{p} \, dz= h^{\frac{n}{2}-p} 
\int_{\tilde{S}_{\frac{1}{2}}(0)} |A_{h}^{t} \, D^{2} \tilde{u}_{h}(x) \, A_{h}|^{p}\, dx
\leq  C \,h^{\frac{n}{2}-p} \, |\log h|^{2p} \int_{\tilde{S}_{\frac{1}{2}}(0)} |D^{2} \tilde{u}_{h}(x)|^{p}\, dx.
\end{eqnarray*}
It follows that
\begin{equation}
\|D^{2} u\|_{L^{p}\big(S_{\phi}(y, \frac{h}{2})\big)}\leq C h^{\frac{n}{2p} -1}|\log h|^{2}
\, \|D^{2}\tilde{u}_{h}\|_{L^{p}\big(\tilde{S}_{\frac{1}{2}}(0)\big)}.
\label{res-d2u}
\end{equation}
Moreover, we have
\begin{equation}
\|\tilde{f}_{h}\|_{L^{n}(\tilde{S}_{1}(0))} = h^{\frac{1}{2}}\|f\|_{L^{n}(S_{\phi}(y, h))}\quad\mbox{and}\quad
\|\tilde{u}_{h}\|_{L^{\infty}(\tilde{S}_{1}(0))} = \, \|u\|_{L^{\infty}(S_{\phi}(y, h))}.
\label{res-u-infty}
\end{equation}
Combining \eqref{res-w2p}--\eqref{res-u-infty}, we obtain the desired estimate stated in our lemma.
\end{proof}
Finally, we will use the following Vitali covering lemma proved by Savin in \cite{S3}; see also \cite[Lemma~2.5]{LN} for a more general covering result.

\begin{lemma}(\cite[Lemma 2.3]{S3})\label{Vitali-MA}
Assume  $\Omega$ and  $\phi$ satisfy the assumptions 
\eqref{global-tang-int}--\eqref{global-sep}.
Then there exists a sequence of disjoint sections $S_{\phi}(y_{i}, \delta_{0}\bar{h}(y_i))$  with $\delta_0= \delta_0(\lambda, \Lambda, n)>0$ such that
\begin{equation*}\Omega\subset \bigcup_{i=1}^{\infty} S_{\phi}(y_{i}, \frac{\bh(y_i)}{2}).
\end{equation*}
\end{lemma}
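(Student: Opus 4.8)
The plan is to perform a Vitali-type selection on the family of maximal interior sections $\{S_\phi(y,\bar h(y))\}_{y\in\Omega}$, following Savin \cite{S3}, with the engulfing property of Theorem~\ref{engulfing2}(a) playing the role that the elementary geometry of concentric balls plays in the classical Vitali lemma, and the volume estimates of Theorem~\ref{engulfing2}(b) guaranteeing that the selected family is countable with heights tending to $0$. Two preliminary observations are needed. First, since $\phi\in C^{0,1}(\overline\Omega)$ and $\Omega$ is bounded, a maximal interior section $S_\phi(y,\bar h(y))$ must touch $\partial\Omega$ at some point $p$, whence $\bar h(y)=\phi(p)-\phi(y)-\nabla\phi(y)\cdot(p-y)\le \mathrm{osc}_{\overline\Omega}\phi+\mathrm{diam}(\Omega)\,\|\nabla\phi\|_{L^\infty(\Omega)}=:H<\infty$, while $\bar h(y)>0$ for every $y\in\Omega$ because $\det D^2\phi\ge\lambda$ forces $D^2\phi(y)>0$, so $\phi$ is strictly convex near $y$. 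Second, the engulfing property yields a substitute for the ``$5r$-lemma'': if $S_\phi(x,s)\cap S_\phi(y,t)\ne\emptyset$ with $t\le s$, then $S_\phi(y,t)\subset S_\phi(x,\theta_\ast^2 s)$; indeed, choosing $z$ in the intersection, Theorem~\ref{engulfing2}(a) at $z$ gives $x\in S_\phi(x,s)\subset S_\phi(z,\theta_\ast s)$ and then at $x$ gives $S_\phi(z,\theta_\ast s)\subset S_\phi(x,\theta_\ast^2 s)$, while $S_\phi(y,t)\subset S_\phi(z,\theta_\ast t)\subset S_\phi(z,\theta_\ast s)$.

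Fix $\delta_0:=(4\theta_\ast^2)^{-1}$. The selection proceeds greedily: put $\mathcal A_1:=\Omega$ and, having chosen $y_1,\dots,y_{i-1}$, let $\mathcal A_i$ be the set of $y\in\Omega$ for which $S_\phi(y,\delta_0\bar h(y))$ is disjoint from all $S_\phi(y_j,\delta_0\bar h(y_j))$ with $j<i$; if $\mathcal A_i\ne\emptyset$, set $H_i:=\sup_{\mathcal A_i}\bar h\in(0,H]$ and pick $y_i\in\mathcal A_i$ with $\bar h(y_i)\ge H_i/2$, otherwise stop. By construction the sections $S_\phi(y_i,\delta_0\bar h(y_i))$ are pairwise disjoint, the sets $\mathcal A_i$ decrease with $i$, and hence $H_i$ is non-increasing. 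Since these disjoint sections lie in $\Omega$, we have $\sum_i|S_\phi(y_i,\delta_0\bar h(y_i))|\le|\Omega|$; as each contains $S_\phi(y_i,\min\{\delta_0\bar h(y_i),c_\ast\})$, whose measure is at least $C_1(\min\{\delta_0\bar h(y_i),c_\ast\})^{n/2}$ by Theorem~\ref{engulfing2}(b), only finitely many indices can have $\delta_0\bar h(y_i)>c_\ast$ and along the remaining ones $\bar h(y_i)\to0$; in particular the selected family is at most countable (if it is finite, repeat the last section to obtain a formal sequence).

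It remains to show $\Omega\subset\bigcup_i S_\phi(y_i,\bar h(y_i)/2)$. Fix $x\in\Omega$. Note $x\in\mathcal A_1$, and $x$ cannot belong to every $\mathcal A_i$: if the process is infinite this would force $\bar h(x)\le H_i\le 2\bar h(y_i)\to0$, contradicting $\bar h(x)>0$; if it stops at stage $N$ then $\mathcal A_{N+1}=\emptyset$. Hence there is a largest index $j$ with $x\in\mathcal A_j$, and $x\notin\mathcal A_{j+1}$ means $S_\phi(x,\delta_0\bar h(x))$ meets $S_\phi(y_i,\delta_0\bar h(y_i))$ for some $i\le j$; moreover $\delta_0\bar h(y_i)\ge \delta_0 H_i/2\ge\delta_0 H_j/2\ge\delta_0\bar h(x)/2$ since $x\in\mathcal A_j$. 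Putting $s:=\max\{\delta_0\bar h(x),\delta_0\bar h(y_i)\}\le 2\delta_0\bar h(y_i)$ and applying the ``$5r$-substitute'' from the first paragraph to these two intersecting sections gives $S_\phi(x,\delta_0\bar h(x))\subset S_\phi(y_i,\theta_\ast^2 s)\subset S_\phi(y_i,2\theta_\ast^2\delta_0\bar h(y_i))=S_\phi(y_i,\bar h(y_i)/2)$, so $x\in S_\phi(y_i,\bar h(y_i)/2)$, as desired.

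The routine parts are the choice of $\delta_0$ in terms of $\theta_\ast$ and the measure-theoretic bookkeeping. The point requiring care — and the only genuine obstacle — is the engulfing-based containment together with the verification that the selected section obstructing $x$ at stage $j+1$ necessarily has height at least $\frac12\bar h(x)$; this is precisely why the selection must be run with the quantity $H_i/2$ rather than by picking a section of maximal available height, since the supremum $H_i$ need not be attained.
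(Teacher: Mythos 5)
Your proof is correct and follows essentially the same route as the paper's (i.e., Savin's argument, which the paper reproduces almost verbatim in the proof of Lemma~\ref{covering_rk}): a greedy Vitali-type selection at half the supremal height, with a double application of the engulfing property yielding $S_{\phi}(x,\delta_0\bar{h}(x))\subset S_{\phi}(y_i,\bar{h}(y_i)/2)$ once two $\delta_0$-sections of comparable height intersect. Two harmless remarks: since you use the global engulfing constant $\theta_{\ast}$ of Theorem~\ref{engulfing2}, your $\delta_0$ also depends on $\rho$ (the statement's $\delta_0(\lambda,\Lambda,n)$ comes from interior engulfing), and in the final step, where possibly $\bar{h}(x)>\bar{h}(y_i)$, you need the symmetric form of your ``$5r$-substitute'', which your own two-step argument already gives because both centers lie in $S_{\phi}(z,\theta_{\ast}s)$.
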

\begin{proof}[Proof of Theorem \ref{small-d2}]
It follows from  Proposition~\ref{tan_sec} (see also \cite[Lemma 2.2]{S3}) that 
if  $y\in \Omega$ with $\bar{h}(y)\leq c$  then
$$S_{\phi}(y, \bar{h}(y))\subset y + k_{0}^{-1}E_h\subset D_{C\bar{h}(y)^{1/2}}: =\{x\in \overline{\Omega}: \, 
\dist(x, \partial\Omega)\leq C\bar{h}(y)^{1/2}\},\quad C:= 2k_{0}^{-2}.$$
By Lemma \ref{Vitali-MA}, we have
$$\int_{\Omega} |D^{2} u|^{p} dx \leq \sum_{i=1}^\infty\int_{S_{\phi}(y_{i}, \frac{\bh(y_i)}{2})} |D^2 u|^{p} dx.$$
There is a finite number of sections $S_{\phi}(y_i, \bar{h}(y_i))$ with $\bar{h}(y_i)\geq c$ and, by Theorem~\ref{GT_loc}, we have in each such section
$$ \int_{S_{\phi}(y_{i}, \frac{\bh(y_i)}{2})} |D^{2} u|^{p} \leq C \, \big(\|u\|_{L^{\infty}(\Omega)} + \|f\|_{L^{n}(\Omega)}\big)^{p}.$$

Now, for  $d\leq c$ we consider the family $\mathcal{F}_{d}$ of sections $S_{\phi}(y_{i}, \bar{h}(y_i)/2)$ such that $d/2<\bar{h}(y_i)\leq d$. Let $M_{d}$ be the number 
of sections in $\mathcal{F}_{d}$.  We claim that
\begin{equation}\label{Md-est}
 M_{d}\leq C_{b} d^{\frac{1}{2}-\frac{n}{2}}
\end{equation}
for some constant $C_{b}$ depending only on $\rho, n, \lambda, \Lambda$ and $\|\partial\Omega\|_{C^{1,1}}.$
Indeed, we first note that, by \cite[Corollary 3.2.4]{G} (see also Theorem \ref{engulfing2}(b)), there exists a constant 
$C= C(n,\lambda,\Lambda, \rho)>0$ such that
$$|S_{\phi}(y_{i}, \delta_{0}\bar{h}(y_i))| \geq C \bar{h}(y_i)^{n/2} \geq C d ^{n/2}.$$
Since $S_{\phi}(y_{i}, \delta_{0}\bar{h}(y_i))\subset D_{Cd^{1/2}}$ are disjoint, we find that
$$ M_d Cd^{n/2}\leq \sum_{i\in\mathcal{F}_{d}} |S_{\phi}(y_{i},\delta_{0} \bar{h}(y_i))|\leq |D_{Cd^{1/2}}|\leq C_{\ast}d^{1/2}$$
for some constant $C_{\ast}$ depending only on $n$ and $\|\partial\Omega\|_{C^{1,1}}$.
Thus \eqref{Md-est} holds.

It follows from 
Lemma~\ref{w2p-small-local} and \eqref{Md-est} that
\begin{eqnarray*}
\sum_{i\in \mathcal{F}_{d}} \int_{S_{\phi}(y_{i}, \frac{\bh(y_i)}{2})} |D^{2} u|^{p}& \leq& C M_{d}d^{\frac{n}{2} -p}|\log d|^{2p} \, \Big(\|u\|_{L^{\infty}(\Omega)} + \|f\|_{L^{n}(\Omega)}\Big)^{p}\\ &\leq&
C d^{\frac{1}{2} -p}|\log d|^{2p}\, \Big(\|u\|_{L^{\infty}(\Omega)} + \|f\|_{L^{n}(\Omega)}\Big)^{p}. 
\end{eqnarray*}
Adding these inequalities for the sequence $d = c2^{-k}, k= 0, 1, 2, \cdots, $  and noting that
$$\|u\|_{L^{\infty}(\Omega)} \leq C(n, \rho, \lambda, \Lambda) \|f\|_{L^{n}(\Omega)},$$ by the ABP estimate,
we obtain the desired global $L^p$ estimate for $D^2 u$.
\end{proof}

\subsection{Proofs of the power decay estimates}
\begin{proof}[Proof of Theorem \ref{thm:powerdecayMA}]
Let $\{S_{\phi}(y_i, \bar{h}(y_i)/2)\}$ be the sequence of sections covering $\Omega$ given by Lemma~\ref{Vitali-MA}. In what follows we will use the notations as in the proof of Lemma~\ref{w2p-small-local}. We then have 
\begin{eqnarray}\label{eq:covering1}
\big|\Omega\setminus \lA_{s^{-2}}\big|
&\leq &\sum_{i=1}^\infty{\big|S_{\phi}(y_i, \bar{h}(y_i)/2)\setminus \lA_{s^{-2}}\big| }\nonumber\\
&\leq & \sum_{k=0}^\infty \sum_{i\in \calF_{c2^{-k}}}{\big|S_{\phi}(y_i, \bar{h}(y_i)/2)\setminus \lA_{s^{-2}}\big| } + \sum_{i: \, \bh({y_i})> c}{\big|S_{\phi}(y_i, \bar{h}(y_i)/2)\setminus 
\lA_{s^{-2}}\big| }=: I + II.
\end{eqnarray}
Let us first estimate the summation $I$ corresponding to sections with $\bh(y_i)\leq c$. Consider a prototype section $S_{\phi}(y, h)$ with $h:=\bar{h}(y)\leq c$. 
Proposition \ref{tan_sec} 
tells us that $S_{\phi}(y, h)$ is equivalent to an ellipsoid $E_h$, i.e.,
$$k_0 E_h \subset S_{\phi}(y, h)-y \subset k_{0}^{-1} E_h,$$
where
$$E_h := h^{1/2}A_{h}^{-1}B_1, \quad \mbox{with} \quad \det A_{h}=1,\quad  \|A_{h}\|, \, \|A_{h}^{-1} \| \le k^{-1} |\log  h|.$$
Here $k, k_{0}$ depend only on $n$ and $\rho$.
Let $T(x) := h^{-1/2} A_h(x-y)$.  Define $\tilde U_h:=T(S_{\phi}(y, h))$ and
\[\tilde \phi_h(z) := h^{-1} \Big[\phi(T^{-1}z) -\phi(y) -\nabla\phi(y)\cdot (T^{-1}z -y)-h\Big]
\quad \mbox{for}\quad z\in \tilde U_h.
\]
Then $B_{k_0} \subset \tilde U_h\equiv S_{\tilde\phi_h}(0,1)\subset B_{k_0^{-1}} $, $1-\eps \leq \det D^2\tilde\phi_h \leq 1+\eps$ in $\tilde U_h$ and 
$\tilde\phi_h =0$ on $\partial \tilde U_h$.
By \cite[Theorem 3.3.10]{G}, there exists $\eta_{0}= \eta_{0}(n,\rho)>0$ such that
$$S_{\tilde\phi_h}(x, t)\Subset \tilde U_h~\text{ for all}~ x\in S_{\tilde\phi_h}(0, 1/2)~\text{and}~ t\leq \eta_{0}.$$
Now, let
$$\tilde{D}_{s}^{\frac{1}{2}} :=\{x\in S_{\tilde\phi_h}(0, 1/2): S_{\tilde\phi_h}(x, t)\subset B(x, s\sqrt{t}), ~\forall t\leq \eta_{0}\}.$$
Then, by \cite[Theorem~2.8]{GN2}, we obtain
\begin{equation*}
|S_{\tilde\phi_h}(0, 1/2)\setminus \tilde D^{\frac12}_{s}|
\leq \frac{|\tilde U_h|}{(C\epsilon)^2}\, s^{-p_\eps},
\end{equation*}
where $p_\eps := -\dfrac{\ln \sqrt{C \epsilon}}{\ln
M}$ with $C, M>0$ is a constant depending only on $n$ and $\rho$. Let 
\[
\tilde A_\sigma := \left\{\bar z\in \tilde U_h: \tilde\phi_h(z)\geq
\tilde\phi_h(\bar z) + \nabla \tilde\phi_h(\bar z)\cdot (z-\bar z) + \sigma\, |z-\bar z|^2, \quad \forall z\in \tilde U_h\right\}.
\]
Since $\tilde D^{\frac12}_{s} = S_{\tilde\phi_h}(0, 1/2) \cap \tilde A_{s^{-2}}$ by \cite[Theorem 6.2.2]{G}, we can rewrite the above inequality as 
\begin{equation}\label{interior-est}
|T\big(S_{\phi}(y, h/2)\big)\setminus \tilde A_{s^{-2}}|
\leq C(\eps,n,\rho) \, s^{-p_\eps}.
\end{equation}

Let us relate $\tilde A_{s^{-2}}$ to $\lA_\sigma$. Since $|x-\bar x|\leq \|A_h^{-1}\|\, | A_h (x-\bar x)|\leq k^{-1} h^{1/2} |\log h|\,  |Tx - T\bar x|$, 
we have
\begin{align*} 
\tilde A_{s^{-2}} 
&= T \left\{\bar x\in S_\phi(y, h): \tilde\phi_h(Tx)\geq
\tilde\phi_h(T\bar x) + \nabla \tilde\phi_h(T\bar x)\cdot (T x-T\bar x) + s^{-2}\, |T x-T\bar x|^2, \, \forall x\in S_\phi(y, h)\right\}\\
&\subset T \left\{\bar x\in S_\phi(y, h): \phi(x)\geq
\phi(\bar x) + \nabla \phi(\bar x)\cdot ( x-\bar x) + (k^{-1} s |\log h|)^{-2}\, | x-\bar x|^2, \, \forall x\in S_\phi(y, h)\right\}\\
&\subset T\big(S_\phi(y, h) \cap \lA_{(k^{-1} s |\log h|)^{-2}}\big). 
\end{align*}
We infer from this and  \eqref{interior-est} that
\begin{equation*}
|S_\phi(y, h/2)\setminus   \lA_{(k^{-1} s |\log h|)^{-2}}|
\leq C(\eps,n,\rho)  |\det T|^{-1} \, s^{-p_\eps}= C(\eps,n,\rho) \, h^{n/2} \, s^{-p_\eps}\quad \forall s>0,
\end{equation*}
or equivalently,
\begin{equation*}
|S_\phi(y, h/2)\setminus   \lA_{s^{-2}}|
\leq C(\eps,n,\rho) \, h^{n/2} |\log h|^{p_\eps} \, s^{-p_\eps}\quad \forall s>0.
\end{equation*}
Thus the summation  $I$ in  \eqref{eq:covering1} can be estimated as follows
\begin{align}\label{est-item-I}
I
\leq  C(\eps, n, \rho) s^{-p_\eps} \sum_{k=0}^\infty \sum_{i\in \calF_{c 2^{-k}}}{\bar{h}(y_i)^{n/2} |\log \bar{h}(y_i)|^{p_\eps} }
&\leq  C(\eps,n,\rho)  s^{-p_\eps} \sum_{k=0}^\infty {(c 2^{-k})^{n/2} |\log (c 2^{-k-1})|^{p_\eps} M_{c 2^{-k}}}\nonumber\\
&\leq  C \, s^{-p_\eps} \sum_{k=0}^\infty {(c 2^{-k})^{1/2} |\log (c 2^{-k-1})|^{p_\eps} }
\leq  C \, s^{-p_\eps}.
\end{align}
Note that $C$ depends on $\e, n, \rho$ and $\|\partial\Omega\|_{C^{1,1}}$, and we have used the bound \eqref{Md-est} for 
$M_d$ to obtain the third inequality. 

Next let us estimate the summation $II$ corresponding to  sections $S_{\phi}(y_i, \bar{h}(y_i)/2)$ with $\bar h (y_i) >c$. Since 
the family $\{S_\phi(y_i, \delta_0 \bh(y_i))\}$ is disjoint, we infer from the lower bound on volume of sections and $\Omega\subset B_{1/\rho}$ that 
\[
\#\{i: \bh(y_i)> c\} \leq C(n,\rho).
\]
Also, by using the standard normalization for interior sections and  \cite[Theorem~2.8]{GN2} we get
\begin{equation*}
|S_\phi(y_i, \bar{h}(y_i)/2)\setminus   \lA_{s^{-2}}|
\leq C(\eps,n,\rho)  \, s^{-p_\eps} \quad \mbox{for all $i$ with}\quad \bh(y_i)> c.
\end{equation*}
Therefore,
\begin{equation}\label{est-item-II}
II \leq  \#\{i: \bh(y_i)> c\} \, \big[ C(\eps,n,\rho)  \, s^{-p_\eps} \big]\leq  C(\eps,n,\rho)  \, s^{-p_\eps} \quad \forall s>0.
\end{equation}
By combining  \eqref{eq:covering1}, \eqref{est-item-I} and \eqref{est-item-II} we obtain
\begin{align*}
\big|\Omega\setminus \lA_{s^{-2}}\big|
&\leq  I + II
\leq  C(\eps,n,\rho, \|\partial\Omega\|_{C^{1,1}}) \, s^{-p_\eps}=  C(\epsilon,n,\rho, \|\partial\Omega\|_{C^{1,1}}) \quad s^{\dfrac{\ln \sqrt{C \epsilon}}{\ln
M}}.
\end{align*}
\end{proof}
The proof of Theorem~\ref{thm:powerdecayMA}  can also be employed to give the proof of Proposition \ref{Initial-Estimate}.
\begin{proof}[Proof of Proposition \ref{Initial-Estimate}]
Let $\{S_{\phi}(y_{i},\bar{h}(y_i)/2)\}$ be the sequence of sections covering $\Omega$ given by Lemma~\ref{Vitali-MA}. Then we have
\begin{multline}\label{eq:covering2} 
\big|\Omega\setminus G_{\beta}(u,\Omega)\big| 
\leq \sum_{i:\, \bh(y_i) >c}\big|S_{\phi}(y_{i},\frac{\bh(y_i)}{2})\setminus G_{\beta}(u,\Omega)\big|
+ \sum_{k=0}^\infty \sum_{i\in \calF_{c 2^{-k}}}{\big|S_{\phi}(y_{i},\frac{\bh(y_i)}{2})\setminus G_{\beta}(u,\Omega)\big| }.
\end{multline}
By using \cite[Proposition 3.4]{GN2} and arguing as in estimating the term $II$ in the proof of Theorem~\ref{thm:powerdecayMA}, we see that there exist constants $C,\tau>0$ 
depending only on $n, \lambda, \Lambda$ and $\rho$ with $\tau<1/2$ such that
\begin{equation}\label{term1-small-decay}
\sum_{i:\, \bh(y_i) >c}\big|S_{\phi}(y_{i},\frac{\bh(y_i)}{2})\setminus G_{\beta}(u,\Omega)\big|
\leq \sum_{i:\, \bh(y_i) >c}{\frac{C}{\beta^\tau}} 
= \frac{C}{\beta^\tau}  \, \#\{i: \bh(y_i)> c\} 
 \leq \frac{C}{\beta^\tau}.
\end{equation}
To estimate the last expression in \eqref{eq:covering2}, let us  consider a prototype section $S_{\phi}(y, h)$ with $h:=\bar{h}(y)\leq c$. 
We now define the rescaled domains $\tilde \Omega_{h}, \tilde{U}_h$ and rescaled functions $\tilde \phi_{h}$,
$\tilde u_{h}$ and $\tilde f_h$ as in Subsection~\ref{rescale-sec} that {\it preserve
the $L^{\infty}$-norm in a section tangent to the boundary}.
Then
 \begin{equation}
 \|\tilde f_h\|_{L^n(\tilde U_h)}
 =h^{1/2} \|f\|_{L^n(S_{\phi}(y, h))}\leq  h^{1/2} \|f\|_{L^n(\Omega)} \leq 1.
 \label{c_function}
 \end{equation}
 Therefore, we can apply \cite[Proposition 3.4]{GN2} to obtain for $T(x): = h^{-1/2} A_h(x-y)$
 \begin{equation*}
 \big|T(S_{\phi}(y, h/2)) \setminus G_{\beta}(\tilde u_h,\tilde \Omega_h, \tilde\phi_h)\big|=
\big| S_{\tilde\phi_h}(0,1/2) \setminus G_{\beta}(\tilde u_h,\tilde \Omega_h, \tilde\phi_h)\big| \leq
\dfrac{C}{\beta^\tau}\quad \mbox{for all}\quad  \beta>0.
\end{equation*}
But as $\tilde u_h \in C^1(\tilde U_h)$ and 
$d_{\tilde \phi_h}(Tx, T \bar x)^2 =h^{-1} d(x, \bar x)^2$
for all $x,\bar x\in \Omega$, we get
\begin{align*}
T(S_{\phi}(y, h/2)) \cap G_{\beta}(\tilde u_h,\tilde \Omega_h, \tilde\phi_h)=
 T\Big( S_{\phi}(y, h/2)\cap G_{\beta h^{-1}}( u, \Omega) \Big).
\end{align*} 
Thus we infer from the above inequality that
\begin{equation*}
 \big|S_{\phi}(y, h/2) \setminus G_{\beta h^{-1}}( u, \Omega) \big| \leq \dfrac{C}{\beta^\tau} |\det T|^{-1} = \dfrac{C}{\beta^\tau} h^{\frac{n}{2}},
\end{equation*}
or equivalently,
\begin{equation*}
 \big|S_{\phi}(y,h/2) \setminus G_{\beta }( u, \Omega) \big| 
\leq  \dfrac{C}{\beta^\tau} h^{\frac{n}{2} -\tau} \quad \mbox{for all}\quad  \beta>0.
\end{equation*}
This together with  the estimate \eqref{Md-est} for $M_d$  yields
\begin{eqnarray}\label{term2-small-decay}
\sum_{k=0}^\infty \sum_{i\in \calF_{c 2^{-k}}}{\big|S_{\phi}(y_{i}, \bh(y_i)/2)\setminus G_{\beta}(u,\Omega)\big| } 
\leq \frac{C}{\beta^\tau} \sum_{k=0}^\infty \sum_{i\in \calF_{c2^{-k}}}{\bh(y_i)^{\frac{n}{2} -\tau} }
&\leq& \frac{C}{\beta^\tau} \sum_{k=0}^\infty (c2^{-k})^{\frac{n}{2} -\tau} M_{c2^{-k}}
\nonumber\\ &\leq&  \frac{C'}{\beta^\tau} \sum_{k=0}^\infty (c2^{-k})^{\frac{1}{2} -\tau} \leq \frac{C'}{\beta^\tau}
\end{eqnarray}
provided that $\tau <1/2$. Here $C'$ also depends on $\|\partial\Omega\|_{C^{1,1}}$. The desired estimate is now obtained by combining \eqref{eq:covering2}, \eqref{term1-small-decay} and \eqref{term2-small-decay}.
\end{proof}

To prove Proposition \ref{power-decay-at-boundary}, we use the following localized version at the boundary of 
Lemma~\ref{Vitali-MA}.  
\begin{lemma}
 \label{covering_rk}
Assume $(\Omega, \phi, U)\in \mathcal{P}_{\lambda, \Lambda, \rho, \kappa, \ast}$ and let $w$ be the solution to (\ref{1_MA}).
Let $\psi$ denote one of the functions $\phi$ and $w$. Then
 there exists a sequence of disjoint sections $\{S_{\psi}(y_{i}, \delta_{0}\bar{h}(y_i))\}_{i=1}^\infty$, where $\delta_0=\delta_0(n,\lambda,\Lambda)$, $y_i\in U\cap B_{c^2}$ and $S_{\psi}(y_{i},\bar{h}(y_i)) $ is the maximal interior
 section of $\psi$ in $U$, such that
\begin{equation}U\cap B_{c^2}\subset \bigcup_{i=1}^{\infty} S_{\psi}(y_{i}, \frac{\bh(y_i)}{2}).
\label{ccovering}
\end{equation}
Moreover, we have
\begin{equation}S_{\psi}(y_{i},\bar{h}(y_i))\subset U\cap B_c,\quad \bar{h}(y_i)\leq c.
 \label{csection}
\end{equation}
If we let $M^{loc}_{d}$ denote the number 
of sections $S_{\psi}(y_{i}, \bar{h}(y_i)/2)$ such that $d/2<\bar{h}(y_i)\leq d\leq c$, then
\begin{equation}\label{Md-est-loc}
 M^{loc}_{d}\leq C_{b} d^{\frac{1}{2}-\frac{n}{2}}
\end{equation}
for some constant $C_{b}$ depending only on $\rho, n, \lambda, \Lambda$ and $\|\partial\Omega\cap B_{\rho}\|_{C^{1,1}}.$
\end{lemma}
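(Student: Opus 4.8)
The plan is to mimic the proof of the Vitali covering Lemma~\ref{Vitali-MA} (which is \cite[Lemma~2.3]{S3}) but to localize everything inside $U\cap B_{c^2}$, using the fact that for $(\Omega,\phi,U)\in\mathcal{P}_{\lambda,\Lambda,\rho,\kappa,\ast}$ the triple behaves, near $0$, like the global situation of \eqref{global-tang-int}--\eqref{global-sep}. First I would observe that $\psi$ (being either $\phi$ or, by Remark~\ref{quad-rk}, the solution $w$ of \eqref{1_MA}) satisfies in $U$ the hypotheses of the Localization Theorem~\ref{main_loc} at every point of $\partial U\cap B_c$ and separates quadratically from its tangent planes there; together with property $(vi)$ of the class $\mathcal{P}$ this gives, for every $y\in U\cap B_{c^2}$, that the maximal interior section $S_\psi(y,\bar h(y))$ satisfies $\bar h(y)\le c$ and $S_\psi(y,\bar h(y))\subset U\cap B_c$, which is exactly \eqref{csection}. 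This is where the constraint $r\le c^6$-type smallness enters: one must check that the center being in $B_{c^2}$ forces the whole maximal section to stay in $B_c$, and this is precisely the content of Lemma~\ref{sep-lem}(e) transplanted to the class $\mathcal{P}$ (for $w$ one invokes Remark~\ref{quad-rk} in place of Lemma~\ref{sep-lem}(a)).

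Next I would extract the covering \eqref{ccovering}. For each $y\in U\cap B_{c^2}$ consider the shrunken section $S_\psi(y,\bar h(y)/2)$; by the engulfing property (Theorem~\ref{engulfing2}(a)) applied to $\psi$ these sections have the Vitali property: if two such sections intersect, the one of smaller height is contained in a fixed dilate of the larger. Applying the Besicovitch/Vitali selection argument of \cite[Lemma~2.3]{S3} verbatim — now over the index set $U\cap B_{c^2}$ rather than all of $\Omega$ — produces a countable disjoint subfamily $\{S_\psi(y_i,\delta_0\bar h(y_i))\}$ with $\delta_0=\delta_0(n,\lambda,\Lambda)$ and $\bigcup_i S_\psi(y_i,\bar h(y_i)/2)\supset U\cap B_{c^2}$. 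The volume estimates of Theorem~\ref{engulfing2}(b), valid for sections of $\psi$ of height $\le c_\ast$, guarantee $|S_\psi(y_i,\delta_0\bar h(y_i))|\ge C_1(\delta_0\bar h(y_i))^{n/2}$, which is what makes the selection terminate.

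Finally, for the counting bound \eqref{Md-est-loc} I would repeat the argument that gave \eqref{Md-est} in the proof of Theorem~\ref{small-d2}. By Proposition~\ref{tan_sec} (applicable since $\psi$ satisfies the Localization Theorem hypotheses on $\partial U\cap B_c$), any maximal section $S_\psi(y_i,\bar h(y_i))$ with $d/2<\bar h(y_i)\le d\le c$ is tangent to $\partial U$ and satisfies $\mathrm{dist}(y_i,\partial U)\le k_0^{-1}\bar h(y_i)^{1/2}\le k_0^{-1}d^{1/2}$, so $S_\psi(y_i,\delta_0\bar h(y_i))$ lies in the boundary strip $D_{Cd^{1/2}}=\{x\in\overline U:\mathrm{dist}(x,\partial U)\le Cd^{1/2}\}$. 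Since property $(iv)$ of $\mathcal{P}$ tells us $\partial U\cap\{\psi<1\}$ is a $C^{1,1}$ graph with norm bounded by $\kappa$ (equivalently, controlled by $\|\partial\Omega\cap B_\rho\|_{C^{1,1}}$ after the identification in $(ii)$), this strip has measure $|D_{Cd^{1/2}}|\le C_\ast d^{1/2}$. Combining with the disjointness and the lower volume bound $|S_\psi(y_i,\delta_0\bar h(y_i))|\ge C d^{n/2}$ yields $M^{loc}_d\cdot Cd^{n/2}\le C_\ast d^{1/2}$, i.e. \eqref{Md-est-loc}. The main obstacle is the bookkeeping needed to transfer Lemma~\ref{sep-lem}(e) and the boundary-strip volume estimate from the concrete rescaled setting to the abstract class $\mathcal{P}_{\lambda,\Lambda,\rho,\kappa,\ast}$, and to handle the case $\psi=w$ where only the quadratic separation of Remark~\ref{quad-rk} (not the full Localization Theorem package) is available a priori; but since $w$ inherits the same boundary behavior as $\phi$ on $\partial U$ and has bounded Monge--Amp\`ere measure $\det D^2w=1$, the engulfing, volume, and Localization-type estimates all go through for $w$ as well.
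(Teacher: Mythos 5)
Your proposal is correct and follows essentially the same route as the paper's proof: transfer Lemma~\ref{sep-lem}(e) to the class $\mathcal{P}$ via Remark~\ref{quad-rk} and Proposition~\ref{tan_sec} to obtain \eqref{csection} and the strict convexity (ellipsoid-equivalence of maximal sections) of $\psi$ in $U\cap B_c$, run Savin's Vitali-type selection based on engulfing to get \eqref{ccovering}, and then count the dyadic-height families using disjointness, the lower volume bound and the boundary-strip volume estimate exactly as in the derivation of \eqref{Md-est}. The one caveat is that the engulfing and volume estimates should be quoted in their interior form for strictly convex solutions with pinched Monge--Amp\`ere measure (which is precisely why $\delta_0=\delta_0(n,\lambda,\Lambda)$), rather than via Theorem~\ref{engulfing2}, which is stated under the global hypotheses \eqref{global-tang-int}--\eqref{global-sep} and is not literally applicable to $w$ on $U$ (nor to $\phi$ in the abstract class $\mathcal{P}_{\lambda,\Lambda,\rho,\kappa,\ast}$); since all sections entering the selection argument are interior sections contained in $U\cap B_c$, this interior version suffices, as you in effect observe in your closing paragraph.
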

\begin{proof}
 By Remark~\ref{quad-rk}, we can use Proposition~\ref{tan_sec} to get the same conclusion as in Lemma~\ref{sep-lem}(e) for 
 sections of $\psi$ with centers in $U\cap B_{c^2}$. All these sections thus satisfy \eqref{csection} and are equivalent to ellipsoids. In particular, $\psi$ is strictly convex in
 $U\cap B_c.$
 Furthermore,
 $$S_{\psi}(y_{i},\bar{h}(y_i)) \subset \Big\{x\in B_c\cap U: \dist(x, \p\Omega\cap \p U)\leq 2k_0^{-1} \bar{h}(y_i)^{1/2}\Big\}.$$
 With this in mind and assuming that the sequence  $\{S_{\psi}(y_{i}, \delta_{0}\bar{h}(y_i))\}_{i=1}^\infty$ is disjoint and satisfies \eqref{ccovering}, we argue similarly 
as in deriving the estimate \eqref{Md-est} for $M_d$  to obtain \eqref{Md-est-loc}.

 It remains to establish the covering \eqref{ccovering}. The crucial point 
 in the proof of Lemma~\ref{Vitali-MA} is the engulfing property of interior sections which hold for strictly convex solution to the Monge-Amp\`ere
 equation with bounded right hand side. By our discussion above, $\psi$ is strictly convex in $U\cap B_c$ and thus we obtain \eqref{ccovering}. For 
 completeness, we include the proof here, taken almost verbatim from \cite{S3}. By the engulfing property of interior sections of strictly convex solution to the Monge-Amp\`ere
 equation with bounded right hand side, we can choose $\delta_0$ depending only on $n, \lambda, \Lambda$ with the following property. If $y, z\in B_{c^2}\cap U$
 with $$S_{\psi}(y, \delta_0\bar{h}(y))\cap S_{\psi}(z, \delta_0\bar{h}(z))\neq \emptyset~\text{and}~ 2\bar{h}(y) \geq \bar{h}(z)$$
 then
 $$S_{\psi}(z, \delta_0\bar{h}(z)) \subset S_{\psi}(y, \bar{h}(y)/2).$$
 We choose $S_{\psi}(y_1, \delta_0\bar{h}(y_1))$ from all sections $S_{\psi}(y, \delta_0\bar{h}(y))$, $y\in U\cap B_{c^2}$ such that
 $$\bar{h}(y_1)\geq \frac{1}{2}\sup_{y}\bar{h}(y)$$
 then choose $S_{\psi}(y_2, \delta_0\bar{h}(y_2))$ as above but only from the remaining sections $S_{\psi}(y, \delta_0\bar{h}(y))$ that are disjoint from
 $S_{\psi}(y_1, \delta_0\bar{h}(y_1))$, then $S_{\psi}(y_3, \delta_0\bar{h}(y_3))$, etc. Consequently, we easily obtain
 $$U\cap B_{c^2}\subset \bigcup_{y\in U\cap B_{c^2}}S_{\psi}(y, \delta_0\bar{h}(y))\subset  \bigcup_{i=1}^{\infty}S_{\psi}(y_i, \delta_0\bar{h}(y_i)).$$
\end{proof}
\begin{proof}[Proof of Proposition \ref{power-decay-at-boundary}] 
Our proof is similar to that of Proposition~\ref{Initial-Estimate} using Lemma~\ref{covering_rk}. In the proof of Proposition~
\ref{Initial-Estimate}, we replace $\Omega\setminus G_{\beta}(u, \Omega)$ by 
$(U\cap B_{c^2})\setminus G_{\beta}(u, \Omega)$, the covering of $\Omega$ using Lemma~\ref{Vitali-MA} by the covering of $U\cap B_{c^2}$ using Lemma~\ref{covering_rk}.
By \eqref{csection}, the first term of the right hand side of \eqref{eq:covering2} disappears. For the second term of the right hand side of \eqref{eq:covering2}, we estimate
as in the rest of the proof of Proposition~\ref{Initial-Estimate}. Note that, since all sections in the covering for 
$U\cap B_{c^2}$ satisfy $S_{\phi}(y_i, \bar{h}(y_i))\subset B_c\cap U$, instead of \eqref{c_function}, we now have
$$\|\tilde f\|_{L^n(T(S_{\phi}(y, h)))}
 =h^{1/2} \|f\|_{L^n(S_{\phi}(y, h))}\leq  h^{1/2} \|f\|_{L^n(U\cap B_c)} \leq 1.$$
 In \eqref{term2-small-decay}, we replace $M_d$ by $M^{loc}_d$ and use \eqref{Md-est-loc} to estimate it. The conclusion  of Proposition~\ref{power-decay-at-boundary} follows. Note that
 by \eqref{small-sec}, we have $S_{\phi}(0, r)\subset U\cap B_{c^2}$ if $r\leq c^6$ and the last remark of the proposition follows.
\end{proof}
\subsection{Global stability of cofactor matrices}
\label{cofactor_sec}
In this subsection, we prove that, under suitable geometric conditions,
the cofactor matrices of the Hessian matrices of two convex 
functions defined on the same domain are close if their Monge-Amp\`ere measures and boundary 
values are close in the $L^{\infty}
$ norm. 

We first start with a stability result at the boundary
for the second derivatives and the cofactor matrices of functions in the class $\mathcal{P}$.
\begin{proposition}\label{global-convergence}
Assume $(\Omega, \phi, U)\in \mathcal{P}_{1-\e, 1+ \e, \rho, \kappa, \ast}$. Let $w\in C(\overline{U})$ be  the convex solution to 
\begin{equation*}
\left\{\begin{array}{rl}
\det D^2 w &=1 \qquad \mbox{ in}\quad U\\
w  &=\phi \ \ \ \ \ \ \ \mbox{ on}\quad \partial U.
\end{array}\right.
\end{equation*}
 Then the following statements hold.
\begin{myindentpar}{1cm}
(i) For any $ p>1$, there exist $\epsilon_0=\epsilon_0(p, n,\rho)>0$  
and $C =C(p,n,\rho,\kappa)>0$  such that
$$\|D^{2}\phi-D^{2} w\|_{L^{p}(B_{c^2}\cap U)}\leq  C \e^{\frac{\delta}{n (2p - \delta)}}\quad \mbox{for all}\quad \e\leq \e_0.$$
(ii)  Assume  in addition that $(\Omega, \phi, U)\in \mathcal{P}_{1-\e, 1+ \e, \rho, \kappa, \alpha}$.  Then for any $q\geq 1$, there exist
 $\epsilon_0=\epsilon_0(q, n,\rho)>0$  
and $C =C(q,n,\rho,\kappa,\alpha)>0$   such that
$$\|\Phi-\calW\|_{L^{q}(B_{c^2}\cap U)}\leq  C \e^{\frac{(n-1)\delta}{n (2nq - \delta)}}\quad \mbox{for all}\quad \e\leq \e_0.$$
\end{myindentpar}
Here $\delta=\delta(n, \rho)>0$, and $\Phi, \calW$ are the matrices of cofactors of $D^2\phi$ and $D^2 w$, respectively.
\end{proposition}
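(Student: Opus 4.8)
The plan is to obtain $L^{p}$‑closeness of $D^{2}\phi$ and $D^{2}w$ (hence of the cofactor matrices) by interpolating a \emph{quantitative but weak} estimate---closeness in a small universal $L^{\delta}$ norm, with $\delta=\delta(n,\rho)$ the Guti\'errez--Tournier $W^{2,\delta}$ exponent of Theorem~\ref{GT_loc}---against a \emph{qualitative but strong} one, namely boundedness in $L^{2p}$. The first ingredient is the $L^{\infty}$‑stability of the Monge--Amp\`ere equation: since $\det D^{2}\phi=g$ with $\|g-1\|_{L^{\infty}(U)}\le\e$, $\det D^{2}w=1$, and $\phi=w$ on $\partial U$, a standard comparison argument---sandwiching $\phi$ and $w$ between the convex solutions of $\det D^{2}v_{\pm}=1\pm\e$ with boundary data $\phi|_{\partial U}$, and invoking the Aleksandrov estimate with $\|g-1\|_{L^{1}(U)}\le C\e$---gives $\|\phi-w\|_{L^{\infty}(U)}\le C(n,\rho)\,\e^{1/n}$.

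\textbf{The weak estimate, which is the main obstacle.} The core of the proof is to show
\[
\|D^{2}\phi-D^{2}w\|_{L^{\delta}(B_{c^2}\cap U)}\le C\,\e^{1/n}.
\]
This is the affine‑invariant analogue of the strong $L^{1}_{\mathrm{loc}}$‑convergence of Hessians under convergence of Monge--Amp\`ere measures: $\phi$ and $w$ are $L^{\infty}$‑close with Monge--Amp\`ere measures close to $1$, and one must turn this into weak‑norm closeness of their Hessians \emph{with an explicit rate}. The difference $v:=\phi-w$ vanishes on $\partial U$ and solves $M^{ij}v_{ij}=g-1$ in $U$, where $M:=\int_{0}^{1}\mathrm{cof}\big((1-t)D^{2}w+tD^{2}\phi\big)\,dt$, and the Minkowski determinant inequality gives $\det M\ge(1-\e)^{n-1}$; the naive attempt is a Guti\'errez--Tournier‑type $W^{2,\delta}$ bound $\|D^{2}v\|_{L^{\delta}}\le C(\|v\|_{L^{\infty}}+\|g-1\|_{L^{n}})\le C\e^{1/n}$, but since $M$ is not literally the cofactor matrix of the Hessian of a single convex function, Theorem~\ref{GT_loc} does not apply off the shelf. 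The route I would take is to localize: cover $B_{c^2}\cap U$ by the maximal interior sections $S_{\phi}(y_{i},\bar h(y_{i}))\subset U\cap B_c$ of Lemma~\ref{covering_rk}, on each such section compare $\phi$ with the solution $w_{S_i}$ of $\det D^{2}w_{S_i}=1$ with the same boundary values on $\partial S_i$, normalize (so that the section becomes a domain of bounded eccentricity on which $w_{S_i}$ is uniformly convex and $C^{2,\alpha}$ by Caffarelli's interior estimates), apply a quantitative $W^{2,1+\sigma}$‑stability estimate of De Philippis--Figalli--Savin / Schmidt type in the normalized picture, undo the normalization, and sum using the count $M^{loc}_{d}\le C_{b}\,d^{(1-n)/2}$ of Lemma~\ref{covering_rk}. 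This summation is delicate because normalizing a section of height $\bar h(y_i)$ amplifies an $L^{\infty}$‑defect by the factor $\bar h(y_i)^{-1}$; I expect this---treating a thin boundary layer, where one uses smallness of \emph{volume} rather than of the pointwise defect, separately from the bulk---to be the main technical burden of the whole proposition.

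\textbf{The strong estimate, and interpolation.} Next one shows $\|D^{2}\phi\|_{L^{2p}(B_{c^2}\cap U)}+\|D^{2}w\|_{L^{2p}(B_{c^2}\cap U)}\le C(p,n,\rho,\kappa)$ for $\e\le\e_{0}(p)$. For $w$: $\det D^{2}w\equiv 1$ and, by Remark~\ref{quad-rk}, $w$ separates quadratically from its tangent planes on the $C^{1,1}$ portion $\partial U\cap B_c$, so Savin's global $W^{2,p}$ estimates for the Monge--Amp\`ere equation \cite{S3} give the bound (in the situation of part (ii) one has instead the stronger $\|D^{2}w\|_{L^{\infty}(B_c\cap U)}\le c_{0}^{-1}$ and $c_{0}I_{n}\le D^{2}w\le c_{0}^{-1}I_{n}$ from (vii)). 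For $\phi$: since the quasi‑distance $d_{\phi}$ is generated by $\phi$ itself, $\phi\in G_{\beta}(\phi,U,\phi)$ for every $\beta\ge 2$, so Lemma~\ref{distribution-function} with $u=\phi$ reduces the $L^{2p}$ bound for $D^{2}\phi$ to a power‑decay bound for $|(B_{c^2}\cap U)\setminus A^{loc}_{s^{-2}}|$, and such a bound---with decay rate in $s$ tending to $\infty$ as $\e\to 0$---follows, for $\e$ small depending on $p$, from the covering of Lemma~\ref{covering_rk} and the interior power‑decay estimates behind Theorem~\ref{thm:powerdecayMA}. Setting $a:=\tfrac{2(p-\delta)}{2p-\delta}$, so that $\tfrac1p=\tfrac{1-a}{\delta}+\tfrac{a}{2p}$, H\"older's interpolation inequality then yields
\[
\|D^{2}\phi-D^{2}w\|_{L^{p}(B_{c^2}\cap U)}\le\|D^{2}\phi-D^{2}w\|_{L^{\delta}(B_{c^2}\cap U)}^{\,1-a}\,\|D^{2}\phi-D^{2}w\|_{L^{2p}(B_{c^2}\cap U)}^{\,a}\le C\,\e^{\frac{\delta}{n(2p-\delta)}},
\]
which is (i).

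\textbf{From Hessians to cofactors.} For (ii) we are in $\mathcal{P}_{1-\e,1+\e,\rho,\kappa,\alpha}$, so by (vii) $\calW=(\det D^{2}w)(D^{2}w)^{-1}=(D^{2}w)^{-1}$ has $|\calW|\le c_{0}^{-1}$ in $B_c\cap U$, while $\det D^{2}\phi=g\ge 1-\e$ forces $|(D^{2}\phi)^{-1}|\le C\,|D^{2}\phi|^{n-1}$, the least eigenvalue of $D^{2}\phi$ being $\ge(1-\e)|D^{2}\phi|^{1-n}$. From the identity $(D^{2}\phi)^{-1}-(D^{2}w)^{-1}=-(D^{2}\phi)^{-1}(D^{2}\phi-D^{2}w)(D^{2}w)^{-1}$, together with $\Phi-\calW=g\big[(D^{2}\phi)^{-1}-(D^{2}w)^{-1}\big]+(g-1)(D^{2}w)^{-1}$ and $1-\e\le g\le 1+\e$, one gets
\[
\|\Phi-\calW\|_{L^{q}(B_{c^2}\cap U)}\le C\,\big\|\,|D^{2}\phi|^{n-1}\,\big\|_{L^{q_{1}}(B_{c^2}\cap U)}\,\|D^{2}\phi-D^{2}w\|_{L^{q_{2}}(B_{c^2}\cap U)}+C\e,
\]
with $\tfrac1q=\tfrac1{q_{1}}+\tfrac1{q_{2}}$. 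Choosing $q_{2}=\tfrac{2nq+(n-2)\delta}{2(n-1)}$, so that $2q_{2}-\delta=\tfrac{2nq-\delta}{n-1}$, and $q_{1}$ correspondingly large, the strong estimate controls $\big\|\,|D^{2}\phi|^{n-1}\,\big\|_{L^{q_{1}}}$ once $\e\le\e_{0}(q)$, while (i) at exponent $q_{2}$ gives $\|D^{2}\phi-D^{2}w\|_{L^{q_{2}}}\le C\,\e^{\frac{\delta}{n(2q_{2}-\delta)}}=C\,\e^{\frac{(n-1)\delta}{n(2nq-\delta)}}$; as the remaining $C\e$ is of higher order, this is the bound in (ii).
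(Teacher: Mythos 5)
Your overall architecture is the same as the paper's: a weak $L^{\delta}$-closeness estimate of order $\e^{1/n}$, uniform $L^{2p}$ bounds for $D^{2}\phi$ and $D^{2}w$ obtained by covering $B_{c^2}\cap U$ with the maximal interior sections of Lemma~\ref{covering_rk} and summing with $M^{loc}_{d}\le C_b d^{(1-n)/2}$, then interpolation; and your part (ii), based on the identity $\Phi-\calW=g[(D^2\phi)^{-1}-(D^2w)^{-1}]+(g-1)(D^2w)^{-1}$ together with $\|D^2w\|_{L^\infty(B_c\cap U)}\le c_0^{-1}$ and H\"older, is a correct (and slightly cleaner) variant of the paper's use of the cofactor-difference inequality from \cite{GN1}; the exponent bookkeeping there checks out. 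The strong estimate is also fine in substance.

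The genuine gap is exactly where you flag it: the weak estimate $\|D^{2}\phi-D^{2}w\|_{L^{\delta}(B_{c^2}\cap U)}\le C\e^{1/n}$ is not proved. Your route has two unfilled holes. First, the "quantitative $W^{2,1+\sigma}$-stability of De Philippis--Figalli--Savin/Schmidt type" you invoke in the normalized sections does not exist as a citable result with an explicit rate: the second-order stability of De Philippis--Figalli is qualitative (compactness, no rate), and the quantitative Sobolev stability stated in this paper (Proposition~\ref{stab-sobolev}) is itself \emph{derived from} the very estimate you are trying to prove, so using it here would be circular. Second, you compare $\phi$ with the local solutions $w_{S_i}$ (same boundary data on $\partial S_i$), whereas the proposition concerns the single global $w$ with data $\phi|_{\partial U}$; passing from $D^{2}w_{S_i}$ to $D^{2}w$ is an extra stability step (two solutions of $\det D^{2}=1$ whose boundary data differ by $O(\e^{1/n})$ on $\partial S_i$), and it is precisely here that the $\bar h(y_i)^{-1}$ amplification you worry about reappears; your proposed fix (a thin boundary layer treated by volume) is only sketched. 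The missing idea that makes the paper's proof work is the one-sided sub/supersolution structure: by concavity of $u\mapsto(\det D^{2}u)^{1/n}$, the difference $v=\phi-w$ satisfies $\mathcal{L}_{w}v\ge -n|g^{1/n}-1|$ and $\mathcal{L}_{\phi}v\le 2n|g^{1/n}-1|$, so one never needs the mixed matrix $M=\int_0^1\mathrm{cof}((1-t)D^2w+tD^2\phi)\,dt$: the \emph{one-sided} Guti\'errez--Tournier $W^{2,\delta}$ estimates apply directly with the genuine potentials $w$ (to control $(D^{2}v)^{+}$) and $\phi$ (to control $(D^{2}v)^{-}$), in each section of the coverings by sections of $w$ and of $\phi$ respectively; combined with the maximum principle bound $\|v\|_{L^{\infty}(U)}\le C\|g^{1/n}-1\|_{L^{n}(U)}\le C\e^{1/n}$, the rescaled-section estimates are summed exactly as in Theorem~\ref{small-d2}, where the factor $h^{n/(2\delta)-1}|\log h|^{2}$ produced by undoing the normalization is absorbed because $\delta<1/2$ and $M^{loc}_{d}\le C_b d^{(1-n)/2}$. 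Without this (or a genuinely quantitative substitute for your black box plus the $w_{S_i}\to w$ step), the proof of part (i), and hence of part (ii), is incomplete.
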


\begin{proof} 
(i) Our conclusion follows from the following claims.\\
{\bf Claim 1.} There exist $\epsilon_0=\epsilon_0(p, n,\rho)>0$ small 
and $C_0 =C_0(p,n,\rho,\kappa)>0$  such that
$$\|D^{2}\phi\|_{L^{2p}(B_{c^2}\cap U)} + \|D^{2} w\|_{L^{2p}(B_{c^2}\cap U)}\leq C_{0}\quad \mbox{whenever}\quad \e\leq \e_0.
$$
{\bf Claim 2.} There exist $\delta=\delta(n,\rho)\in (0,1/2)$ and $C=C(n,\rho,\kappa)>0$
such that
\begin{equation}\|D^{2}\phi-D^{2} w\|_{L^{\delta}(B_{c^2}\cap U)}\leq C \e^{1/n}\quad \mbox{for all}\quad \e<\frac{1}{2}.
 \label{small-exp}
\end{equation}
Indeed, let $\theta\in (0, 1)$ be such that
$$\frac{1}{p} =\frac{\theta}{2p} + \frac{1-\theta}{\delta}.$$
Then $1-\theta = \delta/ (2p -\delta)$ and by the interpolation inequality we get
$$ \|D^{2}\phi-D^{2} w\|_{L^{p}(B_{c^2}\cap U)} \leq \|D^{2}\phi-D^{2} w\|^{\theta}_{L^{2p}
(B_{c^2}\cap U)}
\|D^{2}\phi-D^{2} w\|^{1-\theta}_{L^{\delta}(B_{c^2}\cap U)} \leq C \e^{\frac{1-\theta}{n}} =  C \e^{\frac{\delta}{n (2p - \delta)}}.$$ 
We now turn to the proofs of the claims. 

{\bf Claim 1} is essentially Savin's global $W^{2, p}$ estimates for the Monge-Amp\`ere equations \cite{S3}. For the proof in our setting, we 
use Lemma~\ref{covering_rk} and follow his arguments. For completeness, we include
the proof here. Let $\psi$ denote one of the functions $\phi$ and $w$. Then by  Lemma~\ref{covering_rk}, 
 there exists a sequence of disjoint sections $\{S_{\psi}(y_{i}, \delta_{0}\bar{h}(y_i))\}_{i=1}^\infty$, where $y_i\in U\cap B_{c^2}$ and $S_{\psi}(y_{i},\bar{h}(y_i)) $ is the maximal interior
 section of $\psi$ in $U$, such that
\begin{equation*}U\cap B_{c^2}\subset \bigcup_{i=1}^{\infty} S_{\psi}(y_{i}, \frac{\bh(y_i)}{2}).
\label{c_covering}
\end{equation*}
Moreover, we have
\begin{equation*}S_{\psi}(y_{i},\bar{h}(y_i))\subset U\cap B_c,\quad\bar{h}(y_i)\leq c.
 \label{c_section}
\end{equation*}
We will prove that:
There exist $\epsilon_0=\epsilon_{0}(p, \rho, n)>0$ small and $C= C(p, \rho, n)>0$ such that for $\e\leq \e_{0}$, we have
\begin{equation}
\int_{S_{\psi}(y, \frac{\bar{h}(y)}{2})} \abs{D^2 \psi}^{2p}\leq C \,\bh(y)^{\frac{n}{2}}|\log \bar{h}(y)|^{4p}  \quad \forall y\in U\cap B_{c^2}.
\label{MA_section}
\end{equation}
Given this, we can complete the proof of {\bf Claim 1} as follows. We have
\begin{eqnarray}
 \int_{U\cap B_{c^2}}\abs{D^2\psi}^{p}\leq \sum_{i=1}^\infty \int_{S_{\psi}(y_{i}, \frac{\bar{h}(y_{i})}{2})} \abs{D^2 \psi}^{2p} 
 =\sum_{k=0}^\infty \sum_{i\in \calF_{c2^{-k}}}\int_{S_{\psi}(y_{i}, \frac{\bar{h}(y_{i})}{2})} \abs{D^2 \psi}^{2p},
\label{MA_section2}
 \end{eqnarray}
where $\mathcal{F}_{d}$ is the family of sections $S_{\psi}(y_{i}, \frac{\bar{h}(y_{i})}{2})$ such that $d/2<\bar{h}(y_i)\leq d\leq c.$ 
By \eqref{MA_section}, we have for each $S_{\psi}(y_{i}, \frac{\bar{h}(y_{i})}{2})\in \mathcal{F}_{d}$,
$$\int_{S_{\psi}(y, \frac{\bar{h}(y)}{2})} \abs{D^2 \psi}^{2p}\leq C \abs{\log d}^{4p} \abs{S_{\psi} (y_i, \delta_0 \bar{h}(y_i))}$$
and since
$$S_{\psi} (y_i, \delta_0 \bar{h}(y_i))\subset \{x\in B_c\cap U: \dist(x, \p\Omega\cap \p U)\leq 2k_0^{-1} d^{1/2}\} $$
are disjoint, we find
$$\sum_{i\in \calF_{d}}\int_{S_{\psi}(y_{i}, \frac{\bar{h}(y_{i})}{2})} \abs{D^2 \psi}^{2p}\leq C_1\abs{\log d}^{4p} d^{1/2}$$
where $C_1$ now depends also on $\kappa$ which is the upper bound for $\|\p\Omega\cap B_{c}\|_{C^{1,1}}$.
Therefore, {\bf Claim~1} easily follows from \eqref{MA_section2} by adding these inequalities for $d= c2^{-k}$, $k=0, 1, \dots$

It remains to prove (\ref{MA_section}). Let $h:= \bar{h}(y)$. Then $h\leq c.$
By applying Proposition~\ref{tan_sec}  
to $S_{\psi}(y, h)$, we find that it is equivalent to an ellipsoid $E_h$, i.e.,
$$k_0 E_h \subset S_{\psi}(y, h)-y \subset k_{0}^{-1} E_h,$$
where
$E_h := h^{1/2}A_{h}^{-1}B_1$ with $\det A_{h}=1$ and  $\|A_{h}\|, \, \|A_h^{-1} \| \le C |\log h|.$
We use the following rescalings similar to those in Subsection~\ref{rescale-sec}:
$$\tilde{\Omega}_{h}  := h^{-1/2} A_{h} (\Omega-y),$$
and  for $ x\in \tilde{\Omega}_{h}$
$$\tilde{\psi}_{h}(x) := 
h^{-1} \left[\psi ( y + h^{1/2} A^{-1}_{h} x)-\psi(y)-\nabla\psi(y)\cdot(h^{1/2}A^{-1}_h x)-h\right].$$
Then
$$B_{k_0}\subset S_{\tilde{\psi}_{h}} (0, 1)\equiv h^{-1/2} A_{h} \big(S_{\phi}(y, h) -y\big)\subset B_{k_0^{-1}}.$$
We have
$$\det D^{2} \tilde{\psi}_{h} (x)= \det D^2 \psi( y + h^{1/2} A^{-1}_{h} x)\quad \mbox{and}\quad \tilde{\psi}_{h}= 0 \,\mbox{ on }\, \partial S_{\tilde{\psi}_{h}} (0, 1).$$

For simplicity, we denote $\tilde{S}_{t}(0) := S_{\tilde{\psi}_{h}} (0, t)$ for $t>0.$
If $\psi=\phi$ then by Caffarelli's interior $W^{2,p}$ estimates for the Monge-Amp\`ere equation \cite{C3}, we have
$$\int_{\tilde{S}_{\frac{1}{2}}(0))}\abs{D^2 \tilde\psi_{h}}^{2p}\leq C$$
if $\e\leq \e_{0}$ small depending only on $p, \rho$ and $n$. If $\psi=w$ then as $\det D^2 w=1$, the above inequality obviously holds.
Using the fact
$$D^{2} \psi( y + h^{1/2} A^{-1}_{h} x) = A_{h}^{t} \, D^{2} \tilde{\psi}_{h}(x) \, A_{h},$$
we obtain (\ref{MA_section}) from
\begin{eqnarray*}
\int_{S_{\phi}(y,\frac{h}{2})}|D^{2} \psi(z)|^{2p} \, dz&=& h^{\frac{n}{2}} 
\int_{\tilde{S}_{\frac{1}{2}}(0)} |A_{h}^{t} \, D^{2} \tilde{\psi}_{h}(x) \, A_{h}|^{2p}\, dx\\
&\leq &  C \,h^{\frac{n}{2}} \, |\log h|^{4p} \int_{\tilde{S}_{\frac{1}{2}}(0)} |D^{2} \tilde{\psi}_{h}(x)|^{p}\ \leq C \,h^{\frac{n}{2}} \, |\log h|^{4p}.
\end{eqnarray*}

Finally, we verify {\bf Claim 2} by proving (\ref{small-exp}). As in \cite[Lemma 3.4]{GN1}, we note that the difference $v:= \phi-w$ is a subsolution
(supersolution) of linearized Monge-Amp\`ere equations with bounded right hand side, corresponding to the potentials $w$ and 
$\phi$ respectively. We cover $U\cap B_{c^2}$ by sections of $w$ and $\phi$ using Lemma \ref{covering_rk}. In each of these sections, we can use the
one-sided $W^{2,\delta}$ estimates of Guti\'errez-Tournier \cite{GT}. Then, adding these estimates as in the proof of 
Theorem~\ref{small-d2}, we get \eqref{small-exp}. The details are as follows.

 Consider the operator $\mathcal{M}u:= (\det D^{2}u)^{1/n}$ and its linearized operator
$$\hat{\mathcal{L}}_{u} v:= \frac{1}{n} (\det D^{2}u)^{1/n} \trace \big((D^{2} u)^{-1} D^{2} v\big).$$
Notice that $\hat{\mathcal{L}}_{u} v$ and the operator $\mathcal{L}_{u} v$ defined in \eqref{LMA-eq} are related by
$$ \mathcal{L}_{u} v= n (\det D^{2}u)^{\frac{n-1}{n}} \hat{\mathcal{L}}_{u}v.$$
Let $v:= \phi -w$ and $g:= \det D^2\phi.$ Since $\mathcal{M}$ is concave, we obtain
$$g^{1/n}-1 = \mathcal{M}\phi -\mathcal{M}w \leq \hat{\mathcal{L}}_{w} v$$
and hence
\begin{equation}
\label{sub-w}
\mathcal{L}_{w} v = n (\det D^{2}w)^{\frac{n-1}{n}} \hat{\mathcal{L}}_{w}v\geq -n |g^{1/n}-1|.
\end{equation}
We also have
$\hat{\mathcal{L}}_{\phi} v \leq \mathcal{M}\phi - \mathcal{M} w \leq |g^{1/n}-1|$
and thus
\begin{equation}
\label{sup-phi}
\mathcal{L}_{\phi} v = n (\det D^{2}\phi)^{\frac{n-1}{n}} \hat{\mathcal{L}}_{\phi}v\leq n (1+\e)^{\frac{n-1}{n}} |g^{1/n}-1|\leq 2n
 |g^{1/n}-1|.
\end{equation}
On the other hand, it follows from the maximum principle (\cite[Lemma 3.1]{H09}) that
\begin{eqnarray}
\label{v-max}
\|v\|_{L^{\infty}(U)}\leq C_{n} \diam(U)
\|g^{1/n}-1\|_{L^{n}(U)}.
\end{eqnarray}
We cover $U\cap B_{c^2}$ by sections of $w$ using Lemma~\ref{covering_rk}. From \eqref{sub-w} and \eqref{v-max},  we can use  Guti\'errez-Tournier's
one-sided $W^{2,\delta}$ estimates \cite{GT} instead of Theorem~\ref{GT_loc} in each of these sections to estimate the $L^{\delta}$ norm
of $(D^2 v)^{+}$.  After that, we argue as in the proof of Theorem~\ref{small-d2}, and taking into account Lemma~\ref{covering_rk} again
 to obtain $\delta_{1}=\delta_1(n,\rho)\in (0,1/2)$ and $C_{1}=C_1(n,\rho,\kappa)>0$ such that
\begin{multline}
\label{d2-plus}
\|(D^2 v)^{+}\|_{L^{\delta_{1}}(U\cap B_{c^2})} \leq C_{1}\Big( \|v\|_{L^{\infty}(U\cap B_c)} + 
\|(\mathcal{L}_{w}v)^{-}\|_{L^{n}(U\cap B_c)}\Big) \leq C_{1} \|g^{1/n}-1\|_{L^{n}(U)}.
\end{multline}
Similarly, from \eqref{sup-phi}, \eqref{v-max} and by covering $U\cap B_{c^2}$ by sections of $\phi$, we obtain
\begin{multline}
\label{d2-minus}
\|(D^2 v)^{-}\|_{L^{\delta_{2}}(U\cap B_{c^2})} \leq C_{2}\Big(\|v\|_{L^{\infty}(U\cap B_c)} +
\|(\mathcal{L}_{\phi}v)^{+}\|_{L^{n}(U\cap B_c)}\Big) \leq C_{2} \|g^{1/n}-1\|_{L^{n}( U)}.
\end{multline}
Let 
$\delta:=\min\{\delta_{1}, \delta_{2}\}.$ Then, from \eqref{d2-plus} and \eqref{d2-minus}, we obtain \eqref{small-exp} as desired since
\begin{equation*}
\|D^2 v\|_{L^{\delta}(U\cap B_{c^2})} \leq C \|g^{1/n}-1\|_{L^{n}(U)} \leq C \e^{1/n}.
\end{equation*}
(ii) The key to the proof is the following estimate
\begin{multline}
\label{cof-key}
\|\Phi-\calW\|_{L^{q}(U\cap B_{c^2})}  \leq C_{n}\left(\e + \|D^2 w\|^{n-1}_{L^{\infty}(U\cap B_{c^2})}
 \|D^2\phi-D^2 w\|^{n-1}_{L^{qn}(U\cap B_{c^2})}\right)
\|D^2 \phi\|^{n-1}_{L^{qn}(U\cap B_{c^2})}
\end{multline}
which can be deduced from the proof of Lemma 3.5 in \cite{GN1}. 

As in {\bf Claim 1} in the proof of part (i), we have
\begin{equation}
\label{global-phi}
\|D^2 \phi\|_{L^{q n}(U\cap B_{c^2})} \leq C_{0}\quad\mbox{for all}\quad \e\leq \e_0,
\end{equation}
where $\e_{0}=\e_0(q,n,\rho)>0$ small and $C_0 =C_0(q,n,\rho,\kappa)>0$.\\
On the other hand, by (vii) in the definition of the class $\mathcal{P}$, we have
\begin{equation}
\label{global-w}
\|D^2 w\|_{L^{\infty}(U\cap B_{c^2})} \leq C_{1}(n,\alpha,\rho).
\end{equation}
Putting \eqref{cof-key}--\eqref{global-w} together, we obtain for $\e\leq\e_0$
\begin{equation*}
\label{cof-final}
\|\Phi-\calW\|_{L^{q}(U\cap B_{c^2})} \leq C_{n} (\e + C_{1}^{n-1}  \|D^2\phi-D^2 w\|^{n-1}_{L^{qn}(U\cap B_{c^2})})
 C_{0}^{n-1}.
\end{equation*}
By applying  part (i) of this proposition to $p= qn,$ we then get the desired conclusion.
\end{proof}
We also obtain the following global 
stability of matrices of cofactors.
\begin{lemma} (Global stability of cofactor matrices)\label{convergence-of-cofactors}
Let $\Omega\subset\R^{n}$ be a uniformly convex domain satisfying (\ref{global-tang-int}) and $\|\partial\Omega\|_{C^3}\leq 1/\rho$.
For any  $q\geq 1$, there exist 
$C, \e_0>0$ depending only on $q$, $n$ and $\rho$ with the following property.
If $\phi,\,w\in C(\overline \Omega)$ are convex functions satisfying
\begin{equation*}
\left\{\begin{array}{rl}
1-\epsilon\!\!\leq\! \det D^2 \phi \!\!\!\!\!&\leq 1+\epsilon\quad \mbox{in}\quad \Omega\\
\!\!\phi\!\!\!\!\!&=0 \ \ \quad \quad\mbox{on}\quad\partial\Omega
\end{array}\right.
\qquad \mbox{and}\qquad \left\{\begin{array}{rl}
 \det D^2 w \!\!\!\!\!&= 1 \quad
 \mbox{in}\quad \Omega\\
\!\!w\!\!\!\!\!&=0 \quad
\mbox{on}\quad\partial\Omega,
\end{array}\right.
\end{equation*}  
then for some small constant $\delta>0$ depending only on $n$ and $\rho$, we have
\[
\|\Phi-\calW\|_{L^q(\Omega)}\leq C \e^{\frac{(n-1)\delta}{n (2nq - \delta)}}\quad \mbox{for all}\quad \e\leq \e_0.
\]

\end{lemma}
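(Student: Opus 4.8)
The plan is to transport the proof of Proposition~\ref{global-convergence}(ii) from a boundary chart to the whole domain $\Omega$, replacing the localized covering of Lemma~\ref{covering_rk} by the global covering of $\Omega$ by maximal interior sections in Lemma~\ref{Vitali-MA}. As a preliminary step I would record that both $\phi$ and $w$ satisfy \eqref{global-tang-int}--\eqref{global-sep} with constants depending only on $n$ and $\rho$: the condition \eqref{global-tang-int} and the bound $\|\partial\Omega\|_{C^{3}}\le 1/\rho$ are in the hypotheses (so in particular $\partial\Omega\in C^{1,1}$), and the quadratic separation \eqref{global-sep} holds by Proposition~\ref{pro:quadsep}, applied to $\phi$ with $\lambda = 1-\e\ge 1/2$ and $\Lambda = 1+\e\le 3/2$, and to $w$ with $\lambda=\Lambda=1$ (note $\phi|_{\partial\Omega}=w|_{\partial\Omega}=0\in C^{3}$). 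Since $\det D^{2}\phi$ is pinched between $1/2$ and $3/2$, all the universal constants below may be taken independent of $\e$.

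The estimate rests on three ingredients, each a global version of a fact used in the proof of Proposition~\ref{global-convergence}. First, a pointwise cofactor inequality: arguing as in the proof of Lemma~3.5 of \cite{GN1}, and using $|\det D^{2}\phi-\det D^{2}w|\le\e$, one has
\[
\|\Phi-\calW\|_{L^{q}(\Omega)}\le C_{n}\Big(\e + \|D^{2}w\|^{n-1}_{L^{\infty}(\Omega)}\,\|D^{2}\phi-D^{2}w\|^{n-1}_{L^{qn}(\Omega)}\Big)\|D^{2}\phi\|^{n-1}_{L^{qn}(\Omega)}.
\]
Second, global second-derivative bounds: Savin's global $W^{2,p}$ estimates for the Monge--Amp\`ere equation \cite{S3} give $\|D^{2}\phi\|_{L^{2qn}(\Omega)}\le C_{0}(q,n,\rho)$ once $\e\le\e_{0}(q,n,\rho)$, while Savin's boundary $C^{2,\alpha}$ estimates \cite{S2} together with the interior Pogorelov estimate give $\|D^{2}w\|_{L^{\infty}(\Omega)}\le C_{1}(n,\rho)$; by the triangle inequality $\|D^{2}\phi-D^{2}w\|_{L^{2qn}(\Omega)}\le C_{2}(q,n,\rho)$ as well. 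Third, global $W^{2,\delta}$ closeness: put $v:=\phi-w$, which vanishes on $\partial\Omega$; since $u\mapsto(\det D^{2}u)^{1/n}$ is concave one has, with $g:=\det D^{2}\phi$, the inequalities $\mathcal{L}_{w}v\ge -n|g^{1/n}-1|$ and $\mathcal{L}_{\phi}v\le 2n|g^{1/n}-1|$ in $\Omega$, and $\|v\|_{L^{\infty}(\Omega)}\le C_{n}\,\diam(\Omega)\,\|g^{1/n}-1\|_{L^{n}(\Omega)}\le C\,\e^{1/n}$ by the maximum principle \cite[Lemma~3.1]{H09}. Covering $\Omega$ by the maximal interior sections $S_{\phi}(y_{i},\bar{h}(y_{i}))$ of Lemma~\ref{Vitali-MA}, applying Guti\'errez--Tournier's one-sided $W^{2,\delta}$ estimates \cite{GT} in each section (to $(D^{2}v)^{+}$ through the potential $w$ and to $(D^{2}v)^{-}$ through the potential $\phi$), and summing exactly as in the proof of Theorem~\ref{small-d2} with the section-counting bound \eqref{Md-est}, one obtains $\delta=\delta(n,\rho)\in(0,1/2)$ and $\|D^{2}\phi-D^{2}w\|_{L^{\delta}(\Omega)}\le C(n,\rho)\,\e^{1/n}$.

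To finish, I would interpolate between the $L^{\delta}$ bound from the third ingredient and the $L^{2qn}$ bound from the second: with $\theta\in(0,1)$ determined by $1/(qn)=\theta/(2qn)+(1-\theta)/\delta$, so that $1-\theta=\delta/(2qn-\delta)$,
\[
\|D^{2}\phi-D^{2}w\|_{L^{qn}(\Omega)}\le \|D^{2}\phi-D^{2}w\|_{L^{2qn}(\Omega)}^{\theta}\,\|D^{2}\phi-D^{2}w\|_{L^{\delta}(\Omega)}^{1-\theta}\le C\,\e^{\frac{\delta}{n(2nq-\delta)}}.
\]
Substituting this bound and those of the second ingredient into the pointwise inequality of the first, and noting that the exponent $\frac{(n-1)\delta}{n(2nq-\delta)}$ is less than $1$ (so that for $\e\le\e_{0}$ the term linear in $\e$ is dominated), we obtain $\|\Phi-\calW\|_{L^{q}(\Omega)}\le C\,\e^{\frac{(n-1)\delta}{n(2nq-\delta)}}$ with $C,\e_{0}$ depending only on $q,n,\rho$, which is the claim.

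The one step that is not automatic is the third ingredient: since $v=\phi-w$ is only a sub/supersolution of a possibly degenerate linearized Monge--Amp\`ere equation rather than a solution, Theorem~\ref{small-d2} cannot be quoted directly and one must globalize Guti\'errez--Tournier's one-sided $W^{2,\delta}$ estimates up to $\partial\Omega$ --- this is where the geometry enters, through the covering of Lemma~\ref{Vitali-MA}, the counting bound \eqref{Md-est} (which uses $\partial\Omega\in C^{1,1}$), and the ABP estimate for $\|v\|_{L^{\infty}(\Omega)}$. The rest --- the cofactor algebra, the cited Monge--Amp\`ere estimates, and the interpolation --- is routine; the only care needed is to keep every constant uniform in $\e$, which the pinching $1/2\le\det D^{2}\phi\le 3/2$ guarantees.
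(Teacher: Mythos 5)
Your proposal is correct and follows essentially the same route as the paper: the paper proves this lemma by rerunning the argument of Proposition~\ref{global-convergence} with $U=\Omega$, quadratic separation from Proposition~\ref{pro:quadsep}, the global covering of Lemma~\ref{Vitali-MA} with the counting bound \eqref{Md-est}, the one-sided Guti\'errez--Tournier estimates for $v=\phi-w$, and interpolation against the global $W^{2,p}$ bound for $D^2\phi$, exactly as you do. The only cosmetic difference is the source of the bound $\|D^2w\|_{L^\infty(\Omega)}\le C$: you invoke Savin's boundary $C^{2,\alpha}$ estimates plus interior Pogorelov, while the paper cites the classical Caffarelli--Nirenberg--Spruck result; either is adequate here since $\partial\Omega\in C^3$ is uniformly convex and $w=0$ on $\partial\Omega$.
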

\begin{proof}
The proof follows the lines of the proof of Proposition~\ref{global-convergence} using Proposition \ref{pro:quadsep}. Here we choose $U=\Omega$, replace $U\cap B_{c^2}$
by $\Omega$ and use the covering Lemma~\ref{Vitali-MA}. The estimate \eqref{global-w} is now a classical result of Caffarelli-Nirenberg-Spruck \cite{CNS}.
\end{proof}

\subsection{Global $W^{2, 1 +\eps}$ estimates for convex solutions}
\label{convex_sec}
In this subsection, we establish the global $W^{2, 1 +\eps}$ estimates for convex solutions to the linearized Monge-Amp\`ere equations. These estimates are simple consequence
of the global stability of cofactor matrices in Subsection \ref{cofactor_sec}.
\begin{theorem} Let $\Omega$ be a uniformly convex domain satisfying \eqref{global-tang-int} with $\p\Omega\in C^{3}$. Let $\phi\in
 C(\overline{\Omega})\cap C^{2}(\Omega)$ be a convex function satisfying
$$0<\lambda\leq \det D^{2}\phi \leq\Lambda~\text{ in }~\Omega\quad\mbox{and}\quad \phi =0 ~\text{ on }~\p\Omega.$$
Let $v$ be the convex solution to
\begin{equation*}
\left\{\begin{array}{rl}
\Phi^{ij} v_{ij} &=f \qquad \mbox{ in}\quad \Omega,\\
v  &=0\ \ \ \ \ \ \ \mbox{ on}\quad \partial \Omega,
\end{array}\right.
\end{equation*}
where $f\in L^\infty(\Omega)$.
Then, there exist $\gamma>1$ and $C>0$ depending only on $\lambda, \Lambda, n$ and $\Omega$ such that
\begin{equation}\|D^2 v\|_{L^{\gamma}(\Omega)}\leq C\|f\|_{L^{\infty}(\Omega)}.
\label{21LMA-eq}
\end{equation}
\label{21LMA}
\end{theorem}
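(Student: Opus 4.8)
The plan is to bound $|D^2 v|$ pointwise by $\Delta\phi$ using only the convexity of $v$ and the algebraic structure of the equation, and then to combine this with a global $W^{2,1+\epsilon}$ estimate for $\phi$ itself. For the pointwise bound, observe that since $v$ is convex, $D^2 v\geq 0$ a.e., and $\Phi\geq 0$, so $f=\trace(\Phi\,D^2 v)\geq 0$ a.e. For positive definite $A$ and positive semidefinite $B$ one has $\trace B\leq \lambda_{\min}(A)^{-1}\trace(AB)$, which follows by writing $\trace(AB)=\trace(A^{1/2}BA^{1/2})$ and $\trace B=\trace\big(A^{-1}A^{1/2}BA^{1/2}\big)$. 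Applying this with $A=\Phi$, $B=D^2 v$ and using that the smallest eigenvalue of $\Phi$ equals $\det D^2\phi/\lambda_{\max}(D^2\phi)\geq \lambda/\Delta\phi$, we get $\Delta v\leq \lambda^{-1}\Delta\phi\cdot f\leq \lambda^{-1}\|f\|_{L^\infty(\Omega)}\,\Delta\phi$ a.e.; since $v$ is convex, $|D^2 v|\leq C_n\Delta v$, hence
\[
|D^2 v(x)|\ \leq\ C(n,\lambda)\,\|f\|_{L^\infty(\Omega)}\,\Delta\phi(x)\qquad\text{for a.e. }x\in\Omega .
\]
The matrix AM--GM inequality $\trace(\Phi\,D^2 v)\geq n(\det\Phi)^{1/n}(\det D^2 v)^{1/n}\geq n\lambda^{(n-1)/n}(\det D^2 v)^{1/n}$ also gives $\det D^2 v\leq C(n,\lambda)\|f\|_{L^\infty(\Omega)}^n$; these a priori computations are made rigorous in the usual way, by first proving the estimate for smooth data and then approximating.

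Integrating the pointwise bound yields $\|D^2 v\|_{L^\gamma(\Omega)}\leq C(n,\lambda)\|f\|_{L^\infty(\Omega)}\|D^2\phi\|_{L^\gamma(\Omega)}$ for every $\gamma\geq 1$, so the theorem reduces to the global estimate $\|D^2\phi\|_{L^{1+\epsilon}(\Omega)}\leq C(n,\lambda,\Lambda,\Omega)$ for some $\epsilon>0$. Since $\partial\Omega\in C^3$ and $\phi=0$ on $\partial\Omega$, Proposition~\ref{pro:quadsep} provides the quadratic separation \eqref{global-sep}, so Lemma~\ref{Vitali-MA} gives disjoint sections $S_\phi(y_i,\delta_0\bar h(y_i))$ with $\Omega\subset\bigcup_i S_\phi(y_i,\bar h(y_i)/2)$, and the dyadic count $M_d\leq C_b\,d^{(1-n)/2}$ of \eqref{Md-est} holds because $\partial\Omega\in C^3\subset C^{1,1}$. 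In a section $S_\phi(y,h)$ with $h=\bar h(y)\leq c$, rescale as in the proof of Theorem~\ref{thm:powerdecayMA}: the rescaled $\tilde\phi_h$ solves $\lambda\leq\det D^2\tilde\phi_h\leq\Lambda$ on a normalized domain $\tilde U_h$ with $B_{k_0}\subset\tilde U_h\subset B_{k_0^{-1}}$ by Proposition~\ref{tan_sec}, and $S_{\tilde\phi_h}(0,1/2)\Subset\tilde U_h$. The interior $W^{2,1+\epsilon}$ estimate of De Philippis--Figalli--Savin and Schmidt gives $\|D^2\tilde\phi_h\|_{L^{1+\epsilon}(S_{\tilde\phi_h}(0,1/2))}\leq C(n,\lambda,\Lambda)$, and undoing the rescaling (using $\det A_h=1$, $\|A_h\|\leq C|\log h|$) produces
\[
\int_{S_\phi(y,h/2)}|D^2\phi|^{1+\epsilon}\ \leq\ C(n,\lambda,\Lambda)\,h^{n/2}|\log h|^{2(1+\epsilon)} ,
\]
with \emph{no negative power of $h$}, because $\tilde\phi_h$ is rescaled as a Monge--Amp\`ere solution rather than via the $L^\infty$-preserving rescaling. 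Grouping the sections by dyadic scales $\bar h(y_i)\sim d=c2^{-k}$, the contribution of scale $d$ is at most $C\,M_d\,d^{n/2}|\log d|^{2(1+\epsilon)}\leq C\,d^{1/2}|\log d|^{2(1+\epsilon)}$, and $\sum_{k\geq 0}(c2^{-k})^{1/2}|\log(c2^{-k})|^{2(1+\epsilon)}<\infty$; the finitely many sections with $\bar h(y_i)>c$ are handled by a normalized interior estimate. This gives $\|D^2\phi\|_{L^{1+\epsilon}(\Omega)}\leq C(n,\lambda,\Lambda,\Omega)$, and combining with the first step (with $\gamma=1+\epsilon$) proves \eqref{21LMA-eq}.

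The main obstacle is the global $W^{2,1+\epsilon}$ bound for $\phi$ in the second paragraph: near $\partial\Omega$ one must simultaneously control the shapes of the maximal interior sections --- through the boundary Localization Theorem~\ref{main_loc} and Proposition~\ref{tan_sec} --- and their number at each dyadic scale, and then verify convergence of the resulting series. The decisive point is that this works for $\phi$, whereas the analogous covering argument for a general solution of $\calL_\phi u=f$ only yields $W^{2,\delta}$ with $\delta<1/2$ (Theorem~\ref{small-d2}): rescaling $\phi$ as a Monge--Amp\`ere solution leaves no factor $\bar h^{-1}$ on its Hessian, while the $L^\infty$-preserving rescaling required for a general $u$ does, which is exactly what limits the exponent there.
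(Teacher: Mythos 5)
Your proof is correct, but it takes a genuinely different route from the paper. The paper proves Theorem~\ref{21LMA} by perturbation and stability: it first establishes the Sobolev stability estimate of Proposition~\ref{stab-sobolev} (interpolating a one-sided Guti\'errez--Tournier $W^{2,\delta}$ stability bound, obtained via the concavity of $(\det)^{1/n}$ and the maximum principle, against the global $W^{2,1+\eps}$ bound \eqref{bdr-21} for Monge--Amp\`ere solutions), and then applies it to the pair $\phi$ and $\phi+tv$ with $t=\|f\|_{L^\infty}^{-1}$, using convexity of $v$ and concavity of $\log\det$ to show $0\le \det D^2(\phi+tv)-\det D^2\phi\le \Lambda(e^{1/\lambda}-1)$. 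You instead use convexity of $v$ at the pointwise level: the algebraic inequality $\trace(\Phi\,D^2v)\ge \lambda_{\min}(\Phi)\,\Delta v$ together with $\lambda_{\min}(\Phi)=\det D^2\phi/\lambda_{\max}(D^2\phi)\ge \lambda/\Delta\phi$ gives $|D^2v|\le C(n,\lambda)\|f\|_{L^\infty}\Delta\phi$ a.e., which reduces the theorem to the global bound $\|D^2\phi\|_{L^{1+\eps}(\Omega)}\le C(n,\lambda,\Lambda,\Omega)$ --- exactly the estimate \eqref{bdr-21} that the paper also needs and attributes to Savin's covering argument plus the De Philippis--Figalli--Savin/Schmidt interior estimates; your sketch of that covering argument (Proposition~\ref{pro:quadsep}, Lemma~\ref{Vitali-MA}, Proposition~\ref{tan_sec}, the count \eqref{Md-est}, and the observation that the Monge--Amp\`ere rescaling of $\phi$ produces no $h^{-1}$ factor, unlike the $L^\infty$-preserving rescaling for general $u$) is accurate. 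Your route is more elementary for this particular theorem and even yields the stronger localized statement $|D^2v|\lesssim \|f\|_{L^\infty}\Delta\phi$; what the paper's route buys is Proposition~\ref{stab-sobolev} itself, a quantitative second-order stability estimate of independent interest (a boundary analogue of De Philippis--Figalli), of which Theorem~\ref{21LMA} is then a quick consequence.

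Two small points of rigor. First, your final "made rigorous by approximation" remark is both unnecessary and slightly off: under approximation of $\phi$ and $f$ there is no reason the approximate solutions remain convex, so you should not argue that way; instead, simply note that in this paper solutions are taken in $W^{2,n}_{loc}(\Omega)$, so the equation holds a.e., the Aleksandrov Hessian of the convex function $v$ is a.e.\ defined and positive semidefinite, and the pointwise chain of inequalities (including $f\ge 0$) holds at a.e.\ point directly; integration then gives $\|D^2v\|_{L^{\gamma}(\Omega)}\le C(n,\lambda)\|f\|_{L^\infty(\Omega)}\|D^2\phi\|_{L^{\gamma}(\Omega)}$ with no limiting argument. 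Second, for the finitely many maximal sections with $\bar h(y_i)>c$ you should say explicitly that their John normalizations have universally bounded norms (heights are bounded above universally and $\Omega\subset B_{1/\rho}$), so the interior $W^{2,1+\eps}$ estimate applies after normalization; this is the same point the paper glosses with "standard normalization for interior sections."
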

\begin{remark}
(i) De Phillipis-Figalli-Savin \cite{DPFS} and Schmidt \cite{Sch} discovered the 
interior $W^{2, 1+\eps}$ estimates for convex solution $\phi$ to the Monge-Amp\`ere equation
\begin{equation*}
 \det D^2 \phi =g ~\mbox{ in}\quad \Omega,\quad\text{and}\quad
\phi  =0~\mbox{ on}\quad \partial \Omega,
\end{equation*}
with $0<\lambda\leq g\leq \Lambda$. In these works, the convexity of $\phi$ plays a crucial role, especially in giving a bound for $|D^2 \phi|$ by 
$\Delta \phi.$  Since
$\Phi^{ij}\phi_{ij} = n \det D^2\phi= ng,$
our theorem is a natural extension of De Phillipis-Figalli-Savin's and Schmidt's estimates. \\
(ii) The convexity of $v$ and standard arithmetic-geometric inequality give
$$f = \Phi^{ij}v_{ij} =\trace(\Phi \, D^2v)\geq n (\det \Phi)^{1/n}(\det D^2 v)^{1/n}\geq 0. $$ 
(iii) It would be interesting to remove the convexity of $v$ in the statement of Theorem \ref{21LMA}.
\end{remark}
Now, we proceed with the proof of Theorem \ref{21LMA}. To do this, we first establish the following Sobolev stability result.
\begin{proposition} (Sobolev stability estimates)
Let $\Omega$ be a uniformly convex domain with $\p\Omega\in C^{3}$. Let $\phi_{k}\in C(\overline{\Omega})\cap C^{2}(\Omega)$ 
($k=1,2$) be convex Aleksandrov solutions of
\begin{equation*}
\det D^2 \phi_{k} =g_{k}~\mbox{ in}\quad \Omega,\quad \mbox{and}\quad
\phi_{k}  =0~\mbox{ on}\quad \partial \Omega,
\end{equation*}
with $0<\lambda\leq g_{k}\leq \Lambda$ in $\Omega$. 
Then there exist $\gamma>1$, $\alpha\in (0,1)$ and $C>0$ depending only on $n, \lambda, \Lambda$ and $\Omega$ such that
\begin{equation}
\label{stab-eq}
\|D^2\phi_{1}-D^2\phi_2\|_{L^{\gamma}(\Omega)}\leq C \|g_{1}- g_{2}\|^{\frac{\alpha}{n}}_{L^{1}(\Omega)}.
\end{equation}
\label{stab-sobolev}
\end{proposition}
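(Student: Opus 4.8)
The plan is to mirror the proof of the global stability of cofactor matrices (Proposition~\ref{global-convergence} and Lemma~\ref{convergence-of-cofactors}): first establish a global $W^{2,\delta}$ stability estimate for the difference $v:=\phi_1-\phi_2$ with a small exponent $\delta\in(0,1/2)$, and then interpolate it against a uniform global $W^{2,1+\epsilon_0}$ bound for $\phi_1$ and $\phi_2$ separately so as to reach an exponent $\gamma>1$. First note that, by Proposition~\ref{pro:quadsep}, $\Omega$ and each $\phi_k$ satisfy \eqref{global-tang-int}--\eqref{global-sep} with constants depending only on $n,\lambda,\Lambda$ and $\Omega$, so the section machinery of Section~\ref{data_sec} (Proposition~\ref{tan_sec}, Lemma~\ref{Vitali-MA}, and, since $\p\Omega\in C^{3}$, the counting bound \eqref{Md-est}) applies to $\phi_1$ and $\phi_2$. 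For the small-exponent step I set $g_k:=\det D^2\phi_k$ and use the concavity of $\mathcal{M}u:=(\det D^2u)^{1/n}$ exactly as in \eqref{sub-w}--\eqref{sup-phi}, obtaining pointwise in $\Omega$ that $\mathcal{L}_{\phi_2}v\ge -C_0\,|g_1^{1/n}-g_2^{1/n}|$ and $\mathcal{L}_{\phi_1}v\le C_0\,|g_1^{1/n}-g_2^{1/n}|$ with $C_0=C_0(n,\Lambda)$, while the maximum principle (\cite[Lemma~3.1]{H09}, cf.\ \eqref{v-max}) gives $\|v\|_{L^\infty(\Omega)}\le C\,\|g_1^{1/n}-g_2^{1/n}\|_{L^n(\Omega)}$. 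Covering $\Omega$ by the maximal interior sections of $\phi_1$ (resp.\ $\phi_2$) from Lemma~\ref{Vitali-MA}, rescaling each by the $L^\infty$-norm preserving rescaling in a section tangent to the boundary, applying Guti\'errez--Tournier's one-sided interior $W^{2,\delta}$ estimates \cite{GT} there to bound $(D^2v)^+$ (resp.\ $(D^2v)^-$) as in \eqref{d2-plus}--\eqref{d2-minus}, and summing over dyadic heights using $M_d\le C_b\,d^{(1-n)/2}$ exactly as in the proof of Theorem~\ref{small-d2}, one obtains $\delta=\delta(n,\lambda,\Lambda)\in(0,1/2)$ and $C=C(n,\lambda,\Lambda,\Omega)$ such that
\[
\|D^2v\|_{L^{\delta}(\Omega)}\ \le\ C\,\|g_1^{1/n}-g_2^{1/n}\|_{L^n(\Omega)}\ \le\ C\,\|g_1-g_2\|_{L^n(\Omega)}\ \le\ C\,\|g_1-g_2\|_{L^1(\Omega)}^{1/n},
\]
where the last two inequalities use that $t\mapsto t^{1/n}$ is Lipschitz on $[\lambda,\Lambda]$ and the interpolation $\|g_1-g_2\|_{L^n}\le\|g_1-g_2\|_{L^\infty}^{(n-1)/n}\|g_1-g_2\|_{L^1}^{1/n}$ together with $\|g_1-g_2\|_{L^\infty(\Omega)}\le 2\Lambda$. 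The restriction $\delta<1/2$ is forced by the convergence of the dyadic sum, which is why this step alone cannot reach an exponent $>1$.

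Next I would establish the uniform bound $\|D^2\phi_k\|_{L^{1+\epsilon_0}(\Omega)}\le C$ for $k=1,2$, with $\epsilon_0=\epsilon_0(n,\lambda,\Lambda)>0$ and $C=C(n,\lambda,\Lambda,\Omega)$; this is the analogue of Claim~1 in the proof of Proposition~\ref{global-convergence}, with Caffarelli's interior $W^{2,p}$ estimates (which require $\det D^2\phi_k$ close to a constant) replaced by the interior $W^{2,1+\epsilon}$ estimates of De Phillipis--Figalli--Savin \cite{DPFS} and Schmidt \cite{Sch}. Covering $\Omega$ by the maximal interior sections $S_{\phi_k}(y_i,\bar h(y_i)/2)$ of Lemma~\ref{Vitali-MA}, the affine map $A_{h}$ of Proposition~\ref{tan_sec} (with $h=\bar h(y_i)\le c$, $\det A_{h}=1$, $\|A_{h}\|\le k^{-1}|\log h|$) sends $S_{\phi_k}(y_i,\bar h(y_i))$ to a normalized section $S_{\tilde\phi_{k}}(0,1)$ squeezed between $B_{k_0}$ and $B_{k_0^{-1}}$, with $\lambda\le\det D^2\tilde\phi_{k}\le\Lambda$, $\tilde\phi_{k}=0$ on its boundary and $\min\tilde\phi_{k}=-1$ at the origin, so \cite{DPFS,Sch} give $\|D^2\tilde\phi_{k}\|_{L^{1+\epsilon_0}(S_{\tilde\phi_{k}}(0,1/2))}\le C$. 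Since $D^2\phi_k=A_{h}^{t}\,D^2\tilde\phi_{k}\,A_{h}$ (there is \emph{no} factor of $h$ here, in contrast to the linearized rescalings), changing variables yields $\int_{S_{\phi_k}(y_i,\bar h(y_i)/2)}|D^2\phi_k|^{1+\epsilon_0}\le C\,\bar h(y_i)^{n/2}|\log\bar h(y_i)|^{2(1+\epsilon_0)}$; summing over the family $\mathcal{F}_d$ and using $M_d\le C_b\,d^{(1-n)/2}$ bounds the contribution of the heights $\simeq d$ by $C\,d^{1/2}|\log d|^{2(1+\epsilon_0)}$, whose sum over $d=c\,2^{-j}$ converges for every $\epsilon_0$, and the finitely many sections with $\bar h(y_i)>c$ are treated directly by \cite{DPFS,Sch} as in the proof of Theorem~\ref{small-d2}. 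Consequently $\|D^2v\|_{L^{1+\epsilon_0}(\Omega)}\le\|D^2\phi_1\|_{L^{1+\epsilon_0}(\Omega)}+\|D^2\phi_2\|_{L^{1+\epsilon_0}(\Omega)}\le C$.

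Finally, fixing $\gamma:=1+\epsilon_0/2\in(\delta,1+\epsilon_0)$ and $\theta\in(0,1)$ with $\frac1\gamma=\frac\theta\delta+\frac{1-\theta}{1+\epsilon_0}$, H\"older's inequality and the two previous steps give
\[
\|D^2\phi_1-D^2\phi_2\|_{L^{\gamma}(\Omega)}\ \le\ \|D^2v\|_{L^{\delta}(\Omega)}^{\theta}\,\|D^2v\|_{L^{1+\epsilon_0}(\Omega)}^{1-\theta}\ \le\ C\,\|g_1-g_2\|_{L^1(\Omega)}^{\theta/n},
\]
which is the asserted estimate \eqref{stab-eq} with $\alpha=\theta\in(0,1)$ and $\gamma>1$. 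I expect the main obstacle to be the uniform higher-integrability step: one has to merge the interior $W^{2,1+\epsilon}$ theory of \cite{DPFS,Sch} with the affine-invariant covering and rescaling scheme for boundary sections to produce a genuinely \emph{global} $W^{2,1+\epsilon}$ bound for $\phi_k$ with constants depending only on $n,\lambda,\Lambda$ and $\Omega$; granting this, the remaining two steps are routine adaptations of arguments already carried out in the paper.
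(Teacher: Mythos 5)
Your proposal is correct and follows essentially the same route as the paper: the global $W^{2,\delta}$ stability of $v=\phi_1-\phi_2$ via the concavity of $(\det D^2\cdot)^{1/n}$, the maximum principle and Guti\'errez--Tournier's one-sided estimates summed over the Vitali covering by maximal interior sections, then a global $W^{2,1+\epsilon_0}$ bound for each $\phi_k$ from the De Phillipis--Figalli--Savin/Schmidt interior estimates combined with the Savin-type covering argument, and finally interpolation between the two exponents. The only blemish is a harmless transposition in your ``resp.'' pairing: since $\mathcal{L}_{\phi_2}v\geq -C_0|g_1^{1/n}-g_2^{1/n}|$ and $\mathcal{L}_{\phi_1}v\leq C_0|g_1^{1/n}-g_2^{1/n}|$, the sections of $\phi_2$ should be used to bound $(D^2v)^{+}$ and those of $\phi_1$ to bound $(D^2v)^{-}$, exactly as in \eqref{d2-plus}--\eqref{d2-minus}.
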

\begin{proof}
The interior counterpart of our proposition was established by De Phillipis-Figalli \cite{DPF2}. Here, we will prove the boundary version with a different method. Our proof relies on 
the $W^{2,\delta}$ estimates of Guti\'errez-Tournier \cite{GT} for solutions to the linearized Monge-Amp\`ere equation. \\
First, using Proposition~\ref{pro:quadsep}, \cite[Lemma~3.1]{H09} and arguing as in the proof of \eqref{small-exp} in 
Proposition~\ref{global-convergence}, we find a small $\delta>0$ and $C_{1}>0$ depending 
only on $n, \lambda, \Lambda$ and $\Omega$ such that
\begin{equation}
\label{stab-eq-2}
\|D^2\phi_{1}-D^2\phi_2\|_{L^{\delta}(\Omega)}\leq 
C_{1} \|g^{\frac{1}{n}}_{1}- g^{\frac{1}{n}}_{2}\|_{L^{n}(\Omega)} \leq C_{1} \|g_{1}- g_{2}\|^{\frac{1}{n}}_{L^{1}(\Omega)}.
\end{equation}
Second, using De Phillipis-Figalli-Savin's and Schmidt's interior $W^{2, 1+\eps}$ estimates for solutions to the Monge-Amp\`ere equation
 \cite{DPFS, Sch} and arguing as in \cite{S3}, we obtain the following global $W^{2, 1+\eps}$ estimates
\begin{equation}
\label{bdr-21}
\|D^2\phi_{1}\|_{L^{\gamma_{1}}(\Omega)} + \|D^2\phi_{2}\|_{L^{\gamma_{1}}(\Omega)} \leq C_{2},
\end{equation}
where $\gamma_{1}>1$ and $C_{2}>0$  depend only on $n, \lambda, \Lambda,$ and $\Omega$. \\
 We now choose $\alpha\in (0,1)$ sufficiently close to $0$ so that
$$\frac{1}{\gamma}:= \frac{\alpha}{\delta} + \frac{1-\alpha}{\gamma_{1}}<1,$$
i.e., $\gamma>1.$ 
Then by the interpolation inequality, we obtain
$$\|D^2\phi_{1}-D^2\phi_2\|_{L^{\gamma}(\Omega)}\leq \|D^2\phi_{1}-D^2\phi_2\|^{\alpha}_{L^{\delta}(\Omega)} 
\|D^2\phi_{1}-D^2\phi_2\|^{1-\alpha}_{L^{\gamma_{1}}(\Omega)}$$
which together with \eqref{stab-eq-2} and \eqref{bdr-21} yields  the estimate \eqref{stab-eq}. 
\end{proof}
\begin{proof}[Proof of Theorem \ref{21LMA}] 
For any $t\in (0, \|f\|^{-1}_{L^{\infty}(\Omega)}]$, we have 
$\phi =\phi + tv~\text{on}~\p\Omega$
and, by the convexity of $v$, $\phi\geq \phi + tv~\text{in}~\Omega.$
Thus
$$\lambda\leq \det D^2 \phi \leq \det D^{2} (\phi + tv).$$
Moreover by the concavity of the map $\phi\mapsto \log \det D^2 \phi,$ we obtain
$$\log \det D^2 (\phi + tv)\leq log \det D^2 \phi + t \phi^{ij}v_{ij} = log \det D^2\phi + t\frac{f}{det D^2 \phi}.$$
Therefore,
\begin{eqnarray*}
0\leq \det D^2 (\phi + tv) - \det D^2 \phi \leq (\det D^2 \phi) \,  \big(e^{\frac{tf}{\det D^2\phi}}-1\big)
\leq  \Lambda (e^{\frac{1}{\lambda}}-1) \quad \mbox{in}\quad \Omega.
\end{eqnarray*}
Applying the stability result in Proposition \ref{stab-sobolev}, we can find $\alpha, C>0, \gamma>1$ depending only 
on $n, \lambda, \Lambda$ and $\Omega$ such that
$$\|t D^2 v\|_{L^{\gamma}(\Omega)}\leq C  \|\det D^2 (\phi + tv) - \det D^2 \phi\|^{\frac{\alpha}{n}}_{L^{1}(\Omega)}
\leq C  \| \Lambda (e^{\frac{1}{\lambda}}-1)\|^{\frac{\alpha}{n}}_{L^{1}(\Omega)}.$$
The estimate \eqref{21LMA-eq} follows by taking $t= \|f\|^{-1}_{L^{\infty}(\Omega)}.$\end{proof}

\section{\bf Global H\"older  Estimates and Approximation Lemma}
In this section, we establish global H\"older continuity estimates for solutions to the linearized Monge-Amp\`ere equation under natural assumptions on the domain, Monge-Amp\`ere
measure and H\"older continuous boundary data. We then use these H\"older estimates to prove approximation lemmas allowing us 
to approximate the solution $u$ to 
$\mathcal{L}_{\phi} u =f$
by smooth solutions of linearized Monge-Amp\`ere equations
associated with convex functions $w$ whose Monge-Amp\`ere measures are close to that of $\phi$. 
\subsection{Global H\"older  estimates}
\label{holder_sec}
In this subsection, we derive global H\"older estimates for solutions to the linearized Monge-Amp\`ere equation in convex domains when the right hand side is 
assumed to be in $L^{n}$ and the boundary data is H\"older continuous. These estimates extend the  global H\"older estimates in \cite{L}  where 
the domains are assumed to be uniformly convex.\\

Our first main theorem is concerned with H\"older estimates in a neighborhood of a boundary point. Its precise statement is as follows.
\begin{theorem}\label{global-H}
Assume $\Omega$ and $\phi$  satisfy 
\eqref{global-tang-int}, \eqref{eq_u},\eqref{om_ass} and if $x_0\in \p\Omega\cap B_\rho$ then
\begin{equation}\label{eq:sep-half-domain}
 \rho\abs{x-x_{0}}^2 \leq \phi(x)- \phi(x_{0})-\nabla \phi(x_{0}) \cdot (x- x_{0})
 \leq \rho^{-1}\abs{x-x_{0}}^2, ~\forall x\in\p\Omega.
\end{equation}
Let $u \in C\big(B_{\rho}\cap 
\overline{\Omega}\big) \cap W^{2,n}_{loc}(B_{\rho}\cap 
\Omega)$  be a  solution to 
\begin{equation*}
 \left\{
 \begin{alignedat}{2}
   \Phi^{ij}u_{ij} ~& = f ~&&\text{ in } ~ B_{\rho}\cap \Omega, \\\
u &= \varphi~&&\text{ on }~\p \Omega \cap B_{\rho},
 \end{alignedat} 
  \right.
\end{equation*} 
where $\varphi\in C^{\alpha}(\partial\Omega\cap B_{\rho})$ for some $\alpha\in (0,1)$. Then, there exist constants $\beta, C >0 $ depending only on $\lambda, \Lambda, n, \alpha$ and $\rho$ 
such that 
$$|u(x)-u(y)|\leq C|x-y|^{\beta}\Big(
\|u\|_{L^{\infty}(\Omega\cap B_{\rho})} + \|\varphi\|_{C^\alpha(\partial\Omega\cap B_{\rho})}  + \|f\|_{L^{n}(\Omega\cap B_{\rho})} \Big),~\forall x, y\in \Omega\cap B_{\rho/2}. $$
\end{theorem}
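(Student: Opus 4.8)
The plan is to establish global H\"older continuity near a boundary point by combining two ingredients: an interior H\"older estimate away from $\p\Omega$ coming from the Caffarelli--Guti\'errez Harnack inequality (which applies since $\det D^2\phi$ is pinched between $\lambda$ and $\Lambda$), and a boundary H\"older modulus of continuity obtained by constructing suitable barriers built from the sections $S_\phi(x_0,h)$ at boundary points $x_0$. The key geometric input is the quadratic separation \eqref{eq:sep-half-domain}, which together with the Localization Theorem \ref{main_loc} forces the boundary sections $S_\phi(x_0,h)$, for $x_0\in\p\Omega\cap B_\rho$ and $h$ small, to be comparable to half-ellipsoids of controlled eccentricity; this is exactly what makes barrier arguments effective despite the degeneracy of $\mathcal L_\phi$ near $\p\Omega$.

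First I would treat the behavior near the boundary. Fix $x_0\in\p\Omega\cap B_{\rho/2}$. Using \eqref{eq:sep-half-domain} and Theorem \ref{main_loc} (or its corollary \eqref{small-sec}), the section $S_\phi(x_0,h)$ is trapped between balls of radii $\sim c\,h^{1/2}/|\log h|$ and $\sim C\,h^{1/2}|\log h|$ around $x_0$. On $\partial S_\phi(x_0,h)$ one has $\phi - \ell_{x_0} = h$ where $\ell_{x_0}$ is the tangent plane at $x_0$; hence the convex function $w^-(x):=A\big(\phi(x)-\ell_{x_0}(x)\big) - B|x-x_0|^2$ with suitable universal $A$ and with $B$ small can be arranged so that $\det D^2 w^-$ is large (using the pinching of $\det D^2\phi$ and the quadratic lower bound $\phi-\ell_{x_0}\ge\rho|x-x_0|^2$ on $\p\Omega$), while $\mathcal L_\phi$-subsolution/supersolution comparisons control $u$. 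More precisely, I would build barriers of the form $\ell_{x_0}$-affine plus a multiple of $(\phi-\ell_{x_0})$ plus a small H\"older bump $|x-x_0|^\beta$, use that $\mathcal L_\phi\phi = n\det D^2\phi$ is bounded, that $\mathcal L_\phi$ annihilates affine functions, and that $\mathcal L_\phi(|x-x_0|^\beta)$ has a favorable sign inside small sections (because $\Phi$ is positive semidefinite and the section geometry is controlled), together with the ABP estimate to absorb the $\|f\|_{L^n}$ term. This yields, for $x\in S_\phi(x_0,h)$,
\[
|u(x)-\varphi(x_0)|\le C\Big(h^{\beta_0}+\|\varphi\|_{C^\alpha}h^{\alpha/2}\Big)\Big(\|u\|_{L^\infty(\Omega\cap B_\rho)}+\|\varphi\|_{C^\alpha(\p\Omega\cap B_\rho)}+\|f\|_{L^n(\Omega\cap B_\rho)}\Big),
\]
and combined with $\diam S_\phi(x_0,h)\sim h^{1/2}|\log h|$ one converts this into a genuine H\"older estimate $|u(x)-\varphi(x_0)|\le C|x-x_0|^{\beta}$ for $x\in\Omega\cap B_{\rho/2}$, after a standard iteration over dyadic heights $h=2^{-k}$ that upgrades the power-of-$h$ decay to a clean modulus of continuity (as in \cite{L, LS}).

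Next I would handle two points $x,y\in\Omega\cap B_{\rho/2}$ in the interior regime. Let $r:=|x-y|$. If $\dist(x,\p\Omega)\ge r^{\mu}$ for an appropriately chosen exponent $\mu\in(0,1)$, then a maximal interior section $S_\phi(x,\bar h(x))$ has height $\bar h(x)\gtrsim \dist(x,\p\Omega)^2$, and after the affine normalization that makes this section comparable to $B_1$ (Proposition \ref{tan_sec}), the interior Harnack inequality of Caffarelli--Guti\'errez \cite{CG2} gives interior H\"older continuity of $u$ with a scaling factor; rescaling back produces $|u(x)-u(y)|\le C\,r^{\beta'}(\|u\|_{L^\infty}+\|f\|_{L^n})$ for some $\beta'>0$, where the loss from the $|\log|$ factors and the rescaling of $f$ is absorbed by choosing $\beta$ slightly smaller. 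If instead $\dist(x,\p\Omega)<r^\mu$, then both $x$ and $y$ lie within distance $\lesssim r^\mu$ of $\p\Omega$; picking nearest boundary points and applying the boundary estimate from the previous paragraph (plus $\|\varphi\|_{C^\alpha}$ continuity on $\p\Omega\cap B_\rho$) bounds $|u(x)-u(y)|$ by $C r^{\mu\beta}(\cdots)$. Choosing $\mu$ to balance the two regimes and taking $\beta$ to be the smaller of the resulting exponents completes the argument.

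The main obstacle I anticipate is the barrier construction at the boundary: one must produce sub- and supersolutions for the degenerate operator $\mathcal L_\phi$ that (a) dominate/are dominated by $u$ on the whole boundary of a small section — which splits into the portion on $\p\Omega$, where \eqref{eq:sep-half-domain} and the $C^\alpha$ bound on $\varphi$ are used, and the portion in the interior of $\Omega$, where strict convexity of $\phi$ gives a quantitative gap — and (b) whose Monge--Amp\`ere measures or $\mathcal L_\phi$-values have the correct sign, all with constants uniform as $h\to 0$ despite $\Phi$ degenerating. Managing the logarithmic factors $|\log h|$ coming from $\|A_h\|$ in Theorem \ref{main_loc} so that they do not destroy the H\"older exponent, and the dyadic iteration that turns the one-step decay into a modulus of continuity, are the delicate bookkeeping points; everything else is a routine combination of ABP, the engulfing property (Theorem \ref{engulfing2}), and interior Harnack.
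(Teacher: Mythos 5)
Your overall skeleton coincides with the paper's proof: a boundary H\"older estimate at points of $\p\Omega\cap B_{\rho/2}$ obtained from barriers adapted to the degenerate operator (this is Proposition \ref{local-H} in the paper), combined with the Caffarelli--Guti\'errez interior H\"older estimate on sections normalized via Proposition \ref{tan_sec}, and a gluing step (the paper glues along maximal interior sections tangent to $\p\Omega$ as in \cite{L}, while your dichotomy on $\dist(x,\p\Omega)$ versus $|x-y|^{\mu}$ is an acceptable variant). The gap is in the step you yourself flag as the main obstacle: the barrier construction is not carried out, and the two mechanisms you propose for it do not work as stated. First, the claim that $\mathcal{L}_{\phi}(|x-x_0|^{\beta})$ ``has a favorable sign inside small sections because $\Phi$ is positive semidefinite'' is unfounded: for $0<\beta<2$ the Hessian of $|x-x_0|^{\beta}$ is indefinite (negative in the radial direction, positive tangentially), and near $\p\Omega$ the eigenvalues of $\Phi$ are neither bounded above nor below by universal constants, so $\Phi^{ij}\partial_{ij}(|x-x_0|^{\beta})$ has no controllable sign. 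Second, arranging ``$\det D^2 w^-$ large'' for $w^-=A(\phi-\ell_{x_0})-B|x-x_0|^2$ is the right idea for comparison with the Monge--Amp\`ere equation, but here one needs a supersolution of the \emph{linearized} operator; an isotropic quadratic bump cannot deliver it, since making $\mathcal{L}_{\phi}(B|x-x_0|^2)=2B\,\trace\Phi$ dominate $A\,\mathcal{L}_{\phi}\phi=An\det D^2\phi$ forces $B\gtrsim A\Lambda/\lambda^{(n-1)/n}$, which is incompatible with keeping $w^-\le \phi-\ell_{x_0}$ on $\p\Omega$ via the quadratic separation \eqref{eq:sep-half-domain} (that requires $B\le A\rho$).

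What actually makes the barrier work in the paper (Lemma \ref{key-lem_sup}) is an \emph{anisotropic} quadratic: $w_{\delta}=M_{\delta}x_n+\phi-\tilde\delta|x'|^2-\frac{\Lambda^n}{(\lambda\tilde\delta)^{n-1}}x_n^2$, where the tangential opening $\tilde\delta$ is small (so the quadratic separation on $\p\Omega$ and the lower bound $\phi\ge|x|^3$ on $\Omega\cap\p B_{\delta}$, itself a consequence of the Localization Theorem \ref{main_loc} via \eqref{est-for-phi}, control the boundary values) while the normal coefficient is huge, so that $\det D^2 q=\Lambda^n/\lambda^{n-1}$ and the matrix inequality $\trace(\Phi D^2q)\ge n(\det\Phi\,\det D^2q)^{1/n}\ge n\Lambda\ge \mathcal{L}_{\phi}\phi$ yields $\mathcal{L}_{\phi}w_{\delta}\le -n\Lambda$ without any control on individual eigenvalues of $\Phi$; the linear term $M_{\delta}x_n$ (harmless since $\mathcal{L}_{\phi}x_n=0$) restores nonnegativity on $\p(\Omega\cap B_{\delta})$. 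The ABP estimate then absorbs $\|f\|_{L^n}$ exactly as you envision, and the choice $\e=|x|^{\alpha/(\alpha+3n)}$ produces the boundary exponent. Without this (or an equivalent) construction your boundary estimate, and hence the gluing step, does not go through, so as written the proposal has a genuine gap precisely at the heart of the argument.
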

As an immediate consequence of Theorem~\ref{global-H}, we obtain the following estimates which are 
the global counterparts of Caffarelli-Guti\'errez's interior H\"older estimates for solutions to the linearized Monge-Amp\`ere equation  \cite{CG2}.

\begin{theorem}\label{global-h}
Assume $\Omega$ and $\phi$  satisfy
\eqref{global-tang-int}--\eqref{global-sep}. 
Let $u \in C(\overline{\Omega})\cap W^{2, n}_{loc}(\Omega)$   be a  solution to 
\begin{equation*}
   \Phi^{ij}u_{ij}= f ~\text{ in } ~  \Omega,\quad\mbox{and}\quad
u = \varphi~\text{ on }~\p \Omega,
\end{equation*} 
where $\varphi\in C^{\alpha}(\partial\Omega)$ for some $\alpha\in (0,1)$. Then, there exist constants $\beta, C >0 $ depending only on $\lambda, \Lambda, n, \alpha$ and $\rho$ 
such that 
$$|u(x)-u(y)|\leq C|x-y|^{\beta}\Big(
\|u\|_{L^{\infty}(\Omega)} + \|\varphi\|_{C^\alpha(\partial\Omega)}  + \|f\|_{L^{n}(\Omega)} \Big),~\forall x, y\in \Omega. $$
\end{theorem}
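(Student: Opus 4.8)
The plan is to deduce the global Hölder estimate from the boundary version, Theorem~\ref{global-H}, together with Caffarelli–Gutiérrez's interior Hölder estimate \cite{CG2}, via a standard covering-and-patching argument. First I would observe that under \eqref{global-tang-int}--\eqref{global-sep}, the hypotheses \eqref{eq_u} and \eqref{om_ass}, \eqref{eq:sep-half-domain} of Theorem~\ref{global-H} are met (after a translation and subtraction of a linear function) at every boundary point $x_0\in\p\Omega$, since \eqref{global-sep} is precisely the quadratic separation required, uniformly in $x_0$, and \eqref{global-tang-int} gives the uniform interior-ball condition so that $\Omega$ can be put in the normalized position \eqref{om_ass} at each boundary point. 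Thus for every $x_0\in\p\Omega$ there is a universal radius $\rho>0$ (the one in \eqref{global-tang-int}) such that Theorem~\ref{global-H} applies in $B_\rho(x_0)\cap\Omega$, giving
\[
|u(x)-u(y)|\le C|x-y|^{\beta}\Big(\|u\|_{L^\infty(\Omega)}+\|\varphi\|_{C^\alpha(\p\Omega)}+\|f\|_{L^n(\Omega)}\Big)
\]
for all $x,y\in B_{\rho/2}(x_0)\cap\Omega$, with $\beta,C$ depending only on $\lambda,\Lambda,n,\alpha,\rho$.

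Next I would cover the ``boundary collar'' $\Omega_{\rho/4}:=\{x\in\overline\Omega:\dist(x,\p\Omega)<\rho/4\}$ by finitely many balls $B_{\rho/2}(x_i)$, $x_i\in\p\Omega$ (this is possible with a number of balls bounded in terms of $\rho$ and $n$, since $\Omega\subset B_{1/\rho}$), so that on each piece the displayed Hölder bound holds; a connectedness/chaining argument then upgrades these local bounds to a single Hölder estimate for all pairs $x,y\in\Omega_{\rho/4}$ lying in a common connected component of the collar, and for pairs in different ``far-apart'' pieces one simply uses $|x-y|\ge$ const together with the $L^\infty$ bound. For the interior region $\Omega\setminus\Omega_{\rho/8}$, I would use the interior Hölder estimate of Caffarelli–Gutiérrez in sections $S_\phi(x,t)$: for $x$ with $\dist(x,\p\Omega)\ge\rho/8$ the maximal interior section $S_\phi(x,\bar h(x))$ has height $\bar h(x)\ge c(\rho,\lambda,\Lambda,n)>0$ by Proposition~\ref{tan_sec} (or Theorem~\ref{engulfing2}), so after the usual affine normalization of that section the interior estimate yields $|u(x)-u(y)|\le C|x-y|^{\beta'}(\|u\|_{L^\infty}+\|f\|_{L^n})$ on a fixed fraction of the section, with universal constants; finitely many such sections cover the interior region. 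Taking $\beta$ to be the minimum of the boundary and interior Hölder exponents and absorbing all constants, the two families of estimates patch together: any $x,y\in\Omega$ either both lie in one boundary piece, both lie in one interior section, or are separated by a universal distance (handled by $\|u\|_{L^\infty}$), and in the remaining case one connects them by a bounded chain of overlapping pieces.

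The only mild obstacle is the bookkeeping in the chaining argument — ensuring that the number of pieces needed to connect any two points is bounded by a universal constant (which follows from $\Omega\subset B_{1/\rho}$ and the uniform overlap of the cover), so that iterating the local Hölder inequalities a bounded number of times does not destroy the exponent or blow up the constant. This is routine given the geometry encoded in \eqref{global-tang-int}--\eqref{global-sep}, and no new ideas beyond Theorem~\ref{global-H} and \cite{CG2} are required; indeed the statement is presented in the paper as an ``immediate consequence'' of Theorem~\ref{global-H}.
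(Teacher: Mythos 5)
Your proposal is correct and takes essentially the same route as the paper: Theorem \ref{global-h} is obtained there as an immediate consequence of Theorem \ref{global-H} applied at (finitely many) boundary points, with the interior region handled by Caffarelli--Guti\'errez estimates on maximal interior sections of universally bounded-below height, exactly the patching you describe (and which is already carried out, following \cite{L}, in the proof of Theorem \ref{global-H}). Your covering/chaining bookkeeping is just the standard way of making that ``immediate consequence'' explicit.
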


\bigskip

The key to the proof of Theorem \ref{global-H} is the following boundary H\"older estimates.
\begin{proposition}
Let $\phi$ and $u$ be as in Theorem \ref{global-H}. Then, there exist $\delta, C$ depending only on $\lambda, \Lambda, n,  \alpha, \rho$ 
such that, for any $x_{0}\in\partial\Omega\cap B_{\rho/2}$, we have
$$|u(x)-u(x_{0})|\leq C|x-x_{0}|^{\frac{\alpha}{\alpha +3n}} \Big(
\|u\|_{L^{\infty}(\Omega\cap B_{\rho})} + \|\varphi\|_{C^\alpha(\partial\Omega\cap B_{\rho})}  + \|f\|_{L^{n}(\Omega\cap B_{\rho})} \Big),~\forall x\in \Omega\cap B_{\delta}(x_{0}). $$
 \label{local-H}
\end{proposition}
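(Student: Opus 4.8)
The plan is to prove the estimate by an iteration on dyadic sections of $\phi$ centered at $x_0 \in \p\Omega\cap B_{\rho/2}$, combining the geometry coming from the Localization Theorem~\ref{main_loc} with a maximum-principle (comparison) argument on each section. After subtracting an affine function we may assume $u(x_0)=0$, $\varphi(x_0)=0$, and that $x_0=0$ with $\phi(0)=0$, $\nabla\phi(0)=0$; write $K:=\|u\|_{L^\infty(\Omega\cap B_\rho)}+\|\varphi\|_{C^\alpha(\p\Omega\cap B_\rho)}+\|f\|_{L^n(\Omega\cap B_\rho)}$ and normalize $K=1$. By \eqref{eq:sep-half-domain} the quadratic-separation hypothesis of Theorem~\ref{main_loc} holds at $0$, so for $h\le k$ the section $S_\phi(0,h)$ is comparable to a half-ellipsoid $E_h$ of volume $\omega_n h^{n/2}$, and by \eqref{small-sec} one has $S_\phi(0,h)\subset B^+_{Ch^{1/2}|\log h|}\subset B^+_{h^{1/3}}$ and $B^+_{h^{2/3}}\cap\overline\Omega\subset S_\phi(0,h)$. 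The idea is to show that the oscillation of $u$ over $S_\phi(0,h)$ decays like a power of $h$ as $h\to 0$, and then translate this into a power of $|x|$ via these inclusions.

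The core is a one-step decay lemma: there is a universal $\mu\in(0,1)$ such that for all small $h$,
\[
\operatorname{osc}_{S_\phi(0,h/2)} u \;\le\; \mu\,\operatorname{osc}_{S_\phi(0,h)} u \;+\; C h^{\alpha/(3n)}.
\]
To prove it, work in the rescaled picture $\phi_h(x)=\phi(h^{1/2}A_h^{-1}x)/h$ on $U_h=S_{\phi_h}(0,1)$, which by Lemma~\ref{sep-lem} and \eqref{size-u-h} satisfies the hypotheses of the Localization Theorem at boundary points of $\p U_h\cap B_c$ with universally controlled geometry; the rescaled solution $\tilde u_h(x)=u(h^{1/2}A_h^{-1}x)$ solves $\mathcal L_{\phi_h}\tilde u_h=\tilde f_h$ with $\|\tilde f_h\|_{L^n(U_h)}=h^{1/2}\|f\|_{L^n(S_\phi(0,h))}\le h^{1/2}$, and the boundary datum $\tilde\varphi_h$ inherits a $C^\alpha$ bound with a gain: since $\p\Omega_h\cap B_{2/k}$ is a graph with small $C^{1,1}$ norm and $\operatorname{diam}(U_h)$ is universal, the oscillation of $\tilde\varphi_h$ over $\p U_h\cap\p\Omega_h$ is $\lesssim h^{\alpha/(\text{power})}$ after we use $S_\phi(0,h)\subset B^+_{h^{1/3}}$ to convert the Euclidean $C^\alpha$-modulus into a power of $h$. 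Now compare $\tilde u_h$ (after subtracting its average or the value at a convenient interior point) with barriers built from $\phi_h$ itself: the function $\pm\big(\tilde\phi_h - l\big)$ for suitable affine $l$ is a sub/supersolution of $\mathcal L_{\phi_h}$ since $\mathcal L_{\phi_h}\phi_h = n\det D^2\phi_h$ is bounded, and the linear term $\p U_h\cap\p\Omega_h$ separation \eqref{sep-near0} controls these barriers from the boundary. Feeding in the ABP estimate to absorb $\tilde f_h$ (cost $O(h^{1/2})$) and the Caffarelli--Gutiérrez interior Harnack inequality / oscillation decay for $\mathcal L_{\phi_h}$ on interior sections, one obtains that on $S_{\phi_h}(0,1/2)$ the oscillation of $\tilde u_h$ drops by a universal factor, up to the boundary-data error and the $f$-error — which is exactly the claimed one-step inequality after rescaling back, with the specific exponent $\alpha/(3n)$ coming from bookkeeping the three sources of $h$-powers (graph flatness $h^{1/2}$, section size $h^{1/3}$ in the Hölder modulus, and $f$-term $h^{1/2}$), the smallest of which governs.

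Iterating the one-step lemma along $h_j = k\,2^{-j}$ gives $\operatorname{osc}_{S_\phi(0,h_j)} u \le C h_j^{\beta_0}$ for a universal $\beta_0>0$ (a standard geometric-series argument, choosing $\beta_0<\min\{\log(1/\mu)/\log 2,\ \alpha/(3n)\}$). Finally, for $x\in\Omega\cap B_\delta$ pick $j$ with $h_j\simeq |x|^{3/2}$ (so that $x\in S_\phi(0,h_j)$ by the inclusion $B^+_{h^{2/3}}\cap\overline\Omega\subset S_\phi(0,h)$, while $h_j\lesssim |x|^{3/2}$); then $|u(x)| = |u(x)-u(0)| \le \operatorname{osc}_{S_\phi(0,h_j)}u \le C h_j^{\beta_0}\le C|x|^{3\beta_0/2}$, and restoring the normalization by $K$ and relabeling the exponent as $\alpha/(\alpha+3n)$ (the value that survives the chain of estimates) yields the claim. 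The main obstacle is the one-step decay lemma: one must carefully choose the barriers so that they dominate $\tilde u_h$ on the \emph{whole} boundary $\p U_h$ — both on the flat portion $\p U_h\cap\p\Omega_h$, where the $C^\alpha$ boundary data and the quadratic separation \eqref{sep-near0} are used, and on the interior portion $\p U_h\setminus\p\Omega_h$, where one exploits $\phi_h\ge l_{x_0}+r_0$ (strict convexity, as in the proof of Lemma~\ref{lm:quadratic-separation-U}) — all while keeping every constant universal and tracking precisely which power of $h$ each error term carries.
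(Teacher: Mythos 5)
Your overall scheme (dyadic iteration of an oscillation–decay lemma on boundary sections, then converting section height into Euclidean distance) is genuinely different from the paper's argument, but the step on which everything hinges is not established, and the tools you invoke do not supply it. The claimed one-step inequality $\operatorname{osc}_{S_\phi(0,h/2)}u\le \mu\,\operatorname{osc}_{S_\phi(0,h)}u+Ch^{\alpha/(3n)}$ with a \emph{universal} $\mu<1$ is a boundary growth/oscillation lemma; the Caffarelli--Guti\'errez Harnack inequality is purely interior, and its constants degenerate for points of $S_{\phi_h}(0,1/2)$ approaching $\p\Omega_h$, so it cannot contract the full oscillation of a half-section that touches the boundary. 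The barrier you propose, $\pm(\phi_h-l)$ with $l$ the tangent plane at $0$, is indeed a sub/supersolution, but since $\phi_h-l$ is of size $1/2$ throughout $S_{\phi_h}(0,1/2)$ and only of size $r_0$ on the lateral boundary, the comparison argument it yields is $\operatorname{osc}_{S_{\phi_h}(0,1/2)}u\le C(\text{boundary-data oscillation}+1)$, a bound by a constant rather than a contraction. So the ``main obstacle'' you flag --- building a barrier that is small on the flat boundary portion, of definite size on the lateral boundary, \emph{and} quantitatively strong enough to force decay --- is exactly the content you have not carried out, and without it the iteration never starts. In addition, the final exponent is not derived: your scheme would give $|u(x)-u(0)|\lesssim |x|^{3\beta_0/2}$ with $\beta_0<\min\{\log(1/\mu)/\log 2,\ \alpha/(3n)\}$ depending on the unknown Harnack-type constant $\mu$, and ``relabeling'' this as $\alpha/(\alpha+3n)$ is not legitimate (the stated exponent is the stronger one near $0$).

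For comparison, the paper avoids iteration and any Harnack input altogether: it constructs, at a single scale $\delta$, the explicit supersolution $w_\delta=M_\delta x_n+\phi-\tilde\delta|x'|^2-\frac{\Lambda^n}{(\lambda\tilde\delta)^{n-1}}x_n^2$ with $\tilde\delta=\delta^3/2$ and $M_\delta\sim\delta^{-(3n-3)}$, which satisfies $\mathcal{L}_\phi w_\delta\le -n\Lambda$, is nonnegative on $\p\Omega\cap B_\delta$, and is bounded below by $\delta^3/2$ on $\Omega\cap\p B_\delta$ (Lemma~\ref{key-lem_sup}, using the growth $|x|^3\le\phi\le C|x|^2|\log|x||^2$ from the Localization Theorem). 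Comparing $u(x)-u(0)\pm\e\pm\frac{6}{\delta_2^3}w_{\delta_2}$ via the ABP estimate, with $\delta_2=\e^{1/\alpha}$ tied to the boundary H\"older modulus, gives $|u(x)-u(0)|\le 2\e+C\e^{-3n/\alpha}|x|$, and optimizing $\e=|x|^{\alpha/(\alpha+3n)}$ produces precisely the stated exponent. If you want to pursue your route, you would either have to prove a genuine boundary growth lemma for $\mathcal{L}_\phi$ (which again requires a barrier of the above quantitative type), or simply note that once such a barrier is in hand the iteration is unnecessary.
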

\begin{proof}[Proof of Theorem \ref{global-H}]
The boundary H\"older estimates in Proposition \ref{local-H} combined with the interior H\"older continuity estimates of Caffarelli-Guti\'errez \cite{CG2} and Savin's Localization Theorem
\cite{S1, S2, S3} gives the global H\"older 
estimates in  Theorem \ref{global-H}. The precise arguments are almost the same as the proof  of \cite[Theorem~1.4]{L}. Since \cite[Theorem~1.4]{L} is a global result and our theorem is local,
we indicate some differences in the arguments. It suffices to prove the theorem for $x, y\in B_{c^2}\cap \Omega$. We use the quadratic separation (\ref{eq:sep-half-domain}) and Proposition \ref{tan_sec} to show that 
if $y\in \Omega\cap B_{c^2}$ then the maximal interior section $S_{\phi}(y, \bar{h}(y))$ is contained in $\Omega\cap B_c$ and so tangent to $\p\Omega$ at $x_{0}\in\p\Omega\cap B_{c}$ (see Lemma \ref{sep-lem}(e)).
Using this fact, Caffarelli-Guti\'errez's interior H\"older estimates \cite{CG2} and Proposition \ref{local-H}, we obtain as in \cite{L}
$$\abs{u(z_1)-u(z_2)}\leq \abs{z_1-z_2}^{\beta}\Big(
\|u\|_{L^{\infty}(\Omega\cap B_{\rho})} + \|\varphi\|_{C^\alpha(\partial\Omega\cap B_{\rho})}  + \|f\|_{L^{n}(\Omega\cap B_{\rho})} \Big)~\forall z_1, z_2\in S_{\phi}(y, \frac{\bar{h}(y)}{2}).$$
The rest of the argument is the same as in \cite{L}.
 \end{proof}

The proof of Proposition~\ref{local-H} is based on a construction of suitable barriers. Assume $\phi$ and $\Omega$ satisfy the assumptions 
in the proposition. 
We also assume for simplicity that
 $\phi(0)=0\, \mbox{ and } \, \nabla \phi(0)=0.$ 
We now construct a supersolution as in \cite[Lemma 6.2]{LS}.
 \begin{lemma}[Supersolution] 
Given $\delta$ universally small ($\delta\leq \rho$), define
$$\tilde \delta:= \frac{\delta^3}{2}\quad \mbox{and} \quad M_{\delta}:= \frac{2^{n-1} \Lambda^n}{\lambda^{n-1}}\frac{1}{\delta^{3n-3}}
\equiv \frac{\Lambda^{n}}{(\lambda \tilde \delta)^{n-1}}.$$
Then the function
$$w_{\delta}(x', x_n):= M_{\delta} x_{n} +  \phi - \tilde \delta |x'|^2 - \frac{\Lambda^n}{( \lambda \tilde \delta)^{n-1}} 
 x_{n}^2\quad\mbox{for}\quad (x', x_n)\in \overline{\Omega}$$ satisfies 
 $$\mathcal{L}_{\phi} (w_{\delta}):= \Phi^{ij}(w_{\delta})_{ij} \leq - n\Lambda\quad\mbox{in}\quad \Omega,$$
and
$$w_{\delta} \ge 0 ~\text{ on }~ \p(\Omega \cap B_\delta), \quad w_{\delta} \ge \frac{\delta^3}{2} ~\text{ on }~  \Omega\cap \p B_{\delta}.$$
\label{key-lem_sup}
\end{lemma}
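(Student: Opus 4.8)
The plan is to verify the three assertions by direct computation: the differential inequality via a trace identity together with the matrix arithmetic--geometric mean inequality, and the boundary bounds via the decomposition
\[
w_\delta(x) = M_\delta x_n(1-x_n) + \big(\phi(x) - \tilde\delta|x'|^2\big),
\]
in which the first summand is manifestly nonnegative on $\overline{\Omega}\cap\overline{B_\delta}$ (there $0\le x_n\le\delta<1$, since $\overline{\Omega}\subset\{x_n\ge 0\}$ by \eqref{om_ass} and $\delta$ is universally small).

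For the differential inequality I would first note that $M_\delta x_n$ is affine, so $D^2 w_\delta = D^2\phi - D$ with $D = \mathrm{diag}(2\tilde\delta,\dots,2\tilde\delta,2M_\delta)$, the constant positive definite Hessian of $\tilde\delta|x'|^2 + M_\delta x_n^2$. Hence, using the identity $\Phi^{ij}\phi_{ij} = n\det D^2\phi$,
\[
\mathcal{L}_\phi w_\delta = \trace(\Phi D^2 w_\delta) = n\det D^2\phi - \trace(\Phi D).
\]
Since $\Phi = (\det D^2\phi)(D^2\phi)^{-1}$ is positive definite and $D$ is positive definite, the matrix AM--GM inequality $\trace(\Phi D) = \trace(D^{1/2}\Phi D^{1/2})\ge n(\det\Phi\,\det D)^{1/n}$ applies. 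With $\det\Phi = (\det D^2\phi)^{n-1}$ and $\det D = (2\tilde\delta)^{n-1}\cdot 2M_\delta = 2^n\Lambda^n\lambda^{1-n}$ --- and this is exactly where the particular choice of $M_\delta$ and $\tilde\delta$ enters --- one gets $\trace(\Phi D)\ge 2n\Lambda\,\lambda^{(1-n)/n}(\det D^2\phi)^{(n-1)/n}\ge 2n\Lambda$ because $\det D^2\phi\ge\lambda$. Combined with $\det D^2\phi\le\Lambda$, this yields $\mathcal{L}_\phi w_\delta\le n\Lambda - 2n\Lambda = -n\Lambda$ in $\Omega$.

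For the bound on $\Omega\cap\partial B_\delta$ I would take $h=\delta^3$ (so $h\le k$ for $\delta$ small) and invoke the inclusion $S_\phi(0,h)\subset B^{+}_{h^{1/3}}$, a consequence of the Localization Theorem that is available here because \eqref{eq:sep-half-domain} at $x_0=0$ (recall the normalization $\phi(0)=0$, $\nabla\phi(0)=0$) supplies its hypotheses; see \eqref{small-sec}. Any $x\in\overline{\Omega}$ with $|x|=\delta=h^{1/3}$ then lies outside $S_\phi(0,h)$, so $\phi(x)\ge h = \delta^3 = 2\tilde\delta$; together with $\tilde\delta|x'|^2\le\tilde\delta\delta^2<\tilde\delta$ and the nonnegativity of $M_\delta x_n(1-x_n)$ this gives $w_\delta(x)\ge 2\tilde\delta - \tilde\delta = \tilde\delta = \delta^3/2$. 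On $\partial\Omega\cap\overline{B_\delta}$, the quadratic separation \eqref{eq:sep-half-domain} at $x_0=0$ gives $\phi(x)\ge\rho|x|^2\ge\rho|x'|^2\ge\tilde\delta|x'|^2$ (using $\tilde\delta=\delta^3/2\le\rho$ for $\delta$ small), so $\phi(x)-\tilde\delta|x'|^2\ge 0$ and hence $w_\delta\ge 0$; since $\partial(\Omega\cap B_\delta)\subset(\partial\Omega\cap\overline{B_\delta})\cup(\overline{\Omega}\cap\partial B_\delta)$ and $w_\delta\ge\delta^3/2\ge 0$ on the second set, we conclude $w_\delta\ge 0$ on $\partial(\Omega\cap B_\delta)$.

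The only genuinely substantive ingredient is the matrix AM--GM step in the interior estimate, and even there the work is minimal once one sees that $M_\delta$ and $\tilde\delta$ were designed precisely so that $\det D = 2^n\Lambda^n\lambda^{1-n}$ balances $\det\Phi\ge\lambda^{n-1}$ to overcome the factor $2n\Lambda$. The boundary estimates are then bookkeeping; the points requiring a little care are tracking the universality of the smallness thresholds on $\delta$ (namely $\delta\le\rho$, $\delta^3\le k$, $\delta<1$) and verifying that the Localization-Theorem consequence \eqref{small-sec} is indeed applicable in the present setting.
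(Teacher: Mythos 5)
Your proposal is correct and follows essentially the same route as the paper: the same decomposition $w_\delta = M_\delta x_n(1-x_n) + (\phi - \tilde\delta|x'|^2)$ for the boundary bounds, the same use of \eqref{small-sec} (via the section $S_\phi(0,\delta^3)$, where the paper instead records the equivalent pointwise bound $\phi(x)\ge|x|^3$ near the origin), and the same matrix arithmetic--geometric mean inequality with the identical determinant bookkeeping $\det D = 2^n\Lambda^n\lambda^{1-n}$ for the interior inequality. The only cosmetic difference is that the paper packages the quadratic part as an auxiliary function $q$ with $\det D^2 q = \Lambda^n/\lambda^{n-1}$ and writes $w_\delta = M_\delta x_n + \phi - 2q$, whereas you compute $\trace(\Phi D)$ directly.
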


\begin{proof}
We recall from \eqref{small-sec} that
$$ \overline \Omega \cap B^{+}_{ch^{1/2}/\abs{\log h}}\subset S_{\phi}(0, h) 
\subset \overline{\Omega} \cap B^{+}_{C h^{1/2} \abs{\log h}}.$$
The first inclusion gives
$\phi\leq h~\text{ in }~ \overline \Omega \cap B^{+}_{ch^{1/2}/\abs{\log h}}$ and hence for $x$ close to the origin,
$$\phi(x)\leq C\abs{x}^2\abs{\log \abs{x}}^2.$$
Similarly, the second inclusion gives
$$\phi(x)\geq  c\abs{x}^2\abs{\log \abs{x}}^{-2}\geq \abs{x}^3$$ for $x$ close to the origin.
In conclusion, we have
\begin{equation}\label{est-for-phi}
 \abs{x}^3 \leq \phi(x) \leq C\abs{x}^2 \abs{\log{|x|}}^2
\end{equation}
if $\abs{x}\leq \delta$ for $\delta$ universally small. Therefore, the choice of $\tilde\delta$ gives
$$\phi(x)- \tilde \delta |x'|^2 \geq \abs{x}^3-\tilde\delta \abs{x}^2\geq \frac{1}{2}\abs{x}^3
 = \tilde \delta \quad \mbox{on }\, \Omega\cap \p B_{\delta}.$$
On the other hand, the choice of $M_{\delta}$ implies that 
\begin{equation*}
 M_\delta x_n - \frac{\Lambda^n}{( \lambda \tilde \delta)^{n-1}} 
 x_{n}^2  \ge 0  \quad \quad \text{on}\quad\overline{ \Omega\cap B_{\delta}}.
 \end{equation*}
Hence, $w_\delta \geq \tilde\delta$ on $\Omega\cap \p B_{\delta}$ while
on  $\p \Omega\cap B_{\delta}$, the quadratic separation \eqref{eq:sep-half-domain} and $\delta\leq \rho$ give
 $$w_{\delta} \ge \phi -  \tilde \delta  |x'|^2 \ge 0.$$
As a consequence, we obtain the desired inequalities for $w_{\delta}$ on $\p(\Omega\cap B_{\delta})$. 

It remains to prove that $\mathcal{L}_{\phi} (w_{\delta})\leq - n\Lambda$ in $\Omega$. If we denote $$q(x):=\frac 12 \left( \tilde \delta  |x'|^2 + \frac{\Lambda^n}{( \lambda \tilde \delta)^{n-1}} 
 x_{n}^2 \right),$$
then $$\det D^2 q=\frac{\Lambda^n}{\lambda^{n-1}}, \quad D^2q \ge \tilde \delta I.$$
Using the matrix inequality
\begin{equation*}
\trace (AB) \geq n (\det A \det B)^{1/n}~\text{ for }~ A, B~\text{ symmetric }~\geq 0,
\end{equation*}
we get
\begin{equation*}
 \mathcal{L}_\phi q=\trace(\Phi\, D^2 q) \geq n (\det (\Phi) \det D^{2} q)^{1/n} = n ((\det D^{2}\phi)^{n-1}\frac{\Lambda^n}{\lambda^{n-1}})^{1/n}\geq n\Lambda.
\end{equation*}
Since $\mathcal{L}_\phi x_n=0,$ we find
$$\mathcal{L}_\phi w_{\delta} =L_\phi(M_{\delta}x_n + \phi - 2 q) = \Phi^{ij}\phi_{ij}- 2 \mathcal{L}_\phi q
= n\det D^2\phi-  2\mathcal{L}_\phi q\le  -n\Lambda\quad \mbox{in}\quad \Omega.$$
\end{proof}

\begin{proof}[Proof of Proposition \ref{local-H}] Our proof follows closely the proof of Proposition 2.1 in \cite{L}.
We can suppose  that  $K:= \|u\|_{L^{\infty}(\Omega\cap B_{\rho})} + \|\varphi\|_{C^\alpha(\partial\Omega\cap B_{\rho})}  + \|f\|_{L^{n}(\Omega\cap B_{\rho})}$ is finite. By  working with the function $v:= u/K$ instead of $u$, we can assume in addition that
\[
\|u\|_{L^{\infty}(\Omega\cap B_{\rho})} + \|\varphi\|_{C^\alpha(\partial\Omega\cap B_{\rho})}  + \|f\|_{L^{n}(\Omega\cap B_{\rho})}\leq 1
\]
and need to show that the inequality
\begin{equation}\label{equivalent-conclusion}
|u(x)-u(x_{0})|\leq C|x-x_{0}|^{\frac{\alpha}{\alpha +3n}}\quad\forall x\in \Omega\cap B_{\delta}(x_{0})
\end{equation}
holds for all $x_0\in \Omega\cap B_{\rho/2}$,  where  $\delta$ and $C$ depends only on $\lambda, \Lambda, n,  \alpha$ and $\rho$. 

We prove \eqref{equivalent-conclusion} for $x_{0} =0$. However, our arguments apply to all points $x_0\in \Omega\cap B_{\rho/2}$ with
obvious modifications.
For any $\varepsilon \in (0,1)$, we consider the functions
$$h_{\pm}(x) := u(x)- u(0)\pm \e\pm \frac{6}{\delta_2^3} w_{\delta_2}$$
in the region
 $$A:= \Omega\cap B_{\delta_{2}}(0),$$ where $\delta_{2}$ is small to be chosen later and the function $w_{\delta_2}$ is as in Lemma~\ref{key-lem_sup}.
We remark that $w_{\delta_2}\geq 0$ in $A$ by the maximum principle. Observe that if $x\in\partial\Omega$
with $|x|\leq \delta_{1}(\e):= \e^{1/\alpha}$ then, 
\begin{equation}
\label{bdr-ineq}|u(x)-u(0)| =|\varphi(x)-\varphi(0)| \leq |x|^{\alpha} \leq \e.
\end{equation}
On the other hand, if $x\in \Omega \cap \p B_{\delta_2}$ then from Lemma~\ref{key-lem_sup}, we obtain
$\frac{6}{\delta_2^3} w_{\delta_2}(x)\geq 3.$
It follows that, if we choose $\delta_{2}\leq \delta_{1}$ then from \eqref{bdr-ineq} and $|u(x)-u(0)\pm \e|\leq 3$, we get
$$h_{-}\leq 0, \, h_{+}\geq 0~\text{ on }~\partial A.$$
Also from Lemma~\ref{key-lem_sup}, we have 
$$\mathcal{L}_{\phi} h_{+}\leq f, \,  \mathcal{L}_{\phi} h_{-}\geq f~\text{ in }~A.$$
Hence the ABP estimate applied in $A$ gives
\begin{equation}\label{h-from-above}
h_{-}\leq  C_{1}(n, \lambda)diam (A) \|f\|_{L^{n}(A)}\leq C_{1}(n, \lambda)\delta_{2}~\text{ in }~ A
\end{equation}
and 
\begin{equation}\label{h-from-below}
h_{+}\geq - C_{1}(n, \lambda)diam (A) \|f\|_{L^{n}(A)}\geq  -C_{1}(n,\lambda)\delta_{2}~\text{ in }~ A.
\end{equation}
By restricting $\e\leq C_{1}(n, \lambda)^{\frac{-\alpha}{1-\alpha}}$, we can assume that
$$\delta_{1} = \e^{1/\alpha}\leq \frac{\e}{C_{1}(n,\lambda)}.$$
Then, for $\delta_{2}\leq \delta_{1}$, we have $C_{1}(n,\lambda)\delta_{2}\leq \e$ and thus, for all $x\in A$, we obtain from \eqref{h-from-above} and \eqref{h-from-below} that
$$|u(x)-u(0)|\leq 2\e + \frac{6}{\delta_2^3} w_{\delta_2}(x).$$
Note that, by construction and the estimate \eqref{est-for-phi} for the function $\phi$, we have in $A$
$$w_{\delta_2}(x)\leq M_{\delta_2} x_n + \phi(x)\leq M_{\delta_2}\abs{x} + C\abs{x}^2\abs{\log \abs{x}}^2\leq 2M_{\delta_2}\abs{x}.$$

Therefore, choosing $\delta_{2}= \delta_{1}$ and recalling the choice of $M_{\delta_2}$, we get
\begin{equation}
\label{op-ineq}
|u(x)-u(0)|\leq 2\e + \frac{12 M_{\delta_2}}{\delta_2^3}\abs{x}= 2\e + \frac{C_{2}(n,\lambda, \Lambda)}{\delta_{2}^{3n}}|x| = 2\e + C_{2} \e^{-3n/\alpha}|x|
\end{equation}
for all $x,\e$ satisfying the following conditions
\begin{equation}
\label{xe-ineq}
|x|\leq \delta_{1}(\e):= \e^{1/\alpha},\quad \e\leq C_1(n,\lambda)^{\frac{-\alpha}{1-\alpha}} =: c_{1}.
\end{equation}
Finally, let us choose $\e = |x|^{\frac{\alpha}{\alpha + 3n}}.$
It satisfies the conditions in \eqref{xe-ineq}  if
$$|x|\leq \min\{c_{1}^{\frac{\alpha +3n}{\alpha}}, 1\} =: \delta.$$
Then, by \eqref{op-ineq}, we have 
$|u(x)-u(0)| \leq (2 + C_2) |x|^{\frac{\alpha}{\alpha + 3n}}\,$ for all $\, x\in \Omega\cap B_{\delta}(0)$.
\end{proof}
\subsection{Global approximation lemma}
\label{appro_sec}
In this subsection, we prove an approximation lemma that allows us to compare the solution $u$ to the linearized Monge-Amp\`ere equation 
$\mathcal{L}_{\phi} u =f$
to smooth solutions $h$ of linearized Monge-Amp\`ere equations $\mathcal{L}_{w}h =0$
associated with convex functions $w$ satisfying $\det D^2 w =1$.
 We will estimate the difference $u-h$ in terms of the
$L^{n}$-norms of $f$ and $\Phi-\calW$ where $\Phi =(\Phi^{ij})$ and   $\calW = (\calW^{ij})$ are 
the matrices of cofactors of $D^2\phi$ and $D^2w$, respectively. Therefore, in light of the global stability of cofactor matrices in Subsection~\ref{cofactor_sec}, $u$ is 
well-approximated by $h$ provided that $\det D^2\phi$ is close to $1$. This approximation lemma will play a key role in Section~\ref{density_sec}
where we use it to get power decay estimates for the distribution function of the second derivatives of $u$ that are more refined than those provided by
Proposition~\ref{power-decay-at-boundary}. 

Our approximation lemma, relevant for data of the type $\big(\Omega_{h}, \phi_{h}, S_{\phi_{h}}(0, 1)\big)$, states as follows. 

\begin{lemma}\label{explest} 
Assume $(\Omega, \phi, U)\in \mathcal{P}_{\frac{1}{2}, \frac{3}{2}, \rho, \kappa, \alpha}.$ Let $r := c^2/4$ where $c$ is as in Remark \ref{choosingc}.
Suppose that $u\in C(\overline U)\cap W^{2,n}_{loc}(U)$ is a  solution of 
$\,\Phi^{i j} u_{i j} = f\,$ in $U\cap B_{4r}$
with $$\|u\|_{L^{\infty}(U\cap B_{4r})} +\|u\|_{C^{2,\alpha}(\p U\cap B_{4r})} \leq 1. $$ 
Let $w$ be defined as in $(vii)$ of the definition of the class $\mathcal{P}$. 
Assume  $h$  is a solution of
\begin{equation*}
\left\{\begin{array}{rl}
\calW^{i j} h_{i j}  &= 0 \quad \mbox{ in }\quad  B_{2r}\cap U\\
h  &=u\quad\mbox{ on } \quad\partial (B_{2r}\cap U).
\end{array}\right.
\end{equation*}
Then, there exist $C>0$ and  $\gamma \in (0,1)$ depending only on $n, \rho$ and $\alpha$ such that  
\begin{equation}\label{boundary-C^2}
\|h\|_{C^{1,1}(\overline{ B_{r} \cap U})} 
\leq C,
\end{equation}
and if $\| \Phi - \calW\|_{L^n( B_{2r}\cap U)}\leq r^4$ then
\begin{align*}
\|u-h\|_{ L^\infty (B_{r}\cap U)}
&+\|f-\trace([\Phi-\calW ]D^2h)\|_{ L^n( B_{r}\cap U)}\\
&\leq C \left\{\big(1+ \|u\|_{C^{1/2}(\p U\cap B_{4r})} \big)\, \| \Phi - \calW\|_{L^n( B_{2r}\cap U)}^\gamma + \|f\|_{L^n(U\cap B_{4r})} \right\}.
\end{align*}
\end{lemma}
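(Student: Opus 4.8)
The plan is to establish the $C^{1,1}$ bound \eqref{boundary-C^2} first and then run a barrier/ABP argument on the difference $u-h$. For \eqref{boundary-C^2}: by $(vii)$ of the definition of $\mathcal{P}_{1/2,3/2,\rho,\kappa,\alpha}$, the function $w$ satisfies $c_0 I_n\le D^2w\le c_0^{-1}I_n$ in $B_c\cap U$ and $\|w\|_{C^{2,\alpha}(\overline{B_c\cap U})}\le c_0^{-1}$, so the operator $\calW^{ij}\partial_{ij}$ is uniformly elliptic with $C^\alpha$ coefficients in $B_c\cap U\supset B_{2r}\cap U$. Since $\p U\cap B_{4r}$ is a $C^{2,\alpha}$ graph (again from $(vii)$) and the boundary datum $u\big|_{\p U}$ is $C^{2,\alpha}$ with norm $\le 1$, standard Schauder theory for linear uniformly elliptic equations (interior plus boundary estimates, e.g. in Gilbarg--Trudinger) applied in $B_{2r}\cap U$ gives $\|h\|_{C^{2,\alpha}(\overline{B_r\cap U})}\le C(n,\rho,\alpha)$, which in particular yields $\|h\|_{C^{1,1}(\overline{B_r\cap U})}\le C$. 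The maximum principle also gives $\|h\|_{L^\infty(B_{2r}\cap U)}\le\|u\|_{L^\infty(\p(B_{2r}\cap U))}\le 1$.

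The main estimate is obtained as follows. Set $v:=u-h$. On $B_{2r}\cap U$ one computes
\[
\calL_\phi v=\Phi^{ij}u_{ij}-\Phi^{ij}h_{ij}=f-\calW^{ij}h_{ij}-[\Phi^{ij}-\calW^{ij}]h_{ij}=f-\trace\big([\Phi-\calW]D^2h\big),
\]
using $\calW^{ij}h_{ij}=0$. By the $C^{1,1}$ bound on $h$ just proved,
\[
\|\trace([\Phi-\calW]D^2h)\|_{L^n(B_{2r}\cap U)}\le \|D^2h\|_{L^\infty(B_r\cap U)}\,\|\Phi-\calW\|_{L^n(B_{2r}\cap U)}\le C\,\|\Phi-\calW\|_{L^n(B_{2r}\cap U)},
\]
which already controls the second summand on the left-hand side of the claimed inequality (after also adding $\|f\|_{L^n}$); note we only need the $C^{1,1}$ bound of $h$ on $B_r\cap U$ here, and to get the bound on all of $B_{2r}\cap U$ one would argue on a slightly larger concentric region, which is why the hypotheses are stated on $B_{4r}$. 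Thus $v$ solves $\calL_\phi v=g$ in $B_{2r}\cap U$ with $\|g\|_{L^n(B_{2r}\cap U)}\le C(\|f\|_{L^n(U\cap B_{4r})}+\|\Phi-\calW\|_{L^n(B_{2r}\cap U)})$, and $v=0$ on $\p(B_{2r}\cap U)$.

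To bound $\|v\|_{L^\infty(B_r\cap U)}$ one splits $v=v_1+v_2$ where $v_1$ handles the right-hand side $g$ and $v_2$ handles the boundary oscillation coming from the fact that $h=u$ on $\p(B_{2r}\cap U)$ but $h$ need not agree with $u$ on the part of $\p(B_r\cap U)$ lying inside $U$. For $v_1$: the ABP/maximum principle for $\calL_\phi$ in $B_{2r}\cap U$ gives $\|v\|_{L^\infty}\le C\,\diam\,\|g\|_{L^n}\le C(\|f\|_{L^n}+\|\Phi-\calW\|_{L^n})$, so this contribution is even better than the stated power $\gamma$. The genuinely delicate point is controlling $v$ near $\p U\cap B_{2r}$: here one uses the global boundary Hölder estimate (Theorem~\ref{global-H}, applied to both $u$ and $h$ on the rescaled domain after noting $\phi$ and $w$ separate quadratically on $\p U\cap B_c$ by $(iii)$ and Remark~\ref{quad-rk}) to get $\|u-h\|_{C^\beta}$ up to the boundary with constant depending on $\|u\|_{C^{1/2}(\p U\cap B_{4r})}$, and then interpolates this Hölder control against the $L^\infty$ smallness of $v$ on the interior portion of the boundary of $B_{2r}\cap U$, where $v$ is small because $\|\Phi-\calW\|_{L^n}$ is small forces $h$ close to $u$. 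Combining the two pieces by a standard interpolation inequality $\|v\|_{L^\infty(B_r\cap U)}\le C\|v\|_{L^\infty(\p(B_{2r}\cap U))}^{\gamma}(\dots)^{1-\gamma}+(\text{ABP term})$ produces the exponent $\gamma\in(0,1)$ and the factor $(1+\|u\|_{C^{1/2}(\p U\cap B_{4r})})$.

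\textbf{Main obstacle.} The routine parts are the Schauder bound for $h$ and the ABP bound for the right-hand side. The hard part will be the boundary layer analysis: making precise the claim that $v=u-h$ is uniformly small on the portion of $\p(B_{2r}\cap U)$ interior to $U$ and quantitatively coupling the degenerate-elliptic ABP estimate for $\calL_\phi$ (which is \emph{not} uniformly elliptic) with the uniformly elliptic information about $h$. The key technical device is that on $\p U\cap B_c$ both $\phi$ and $w$ enjoy the quadratic separation \eqref{quadratic-separation}, which is exactly what lets Savin's localization and the barrier construction of Lemma~\ref{key-lem_sup}-type go through near the boundary; away from $\p U$ the sections are genuine interior sections and Caffarelli--Gutiérrez interior estimates apply. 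Tracking how the constant degrades and extracting a clean power $\gamma$ of $\|\Phi-\calW\|_{L^n}$ is where the real work lies.
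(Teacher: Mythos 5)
Your first step (Schauder/Pogorelov bound for $h$ giving \eqref{boundary-C^2}) and the identity $\calL_\phi(u-h)=f-\trace([\Phi-\calW]D^2h)$ match the paper, but the core of your argument for the $L^\infty$ bound has a genuine gap. The difficulty is not where you locate it: near $\p U\cap B_{2r}$ the Dirichlet data $u$ is $C^{2,\alpha}$ and $h$ is perfectly fine there; the real problem is the \emph{lateral} portion $U\cap\p B_{2r}$ of $\p(B_{2r}\cap U)$, where the data $u$ is only H\"older, so $D^2h$ blows up as one approaches it. Consequently your ABP step on all of $B_{2r}\cap U$, which needs $\|\trace([\Phi-\calW]D^2h)\|_{L^n(B_{2r}\cap U)}\le \|D^2h\|_{L^\infty(B_{2r}\cap U)}\|\Phi-\calW\|_{L^n(B_{2r}\cap U)}$, is not available, and your proposed fix (``argue on a slightly larger concentric region, which is why the hypotheses are on $B_{4r}$'') cannot work: $h$ is only defined by the Dirichlet problem in $B_{2r}\cap U$, so no hypothesis on $u$ in $B_{4r}$ extends the $C^{1,1}$ bound of $h$ up to $U\cap\p B_{2r}$. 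The remaining devices you invoke are also not usable as stated: $v=u-h$ vanishes identically on $\p(B_{2r}\cap U)$ by construction, so interpolating against $\|v\|_{L^\infty(\p(B_{2r}\cap U))}$ yields nothing and cannot generate the exponent $\gamma$; and the claim that ``$\|\Phi-\calW\|_{L^n}$ small forces $h$ close to $u$'' on an interior boundary portion is exactly the conclusion of the lemma, so using it is circular. Note also that if your $v_1$ estimate (power $1$ of $\|\Phi-\calW\|_{L^n}$) were correct, the lemma would hold with $\gamma=1$, which should have been a warning sign.

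The paper's mechanism, which is what is missing from your plan, is a boundary-layer argument in $\delta$: for $0<\delta<r$ one proves, from the H\"older bounds $\|u\|_{C^\beta(\overline{B_{2r}\cap U})}\le C\Theta$ (Theorem~\ref{global-H}) and $\|h\|_{C^{\beta'}(\overline{B_{2r}\cap U})}\le C\Theta$ together with $u-h=0$ on $\p(B_{2r}\cap U)$, that $\|u-h\|_{L^\infty(\p(B_{2r-\delta}\cap U))}\le C\delta^{\beta'}\Theta$, and from scaled interior/boundary Schauder estimates that $\|D^2h\|_{L^\infty(B_{2r-\delta}\cap U)}\le C\delta^{\beta'-2-\alpha}\Theta$. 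Applying ABP in the shrunk domain $B_{2r-\delta}\cap U$ then gives a bound of the form $C(\delta^{\beta'}+\delta^{\beta'-2-\alpha}\|\Phi-\calW\|_{L^n(B_{2r}\cap U)})\Theta+C_n\|f\|_{L^n(U\cap B_{2r})}$, and choosing $\delta=\|\Phi-\calW\|_{L^n(B_{2r}\cap U)}^{1/(2+\alpha)}$ (legitimate since $\|\Phi-\calW\|_{L^n}\le r^4$) produces exactly the exponent $\gamma=\beta'/(2+\alpha)$ and the factor $1+\|u\|_{C^{1/2}(\p U\cap B_{4r})}$. Without this quantification of the blow-up rate of $D^2h$ near $U\cap\p B_{2r}$ and the optimization in $\delta$, your outline does not yield the stated estimate.
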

\begin{proof}
Observe first that by  $(vii)$ in the definition of the class $\mathcal{P}$, the 
following $C^{2,\alpha}$ and Pogorelov estimates hold
\begin{equation}
\label{Pogo-1}
\|\p U\cap B_{4r}\|_{C^{2,\alpha}} \leq c_{0}^{-1},~ \|w\|_{C^{2,\alpha}(\overline{U\cap B_{4r}})} \leq c_{0}^{-1},~c_{0}I_{n}\leq D^2 w\leq c_{0}^{-1} I_{n}~\text{ in }~ B_{4r}\cap U.
\end{equation}
Therefore,  $W^{ij} \p_{ij}$ is a uniformly elliptic differential operator with $C^{\alpha}$ coefficients. Hence, we can employ
 the standard boundary $C^{2,\alpha}$-estimates for linear uniformly elliptic equation and obtain \eqref{boundary-C^2} since
 \begin{equation*}
\|h\|_{C^{1,1}(\overline{ B_{r} \cap U})} \leq \|h\|_{C^{2,\alpha}(\overline{ B_{r} \cap U})}
\leq C(n, \rho, \alpha) \big( \|u\|_{L^\infty( B_{2r} \cap U)} + \|u\|_{C^{2,\alpha}(\p U\cap B_{2r})}\big) \leq  C(n, \rho,\alpha).
\end{equation*}
Next, since  $(\Omega, \phi, U)\in \mathcal{P}_{\frac{1}{2}, \frac{3}{2}, \rho, \kappa, \alpha}$, by Remark \ref{quad-rk}, the domain
$U$ and function $\phi$ satisfy \eqref{global-tang-int}, \eqref{eq_u} and \eqref{om_ass} and (\ref{eq:sep-half-domain}). Therefore, 
 it follows from Theorem~\ref{global-H} with $C^{1/2}$ boundary data that there exist constants $C>0$ and  $\beta\in (0,1)$ depending only on $n$ and $\rho$ such that
\begin{equation}\label{eq:boundary-holder}
\|u\|_{C^\beta(\overline{ B_{2r} \cap U})}
\leq C\Big(\|u\|_{L^\infty( B_{4r} \cap U)} 
+ \|u\|_{C^{1/2}(\partial U \cap B_{4r})} + \|f\|_{L^n(B_{4r} \cap U)}\Big)
\leq C \Theta,
\end{equation}
where
$$\Theta:=1 +  \|u\|_{C^{1/2}(\partial U \cap B_{4r})} + \|f\|_{L^n(B_{4r} \cap U)}.$$
 In view of \eqref{Pogo-1}, \eqref{eq:boundary-holder},
 and the standard global H\"older estimates for linear  uniformly 
elliptic equations (see  \cite[Corollary~9.29]{GiT}, \cite[Proposition~4.13]{CC} and \cite[Theorem~1.10]{Wi}), 
we can find constants $C>0$ and  $\beta' \in (0, \beta)$ depending only on $n, \rho$ and $\alpha$ such that
\begin{equation}\label{eq:global-holder-for-h}
\|h\|_{C^{\beta'}(\overline{ B_{2r} \cap U})}
\leq C\Big(\|h\|_{L^\infty(B_{2r} \cap U)} + \|u\|_{C^{\beta}(\partial (U\cap B_{2r}))} \Big)
\leq C \Theta.
\end{equation}
Now let $0<\delta <r$. Then we claim that 
\begin{equation}\label{dest}
\|u-h\|_{L^\infty(\partial (B_{2r-\delta}\cap U)) } \leq C\delta^{\beta'} \Theta,
\end{equation}
and
\begin{equation}\label{2orderest}
\|D^2 h\|_{L^\infty(B_{2r-\delta}\cap U) } \leq C \delta^{\beta' -2-\alpha} \Theta.
\end{equation}
To prove \eqref{dest}, we verify that $|(u-h)(x)|\leq C\delta^{\beta'} \Theta$ for all $x\in \partial (B_{2r-\delta}\cap U)$. Indeed, if 
$x\in \partial (B_{2r-\delta}\cap U)$ then we can find $y\in \partial(B_{2r}\cap U)$ such that $|x-y| \leq \delta$. 
Since $u-h =0$ on $ \partial (B_{2r}\cap U)$, we get from \eqref{eq:boundary-holder} and \eqref{eq:global-holder-for-h} that
\begin{align*}
|(u-h)(x)| 
= |(u-h)(x) - (u-h)(y)| 
\leq |u(x)-u(y)| + |h(x)-h(y)|
\leq C \delta^{\beta'} \Theta.
\end{align*}

To prove (\ref{2orderest}), let $x_0\in B_{2r-\delta}\cap U$. If $B_{\delta/2}(x_0)\subset B_{2r}\cap U$, then we can  apply interior $C^{1,1}$-estimates 
to $h-h(x_0)$ in $B_{\delta/2}(x_0)$ and use \eqref{eq:global-holder-for-h} to get
\begin{equation*}
\|D^2 h(x_0)\| \leq C \delta^{-2} \|h-h(x_0)\|_{L^\infty( B_{\frac{\delta}{2}}(x_0))} \leq C \delta^{\beta'-2} \Theta.
\end{equation*}

In case $B_{\delta/2}(x_0)\not\subset B_{2r}\cap U$, then there exists $z_0\in B_{2r-\delta}\cap \partial U\subset \partial \Omega$ such that $x_0\in B_{\delta/2}(z_0)$. Hence since $B_\delta(z_0)\cap U \subset B_{2r}\cap U$ and by applying boundary $C^{2,\alpha}$-estimates to $h-h(x_0)$ in $B_{\delta/2}(z_0)\cap U$ we obtain
\begin{align*}
 &\|D^2(h-h(x_0)) \|_{C^{\alpha}(B_{\frac{\delta}{2}}(z_0)\cap U)} \\
 &\leq C \delta^{-(2+\alpha)} \Big( \|h-h(x_0)\|_{L^\infty(B_{\delta}(z_0)\cap U)}
+\sum_{k=1}^{2} \delta^{k+\alpha}\|D^k( u-h(x_0))\|_{C^{\alpha}(\partial U\cap  B_{\delta}(z_0))}\Big)\\
&\leq C \delta^{-(2+\alpha)} \Big(\delta^{\beta'} \Theta
+\delta^{1+\alpha}\Big)\leq C \delta^{\beta' -2-\alpha} \Theta.
\end{align*}
It follows that 
$\|D^2 h(x_0)\|
\leq  C \delta^{\beta'-2-\alpha} \Theta$, and thus
\eqref{2orderest} is proved.

Having \eqref{dest} and \eqref{2orderest}, we now complete the proof of the lemma. Observe that $u-h\in W^{2,n}_{loc}( B_{2r}\cap U)$ is a  solution of
\begin{align*}
\Phi^{ij} (u-h)_{ij}=f - \Phi^{ij} h_{i j} = f- [ \Phi^{i j} - \calW^{ij}] h_{i j} =: F \quad \mbox{in}\quad  B_{2r}\cap U.
\end{align*}
The ABP estimate together with \eqref{dest} and \eqref{2orderest} gives 
\begin{align*}
&\|u-h\|_{L^\infty( B_{2r-\delta}\cap U) } +\|F\|_{L^n(B_{2r-\delta}\cap U)}\leq \|u-h\|_{L^\infty(\partial (B_{2r-\delta}\cap U)) }  + C_n \|F\|_{L^n(B_{2r-\delta}\cap U)} \\
&\leq \|u-h\|_{L^\infty(\partial(B_{2r-\delta}\cap U)) } +C_n \|D^2 h\|_{L^\infty(B_{2r-\delta}\cap U) } \| \Phi - \calW\|_{L^n(B_{2r}\cap U)}  + C_n \|f\|_{L^n(U\cap B_{2r})} \\ 
&\leq C  \Big( \delta^{\beta'} + \delta^{\beta'-2-\alpha}\| \Phi - \calW\|_{L^n(B_{2r}\cap U)}\Big) \, \Theta + C_n \|f\|_{L^n(U\cap B_{2r})}.
\end{align*}
If $\| \Phi - \calW\|_{L^n(B_{2r}\cap U)}\leq r^4$ then by taking $\delta = \| \Phi - \calW\|_{L^n(B_{2r}\cap U)}^{\frac{1}{2 +\alpha}},$ we obtain the desired inequality with
$\gamma =\beta'/(2+\alpha)$ since
\begin{align*}
\|u-h\|_{L^\infty(B_{r}\cap U) } +\|F\|_{L^n(B_{r}\cap U)}
&\leq C \| \Phi - \calW\|_{L^n(B_{2r}\cap U)}^{\frac{\beta'}{2+\alpha}} \Theta  + C_n \|f\|_{L^n(U\cap B_{2r})}\\
&\leq C  \Big\{ \big( 1 + \|u\|_{C^{1/2}(\p U\cap B_{4r})}\big)\, \| \Phi - \calW\|_{L^n(B_{2r}\cap U)}^{\frac{\beta'}{2+\alpha}} + \|f\|_{L^n( U\cap B_{4r})}\Big\}.
\end{align*}

\end{proof}

We end this subsection with a result allowing us to estimate the measure of the set where the quasi distance generated by $\phi$ is bounded from below by certain multiple of the Euclidean
distance. This set, when restricted to sections of $\phi$, has almost full measure if the Monge-Amp\`ere measure
$\det D^2\phi$ is close to a constant. Its precise statement is as follows.

\begin{lemma}\label{lm:tangent-paraboloid}
Assume that $(\Omega, \phi, U)\in \mathcal{P}_{1-\e, 1+ \e, \rho, \kappa, \alpha}$ where $0<\e<1/2$. Define
\begin{equation}\label{Def-A-sigma}
A_{\sigma} := \left\{\tilde x\in U: \phi(x)\geq
\phi(\tilde x) + \nabla \phi(\tilde x)\cdot (x-\tilde x) + \frac{\sigma}{2}\, |x-\tilde x|^2, \quad \forall x\in B_{c^2} \cap U\right\}.
\end{equation}
Then there exist $\sigma =\sigma(n,\rho,\alpha)>0$ and $C= C(n, \rho, \alpha, \kappa)>0$ such that
\begin{equation*}
\big|  S_{\phi}(0, c^9)\setminus A_\sigma\big| \leq C \eps^{1/3n}\, | S_{\phi}(0, c^9)|.
\end{equation*}
\end{lemma}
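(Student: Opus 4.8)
The plan is to compare $\phi$ with the solution $w$ of the standard Monge--Amp\`ere equation $\det D^2 w = 1$ in $U$ with $w = \phi$ on $\partial U$, and to exploit the fact that, by Proposition~\ref{D^2w-est} (equivalently item $(vii)$ in the definition of $\mathcal{P}_{1-\e,1+\e,\rho,\kappa,\alpha}$), $w$ is uniformly convex in $B_c \cap U$: one has $c_0 I_n \le D^2 w \le c_0^{-1} I_n$ there. Since $D^2 w \ge c_0 I_n$, a second-order Taylor expansion shows that $w$ itself is contained in the analogue of $A_\sigma$ with, say, $\sigma = c_0$; more precisely, for every $\tilde x \in S_\phi(0,c^9)$ (which sits well inside $B_c \cap U$ by \eqref{small-sec}, provided the universal constant $c$ is chosen small), we have $w(x) \ge w(\tilde x) + \nabla w(\tilde x)\cdot(x-\tilde x) + \tfrac{c_0}{2}|x-\tilde x|^2$ for all $x \in B_c \cap U$. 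Thus the task reduces to transferring this quantitative convexity from $w$ to $\phi$ on a large-measure subset of $S_\phi(0,c^9)$, using that $\phi$ and $w$ are $L^\infty$-close (by the ABP/maximum principle bound $\|\phi - w\|_{L^\infty(U)} \le C_n \,\mathrm{diam}(U)\,\|(\det D^2\phi)^{1/n} - 1\|_{L^n(U)} \le C\e^{1/n}$, cf.\ \eqref{v-max}) and that their second derivatives are close in $L^\delta$ (by Claim~2 / \eqref{small-exp} in the proof of Proposition~\ref{global-convergence}, $\|D^2\phi - D^2 w\|_{L^\delta(B_{c^2}\cap U)} \le C\e^{1/n}$).

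The key steps, in order: \textbf{(1)} Fix $\sigma := c_0/4$. By Proposition~\ref{global-convergence}(i) applied with a suitable exponent $p > 1$, $\|D^2\phi - D^2 w\|_{L^p(B_{c^2}\cap U)} \le C\e^{\kappa_0}$ for some universal power $\kappa_0 > 0$; in particular $D^2\phi \ge \tfrac{c_0}{2} I_n$ except on a set $E$ of measure $|E| \le C\e^{\kappa_0 p}$ (Chebyshev, since on $B_{c^2}\cap U \setminus E$ we have $|D^2\phi - D^2 w| \le c_0/2$, forcing $D^2\phi \ge c_0 I_n - \tfrac{c_0}{2} I_n$). Choosing $p$ large enough (still universal, depending on $n,\rho,\alpha$) we may guarantee $\kappa_0 p \ge 1/(3n)$, so $|E| \le C\e^{1/3n}$. \textbf{(2)} For $\tilde x \in S_\phi(0,c^9)$, write $\varphi_{\tilde x}(x) := \phi(x) - \phi(\tilde x) - \nabla\phi(\tilde x)\cdot(x-\tilde x)$. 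To control $\varphi_{\tilde x}$ from below by $\tfrac{\sigma}{2}|x - \tilde x|^2$ on all of $B_{c^2}\cap U$, I would split into two regimes: a \emph{near} regime $|x - \tilde x| \le r_0$ for a small universal $r_0$, where I integrate the Hessian lower bound along the segment $[\tilde x, x]$ (using that this segment lies in $B_c \cap U$ and meets $E$ in controlled measure, or rather: $\varphi_{\tilde x}(x) = \int_0^1 (1-t)\langle D^2\phi(\tilde x + t(x - \tilde x))(x-\tilde x), x - \tilde x\rangle \,dt$, which is $\ge \tfrac{c_0}{2}|x-\tilde x|^2$ once the segment avoids $E$); and a \emph{far} regime $|x - \tilde x| \ge r_0$, handled by strict convexity of $\phi$ together with the engulfing property (Theorem~\ref{engulfing2}a) and the lower volume bound, since $\phi$ on $B_c \cap U$ behaves like a uniformly convex function up to $\e$-errors, so $\varphi_{\tilde x}(x) \ge c_1 > 0$ uniformly, which dominates $\tfrac{\sigma}{2}|x-\tilde x|^2 \le \tfrac{\sigma}{2}(2/k)^2$ once $\sigma$ is small. \textbf{(3)} The set of $\tilde x$ for which the near-regime argument fails is exactly the set of $\tilde x$ whose ``bad cone'' towards $E$ has positive measure; by a Fubini/integral-geometric estimate (integrating $\mathbf{1}_E$ over segments), the measure of such bad centers is $\le C|E|^{1/n} \le C\e^{1/3n}$ up to adjusting the exponent. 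Combining, $|S_\phi(0,c^9) \setminus A_\sigma| \le C\e^{1/3n}$, and dividing by $|S_\phi(0,c^9)| \approx c^{9n/2}$ (a universal constant by Theorem~\ref{engulfing2}b) gives the stated relative bound.

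I expect the main obstacle to be \textbf{step (3)}: propagating the pointwise Hessian lower bound ``$D^2\phi \ge \tfrac{c_0}{2}I_n$ off a small set $E$'' into the \emph{uniform-in-$x$} inequality $\varphi_{\tilde x}(x) \ge \tfrac{\sigma}{2}|x - \tilde x|^2$ valid for \emph{all} $x \in B_{c^2}\cap U$ simultaneously, for a large-measure set of centers $\tilde x$. The naive Fubini estimate over segments emanating from a fixed $\tilde x$ controls $|\{x : [\tilde x,x]\cap E \text{ is large}\}|$, but we need to bound the \emph{centers}, not the endpoints; this requires a covering/layer-cake argument (or invoking Lemma~\ref{lm:belowimplyabove} to upgrade a lower bound on $B_{r_0}(\tilde x)$ to a global one, which handles the far regime cleanly and reduces everything to the near regime on a small ball). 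A clean way around it: show directly that $A_\sigma \supset \{\tilde x \in S_\phi(0,c^9) : D^2\phi \ge \tfrac{c_0}{2}I_n \text{ a.e.\ on } B_{r_0}(\tilde x)\}$ using Lemma~\ref{lm:belowimplyabove}, and then the bad center set is $\{\tilde x : |B_{r_0}(\tilde x) \cap E| > 0\} \subset$ the $r_0$-neighborhood of $E$, whose measure is still $\le C\e^{1/3n}$ after absorbing $r_0$ (universal) into the constant — but one must check that $B_{r_0}(\tilde x) \subset B_{c^2}\cap U$ for $\tilde x$ in $S_\phi(0,c^9)$, which holds by \eqref{small-sec} if $c$ is universally small. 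The remaining routine points — verifying the geometry inclusions and that the $w$-based lower bound genuinely transfers with the $L^\infty$ closeness absorbed — I would treat quickly.
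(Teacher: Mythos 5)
Your route (Hessian stability plus Chebyshev, then a near/far splitting around each center) is genuinely different from the paper's, but it has a gap exactly at the step you flagged, and your proposed repair does not work. From $\|D^2\phi-D^2w\|_{L^p(B_{c^2}\cap U)}$ small you only get $D^2\phi\geq \tfrac{c_0}{2}I_n$ \emph{a.e.\ off a small set} $E$, and this must be upgraded to the pointwise inequality $\phi(x)\geq \phi(\tilde x)+\nabla\phi(\tilde x)\cdot(x-\tilde x)+\tfrac{\sigma}{2}|x-\tilde x|^2$ for \emph{all} $x\in B_{c^2}\cap U$, for most centers $\tilde x$. Your "clean way around" bounds the bad centers by the $r_0$-neighborhood of $E$ and claims its measure is $\leq C|E|$; that is false -- a set of tiny measure spread densely has an $r_0$-neighborhood of full measure. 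A maximal-function refinement ($|\{\mathcal{M}\mathbf{1}_E>\eta\}|\leq C|E|/\eta$) only controls the density of $E$ in balls around good centers; the Taylor integral along a single segment $[\tilde x,x]$ can still spend most of its length inside $E$, where convexity gives only $D^2\phi\geq 0$, so no uniform-in-$x$ lower bound follows. Also, Lemma~\ref{lm:belowimplyabove} goes the wrong way for your far regime: it converts a local \emph{lower} bound on $d(\cdot,\tilde x)^2$ into an \emph{upper} bound, not into the global lower bound needed for membership in $A_\sigma$ (the far regime would instead need quantitative strict convexity/section-size estimates, which is fixable but is not what that lemma states).

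There is also a quantitative gap: Proposition~\ref{global-convergence}(i) gives $\|D^2\phi-D^2w\|_{L^p}\leq C\epsilon^{\delta/(n(2p-\delta))}$, so Chebyshev yields $|E|\leq C_p\,\epsilon^{p\delta/(n(2p-\delta))}$, and since $\delta<1/2$ (it is the small Guti\'errez--Tournier exponent) the exponent $p\delta/(n(2p-\delta))$ is capped below $\delta/(2n)<1/(4n)$ for every $p$; "choosing $p$ large" cannot reach the stated rate $\epsilon^{1/3n}$ because the stability exponent shrinks as $p$ grows. The paper avoids both problems by a different mechanism: it forms the convex envelope $\Gamma$ of $\phi-\tfrac{w}{2}$ in $U\cap B_{c^2}$ and shows, by comparing gradient images (Caffarelli's $L^\infty$-perturbation argument, using only $\|\phi-w\|_{L^\infty}\leq C\epsilon^{1/n}$ from the maximum principle, the Pogorelov bounds $c_0I_n\leq D^2w\leq c_0^{-1}I_n$, and the $C^{1,1}$ bound on $\partial U\cap B_{c^2}$ to control the boundary collar), that the contact set $\{\Gamma=\phi-\tfrac{w}{2}\}$ fills $S_\phi(0,c^9)$ up to measure $C\epsilon^{1/3n}$, the rate coming from a perturbation of size $\sim(\epsilon^{1/n})^{1/3}$. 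At a contact point the supporting hyperplane of $\Gamma$ together with $D^2w\geq c_0I_n$ gives $\phi(x)\geq l(x)+\tfrac{c_0}{4}|x-x_0|^2$ for all $x\in U\cap B_{c^2}$ at once, so contact points lie in $A_{c_0/2}$ and no a.e.-to-pointwise upgrade is ever needed; to salvage your approach you would essentially have to reintroduce such an envelope/contact-set argument.
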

\begin{proof}
We first note that (\ref{small-sec}) implies
\begin{equation}| S_{\phi}(0, c^9)| \geq |B_{c^6}\cap U|\geq C.
 \label{volc}
\end{equation}
Let $w$ be defined as in $(vii)$ in the definition of the class $\mathcal{P}$. Then the following
boundary Pogorelov estimates hold
\begin{equation}
\label{Pogo-2}
c_{0}I_{n}\leq D^2 w\leq c_{0}^{-1}I_{n}~\text{ in }~ B_{c^2}\cap U.
\end{equation}
Let $\Gamma$ be the convex envelope of $\phi-\frac{w}{2}$ in $U\cap B_{c^2}$. We claim that there exists 
$C>0$ depending only on $n$, $\rho, \alpha$ and $\kappa$ such that 
\begin{equation}
\label{contact-set}
\big| \{\Gamma = \phi-\frac{w}{2}\}\cap  S_{\phi}(0, c^9) \big| \geq (1-C\eps^{1/3n})| S_{\phi}(0, c^9)|.
\end{equation}
Assume this claim for a moment. Then by using \eqref{Pogo-2} and arguing as in the proof of \cite[Theorem 6.1.1]{G}, we 
obtain the desired conclusion. For completeness, we include the proof.

Let the contact set be $$\mathcal{C} :=\Big\{x\in U\cap B_{c^2}: \Gamma(x)= \phi(x)-\frac{w(x)}{2}\Big\}.$$ We assert that for $\sigma := c_0/2$, we have
$$\mathcal{C}\cap S_{\phi}(0, c^9)\subset A_{\sigma}\cap S_{\phi}(0, c^9).$$
It then follows from \eqref{contact-set} that
$$|S_{\phi}(0, c^9)\setminus A_{\sigma}|\leq |S_{\phi}(0, c^9)\setminus \mathcal{C}|\leq C \e^{1/3n}|S_{\phi}(0, c^9)|.$$
We now proceed with the proof of the claim. Let $x_0\in \mathcal{C}\cap S_{\phi}(0, c^9)$, and let $l_{x_0}$ be a supporting hyperplane to
$\Gamma$ at $x_0$.  Since $x_0\in\mathcal{C}$, we have 
$l_{x_0}(x_0) =\phi(x_0) - \frac{1}{2}w(x_0)$
and
\begin{equation}
 \label{contact1}
 \phi(x) \geq l_{x_0}(x) +\frac{w(x)}{2}\quad \mbox{for all}\quad x\in U\cap B_{c^2}.
\end{equation}
On the other hand, if $x\in U\cap B_{c^2}$ then the Taylor formula and the first inequality in the Pogorelov estimates \eqref{Pogo-2} give
\begin{eqnarray*}
 w(x)-w(x_0)-\nabla w(x_0)\cdot (x-x_0) &=&\int_{0}^{1} t \int_{0}^{1} \langle D^2 w(x_0 +\theta t (x-x_0))\cdot (x-x_0), x- x_0\rangle d\theta dt\\
 &\geq& \int_{0}^{1} t c_0\abs{x-x_0}^2 dt = \frac{c_0}{2}\abs{x-x_0}^2.
\end{eqnarray*}
Combining this with \eqref{contact1}, we deduce that 
$$\phi(x)\geq l(x) + \frac{c_0}{4}\abs{x-x_0}^2 \quad \forall x\in U\cap B_{c^2},$$
where $l(x)$ is the supporting hyperplane to $\phi$ at $x_0$ in $U\cap B_{c^2}$ given by
$$l(x) := l_{x_0}(x) + \frac{1}{2} w(x_0) + \frac{1}{2} \nabla w(x_0) \cdot(x-x_0).$$
Therefore $x_0\in A_{\sigma}$ with $\sigma = c_0/2$, proving the assertion.

It remains to prove \eqref{contact-set}. The idea is to compare the image of the gradient 
mappings of convex functions which are close in $L^{\infty}$-norms. This idea goes back to Caffarelli (see \cite[Lemma 2]{C3} and also \cite[Lemma 6.2]{LTW}). 
Since our setting near the boundary is a bit different, we sketch the proof. 

By \eqref{volc}, it suffices to consider the case $\e \ll 1$. By the maximum principle (\cite[Lemma 3.1]{H09}), we have
\begin{eqnarray*}
\|\phi-w\|_{L^{\infty}(U)}\leq C_{n} \diam(U)
\|(\det D^2\phi )^{1/n}-1\|_{L^{n}(U)} \leq C\, \e^{1/n}\equiv \bar{\e}.
\end{eqnarray*}
Therefore,
$$\frac{1}{2}w-\bar{\e}\leq\phi-\frac{w}{2}\leq\frac{1}{2} w +\bar{\e}\quad \text{in}\quad U\cap B_{c^2}$$
and since $w$ is convex, we have
$$\frac{1}{2}w-\bar{\e}\leq\Gamma\leq\frac{1}{2} w +\bar{\e}\quad \text{in}\quad U\cap B_{c^2}.$$
Let
$$V_{1}=\Big\{x\in U\cap B_{c^2}: \,  \dist \big(x, \partial(U\cap B_{c^2})\big)>\delta\Big\}.$$ Then, 
using \eqref{Pogo-2}, we will show that for $1\gg \delta> \bar{\e}$ to be chosen later, we have
\begin{equation}
\big| \{\Gamma = \phi-\frac{w}{2}\}\cap V_1 \big| \geq (1-C\delta)|V_1|
\label{contact2}
\end{equation}
for some
$C$ depends on $n$, $\rho, \alpha$ and $\kappa$.
Indeed, let
$$V_2=\Big\{x\in U\cap B_{c^2}: \,  \dist \big(x, \partial(U\cap B_{c^2})\big)>2\delta\Big\}.$$
For $x_0\in V_2$, consider 
$$v^*(x)  :=\frac{1}{2} w(x) -\bar{\e} + \delta (r^2- \abs{x-x_0}^2)$$
where
$$\delta= 2\sqrt[3]{\bar{\e}}\sim \e^{1/3n},~\delta>r>\sqrt{\frac{2\bar{\e}}{\delta}}.$$
Then $B(x_0, r)\subset V_1$ and 
\begin{equation*}v^*\leq \Gamma~\text{ on }~ \p B(x_0, r) \quad \text{and}\quad v^* \geq \Gamma~\text{ in }~ B(x_0, r- \frac{2\bar{\e}}{\delta r}).
\end{equation*}
It follows that $\nabla v^*(B(x_0, r- \frac{2\bar{\e}}{\delta r}) \subset \nabla \Gamma (B(x_0, r)).$
Hence
\begin{equation}\label{eq:subdiff-comparison}
 \nabla v^*(V_2)\subset \nabla \Gamma (V_1).
\end{equation}
From the $C^2$ bound on $w$ in (\ref{Pogo-2}), we have 
$$D^2 v^*= \frac{1}{2}D^2 w -2\delta I_n = \frac{1}{2} \big(1 - 4 c_0^{-1}\delta\big) D^2 w + 2\delta \big(c_0^{-1} D^2 w- I_n\big)\geq 
\frac{1}{2} \big(1 - 4 c_0^{-1}\delta\big) D^2 w.$$
Therefore, using $\det D^2 w=1$, we obtain
\begin{equation}
 \label{vol1}
 |\nabla v^* (V_2)|=\int_{V_2}\det D^2 v^* \geq (\frac{1}{2^{n}}- C_{1}\delta)|V_2|.
\end{equation}
Next, as $\Gamma$ is convex with $\Gamma\in C^{1,1}(U\cap B_{c^2})$ and $\det D^2\Gamma =0$  a.e. outside $\mathcal{C}$, we have
\begin{equation}\label{eq:subdffGamma}
 |\nabla\Gamma (V_1)|= |\nabla\Gamma (V_1\cap\mathcal{C})| = \int_{V_1\cap\mathcal{C}}\det D^2\Gamma.
\end{equation}
We now estimate $\det D^2 \Gamma$ from above. For this, observe that 
for any $x\in\mathcal{C}$, the function $\phi-\frac{1}{2}w-\Gamma$
attains its local minimum value $0$ at $x$. Hence, 
$$D^2\Gamma (x) \leq D^2 (\phi-\frac{1}{2}w)(x)$$
at any twice differentiable point of $\Gamma$ and $\phi$. Therefore, this inequality holds for a.e $x\in \mathcal{C}$ by Aleksandrov theorem. 
 Note that for symmetric, nonnegative matrices $A$ and $B$, we have
$$(\det (A+B))^{1/n}\geq (\det A)^{1/n} + (\det B)^{1/n}.$$
Thus, for a.e $x\in\mathcal{C}$, we have
\begin{eqnarray*}(\det D^2\Gamma(x))^{1/n} &\leq& (\det D^2 (\phi-\frac{1}{2}w)(x))^{1/n} \leq (\det D^2\phi)^{1/n}-(\det D^2 (\frac{1}{2}w)(x))^{1/n}\\
&\leq& (1+\e)^{1/n}-\frac{1}{2}\leq 
\frac{1}{2} +\frac{\e}{n}.
\end{eqnarray*}
Combining with \eqref{eq:subdffGamma} gives
$$|\nabla \Gamma(V_1)|\leq(\frac{1}{2^n} + C_2\e)|V_1\cap \mathcal{C}|.$$
We infer from this, \eqref{eq:subdiff-comparison} and \eqref{vol1} that
\begin{equation}|V_1\cap\mathcal{C}|\geq \frac{1-2^n C_1\delta}{1 + 2^{n}C_2\e}\abs{V_2}\geq (1-C_3 \delta)\abs{V_2}
 \label{vol2}
\end{equation}
for $\e\leq \e_0$ with $\e_0$ is a small universal constant.

By $(iv)$ in the definition of the class $\mathcal{P}$, we have $\|B_{c^2} \cap \partial U\|_{C^{1,1}}\leq \kappa.$ Consequently, 
 \begin{align*}
  \big|(U\cap B_{c^2}) \backslash V_2 \big| \leq C_4\delta\quad \text{and}\quad \abs{V_2}\geq \abs{V_1}- C_4\delta
 \end{align*}
for some $C_4>0$ depending only on $n, \rho$ and $\kappa$. Combining the above inequalities with \eqref{vol2}, we easily obtain \eqref{contact2}.

It follows from \eqref{contact2}, the inclusion  $\{\Gamma<\phi-\frac{w}{2}\}\subset U\cap B_{c^2}$  and  \eqref{volc} that
\begin{eqnarray*}
|\{\Gamma<\phi-\frac{w}{2}\}\cap  S_{\phi}(0, c^9)| &\leq & |\{\Gamma<\phi-\frac{w}{2}\}\cap V_1| + |(U\cap B_{c^2}) \backslash V_1|
\leq  C\delta|V_1| + C_4\delta \\ &\leq& C_5\delta\leq C\delta | S_{\phi}(0, c^9)| \leq C \e^{1/3n} | S_{\phi}(0, c^9)|. \nonumber
\end{eqnarray*}
This gives the claim \eqref{contact-set} and the proof  is complete.
\end{proof}

\section{\bf Density and Global $W^{2,p}$ Estimates }
\label{density_sec}

In this section we will prove global $W^{2,p}$ estimates for solutions to the linearized Monge-Amp\`ere equations as stated in the introduction. The key tools are density estimates
and a covering lemma.
\subsection{Density estimates}\label{sub:improveddensity}
In this subsection, by using the approximation lemma
in Subsection \ref{appro_sec} together with the stability of cofactor matrices established in Subsection \ref{cofactor_sec}, we improve density estimates obtained in 
Section \ref{small_decay_sec} when the Monge-Amp\`ere measure
$\det D^{2}\phi$ is close to $1$.

Our first lemma improves the power decay estimates in Proposition \ref{power-decay-at-boundary} which say that for $(\Omega, \phi, U)\in \mathcal{P}_{\lambda, \Lambda, \rho, \kappa, \ast}$, the quantity 
$|S_{\phi}(0, r)\setminus G_{N}(u, \Omega) |$ decays like $CN^{-\tau}$. Here, we improve $C$ by roughly a factor of
$\| \Phi - \calW\|_{L^n(U)}
+ \Big(\fint_{U} |f|^n ~dx\Big)^{\frac{1}{ n}}$ when $\ast$ is replaced by $\alpha$, $\lambda$ and $\Lambda$ are close to $1$, and $W$ is the matrix of cofactors
of $D^2 w$ of the solution to the Monge-Amp\`ere equation $\det D^2 w=1$ with the same boundary values as $\phi$. The precise statement is as follows.

\begin{lemma}\label{lm:acceleration}
Assume $(\Omega, \phi, U)\in \mathcal{P}_{1-\e, 1+ \e, \rho, \kappa, \alpha}$ where $0<\epsilon<1/2$. Let $r= c^2/4$.
Suppose $u\in  C(\Omega)\cap  C^1(U)\cap W^{2, n}_{loc}(U)$
is a solution of  
$\calL_\phi u=f$ in $U$ that satisfies
$$\|u\|_{L^{\infty}(U)} + \|u\|_{C^{2,\alpha}(\p U\cap B_{4r})} \leq 1, $$
and has at most quadratic growth  in the sense that
\begin{equation}
 \label{u-far}
|u(x)|\leq C^*\, \big[ 1+ d(x, x_0)^2\big] ~\text{ in }~ \Omega\setminus U~ \text{ for some }~ x_0\in B_{r/2}\cap U.
\end{equation}
Then there exist $\tau =\tau(n,\rho)>0$ 
and $N_0 = N_0(C^*, n,\rho, \alpha)>0$  such that for $N\geq N_0$ we have
\begin{equation*}
|G_{N}(u,\Omega) \cap   S_{\phi}(0, c^9)|\geq \left\{1- C\big( N^{-\tau} \delta_0^\tau +\epsilon^{1/3n} \big)\right\} \, | S_{\phi}(0, c^9)|
\end{equation*}
provided that $\| \Phi - \calW\|_{L^n(B_{2r} \cap U)} \leq r^4$. Here $C= C(n, \rho, \alpha,\kappa)>0$, $\calW,\, \gamma$ are from 
Lemma~\ref{explest}, and
\[
\delta_0 := \big( 1 + \|u\|_{C^{1/2}(\p U\cap B_{4r})}\big)\, \| \Phi - \calW\|_{L^n(B_{2r} \cap U)}^\gamma 
+ \Big(\fint_{U} |f|^n ~dx\Big)^{\frac{1}{ n}}.
\]
\end{lemma}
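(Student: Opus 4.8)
The plan is to combine the approximation lemma (Lemma~\ref{explest}), the tangent-paraboloid density estimate (Lemma~\ref{lm:tangent-paraboloid}), and the preliminary power decay estimate (Proposition~\ref{power-decay-at-boundary}), using the observation from Lemma~\ref{distribution-function}'s proof that a function trapped between quasi-paraboloids over $\phi$ can be compared to one trapped between Euclidean paraboloids once we restrict to the set $A_\sigma$. First I would introduce $w$ and $h$ exactly as in Lemma~\ref{explest}: $w$ is the solution of $\det D^2 w=1$ in $U$ with $w=\phi$ on $\p U$, and $h$ solves $\calW^{ij}h_{ij}=0$ in $B_{2r}\cap U$ with $h=u$ on $\p(B_{2r}\cap U)$. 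Lemma~\ref{explest} gives $\|h\|_{C^{1,1}(\overline{B_r\cap U})}\le C$ and
\[
\|u-h\|_{L^\infty(B_r\cap U)}+\|f-\trace([\Phi-\calW]D^2h)\|_{L^n(B_r\cap U)}\le C\delta_0'
\]
where $\delta_0'$ is essentially the first summand of $\delta_0$ plus $\|f\|_{L^n(U\cap B_{4r})}$; since the rescaling almost preserves $L^\infty$-norms of second derivatives, the relevant normalization makes $\|f\|_{L^n(U\cap B_{4r})}$ comparable to $(\fint_U|f|^n)^{1/n}$ up to the universal measure of $U\cap B_{4r}$, so in the end the right-hand side is controlled by $C\delta_0$.

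Next I would set up the covering argument. The idea is that on the contact set $A_\sigma$ from Lemma~\ref{lm:tangent-paraboloid}, the quasi distance $d(x,\bar x)$ satisfies $d(x,\bar x)^2\ge \tfrac{\sigma}{2}|x-\bar x|^2$ for all $x\in B_{c^2}\cap U$; hence if $\bar x\in A_\sigma$ and the smooth function $h$ satisfies a genuine paraboloid bound $|h(x)-h(\bar x)-\nabla h(\bar x)\cdot(x-\bar x)|\le \tfrac{K}{2}|x-\bar x|^2$ with $K\sim\|D^2h\|_\infty\le C$, then $\bar x$ lies in $G_{2K/\sigma}(h,\Omega,\phi)$-type set. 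The point $h$ being $C^{1,1}$ means \emph{every} point of $B_r\cap U$ has this property for $h$; the issue is to transfer it to $u$. This is done by absorbing the error $u-h$: wherever the positive part of $u$ minus an affine function exceeds that of $h$, one invokes the $L^\infty$ bound $\|u-h\|_\infty\le C\delta_0$ and a measure estimate for the set where $|D^2u|$ is large in terms of $\delta_0$ — this is precisely where the preliminary decay Proposition~\ref{power-decay-at-boundary} (applied to $u-h$, or to $u$ with the modified right-hand side $F=f-\trace([\Phi-\calW]D^2h)$) enters, giving that the bad set has measure $\le C\delta_0^\tau N^{-\tau}|S_\phi(0,c^9)|$ after rescaling so that $\|F\|_{L^n}\le \delta_0$. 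The factor $\delta_0^\tau$ comes from the standard scaling trick: dividing $u-h$ by $\delta_0$, applying the $\beta^{-\tau}$ decay, and rescaling back turns $\beta$ into $\beta\delta_0$. Finally, one takes the complement: $S_\phi(0,c^9)\setminus G_N(u,\Omega)$ is contained in the union of $S_\phi(0,c^9)\setminus A_\sigma$ (measure $\le C\e^{1/3n}|S_\phi(0,c^9)|$ by Lemma~\ref{lm:tangent-paraboloid}) and the bad set for $u-h$ at level $\sim N$ (measure $\le C N^{-\tau}\delta_0^\tau|S_\phi(0,c^9)|$), once $N\ge N_0$ is large enough that $N/\sigma$ dominates the universal bound $2C$ on $\|D^2h\|_\infty$ and the at-most-quadratic growth condition \eqref{u-far} is absorbed into the choice of $N_0(C^*,\dots)$.

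More precisely, the logical skeleton is: (1) fix $\sigma,C^{(1)}$ from Lemma~\ref{lm:tangent-paraboloid}; (2) produce $w,h$ and the estimates of Lemma~\ref{explest}; (3) show $\{\,\bar x\in A_\sigma\cap S_\phi(0,c^9): \bar x\in G_N(u,\Omega)\,\}$ contains $A_\sigma\cap S_\phi(0,c^9)$ minus a set $\mathcal B_N$ where either $|D^2(u-h)|$ is comparable to $N$ or $u$ fails the quadratic growth bound away from $U$; the latter is handled by choosing $N_0$ depending on $C^*$ so that the quasi-paraboloid of opening $N$ dominates $C^*(1+d(x,x_0)^2)$ on $\Omega\setminus U$ using that $d(x,x_0)^2\ge$ (height of $S_\phi(0,c^9)$) $\sim c^9$ there; (4) estimate $|\mathcal B_N|$ by rescaling and Proposition~\ref{power-decay-at-boundary} applied to $(u-h)/\delta_0$, whose linearized equation has $L^n$-norm of right-hand side $\le 1$ after normalization, giving $|\mathcal B_N|\le C(N\sigma^{-1})^{-\tau}\delta_0^\tau|S_\phi(0,c^9)|\le CN^{-\tau}\delta_0^\tau|S_\phi(0,c^9)|$; (5) combine with Lemma~\ref{lm:tangent-paraboloid} to conclude. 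I expect the main obstacle to be step~(3): carefully verifying that a point $\bar x$ in the contact set $A_\sigma$ at which the error $u-h$ is small (pointwise) and at which a Savin-type second derivative estimate holds actually belongs to $G_N(u,\Omega)$ \emph{globally on $\Omega$}, not just on $B_{c^2}\cap U$ — this requires propagating the quasi-paraboloid bound from the small ball $B_{c^2}$ out to all of $\Omega$ using the convexity of $\phi$, the lower bound $d(x,\bar x)^2\gtrsim c^9$ for $x$ outside a fixed neighborhood, and the growth hypothesis \eqref{u-far}, which is exactly the role of the constant $C^*$ in $N_0$. The bookkeeping of which set gets covered and the precise power $\delta_0^\tau$ versus $\delta_0$ is the delicate accounting, but it is routine once the scaling normalization for $F$ is fixed.
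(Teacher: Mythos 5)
Your proposal follows essentially the same route as the paper's proof: construct $h$ via Lemma~\ref{explest}, extend it by $u$ outside $B_{2r}\cap U$, show points of the contact set $A_\sigma$ lie in $G_{N_0}(h,\Omega)$ using the $C^{1,1}$ bound, the lower bound $d(x,\bar x)^2\ge\frac{\sigma}{2}|x-\bar x|^2$, convexity of $d(\cdot,\bar x)^2$, and the growth hypothesis \eqref{u-far} (which is exactly where $C^*$ enters $N_0$), then apply Proposition~\ref{power-decay-at-boundary} to $(u-h)/\delta_0'$ and combine with Lemma~\ref{lm:tangent-paraboloid} via $G_N(u-h,\Omega)\cap G_N(h,\Omega)\subset G_{2N}(u,\Omega)$. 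Apart from some loose phrasing (the bad set is $S_\phi(0,c^9)\setminus G_N(u-h,\Omega)$ rather than literally where $|D^2(u-h)|\sim N$), this is the paper's argument.
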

\begin{proof}
Let $h$ be the solution of
\begin{equation*}
\calW^{i j} h_{i j}  = 0~ \mbox{ in} \quad B_{2r} \cap U,\quad\mbox{and}\quad
h  =u~ \mbox{ on} \quad\partial ( B_{2r} \cap U).
\end{equation*}
By Lemma~\ref{explest} and since $U\subset B_{k^{-1}}$, there exists $C_0$ depending only on $n, \rho$ and $\alpha$ such that
\begin{align}
&\|h\|_{C^{1,1}(\overline{ B_{r} \cap U})}
\leq  C_0,\label{regularity-alpha}\\
&\|u-h\|_{L^\infty(\overline{ B_{r} \cap U})} + 
\| f-\trace([\Phi-\calW] D^2h)\|_{L^n(\overline{ B_{r} \cap U})}
\leq C_{0} \,\delta_0 =:\delta_0'.\label{closeness-alpha}
\end{align}
We now consider $h|_{B_{r} \cap U}$ and then extend $h$ outside $B_{r} \cap U$ continuously such that
\begin{equation*}
\left\{\begin{array}{rl}
&h(x)  = u(x) \quad \forall  x\in \Omega\setminus (B_{2r} \cap U),\\
&\|u-h\|_{L^\infty(\Omega)}  =\|u-h\|_{L^\infty(B_{r} \cap U)}.
\end{array}
\right.
\end{equation*}
The maximum principle gives
$\|h\|_{L^\infty(B_{r} \cap U)} \leq \|u\|_{L^\infty(U)}\leq 1,$ and thus 
\begin{equation}\label{h-above-below}
u(x) -2 \leq h(x) \leq u(x) +2 \quad \mbox{for all}\quad x\in \Omega.
\end{equation}
We claim that if $N\geq N_0$, then
\begin{equation}\label{h-is-good}
(B_{\frac{r}{2}} \cap U)\cap A_{\sigma}\subset G_N(h,\Omega)
\end{equation}
 where $\sigma=\sigma(n,\rho,\alpha)>0$ is the constant given by Lemma~\ref{lm:tangent-paraboloid}  and   the set $A_{\sigma} $ is defined by \eqref{Def-A-sigma}.\\

 Indeed, let $\bar x\in (B_{\frac{r}{2}} \cap U)\cap A_{\sigma}$.
By \eqref{regularity-alpha} we have  $$ \left | h(x) -[ h (\bar x) +
\nabla  h (\bar x)\cdot ( x- \bar x)]\right|\leq C_0 |x- \bar x|^2
\text{ for all } x\in \overline{B_{r} \cap U},$$ and since 
$\bar x\in  A_{\sigma}$ 
\begin{equation}\label{d-below}
d(x, \bar x)^2 = \phi(x) -[\phi(\bar x) +\nabla \phi(\bar x)\cdot( x-\bar x)]
\geq \frac{\sigma}{2} |x-\bar x|^2\quad\forall x\in B_{4r} \cap U.
\end{equation}
Therefore,
\begin{equation}\label{h-step2}
\big| h(x) -[ h (\bar x) +\nabla h (\bar x)\cdot ( x- \bar x)] \big|\leq \frac{2 C_0}{\sigma} \, d(x, \bar x)^2\quad\forall x \in \overline{B_{r} \cap U}.
\end{equation}
We next show that by increasing the constant on the right hand side of \eqref{h-step2}, that the resulting inequality holds for all $x$ in $\Omega$. 

To see this,  we first observe that by the maximum principle
$\max_{U}\phi = \max_{\p U}\phi =1$
and by the gradient estimates $(v)$ in the definition of the class $\mathcal{P}$ and $x_0\in U\cap B_{r/2}$, we have 
\begin{align}\label{d-via-d}
d(x, x_0)^2  &= d(x, \bar x)^2 
+ [\phi(\bar x) -\phi(x_0) -\langle  \nabla \phi(x_0), \bar x -x_0\rangle] +  \langle  \nabla \phi(\bar x) -\nabla \phi(x_0), x-\bar x\rangle\\
&\leq d(x, \bar x)^2 + C_1 (1 +  |x -\bar x| ) \qquad \mbox{for all }\quad x\in\Omega\nonumber
\end{align}
for some universal $C_1$ depending only on $n$ and $\rho$.

Next, we observe that if $c_{1}= \sigma r/4$ then 
\begin{equation}\label{d-below-linear}
 d(x, \bar x)^2 \geq c_1 |x-\bar x| \quad \forall x\in \overline{\Omega}\setminus \overline{ B_{r} \cap U}.
\end{equation}
Indeed, by \eqref{d-below}  and the fact that $\bar x\in B_{\frac{r}{2}} \cap U$, the above inequality holds for all 
$x\in  U\cap \partial B_{r}$. Now for $x\in \overline{\Omega}\setminus \overline{B_{r} \cap U}$ we can choose 
$\hat x \in  U \cap \partial B_{r}$ and $\lambda\in (0,1)$ satisfying $\hat x = \lambda x + (1-\lambda) \bar x$. Then since $d(\hat x, \bar x)^2 \geq 
c_1 |\hat x -\bar x|$ and the function $z\mapsto d(z,\bar x)^2$ is convex, we obtain 
\[
\lambda d(x, \bar x)^2 + (1-\lambda) d(\bar x, \bar x)^2
\geq c_1 |\lambda x + (1-\lambda) \bar x -\bar x| = c_1\lambda |x-\bar x|
\]
which gives $d(x, \bar x)^2 \geq c_1 |x-\bar x|$ and hence \eqref{d-below-linear} is proved.

We are ready to show that \eqref{h-step2} holds for all $x\in\Omega$ but with a bigger constant on 
the right hand side. Let $x\in \Omega \setminus \overline{B_{r} \cap U}$. Then, recalling $\bar x\in B_{\frac{r}{2}} \cap U$ and by (\ref{d-below-linear}),
we have
$$d(x, \bar{x})^2 \geq c_{1}r/2 =: c_2.$$
We can estimate using \eqref{regularity-alpha} and  \eqref{h-above-below},
\begin{equation} 
\left | h(x) -[ h (\bar x) +
\nabla  h (\bar x)\cdot ( x- \bar x)]\right|
\leq |h(x) - h(\bar x)| + C_0 |x-\bar x|
\leq|u(x)| + C_0 ( |x-\bar x|+ 1).
\label{dh-est}
\end{equation}
Consider the following cases:

{\bf Case 1:} $x\in \overline{U}\setminus \overline{B_{r} \cap U}$. 
Using (\ref{dh-est}) and the above lower bound for $d(x, \bar{x})^2$, we obtain
$$\left| h(x) -[ h (\bar x) + \nabla  h (\bar x)\cdot ( x- \bar x) ]\right| \leq 1 + C_0 ( |x-\bar x|+ 1)\leq 1 + C_0(2k^{-1}+ 1)\leq C_{2} \, d(x,\bar{x})^2.$$

{\bf Case 2:} $x\in \Omega\setminus  \overline{U}$. Using \eqref{dh-est}, \eqref{u-far}, \eqref{d-via-d}, \eqref{d-below-linear} and the bound  $d(x, \bar{x})^2\geq c_2$, we find that 
\begin{eqnarray*}
\left | h(x) -[ h (\bar x) +
\nabla  h (\bar x)\cdot ( x- \bar x)]\right|
&\leq&  C^*\,[ 1+ d(x, x_0)^2] +C_0 ( |x-\bar x|+ 1) \\
&\leq &  C^*\,d(x, \bar x)^2 +C_3 ( |x-\bar x|+ 1)\leq C_4 \, d(x, \bar x)^2.
\end{eqnarray*}
Therefore if we choose 
\[N_0 := \max{\Big\{\frac{ 4 C_0}{\sigma}, 2 C_2, 2 C_4 \Big\}},
\]
then it follows from the above considerations and \eqref{h-step2} that 
\[
\left | h(x) -[ h (\bar x) +
\nabla  h (\bar x)\cdot ( x- \bar x)]\right|\leq \frac{N_0}{2} \, d(x, \bar x)^2
\quad\mbox{for all}\quad x \in  \Omega.
\]
This means $\bar x\in G_{N_0}(h,\Omega)\subset G_{N}(h,\Omega)$ for all $N\geq N_0$. Thus claim \eqref{h-is-good} is proved.

Next let
\[
u'(x) := \frac{(u-h)(x)}{\delta_0'}, \quad \mbox{for}\quad x\in\Omega.
\]
We infer from \eqref{closeness-alpha} and the way $h$ was initially defined and extended that
\begin{align*}
& \|u'\|_{L^\infty(\Omega)} =\frac{1}{\delta_0'} \|u-h\|_{L^\infty(B_{r} \cap U)}\leq 1,\\
& \calL_\phi u' =\frac{1}{\delta_0'} [\calL_\phi u -\calL_\phi h ]
 =\frac{1}{\delta_0'} \Big[f-\trace([\Phi -\calW] D^2 h) \Big]
=: f'(x) 
\qquad \mbox{ in}\quad B_{r} \cap U.
\end{align*}
Notice that $\|f'\|_{L^n(B_{r} \cap U)}\leq 1$ by \eqref{closeness-alpha}.  Thus   we can  apply Proposition~\ref{power-decay-at-boundary} to get
\begin{equation*}
\big| S_{\phi}(0, c^9)\setminus G_{\frac{N}{\delta_0'}}(u',\Omega)\big| \leq
C \big(\frac{\delta_0'}{N}\Big)^\tau \, | S_{\phi}(0, c^9)|.
\end{equation*}
As 
$ G_{\frac{N}{\delta_0'}}(u',\Omega) =  G_{N}(u-h,\Omega)$, we then conclude 
\begin{align*}
| S_{\phi}(0, c^9)| - |G_{N}(u -h,\Omega) \cap   S_{\phi}(0, c^9)|\leq 
 C \left(\frac{\delta_0}{N} \right)^{\tau}  ~ | S_{\phi}(0, c^9)|
\end{align*}
yielding
\begin{align*}
 \left\{ 1- 
 C  \big(\frac{\delta_0}{N} \big)^{\tau}  \right\}~ | S_{\phi}(0, c^9)|
 &\leq |G_{N}(u -h,\Omega) \cap   S_{\phi}(0, c^9)|\\
 & \leq |G_{N}(u -h,\Omega) \cap   S_{\phi}(0, c^9)\cap A_{\sigma}| +\left| S_{\phi}(0, c^9)\setminus A_{\sigma}\right|\\
 & \leq |G_{N}(u -h,\Omega) \cap   S_{\phi}(0, c^9)\cap A_{\sigma}| + C \epsilon^{1/3n}   \, | S_{\phi}(0, c^9)|,
\end{align*}
where the last inequality is by Lemma~\ref{lm:tangent-paraboloid}. Consequently,
\begin{equation}\label{Est-12}
 |G_{N}(u -h,\Omega) \cap   S_{\phi}(0, c^9) \cap A_{\sigma}| \geq \left\{ 1- 
 C \Big[ \big(\frac{\delta_0}{N} \big)^{\tau} +  \epsilon^{1/3n} \Big] \right\}~ | S_{\phi}(0, c^9)|.
\end{equation}
Next observe that $ G_{N}(u -h,\Omega) \cap  S_{\phi}(0, c^9)\cap A_{\sigma}\subset G_{N}(u -h,\Omega) \cap  G_N(h,\Omega)$ by \eqref{h-is-good}. Therefore,
\begin{equation*}
G_{N}(u -h,\Omega) \cap   S_{\phi}(0, c^9) \cap A_{\sigma} \subset 
G_{2 N}(u,\Omega ) \cap   S_{\phi}(0, c^9)
\end{equation*}
which together with \eqref{Est-12}  gives the conclusion of the lemma. 
\end{proof}

Having the improved decay estimates in Lemma \ref{lm:acceleration}, we can now proceed with density estimates when 
$\det D^2\phi$ is close to a constant. Our next lemma is concerned
with second derivative estimates for solutions to $\mathcal{L}_{\phi} u=f$. It roughly says that in each section $S_{\phi}(x, t)$ with small height $t$, we can find
a very large portion (as close to the full measure as we want) where $u$ has second derivatives bounded in a controllable manner. The bound on $D^2 u$ is made more precise by using
the openings of the quasi paraboloids that touch $u$ from below and above. So far, we have no a priori information on the boundedness of $D^2u$.
However, we can still hope for a bound of order $\frac{1}{t}$ for $|D^2u|$ in $S_{\phi}(x, t)$ as explained in Subsection~\ref{rescale-sec} using an $L^{\infty}$-norm
rescaling of our solution. This heuristic idea explains the factor $\frac{N}{t}$ in the estimate \label{localized-est} of Lemma~\ref{lm:improved-density-I} and the way the solution is rescaled
in the proof.

\begin{lemma}\label{lm:improved-density-I}
Assume $\Omega$ satisfies \eqref{global-tang-int} and $\phi\in C^{0,1}(\overline{\Omega})$ is a convex function satisfying \eqref{global-sep} and
\[1-\epsilon \leq \det D^2\phi \leq
1+\epsilon \quad\mbox{in}\quad \Omega.
\]
Assume in addition that $\partial \Omega\in C^{2,\alpha}$ and $\phi \in C^{2,\alpha}(\partial\Omega)$ for some $\alpha\in (0,1)$. Let $u\in C^1(\Omega) \cap W^{2,n}_{loc}(\Omega)$ be a solution of
$\calL_\phi u=f$ in $\Omega$ with  $u=0$ on $\partial\Omega$ and $\|u\|_{L^\infty(\Omega)}\leq 1$. 
Let $0<\epsilon_0<1$. 
Then there exists $\epsilon>0$ depending only on $\epsilon_0, n, \rho$ and $\alpha$ such that 
for any $x\in \overline{\Omega}$ and $t\leq c_1$  we have
\begin{equation}\label{localized-est}
 \left| G_{\frac{N}{t}}(u,\Omega) \cap S_\phi(x, t)\right|
 \geq \left\{1-\epsilon_0 -C \big(\frac{\sqrt{t}}{N}\big)^\tau \|f\|_{L^n(\Omega)}^\tau\right\}~ \left|S_\phi(x, t)\right|\quad \forall N\geq N_1.
 \end{equation}
Here $\tau =\tau(n,\rho)$;  $C$ and $N_1$ depend only on $n, \rho$ and $\alpha$; $c_1>0$ is  small depending only on  $n, \rho, \alpha$,
$\|\partial\Omega\|_{C^{2,\alpha}}$ and $\|\phi\|_{C^{2,\alpha}(\partial\Omega)}$.
\end{lemma}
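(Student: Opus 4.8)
The plan is to dichotomize $S_\phi(x,t)$ according to whether it is an interior section or is trapped inside a boundary section with comparable height, and in both cases rescale to height $1$ so that the hypotheses become those of the class $\mathcal P_{1-\e,1+\e,\rho,\kappa,\alpha}$, at which point Lemma~\ref{lm:acceleration} applies. First I would invoke Proposition~\ref{dicho} (the Dichotomy). In the interior case $S_\phi(x,2t)\subset\Omega$, one uses the $L^\infty$-norm preserving rescaling: if $y\in\Omega$ with $\bar h(y)\geq 2t$ and $S_\phi(y,\bar h(y))$ is the maximal interior section, then by Proposition~\ref{tan_sec} it is equivalent to an ellipsoid $E_{\bar h(y)}$, and one rescales $u$ by $\tilde u(z)=u(y+h^{1/2}A_h^{-1}z)$ (with $h=\bar h(y)$), which preserves $\|u\|_{L^\infty}$, multiplies $\|f\|_{L^n}$ by $h^{1/2}$, and sends $\Phi$ to $A_h\Phi A_h^t$. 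The interior $W^{2,\delta}$ setup is available; more to the point, for the interior case the estimate is essentially the interior density estimate of \cite{GN2}, but to keep uniformity I would rather route everything through the boundary machinery by noting the interior section sits inside a section tangent to the boundary. In the boundary case $S_\phi(x,2t)\subset S_\phi(z,\bar c t)$ for some $z\in\partial\Omega$; apply the Localization Theorem~\ref{main_loc} and the rescaling \eqref{phi-omega-h} at $z$ with height $h\sim \bar c t$, producing $(\Omega_h,\phi_h,S_{\phi_h}(0,1))$. By Proposition~\ref{P-prop}, for $t$ small (hence $h$ small, below $h_0$) this triple lies in $\mathcal P_{1-\e,1+\e,\rho,Ch^{1/2},\alpha}$, with $\|\partial\Omega_h\cap B_{1/k}\|_{C^{2,\alpha}}\leq C'h^{1/2}$; in particular $\kappa=Ch^{1/2}$ can be absorbed into universal constants.

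Next I would produce the matrix $w$ solving $\det D^2 w=1$ in $U_h=S_{\phi_h}(0,1)$ with $w=\phi_h$ on $\partial U_h$, and let $\calW$ be its cofactor matrix. The global stability of cofactor matrices, Proposition~\ref{global-convergence}(ii), gives $\|\Phi_h-\calW\|_{L^n(B_{c^2}\cap U_h)}\leq C\e^{(n-1)\delta/(n(2n^2-\delta))}$ for $\e\leq\e_0$; choosing $\e$ small (depending on $\e_0,n,\rho,\alpha$) makes this smaller than the threshold $r^4=(c^2/4)^4$ required in Lemma~\ref{lm:acceleration}, and also makes $\delta_0$ in that lemma controlled by $\e^{\gamma'}+(\fint_{U_h}|\tilde f_h|^n)^{1/n}$. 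The term $\|u\|_{C^{1/2}(\partial U\cap B_{4r})}$ is handled by the global H\"older estimate Theorem~\ref{global-h}: since $u=0$ on $\partial\Omega$, the rescaled boundary data is zero on $\partial\Omega_h$, so $\|\tilde u\|_{C^{1/2}(\partial U_h\cap B_{4r})}$ is bounded by a universal constant. The quadratic-growth hypothesis \eqref{u-far} on $u$ outside $U_h$ must be verified: since $|u|\leq 1$ globally and, on $\Omega_h\setminus U_h$, the quasi distance $d_{\phi_h}(x,x_0)^2$ grows (one has $\phi_h\geq 1$ on $\partial U_h$ and $\phi_h(x_0)\leq$ something $\leq 1$ away from the origin, plus the gradient bound from property (v) of $\mathcal P$), the bound $|u(x)|\leq C^*(1+d_{\phi_h}(x,x_0)^2)$ holds with $C^*$ universal. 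Then Lemma~\ref{lm:acceleration} yields
\[
|G_N(\tilde u,\Omega_h)\cap S_{\phi_h}(0,c^9)|\geq\{1-C(N^{-\tau}\delta_0^\tau+\e^{1/3n})\}\,|S_{\phi_h}(0,c^9)|,
\]
and by choosing $\e$ small enough the $\e^{1/3n}$ term plus the $\e^{\gamma\tau}$-part of $\delta_0^\tau$ is $\leq\e_0/2$, leaving the residual $C(\sqrt t/N)^\tau\|f\|_{L^n}^\tau$ coming from the $(\fint|\tilde f_h|^n)^{1/n}$-part of $\delta_0$ after undoing the rescaling (recall $\|\tilde f_h\|_{L^n}=h^{1/2}\|f\|_{L^n}$ with $h\sim t$).

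Finally I would translate $G_N(\tilde u,\Omega_h)$ back to $G_{N/t}(u,\Omega)$ using the scaling identity $d_{\phi_h}(Tx,T\bar x)^2=h^{-1}d_\phi(x,\bar x)^2$ (as in the proof of Proposition~\ref{Initial-Estimate}), and map $S_{\phi_h}(0,c^9)$ back; by \eqref{small-sec} the section $S_{\phi_h}(0,c^9)$ contains a fixed fraction of $U_h\supset$ (rescaled) $S_\phi(x,t)$ up to universal constants, which is where one pays a universal multiplicative constant and adjusts $\e_0$. Combining the two cases of the dichotomy and taking $c_1$ small enough that all the smallness requirements ($h\leq h_0$, $t$ small enough for Proposition~\ref{dicho} and for Lemma~\ref{sep-lem}) hold gives \eqref{localized-est} with $N_1$ the maximum of the $N_0$'s from the two cases. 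The main obstacle I anticipate is bookkeeping the constants through the rescaling so that the final residual genuinely has the clean form $C(\sqrt t/N)^\tau\|f\|_{L^n(\Omega)}^\tau$ — in particular isolating the $f$-dependence inside $\delta_0$ from the $\Phi-\calW$ dependence, and checking that the latter, together with the $\e^{1/3n}$ error, can be driven below $\e_0$ uniformly; a secondary subtlety is verifying the quadratic-growth condition \eqref{u-far} for the rescaled solution, which requires carefully using properties (v) and (vi) of the class $\mathcal P$ together with $|u|\leq 1$.
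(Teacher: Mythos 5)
Your proposal is essentially the paper's own proof: boundary (or boundary-trapped) sections are rescaled via the Localization Theorem so that Proposition~\ref{P-prop} places the rescaled triple in $\mathcal{P}_{1-\e,1+\e,\rho,Ch^{1/2},\alpha}$, then Lemma~\ref{lm:acceleration} is applied with the cofactor stability of Proposition~\ref{global-convergence} absorbing the $\|\Phi-\calW\|$ part of $\delta_0$ and $\e$ chosen so that this plus the $\e^{1/3n}$ error is below a universal fraction $\beta\e_0$ of $\e_0$, after which the $f$-term rescales exactly as you compute ($\|f_h\|_{L^n}=h^{1/2}\|f\|_{L^n}$, $h\sim t$) and the passage from $S_{\phi_h}(0,c^9)$ back to $S_\phi(x,t)$ costs only the volume-estimate constants that the choice of $\beta$ compensates; your dichotomy-first organization and the paper's ``boundary points first, then interior Cases 1--2'' are the same split. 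Two small remarks. First, the $C^{2,\alpha}$ (and $C^{1/2}$) boundary hypotheses of Lemma~\ref{lm:acceleration} are trivially satisfied here because $u_t=0$ on $\partial U_t\cap B_k$, so the detour through Theorem~\ref{global-h} is unnecessary. Second, and more substantively, your aside that you would ``rather route the interior case through the boundary machinery'' does not work if taken literally when $t\ll \bar h(x)$: applying Lemma~\ref{lm:acceleration} to the rescaled maximal (tangent) section only yields density at the fixed height $c^9$ of that rescaled picture, i.e.\ for sections of height comparable to $\bar h(x)$ with opening $\sim N/\bar h(x)$, not for the much smaller $S_\phi(x,t)$ with opening $N/t$; moreover the tangent-section rescaling produces a section with zero boundary data, which is not in the class $\mathcal{P}$. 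What is actually needed there — and what your primary description already amounts to, and what the paper does — is the interior density estimate of \cite{GN2} (Lemma~\ref{interior-density-est}) applied at sub-height $t/\bar h(x)\le 1/2$ inside the normalized tangent section, followed by the affine change of variables.
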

\begin{proof}
If $\e$ is small then by the global $W^{2,p}$ estimates for solutions to the Monge-Amp\`ere equations \cite[Theorem 1.2]{S3}, we have $\phi\in W^{2, 2n}(\Omega)$ and hence 
$\phi\in C^{1}(\overline{\Omega})$.

Let us first consider the case $x\in \partial\Omega$. We can assume that $x=0$, $\phi(0)=0$ and $\nabla \phi(0)=0.$
By the Localization Theorem \ref{main_loc}, we have 
\[
k E_{t} \cap \overline\Omega\subset S_{\phi}(0, t) \subset k^{-1} E_{t}\cap \overline\Omega, 
\]
where $E_{t} := A_{t}^{-1} B_{t^{1/2}}$ with $ A_{t} x= x-\tau_{t} x_n$ and
\[
\tau_{t}\cdot e_n =0, \quad  \quad\|A_{t}^{-1}\|,\, \|A_{t}\|\leq k^{-1} |\log{t}|.
\]
We now define the rescaled domains $\Omega_{t}, U_{t}$ and rescaled functions $\phi_{t}$ and
$u_{t}$ as in Subsection~\ref{rescale-sec} {\it that preserve the $L^{\infty}$-norm of $u$}. We have
\begin{align*}
\mathcal{L}_{\phi_{t}} u_{t}(y)=  tf(T^{-1} y) =: f_{t}(y)
\end{align*}
where $T:=t^{-1/2} A_{t}$ and
 $$\| u_{t} \|_{L^\infty(\Omega_{t})} =\|u \|_{L^\infty(\Omega)}\leq 1, \quad u_{t} = 0 \quad\mbox{on}\quad \partial U_{t} \cap B_k.$$
Moreover, we have from Proposition~\ref{P-prop} that
$$(\Omega_{t}, \phi_{t}, U_{t})\in \mathcal{P}_{1-\e, 1+\e, \rho, Ct^{1/2}, \alpha}
\subset  \mathcal{P}_{1-\e, 1+\e, \rho, 1, \alpha}
$$
 if $t\leq \tilde{c}$, where $\tilde{c}>0$ is a small constant depending only on $n, \rho, \alpha$,
$\|\partial\Omega\|_{C^{2,\alpha}}$ and $\|\phi\|_{C^{2,\alpha}(\partial\Omega)}$.

 Now, applying Lemma~\ref{lm:acceleration} with $C^*=1$, we obtain
\begin{equation*}
|G_{N}(u_{t}, \Omega_{t}, \phi_t) \cap  S_{\phi_t}(0, c^9) |\geq \left\{1- C\big(N^{-\tau} \delta_0^{\tau} +
\epsilon^{1/3n} \big)\right\} \, | S_{\phi_t}(0, c^9)|
\end{equation*}
for any $N\geq N_0=N_0(n,\rho, \alpha)$. Here
\begin{equation}\label{delta-zero}
\delta_0 := \| \Phi_t - \calW_t\|_{L^n(B_{\frac{c^2}{2}} \cap U_t)}^\gamma 
+ \Big(\fint_{U_{t}} | f_{t}|^n ~dy\Big)^{\frac{1}{ n}}, 
\end{equation}
$\gamma$ is given by Lemma~\ref{explest}, $w_{t}$ is the function in $(vii)$ in the definition of the class
$\mathcal{P}$ associated with the triple $(\Omega_{t}, \phi_{t}, U_{t})$ and
 $\calW_{t}$ is the cofactor matrix of $D^2 w_{t}$.  This together with the stability of cofactor matrices in Proposition~\ref{global-convergence} 
implies  the existence of $\e= \e(\e_0, n, \rho, \alpha)>0$ 
such that for $r:=c^9$, we have
\begin{align*}\label{G-tilde-u}
 |G_{N}(u_{t}, \Omega_{t}, \phi_t) \cap  S_{\phi_t}(0, r)|
 &\geq \left\{1-\epsilon_0 \beta-C N^{-\tau}\Big(\fint_{U_{t}} |f_{t}|^n ~dy\Big)^{\frac{\tau}{n}}\right\}~ |S_{\phi_t}(0, r)|\\
 &= \left\{1-\epsilon_0\beta -C \big(\frac{t}{N}\big)^\tau \Big(\fint_{S_{\phi}(0, t)} | f|^n ~dx\Big)^{\frac{\tau}{n}} 
\right\}~ |S_{\phi_t}(0, r)|,
 \end{align*}
 where 
 $\beta=\beta(n,\rho)<1$ is a universal  constant  to be chosen later.

As $S_{\phi_t}(0, r) =T(S_{\phi}(0, rt))$, it is easy to see that for $G_{N}(u,\Omega,\phi) = G_{N}(u,\Omega)$,
\begin{align*}
&G_{N}( u_{t}, \Omega_{t}, \phi_t) \cap S_{\phi_t}(0, r)
=  T \Big( G_{\frac{N}{t}}(u, \Omega, \phi) \cap S_{\phi}(0, rt)\Big).
\end{align*}
Therefore  we conclude that
\begin{align*}
 \left| T \Big( G_{\frac{N}{t}}(u,\Omega) \cap S_{\phi}(0, rt)\Big)\right|
 &\geq \left\{1-\beta \epsilon_0 -C \big(\frac{t}{N}\big)^\tau \Big(\fint_{S_{\phi}(0, t)} | f|^n ~dx\Big)^{\frac{\tau}{n}} \right\}~ \left|T (S_{\phi}(0, rt))\right|~\forall t\leq \tilde c.
 \end{align*}
This is equivalent to
\begin{equation}\label{better-localized-est}
 \left| G_{\frac{N^{'}}{t}}(u,\Omega) \cap S_\phi(x, t)\right|
 \geq \left\{1-\epsilon_0\beta -C \big(\frac{t}{N^{'}}\big)^\tau  \Big(\fint_{S_\phi(x, \frac{t}{r} )} | f|^n ~dx\Big)^{\frac{\tau}{n}} \right\}~ 
 \left|S_\phi(x, t)\right|
 \end{equation}
giving \eqref{localized-est} for any $N^{'}\geq N_1\equiv N_0 r$ and $t\leq  r\tilde c$.  

Next we consider the situation that $x\in \Omega$. We then have the following possibilities:
 
 {\bf Case 1:}  $t\leq h/2$, where $h :=\bh(x)$. 
 
 If $h\geq c$ where $c$ is defined in Proposition \ref{tan_sec} then the estimate (\ref{localized-est}) is an easy consequence of
 the interior density estimates \cite[Lemma 4.3]{GN2} which we now recall.
 \begin{lemma}(\cite[Lemma 4.2]{GN2}) Let $0<\alpha_0<1$ and $\Omega$ be a convex domain in $\R^{n}$ satisfying $B_{k_0}\subset \Omega\subset B_{k_0^{-1}}$ and 
 $u\in C^1(\Omega)\cap W^{2, n}_{loc}(\Omega)$ be a solution of $\Phi^{ij} u_{ij} = f$ in $\Omega$ with $\|u\|_{L^{\infty}(\Omega)}\leq 1,$ where $\phi \in C(\overline{\Omega})$
 is a convex function satisfying $\phi =0$ on $\p\Omega$. Let  $0<\e_{0}<1$. There exists $\e>0$ depending only on $\e_0, \alpha_0, k_0$ and $n$ such that if 
 \[1-\e\leq \det D^2 \phi \leq 1+ \e \quad \mbox{in}\quad  \Omega,
 \]
  then
 for any section $S_{\phi}(x_0, \frac{t_0}{\alpha_0})\subset \Omega_{\frac{\alpha_0 +1}{2}}:=\{x\in \Omega: \phi(x) < (1-\frac{\alpha_0 + 1}{2})\min_{\Omega}\phi\}$, we have
  $$|G_{\frac{N}{t_0}}(u,\Omega)\cap S_{\phi}(x_0, t_0)|\geq \left\{1-\e_0 - C(\frac{t_0}{N})^{\tau} \Big(\fint_{S_{\phi}(x_0, \frac{t_0}{\alpha_0})}
  \abs{f}^n\Big)^{\frac{\tau}{n}}\right\}|S_{\phi}(x_0, t_0)|$$
  for every $N\geq N_0$. Here $C, \tau, N_0$ are positive constants depending only on $\alpha_0, n$ and $k_0.$
  \label{interior-density-est}
 \end{lemma}
Now we consider the remaining situation in {\bf Case 1} when $h\leq c.$ 
We define the rescaled domain $\tilde \Omega_{h}$ and rescaled functions $\tilde \phi_{h}$,
$\tilde u_{h}$ and $\tilde f_h$ as in Subsection~\ref{rescale-sec} that {\it preserve
the $L^{\infty}$-norm in a section tangent to the boundary}.
Now, we apply Lemma~\ref{interior-density-est} to the domain $ S_{\tilde{\phi}_{h}} (0, 1)$ with $\alpha_0 =3/4$, $x_0=0$ and $t_0 = t/h\leq 1/2$,
noting that $(S_{\tilde\phi_h}(0,1))_{\alpha}= S_{\tilde\phi_h}(0,\alpha)$ for all $\alpha>0$.
Thus,
 \begin{equation}|G_{\frac{N h}{t}}(\tilde{u}_h, S_{\tilde{\phi}_{h}} (0, 1), \tilde \phi_h)\cap S_{\tilde{\phi}_{h}} (0, \frac{t}{h})|\geq
 \left\{1-\e_0 - C(\frac{t}{hN})^{\tau} \Big(\fint_{S_{\tilde{\phi}_{h}} (0, \frac{4t}{3h})}
  \abs{\tilde{f}_{h}}^n\Big)^{
  \frac{\tau}{n}}\right\}|S_{\tilde{\phi}_{h}} (0, \frac{t}{h})|.
 \label{density-h}
 \end{equation} 
Let $Ty := h^{-1/2}A_{h}(y-x)$. Then
$$G_{\frac{N h}{t}}(\tilde{u}_h, S_{\tilde{\phi}_{h}} (0, 1), \tilde\phi_h)\cap S_{\tilde{\phi}_{h}} (0, \frac{t}{h}) = T\big(G_{\frac{N}{t}}(u,\Omega) \cap S_{\phi}(x, t)\big).$$
Changing variables in \eqref{density-h} gives
$$|G_{\frac{N}{t}}(u, \Omega)\cap S_{\phi}(x, t)|\geq \left\{1-\e_0 - C(\frac{t}{N})^{\tau} \Big(\fint_{S_{\phi}(x, \frac{4t}{3})}
  \abs{f}^n\Big)^{\tau/n}\right\}|S_{\phi}(x, t)|$$
 and hence \eqref{localized-est} holds. 

{\bf Case 2:} $h/2 <t\leq r\tilde c/\bar{c}\equiv c_{1}$ where $\bar{c}>1$ is the constant in Proposition~\ref{dicho}. Then  
by Proposition~\ref{dicho}, we know that 
$S_{\phi}(x, 2 t) \subset S_{\phi}(z, \bar{c} t) $
for some $z\in\p\Omega$, and by Theorem~\ref{engulfing2}(b), 
\begin{equation*}
C_{1} t^{n/2}\leq |S_{\phi}(x, t)|\leq C_{2} t^{n/2}\quad \forall t\leq c_0.
\end{equation*}
Using these inequalities and the estimate \eqref{better-localized-est} in the case of boundary section, we get 
\begin{eqnarray*}
\left| S_\phi(x, t)\setminus G_{\frac{N}{t}}(u,\Omega) \right|
&\leq&   \left| S_\phi(z, \bar{c}t)\setminus G_{\frac{N}{\bar{c}t}}(u,\Omega) \right|\\
&\leq& \left\{ \epsilon_0\beta +C \big(\frac{ \bar{c}t}{N}\big)^\tau  \Big(\fint_{S_\phi(z,  \bar{c}t/r)} | f|^n ~dx\Big)^{\frac{\tau}{n}}\right\} |S_\phi(z, \bar{c} t)| \\
&\leq& \left\{ \epsilon_0\beta +C \big(\frac{\sqrt{t} }{N}\big)^\tau  \|f\|_{L^n(\Omega)}^\tau\right\} |S_\phi(x, t)| \,C_1^{-1} C_2 \bar{c}^{\frac{n}{2}}.
 \end{eqnarray*}
This implies \eqref{localized-est} as desired by choosing $\beta = C_1 C_2^{-1} \bar{c}^{\frac{-n}{2}}$ and $c_1= r\tilde{c}/\bar{c}= c^9\tilde{c}/\bar{c}$.
\end{proof}

The next lemma is a key technical ingredient in our global $W^{2, p}$ estimates. It propagates a point in a given section 
where the solution $u$ of $\mathcal{L}_{\phi} u=f $ has bounded second derivative to almost all points in that section. More precisely, it says that if in a small 
section $S_{\phi}(x, t)$
we can find a point where $u$ is touched from above and below by quasi paraboloids of opening $\gamma$ generated by $\phi$ then on a set of nearly full measure of $S_{\phi}(x, t)$,
$u$ is touched from above and below by quasi paraboloids of opening $N\gamma$  for some controllable constant $N$, provided that
$\det D^2\phi$ is close to a constant.

\begin{lemma}\label{lm:furthercriticaldensitytwo}
Assume $\Omega$ is uniformly convex satisfying \eqref{global-tang-int} and $\phi\in C^{0,1}(\overline{\Omega})$ is a convex function satisfying \eqref{global-sep} and
\[
1-\epsilon \leq \det D^2\phi \leq 1+\epsilon\quad\mbox{in}\quad\Omega.
\]
Assume in addition that $\partial \Omega\in C^{2,\alpha}$ and $\phi \in C^{2,\alpha}(\partial\Omega)$ for some $\alpha\in (0,1)$. 
Let $u\in  C^1(\Omega) \cap W^{2,n}_{loc}(\Omega)$ be a solution of $\calL_\phi u=f$ in $\Omega$ and  $u=0$ on $\partial\Omega$. 
Let $0<\epsilon_0<1$.
Then there exists $\epsilon>0$ depending only on  $\epsilon_0, n, \rho$ and $\alpha$ such that for 
any $x\in \overline{\Omega}$,   $t\leq c_2$ and $S_\phi(x,t)\cap G_\gamma(u,\Omega)\neq \emptyset$ we have
\begin{equation}\label{improve-density-II}
 \left| G_{N \gamma }(u,\Omega) \cap S_\phi(x,  t)\right|
 \geq \left\{1-\epsilon_0 -C (N\gamma)^{-\tau} \Big(\fint_{S_\phi(\tilde x,\Theta t)} | f|^n ~dx\Big)^{\frac{\tau}{n}} \right\}~ \left|S_\phi(x, t)\right|
 \end{equation}
for all $\tilde x\in S_\phi(x,t)$ and $N\geq N_2$.
Here $\tau$ and $\Theta$ depend only on $n$ and $\rho$;  $C$, $c_2$ and $N_2$ depend only on  $n, \rho, \alpha$, the 
uniform convexity of $\Omega$, $\|\partial\Omega\|_{C^{2,\alpha}}$ and $\|\phi\|_{C^{2,\alpha}(\partial\Omega)}$.
\end{lemma}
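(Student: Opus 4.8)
The plan is to bootstrap from Lemma~\ref{lm:improved-density-I}, which already handles sections containing the \emph{minimum} point of $\phi$ (where $u$ is automatically touched by the trivial zero paraboloid), to arbitrary small sections meeting $G_\gamma(u,\Omega)$. The essential trick is the \textbf{$L^\infty$-norm preserving rescaling in a section tangent to the boundary} from Subsection~\ref{rescale-sec}. First I would pick $\bar x\in S_\phi(x,t)\cap G_\gamma(u,\Omega)$ and, by the engulfing property (Theorem~\ref{engulfing2}a) together with the dichotomy (Proposition~\ref{dicho}), enclose $S_\phi(x,2t)$ inside a section $S_\phi(z,\bar c\,t)$ which is either interior or tangent to $\partial\Omega$; by Lemma~\ref{sep-lem}(e)/Proposition~\ref{tan_sec} this section is equivalent to an ellipsoid $E_{\bar c t}$ with $\det A=1$, $\|A\|,\|A^{-1}\|\le C|\log t|$. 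Rescaling by $T y = (\bar c t)^{-1/2}A(y-z)$ (or $y-z$ replaced by $y-z$ accordingly) produces a new triple in the class $\mathcal{P}_{1-\e,1+\e,\rho,1,\alpha}$ by Proposition~\ref{P-prop}, and the rescaled solution $\tilde u$ satisfies $\|\tilde u\|_{L^\infty}\le 1$ after dividing by the $L^\infty$ norm on the enclosing section — but here we must \emph{not} normalize blindly, because the point $\bar x$ carries the information ``$u$ is touched by a paraboloid of opening $\gamma$.''

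The key reduction is: subtract the affine function $\ell(y)=u(\bar x)+\nabla u(\bar x)\cdot(y-\bar x)$ from $u$, so that $v:=u-\ell$ satisfies $|v(y)|\le \tfrac\gamma2 d(y,\bar x)^2$ for all $y\in\Omega$ — in particular $v$ has \emph{at most quadratic growth} with respect to the quasi-distance centered at a point of the small section, exactly the hypothesis \eqref{u-far} in Lemma~\ref{lm:acceleration}. After rescaling, $d_{\tilde\phi}(T y,T\bar x)^2 = (\bar c t)^{-1} d_\phi(y,\bar x)^2$, so on the unit-height rescaled section $|\tilde v|$ is bounded by a \emph{universal} multiple of $\gamma (\bar c t)^{-1}\cdot(\text{bounded quasi-distance})$, i.e. the natural scale of the second derivative in a section of height $t$ is $\gamma/t$, not $1/t$. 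This is the content of the factor $N\gamma$ versus $N/t$: one runs Lemma~\ref{lm:improved-density-I} (or more precisely re-runs its proof via Lemma~\ref{lm:acceleration}) on $\tilde v$ scaled by an additional $\gamma^{-1}$ so that the rescaled solution has $L^\infty$ norm $\lesssim 1$ on the enclosing section, gets the density estimate for $G_N(\cdot)$ of the doubly-rescaled function, and then undoes both rescalings. Undoing them turns $G_N$ into $G_{N\gamma/t}$ of $v$ on $S_\phi(x,t)$, hence into $G_{N\gamma}(u,\Omega)$ after multiplying heights appropriately and absorbing the affine part (adding an affine function does not change membership in any $G_M$). The error term picks up an average of $|f|^n$ over a slightly dilated section $S_\phi(\tilde x,\Theta t)$ — the dilation constant $\Theta$ comes from the engulfing constant $\theta_*$ used when passing from $S_\phi(x,t)$ to the enclosing section and back, and the freedom in the base point $\tilde x$ is because any two points of $S_\phi(x,t)$ have comparable sections by engulfing.

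Concretely the steps, in order, are: (1) WLOG normalize so $\|u\|_{L^\infty(\Omega)}\le 1$ and fix $\bar x\in S_\phi(x,t)\cap G_\gamma(u,\Omega)$; (2) apply the dichotomy and engulfing to find the enclosing tangent (or interior) section and its Localization-Theorem ellipsoid; (3) perform the $L^\infty$-preserving rescaling of Subsection~\ref{rescale-sec}, verifying via Proposition~\ref{P-prop} that the rescaled triple lies in $\mathcal{P}_{1-\e,1+\e,\rho,1,\alpha}$; (4) set $v=u-\ell_{\bar x}$, check the quadratic-growth bound \eqref{u-far} for the rescaled $\gamma^{-1}v$ with $C^*$ universal, using that $\bar x$ lies in the (shrunken) section so its quasi-distance to any point of $\Omega$ is comparable — up to the enclosing-section scale — to $d(\cdot,x_0)$ for the center; (5) invoke Lemma~\ref{lm:acceleration} and the stability of cofactors (Proposition~\ref{global-convergence}) to choose $\e=\e(\e_0,n,\rho,\alpha)$ making the $\epsilon^{1/3n}$ and $\|\Phi-\mathcal W\|$ terms at most $\e_0\beta$ for a universal $\beta$, exactly as in the proof of Lemma~\ref{lm:improved-density-I}; (6) rescale back, using the volume estimates (Theorem~\ref{engulfing2}b) to convert the loss of measure on the enclosing section into a loss on $S_\phi(x,t)$ at the cost of the universal factor $C_1^{-1}C_2\bar c^{n/2}$, which is absorbed into $\beta$; (7) handle the two sub-cases $t\le \bar h(x)/2$ (interior, via \cite[Lemma~4.2]{GN2}, i.e.\ Lemma~\ref{interior-density-est}) and $\bar h(x)/2<t\le c_2$ (use the tangent enclosing section) separately, just as in Lemma~\ref{lm:improved-density-I}. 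The main obstacle I anticipate is step (4): one must carefully track how $d(\cdot,\bar x)$ compares to $d(\cdot,x_0)$ (the enclosing section's center) globally on $\Omega$, not just inside the section — this requires combining the engulfing property with the gradient bounds in $(v)$ of the class $\mathcal P$ exactly as in the estimates \eqref{d-via-d}–\eqref{d-below-linear} of Lemma~\ref{lm:acceleration}, and checking that the constant $C^*$ that emerges depends only on $n,\rho$ and not on $\gamma$ or $t$. Everything else is bookkeeping of rescaling factors and reuse of the covering/volume machinery already in place.
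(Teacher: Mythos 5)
Your overall skeleton does match the paper's proof: pick the touching point $\bar x\in S_\phi(x,t)\cap G_\gamma(u,\Omega)$, subtract the affine function $u(\bar x)+\nabla u(\bar x)\cdot(\,\cdot-\bar x)$, rescale the enclosing boundary section of height $\theta t$ and divide by $\theta\gamma$ (this is exactly the function $v$ in \eqref{affine-function-l}; whether you call it the $L^\infty$-preserving or the almost $W^{2,\infty}$-preserving rescaling is only bookkeeping), verify the quadratic-growth hypothesis \eqref{u-far}, apply Lemma~\ref{lm:acceleration} together with the cofactor stability of Proposition~\ref{global-convergence}, rescale back using engulfing and the volume estimates, and treat interior centers through the dichotomy. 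However, there is a genuine gap: Lemma~\ref{lm:acceleration} (through the approximation Lemma~\ref{explest}) requires not only $\|v\|_{L^\infty(U_h)}\leq 1$ and the growth condition \eqref{u-far}, but also $\|v\|_{C^{2,\alpha}(\p U_h\cap B_{4r})}\leq 1$. In Lemma~\ref{lm:improved-density-I} this boundary hypothesis is free because $u=0$ on $\p\Omega$; here it is not, since after subtracting the affine function the trace of $v$ on the flat part of $\p U_h$ is the rescaled affine function $-\frac{1}{\theta\gamma}\bigl[\tilde u_h(\bar y)+\nabla\tilde u_h(\bar y)\cdot(y-\bar y)\bigr]$, and nothing in the membership $\bar x\in G_\gamma(u,\Omega)$ directly controls the normal component of $\nabla u(\bar x)$, because $\bar x$ may lie arbitrarily close to $\p\Omega$. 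Your remark that ``adding an affine function does not change membership in any $G_M$'' is true for the definition of $G_M$, but it does not make the affine boundary data admissible for the approximation machinery.

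This is precisely the content of the paper's Lemma~\ref{lm:Holder-on-bottom-boundary}, which is a substantive three-step argument: one first shows the trace is uniformly Lipschitz at the origin using only $|v|\leq 1$ and the geometry of $\p U_h\cap B_k$, then bounds the tangential part $|a'|$ of the rescaled gradient and, crucially, the normal part via $|a_n|h^{1/2}\leq CL$, where the lower curvature bound $\varphi''\gtrsim h^{1/2}$ of the rescaled boundary graph --- i.e.\ the \emph{uniform convexity} of $\Omega$ --- is what makes this possible; finally the $C^{1,1}/C^{2,\alpha}$ bounds on the boundary graph give the $C^{2,\alpha}$ bound for $a_n\psi(x')$. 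This is also exactly where the dependence of $C$, $c_2$, $N_2$ on the uniform convexity of $\Omega$, $\|\p\Omega\|_{C^{2,\alpha}}$ and $\|\phi\|_{C^{2,\alpha}(\p\Omega)}$ in the statement comes from --- a dependence your sketch never produces, which is a sign the step is missing. You correctly flag the verification of \eqref{u-far} (the comparison of $d(\cdot,\bar x)$ with $d(\cdot,x_0)$) as a point needing care, and that part indeed goes through as in \eqref{d-via-d}--\eqref{d-below-linear}; but the missing boundary $C^{2,\alpha}$ estimate for the affine-subtracted, rescaled solution is the real additional idea of this lemma, and without it the appeal to Lemma~\ref{lm:acceleration} in your step (5) is not justified.
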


\begin{proof}
As explained in the proof of Lemma~\ref{lm:improved-density-I}, we have $\phi\in C^{1}(\Omega)\cap W^{2, 2n}(\Omega)$ if $\e$ is small.

Let us  first consider the case $x\in \partial\Omega$. We can assume that $x=0$, $\phi(0)=0$ and $\nabla \phi(0)=0.$
Let $h=\theta t$ where $\theta=\theta(n,\rho)>1$ will be chosen later.
Let $A_{h}$ be the affine transformation as in the Localization Theorem \ref{main_loc}.
We now define the rescaled domains $\Omega_{h}, U_{h}$ and rescaled functions $\phi_{h},$
$\tilde{u}_{h}$  and $\tilde{f}_h$ as in Subsection~\ref{rescale-sec} {\it that almost preserve the $L^{\infty}$-norm of $D^2u$}. 
Let $T= h^{-1/2}A_{h}$. 
  
 Let $\bar x\in S_\phi(0,t)\cap G_\gamma(u,\Omega)$ and $\bar y :=T\bar x$. Then
\[
-\gamma \, d(x,\bar x)^2 \leq u(x)-u(\bar x) - \nabla u(\bar
x)\cdot (x-\bar x) \leq \gamma \, d(x,\bar x)^2,\quad \forall x\in \Omega.
\]
By changing variables and recalling that $\Omega_h= T (\Omega), \tilde{u}_h(y)= h^{-1} u(T^{-1} y)$, we get
\begin{equation}\label{eq:utildeboundedbydistance}
-\gamma \, \dfrac{d(T^{-1}y,T^{-1}\bar y)^2}{ \theta t} \leq \tilde
u_h(y)- \tilde u_h(\bar y) - \nabla \tilde u_h(\bar y)\cdot (y-\bar y)
\leq \gamma \, \dfrac{d(T^{-1}y,T^{-1}\bar y)^2}{ \theta t},~ \forall y\in \Omega_h.
\end{equation}
Since  $\bar x\in S_{\phi}(0, t) \subset S_{\phi}(0, \theta t),$ we have by the engulfing property of sections in Theorem~\ref{engulfing2}(a)
$S_{\phi}(0, \theta t)\subset
S_\phi(\bar x, \theta^2 t).$ It follows that
$d(x,\bar x)^2\leq \theta^2 t$ for $x\in S_{\phi}(0, \theta t)$ yielding $d(T^{-1}y,T^{-1}\bar y)^2\leq \theta^2 t$ for all
$y\in U_h:= T(S_{\phi}(0, h))$.  Consequently, if we define
\begin{equation}
\label{affine-function-l}
v(y) := \dfrac{1 }{\theta \gamma }\left[\tilde u_h(y)- \tilde u_h(\bar y) - \nabla \tilde u_h(\bar
y)\cdot (y-\bar y)\right],\quad y\in \Omega_h,
\end{equation}
then $|v| \leq 1$ in $U_h$. Thanks to Lemma~\ref{lm:Holder-on-bottom-boundary} below we get for $t\leq c_{\alpha}$
\begin{equation}\label{Holder-est-for-L}
\|v\|_{C^{2, \alpha}(\partial{U_h} \cap B_{k})}\leq C_{\alpha},
\end{equation}
where $c_{\alpha}, C_{\alpha}$ depend only on $n, \rho,\alpha$, the 
uniform convexity of $\Omega$, $\|\partial\Omega\|_{C^{2,\alpha}}$ and $\|\phi\|_{C^{2,\alpha}(\partial\Omega)}$.
By \eqref{eq:utildeboundedbydistance} we  have
\begin{equation}
|v(y)|\leq \frac{1}{\theta^2  t}
d(T^{-1} y, T^{-1} \bar y)^2
\leq \frac{1}{\theta } d_{ \phi_h}( y, \bar y)^2\quad \forall y\in T(\Omega),
\label{v-far-off}
\end{equation}
where we recall $\theta t= h$ and $$d_{\phi_h}(y,\bar y)^2
:= \phi_h(y) -\phi_h(\bar y) - \nabla \phi_h (\bar y)\cdot ( y - \bar y) =h^{-1}\, d(T^{-1}y,T^{-1}\bar y)^2.$$
Moreover
\begin{align*}
\mathcal{L}_{\phi_h} v=  (\theta \gamma)^{-1} \mathcal{L}_{\phi_h}\tilde{u_h}=(\theta \gamma)^{-1}\tilde{f}_h\equiv (\theta \gamma)^{-1} f(T^{-1} y)=: \tilde f(y).
\end{align*}
Because $\bar x\in S_{\phi}(0, t)$, we have $ \bar y = T \bar x\in S_{\tilde\phi}(0, \frac{1}{\theta}).$
Hence, we can choose $\theta>1$ depending on $n, \rho, k$ such that $\bar y\in B_{\frac{c^2}{8}}\cap \tilde U$. 
With this choice of $\theta$, we have by Proposition \ref{P-prop} 
$$(\Omega_h, \phi_h, U_h)\in \mathcal{P}_{1-\e, 1+\e, \rho, Ch^{1/2}, \alpha}
\subset  \mathcal{P}_{1-\e, 1+\e, \rho, 1, \alpha}
$$
 if $t\leq \tilde c$, where $\tilde c>0$ is a small constant depending only on  $n, \rho, \alpha$,
$\|\partial\Omega\|_{C^{2,\alpha}}$ and $\|\phi\|_{C^{2,\alpha}(\partial\Omega)}$. Here we can choose $\tilde c\leq c_{\alpha}$, and hence it also depends on the  uniform convexity of $\Omega$.

  Thus, using \eqref{Holder-est-for-L} and (\ref{v-far-off}), we can apply Lemma~\ref{lm:acceleration} to $\bar{v}:= v/C_{\alpha}$ to obtain 
\begin{equation*}
|G_{N}(\bar{v}, \Omega_h, \phi_h) \cap  S_{\tilde \phi}(0, c^9)|\geq \left\{1- C\big( N^{-\tau}\delta_0^{\tau} +\epsilon^{1/3n}\big)\right\} \, | S_{\phi_h}(0, c^9)|
\end{equation*}
for any  $N\geq N_0$,
where
$\delta_0$ is as in \eqref{delta-zero}.
This together with the stability of cofactor matrices in Proposition~\ref{global-convergence}  implies the existence of
$\e= \e(\epsilon_0, n,\rho,\alpha)>0 $ such that
\begin{align*}
 |G_{N}(\bar{v}, \Omega_h, \phi_h) \cap   S_{\phi_h}(0, r)|
 &\geq \left\{1-\epsilon_0\beta -C N^{-\tau} \Big(\fint_{U_h} |\tilde f|^n ~dy\Big)^{\frac{\tau}{n}} \right\}~ | S_{\phi_h}(0, r)|\\
 &=\left\{1-\epsilon_0\beta -C
  \big(\frac{1}{\theta \gamma N}\big)^\tau \Big(\fint_{S_{\phi}(0, \theta t)} | f|^n ~dx\Big)^{\frac{\tau}{n}} \right\}~ | S_{\phi_h}(0, r)|,
 \end{align*}
 where for simplicity we have denoted
 $$r:= c^9$$
 and  $\beta=\beta(n,\rho)<1$ is a universal constant to be chosen later.
 It follows that 
\[
 | S_{\phi_h}(0, r) \setminus G_{N}(\bar{v}, \Omega_h, \phi_h)|\leq\left\{ \epsilon_0\beta +C \big(\frac{1}{\theta \gamma N}\big)^\tau 
 \Big(\fint_{S_{\phi}(0, \theta t)} | f|^n ~dx\Big)^{\frac{\tau}{n}}\right\} | S_{\phi_h}(0, r)|.
\]
 As $ S_{\phi_h}(0, r) =T(S_{\phi}(0, \theta rt))$ and  $\bar{v}(y) =\frac{1}{C_{\alpha}\theta^2 \gamma t} \big[u(T^{-1}y) - u(\bar x) -
 \nabla u(\bar x) \cdot (T^{-1}y -\bar x ) \big]$,   it is easy to see that
\begin{align*}
&G_{N}(\bar{v},\Omega_h, \phi_h) \cap  S_{\phi_h}(0, r)
=T \Big( G_{C_{\alpha}N\theta \gamma}(u,\Omega) \cap S_{\phi}(0, r\theta t)\Big).
\end{align*}
Therefore, by the volume estimates in Theorem \ref{engulfing2}(b),  we conclude that
\begin{eqnarray*}
| S_{\phi}(0, r t)\setminus G_{C_{\alpha}N\theta \gamma}(u,\Omega)|
&\leq&
 | S_{\phi}(0, r\theta t)\setminus G_{C_{\alpha}N\theta \gamma}(u,\Omega)|
 \\&\leq& \left\{ C_1^{-1} C_2 \theta^{\frac{n}{2}}\epsilon_0\beta +C
  \big(\frac{1}{C_{\alpha}\theta \gamma N}\big)^\tau \Big(\fint_{S_{\phi}(0, \theta t)} | f|^n ~dx\Big)^{\frac{\tau}{n}}\right\} \left|S_\phi(0, rt)\right|.
 \end{eqnarray*}
By setting $N'= C_{\alpha}N\theta$, $\beta' = C_1^{-1} C_2 \theta^{n/2}\beta$, we can rewrite this as 
\begin{equation}\label{density-est-boundary}
 \left| G_{ N' \gamma}(u,\Omega) \cap S_\phi(x,  t)\right|
 \geq \left\{1- \epsilon_0\beta' -C
  \big(\frac{1}{ \gamma N'}\big)^\tau
 \Big(\fint_{S_\phi(x, \frac{\theta}{r} t)} | f|^n ~dx\Big)^{\frac{\tau}{n}} \right\}~ \left|S_\phi(x, t)\right|
 \end{equation}
for any $N'\geq N_2\equiv C_{\alpha}N_0\theta$ and $t\leq r\tilde c$. From Theorem ~\ref{engulfing2}(a) we have  
$S_\phi(x,\frac {\theta}{r} t)\subset S_\phi(\tilde x, \frac{\theta_{\ast}\theta}{r} t)$ for 
any $\tilde x\in S_\phi(x, t)$. Therefore, by Theorem~\ref{engulfing2}(b), we see that \eqref{density-est-boundary} yields
 \eqref{improve-density-II}.
 
Next we consider the situation that $x\in \Omega$. We then have the following possibilities:

{\bf Case 1:} $t\leq h/2$, where $h :=\bh(x)$. This case can be handled as {\bf Case 1} of Lemma~\ref{lm:improved-density-I},  
using now \cite[Lemma~4.5]{GN2} and affine transformations similar to the ones at the beginning of the proof of this lemma.

{\bf Case 2:} $h/2 <t\leq r\tilde c/\bar{c}\equiv c_2$, where $\bar{c}>1$ is the constant in Proposition~\ref{dicho}. 
Then by Proposition~\ref{dicho}, we know that $ S_{\phi}(x, 2 t) \subset S_{\phi}(z, \bar{c} t) $
for some $z\in\p\Omega$. Thus, by the estimate \eqref{density-est-boundary} in the 
case of boundary section, we get
\begin{align}\label{improve-density-case2}
\left|  S_\phi(x,  t) \setminus G_{ N \gamma}(u,\Omega)\right|
&\leq \left|  S_\phi(z,  \bar{c}t) \setminus G_{ N \gamma}(u,\Omega)\right|\\
 &\leq \left\{\epsilon_0\beta' +C
  \big(\frac{1}{ \gamma N}\big)^\tau
 \Big(\fint_{S_\phi(z, \frac{\theta \bar{c}}{r}t)} | f|^n ~dx\Big)^{\frac{\tau}{n}} \right\}~ \left|S_\phi(z, \bar{c}t)\right|.\nonumber
\end{align}
For any $\tilde x\in S_\phi(x, t) \subset S_\phi(z,\frac{ \theta \bar{c}}{r}t)$,  we 
get  $S_\phi(z, \frac{\theta \bar{c}}{r}t) \subset S_\phi(\tilde x, \frac{\theta_{\ast}\theta \bar{c}}{r}t)$ by 
the engulfing property in Theorem~\ref{engulfing2}. Now, using \eqref{improve-density-case2} and the volume estimates in this theorem, we find that
\begin{equation*}
\left|  S_\phi(x,  t) \setminus G_{ N \gamma}(u,\Omega)\right|
 \leq \left\{\epsilon_0\beta' C_1^{-1} C_{2}\bar{c}^{\frac{n}{2}} +C
  \big(\frac{1}{ \gamma N}\big)^\tau
 \Big(\fint_{S_\phi(\tilde x, \frac{\theta_{\ast}\theta \bar{c}}{r}t)} | f|^n ~dx\Big)^{\frac{\tau}{n}} \right\} \left|S_\phi(x, t)\right|.
\end{equation*}
This gives \eqref{improve-density-II} with $\Theta := \theta_{\ast} \theta \bar{c}/r$
if we choose $\beta$ such that $\beta' C_1^{-1} C_2 \bar{c}^{n/2} = \beta C_1^{-2} C_2^2 (\theta\bar{c})^{n/2} =1$.
\end{proof}

In the next lemma we prove that the function $v$ defined as in the proof of Lemma~\ref{lm:furthercriticaldensitytwo} has uniform
$C^{2, \alpha}$ bound on $\p U_h\cap B_{k}$. 
\begin{lemma}\label{lm:Holder-on-bottom-boundary} Let $v$ be defined as in \eqref{affine-function-l}. There exist $C_{\alpha}, c_{\alpha}>0$  depending only on $n, \rho,\alpha$, the 
uniform convexity of $\Omega$, $\|\partial\Omega\|_{C^{2,\alpha}}$ and $\|\phi\|_{C^{2,\alpha}(\partial\Omega)}$ such that for $t\leq c_{\alpha}$, we have
\begin{equation}\label{l-half-holder}
\|v\|_{C^{2, \alpha} (\p U_h\cap B_{k}^{+})} \leq C_{\alpha}.
\end{equation}
\end{lemma}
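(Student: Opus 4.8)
I work at the boundary point $x=0$ exactly as in the proof of Lemma~\ref{lm:furthercriticaldensitytwo}: $\phi(0)=0$, $\nabla\phi(0)=0$, $h=\theta t$, $T=h^{-1/2}A_h$, $\Omega_h=T\Omega$, $\phi_h$, $\tilde u_h(y)=h^{-1}u(T^{-1}y)$, $\bar y=T\bar x$, $U_h=S_{\phi_h}(0,1)=T(S_\phi(0,h))$, and $\ell_{\bar y}(y)=\tilde u_h(\bar y)+\nabla\tilde u_h(\bar y)\cdot(y-\bar y)$, so that $v=(\theta\gamma)^{-1}(\tilde u_h-\ell_{\bar y})$. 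Taking $t\le\tilde c$ small (depending on $n,\rho,\alpha,\|\partial\Omega\|_{C^{2,\alpha}},\|\phi\|_{C^{2,\alpha}(\partial\Omega)}$), Proposition~\ref{P-prop} and \eqref{size-u-h} give that $\partial U_h\cap B_k^+\subset\partial\Omega_h$ and that, near the origin, $\partial\Omega_h$ is a graph $\{y=(y',\psi(y'))\}$ in the $e_n$-direction through $0$ with $\|\partial\Omega_h\cap B_{1/k}\|_{C^{2,\alpha}}\le C'h^{1/2}$; by Lemma~\ref{small-M} and Remark~\ref{sig-rem} one also has $\|\psi\|_{C^0},\|\nabla\psi\|_{C^0},\|D^2\psi\|_{C^0}\le C'h^{1/2}$ on $\{|y'|\le 2/k\}$. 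The point is now that, since $u=0$ on $\partial\Omega$, the rescaled function $\tilde u_h$ vanishes identically on $\partial\Omega_h\cap B_k$, hence on $\partial U_h\cap B_k^+$; therefore on $\partial U_h\cap B_k^+$ the function $v$ coincides with the restriction of the single affine function $Q:=-(\theta\gamma)^{-1}\ell_{\bar y}$. Moreover, from $|v|\le1$ in $\overline{U_h}$ (proved in Lemma~\ref{lm:furthercriticaldensitytwo}) we get $\|v\|_{C^0(\partial U_h\cap B_k^+)}\le1$.

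\textbf{Reduction to a single gradient bound.} Writing $Q(y)=Q_0+a'\cdot y'+a_n y_n$ with $a=\nabla Q=-(\theta\gamma)^{-1}\nabla\tilde u_h(\bar y)$ and restricting to the graph, $\nabla_{y'}[Q(y',\psi(y'))]=a'+a_n\nabla\psi$ and $D^2_{y'}[Q(y',\psi(y'))]=a_n D^2\psi$. Using the bounds on $\psi$ and the fact that $Q$ is bounded by $1$ on the near-horizontal graph $\{|y'|\le k'\}$ (which controls $|Q_0|$ and $|a'|$ up to $C(1+|a_n|h^{1/2})$), one obtains
\[
\|v\|_{C^{2,\alpha}(\partial U_h\cap B_k^+)}\le C\big(1+|a_n|\,h^{1/2}\big).
\]
Since $\partial_{y_n}\tilde u_h(\bar y)=h^{-1/2}\,\nabla u(\bar x)\cdot A_h^{-1}e_n$, we have $|a_n|h^{1/2}=(\theta\gamma)^{-1}|\nabla u(\bar x)\cdot A_h^{-1}e_n|$, so it remains to prove
\begin{equation}\label{plan:star}
|\nabla u(\bar x)\cdot A_h^{-1}e_n|\le C\,\theta\gamma .
\end{equation}

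\textbf{The gradient estimate \eqref{plan:star} (the main obstacle).} This is where the hypothesis $\bar x\in S_\phi(0,t)\cap G_\gamma(u,\Omega)$ and the vanishing of $u$ on $\partial\Omega$ are used. I would argue as follows. By Lemma~\ref{sep-lem}(e) and Proposition~\ref{tan_sec}, for $t$ small the maximal interior section $S_\phi(\bar x,\bar h(\bar x))$ is tangent to $\partial\Omega$ at some $x^*\in\partial\Omega\cap B_c$, with $\nabla\phi(\bar x)$ essentially parallel to the inner normal $\nu_{x^*}$, $\dist(\bar x,\partial\Omega)\simeq\bar h(\bar x)^{1/2}$, and $S_\phi(\bar x,\bar h(\bar x))-\bar x$ comparable to $\bar h(\bar x)^{1/2}A_{\bar h(\bar x)}^{-1}B_1$. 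One exhibits a boundary point $p\in\partial\Omega$ lying in a section $S_\phi(0,C h)$, of the form $p=\bar x-s\,A_h^{-1}e_n$ with $s\simeq\dist(\bar x,\partial\Omega)$; since $\bar x\in S_\phi(0,h)\subset S_\phi(\bar x,C'h)$ one has $d(p,\bar x)^2\le C'h$, and the membership $\bar x\in G_\gamma$ together with $u(p)=0$ gives $|u(\bar x)-s\,\nabla u(\bar x)\cdot A_h^{-1}e_n|\le\frac{\gamma}{2}d(p,\bar x)^2\le\tfrac{C'}{2}\gamma h$. Combining this with a bound for $|u(\bar x)|$ obtained by testing the $G_\gamma$-pinching at $x^*$ and at points of $\partial\Omega\cap B_\rho$ (where the quadratic separation \eqref{global-sep} and the gradient estimate of Lemma~\ref{sep-lem}(b) control $d(\cdot,\bar x)^2$), and absorbing the powers of $\log h$ coming from $\|A_h^{-1}\|$ and of $\dist(\bar x,\partial\Omega)$ by taking $t\le c_\alpha$ small, one deduces \eqref{plan:star}. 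The delicate part of the whole argument is exactly this step: the bounds $|v|\le1$ only control $u(\bar x)$ and the tangential part of $\nabla u(\bar x)$, while the component along the logarithmically long direction $A_h^{-1}e_n$ — which is the only one surviving when the affine function $Q$ is restricted to the curved boundary and is amplified by $|\tau_h|\sim|\log h|$ — must be extracted from the paraboloid pinching plus the geometry of boundary sections, and ensuring the logarithmic and distance factors are absorbable (rather than degenerating as $t\to0$) is where the choice of $\theta$ and of $c_\alpha$ must be made carefully.
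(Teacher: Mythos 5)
Your setup and the reduction are essentially the paper's Step 3 read backwards: on $\p U_h\cap B_k^+$ the function $v$ coincides with an affine function, and the $C^{2,\alpha}$ bound follows once one controls the tangential coefficient $a'$ and the quantity $|a_n|h^{1/2}$, using $\|\nabla\psi\|_{L^\infty},\|D^2\psi\|_{L^\infty}\lesssim h^{1/2}$ and $\|D^2\psi\|_{C^\alpha}\lesssim h^{(1+\alpha)/2}$. The gap is in how you propose to get the normal bound $|\nabla u(\bar x)\cdot A_h^{-1}e_n|\le C\theta\gamma$. Pairing $\bar x$ with the single boundary point $p=\bar x-s\,A_h^{-1}e_n$, $s\simeq\dist(\bar x,\p\Omega)$, and using $u(p)=0$ with $\bar x\in G_\gamma(u,\Omega)$ only yields
$s\,|\nabla u(\bar x)\cdot A_h^{-1}e_n|\le |u(\bar x)|+\tfrac{\gamma}{2}d(p,\bar x)^2$.
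The right-hand side is of size $\approx\gamma h$ (up to powers of $|\log h|$), not of size $\gamma s$: $d(p,\bar x)^2$ is controlled only by the height of a section containing both points, hence by $\approx h$ however close $\bar x$ is to $\p\Omega$, and your proposed bound for $|u(\bar x)|$ by ``testing the pinching at $x^*$'' is circular, since every such test couples $u(\bar x)$ with exactly the normal component of $\nabla u(\bar x)$ you are trying to estimate. Points of $S_\phi(0,t)\cap G_\gamma(u,\Omega)$ can lie arbitrarily close to $\p\Omega$ (there is no lower bound $\dist(\bar x,\p\Omega)\gtrsim h^{1/2}$), so the quotient $\gamma h/s$ is unbounded; no choice of $\theta$ or smallness of $c_\alpha$ repairs this, because the degeneration is in $\dist(\bar x,\p\Omega)$, not in $t$ or in the $\log$ factors you propose to absorb.

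The paper avoids this entirely: it never goes back to the pinching or to $\nabla u(\bar x)$, but uses only $|v|\le 1$ in $U_h$, hence $|l|\le 1$ on $\p U_h\cap B_k^+$, together with the geometry of that boundary piece. Uniform convexity of $\Omega$ gives, after rescaling, boundary curves with $C^{-1}h^{1/2}\le\varphi''\le Ch^{1/2}$; an affine function bounded by $1$ on such a graph of horizontal size $\sim k$ and vertical thickness $\sim h^{1/2}$ automatically has tangential slope $O(1)$ and vertical slope $O(h^{-1/2})$ — this is the content of Steps 1 and 2 of the paper's proof (the elementary planar argument with the points $p,q,q',m$), giving precisely $|a'|\le 2L$ and $|a_n|h^{1/2}\le CL$, i.e.\ the bound you are missing. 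The fixed lever arm of height $\sim h^{1/2}$ at horizontal distance $\sim k$, available no matter where $\bar x$ sits, replaces your lever $s\simeq\dist(\bar x,\p\Omega)$. If you want to keep your viewpoint of exploiting $u=0$ on $\p\Omega$ together with the pinching, you would have to test the pinching on a whole boundary patch of horizontal size $\sim h^{1/2}$ (whose vertical extent is $\sim h$ by uniform convexity) and then perform the same two-step separation of tangential and normal coefficients, which is just the paper's argument in the original variables; the single-point pairing cannot deliver the estimate.
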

\begin{proof}
Since  $\p \Omega$ is $C^{2,\alpha}$ at the origin and $\Omega$ is uniformly convex,  we have
$$\abs{x_n- q(x')}\leq M\abs{x'}^{2+\alpha}
\quad \mbox{for}\quad x=(x', x_n)\in \p\Omega\cap B_{\rho},$$
where $q(x')$ is a homogeneous quadratic polynomial with
\begin{equation}
\label{Hess}
D^2_{x'} q \geq C^{-1}I_{n-1}.
\end{equation}
Recall $h= \theta t.$ Then it follows from the definition of $U_h$ and Proposition~\ref{P-prop} that
\begin{equation}\abs{x_{n}- h^{1/2}q(x')}\leq C h^{\frac{1+\alpha}{2}}\abs{x'}^{2+\alpha}
\quad\mbox{on}\quad \p  U_h \cap B_{k}^{+}
\label{C2alpha-bdr}
\end{equation}
if $h\leq h_0$, where $h_0, C$ depend only on $n, \rho,\alpha$ and the $C^{2, \alpha}$ norms of $\partial\Omega$ and $\phi|_{\partial\Omega}$ at the origin.
Hence by combining with \eqref{Hess}, we see that if $h\leq h_0$ ($h_0$ now  depends also on the uniform convexity of $\Omega$) then on $\p U_h\cap B_{k}^{+}$,
\begin{equation} \frac{1}{2} h^{1/2} q (x')\leq x_n \leq 2 h^{1/2} q(x').
\label{quadbound}
\end{equation}
Let
$$l(y) =\frac{ -1}{\theta\gamma} \big[\tilde{u}_h(\bar{y}) + \nabla \tilde{u}_h(\bar{y})\cdot (y-\bar{y})\big].
$$
Then $l(y) =v(y)$ for $y \in \p U_h\cap B_{k}^{+}$. Since $\abs{v}\leq 1$ in $U_h$, 
we find that
\begin{equation}
\label{2bound}
\abs{l(y)- l(z)}= \frac{1}{\theta\gamma}\abs{\nabla \tilde{u}_h(\bar{y})\cdot (y-z)}\leq 2\quad \forall y, z\in \p U_h\cap B_{k}^{+}.
\end{equation}

All constants in this lemma, unless otherwise indicated, depend only on $n, \rho,\alpha$, the 
uniform convexity of $\Omega$ and the $C^{2, \alpha}$ norms of $\partial\Omega$ and $\phi|_{\partial\Omega}$.

We now divide the proof into three steps.\\
{\bf Step 1.} $l$ is uniformly Lipschitz at the origin: there exists $L>0$ such that
$$\abs{l(z)-l(0)}\leq L \abs{z}\quad \forall z\in \p U_h\cap B_{k^2}^{+}.$$

Take $z\in \p U_h\cap B_{k^2}^{+}\backslash\{0\}.$ Let $\mathcal{C}$ be the curve which is the intersection of $\p U_h\cap B_{k}^{+}$  and the vertical plane $(P)$ passing through $z$ and the 
origin. Let $p$ and $q$ be the intersection of $\mathcal{C}$ with $\p B_{k}^{+}.$ We now have a plane curve
$\mathcal{C}$ in $(P)$ which can be assumed to be the usual $xy$-plane. It is easy to see from \eqref{Hess}--(\ref{quadbound}) that $\mathcal{C}$ is a graph in the $y$-direction
$\mathcal{C}= \{(x, \varphi(x))\}$
with $C^{1, 1}$ norm comparable to $h^{1/2}$, that is
$$C^{-1} h^{1/2}\leq \varphi^{''}(x)\leq Ch^{1/2}.$$
Note that, this also follows from the proof of \cite[Lemma 4.2]{LS} for the case of uniformly convex domains $\Omega$.

Since $\abs{p}=\abs{q} =k$, we find that
$$y_{p}\sim h^{1/2}, y_{q}\sim h^{1/2}, ~ \abs{x_p}\sim k, \abs{x_q}\sim k.$$
Without loss of generality, we can assume that $y_{p}\leq y_{q}$ and $x_p<0< x_q$, that is, $p$ is on the left half-plane while $q$ is on the right half-plane. The horizontal line
through $p$ intersects $\mathcal{C}$ at another point $q^{'}.$ Since $\varphi^{''}\leq Ch^{1/2}$ and $y_{q^{'}}= y_{p}\sim h^{1/2}$, we must have $x_{q^{'}}\sim k.$ In particular, $z$
lies on the arc $p0q^{'}.$ We can assume that $z$ lies on the arc $0q^{'}.$ Now, take a ray emanating from $q^{'}$ and parallel to $0z.$ This ray is exactly $q^{'}0$ when $z\equiv q^{'}$ and
it is $q^{'}p$ when $z\rightarrow 0.$ Thus, by continuity, there must be a point $m$ on the arc $0p$ such that $q^{'}m$ is parallel to $0z$. Clearly,
$\abs{q^{'}-m}\geq x_{q^{'}}\sim k.$
Using $z= \frac{\abs{z}}{\abs{q^{'}-m}} (q^{'}-m),$
we find from \eqref{2bound} that
$$\abs{l(z)-l(0)}= \frac{1}{\theta\gamma}\abs{\nabla \tilde{u}_h(\bar{y})\cdot z}=\frac{\abs{z}}{\abs{q^{'}-m}} \frac{1}{\theta\gamma}\abs{\nabla \tilde{u}_h(\bar{y})\cdot (q^{'}-m)} \leq
\frac{\abs{z}}{\abs{q^{'}-m}}  \leq L\abs{z}.$$
Thus, $l$ is Lipschitz at $0$.\\
{\bf Step 2.} Let $ \frac{1}{\theta\gamma}\nabla \tilde{u}_h(\bar{y}) = (a', a_n).$ Then
$$\abs{a'}\leq 2L\quad \mbox{and}\quad \abs{a_n}h^{1/2}\leq CL.$$
First, we note that the projection of $\p U_h\cap B_{k}$ on $\{x_n=0\}$ contains a ball of radius comparable to $k$. By rotating coordinates in $\{x_n=0\}$, we can assume that 
$a'= (A, 0,\cdots, 0).$ Take a curve $\mathcal{C} =\big\{(x, 0, \cdots, 0,\varphi(x))\mid -k^2 \leq x\leq k^2\big\}$ in $\p U_h\cap B_{k}$ that lies in the $x_1x_n$ plane. 
Note that $\varphi(x)\sim h^{1/2} x^2.$
By the Lipschitz property
of $l$ in {\bf Step 1}, we have
$$ \frac{1}{\theta\gamma}\big|\nabla \tilde{u}_h(\bar{y})\cdot (x, 0, \cdots, 0,\varphi(x))\big| = \abs{A x + a_n \varphi (x)} \leq L \sqrt{x^2 + (\varphi(x))^2} \leq 2L\abs{x}$$
Dividing the above inequalities by $x$ and then letting $x\rightarrow 0$,
we get the desired bound $$\abs{a'}=\abs{A}\leq 2L.$$
As a consequence, we have
$$\abs{a_n \varphi(x)}\leq \abs{Ax} + 2L\abs{x} \leq 4L\abs{x}.$$
Using the lower bound on the growth of $\varphi$ and evaluating at $\abs{x}\sim k^2$, we obtain 
$$\abs{a_n}h^{1/2} \leq CL.$$
{\bf Step 3.} We have $$\|v\|_{C^{2, \alpha} (\p U_h\cap B_{k}^{+})}= \|l\|_{C^{2, \alpha} (\p U_h\cap B_{k}^{+})} \leq C.$$
Recall from (\ref{C2alpha-bdr}) that $\p U_h\cap B_{k}$ is a graph in the $e_n$ direction, that is, 
$$\p U_h\cap B_{k} = \big\{(x', \psi(x')):  \, |x'|\leq C_k\big\},$$  with the following properties:
\begin{equation*}
 (a)\, \|\nabla \psi\|_{L^{\infty}} + \|D^2 \psi\|_{L^{\infty}}\leq C h^{1/2},\qquad (b)\, \|D^2 \psi\|_{C^{\alpha}} \leq Ch^{\frac{1+\alpha}{2}}.
\end{equation*}
For $y\in \p U_h\cap B_{k}$, we have $y =(x', \psi(x'))$
and
$$l(y) = l(0) -\frac{1}{\theta\gamma}\nabla \tilde{u}_h( \bar{y}) \cdot y = l(0) -a' \cdot x' -a_n \psi (x')$$
where $l(0)$ is a constant bounded by $1$. Clearly, the $C^{2, \alpha}$ bound for $l$ on $\p U_h\cap B_{k}$ now follows from $(a)-(b)$ and {\bf Step 2.}
\end{proof}
\subsection{Global $W^{2,p}$ estimates}
In this subsection we will use the density estimates established in Subsection~\ref{sub:improveddensity} to derive 
global $W^{2,p}$-estimates for solution $u$ of the linearized equation $\calL_\phi u=f$ when $f\in L^q(\Omega)$ for some $q>n$ as stated in Theorem~\ref{main-result-constant} 
and Theorem~\ref{main-result}.

\begin{proof}[Proof of Theorem \ref{main-result-constant}]
The assumptions on $\Omega$ and $\phi$ in the statement of our theorem imply that $\Omega$ satisfy \eqref{global-tang-int} for some $\rho>0$
and, by Proposition~\ref{pro:quadsep}, $\phi$ satisfies \eqref{global-sep}. Thus, $\Omega$ and $\phi$ satisfy the conditions of Lemma~\ref{lm:improved-density-I} and Lemma~\ref{lm:furthercriticaldensitytwo}.

By the ABP estimate, it suffices to establish our $W^{2,p}$ estimates in the form
\begin{equation*}
\|D^2u\|_{L^p(\Omega)}\leq
C\Big( \|u\|_{L^\infty(\Omega)} +\|f\|_{L^{q}(\Omega)}\Big).
\end{equation*}
We first observe that by working with the function  $v :=\dfrac{\epsilon u}{\epsilon \|u\|_{L^\infty(\Omega)} +\|f\|_{L^{q}(\Omega)}}$ instead of $u$, it is enough to show that  there exist $\epsilon, C>0$  depending only on $p$, $q$, $n$ and $\Omega$ such that
 if  $1-\epsilon \leq \det D^2\phi \leq 1+\epsilon$ in $\Omega$, $\phi=u=0$ on $\partial\Omega$, $\calL_\phi u=f$ in $\Omega$,  $\|u\|_{L^\infty(\Omega)}\leq 1$ and $\|f\|_{L^{q}(\Omega)}\leq \epsilon$, then  
\begin{equation}\label{reduction}
\|D^2u\|_{L^p(\Omega)}\leq C.
\end{equation}
Notice that $u\in W^{2,s}_{loc}(\Omega)$ for any $n< s<q$  as a consequence of  $W^{2,p}_{loc}$ estimates in \cite{GN2}.

Let  $N_{\ast}= \max\{N_1, N_2\}$ where $N_1$ and $N_2$  are the large constants in Lemma~\ref{lm:improved-density-I} and
Lemma~\ref{lm:furthercriticaldensitytwo} and $\hat{c}=\min\{c_1, c_2\}$ where $c_1$ and $c_2$  are the small constants in the above lemmas.
Fix $M\geq N_{\ast}$ so that $1/M< \hat c$. Next select $0<\epsilon_0<1/2$ such that
\[
M^{q} \sqrt{2 \epsilon_0}  =\frac{1}{2}
\]
and  $\epsilon=\epsilon(\epsilon_0, n,\Omega)=\epsilon(p,q,n,\Omega)$ be the smallest  of the constants in
Lemma~\ref{lm:improved-density-I} and
Lemma~\ref{lm:furthercriticaldensitytwo}.
With this choice of $\epsilon$, we are going to show that \eqref{reduction} holds.
Applying Lemma~\ref{lm:improved-density-I} to the function
$u$ and using  $\|f\|_{L^{q}(\Omega)}\leq \epsilon$ we obtain 
\begin{align*}
\big|S_\phi(x,t)\cap G_{\frac{M}{t}}(u,\Omega)\big|
\geq
\left(1-\epsilon_0 - C \epsilon^\tau \right)\, |S_\phi(x,t)|
\end{align*}
as long as $x\in \overline{\Omega}$ and $t\leq \hat c$.
By taking $\epsilon$ even smaller if necessary we can assume $C \epsilon^\tau <\epsilon_0$. Then it follows from the above inequality that
\begin{equation}\label{eq:consequence-lemma6.4}
\big|S_\phi(x,t)\setminus G_{\frac{M}{t}}(u,\Omega)\big| 
\leq 2\epsilon_0 \, |S_\phi(x,t)|
\quad\mbox{for any}\quad x\in \overline{\Omega},\quad  t\leq \hat c.
\end{equation}
Let  $1/h \leq \hat c$. For $x\in
\Omega\setminus G_{h M}(u,\Omega)$, define
\[
g(t) := \dfrac{\big|(\Omega\setminus G_{h M}(u,\Omega))
\cap S_\phi(x,t)\Big|}
{|S_\phi(x,t)|}.
\]
We have $\lim_{t\to 0} g(t)=1.$ Also, if $1/h \leq t
\leq \hat c$, then \eqref{eq:consequence-lemma6.4} gives
\begin{align*}
\big|(\Omega\setminus G_{h M}(u,\Omega)) \cap
S_\phi(x,t)\big| &\leq
|S_\phi(x,t)\setminus G_{h M}(u,\Omega)|\\
&\leq
|S_\phi(x,t)\setminus G_{M/t}(u,\Omega)|
\leq 2\epsilon_0 \, |S_\phi(x,t)|.
\end{align*}
Therefore $g(t)\leq
2\epsilon_0$ for $t\in [1/h, \hat c].$ Then by continuity of
$g$, there exists $t_{x}\leq 1/h$ such that $g(t_{x})=2\epsilon_0$.

Thus for any $x\in
\Omega\setminus G_{h M}(u,\Omega)$ there is
$t_{x}\leq 1/h \leq
\hat c$ satisfying
\begin{equation}\label{eq:generalcriticaldensity}
\big|\big(\Omega\setminus G_{h M}(u,\Omega)\big) \cap
S_\phi(x,t_{x})\big| = 2\epsilon_0 \, |S_\phi(x,t_{x})|.
\end{equation}
We now claim that \eqref{eq:generalcriticaldensity} implies 
\begin{equation}\label{eq:section-contained}
S_\phi(x,t_{x}) \subset \big(\overline{\Omega}\setminus G_h
(u,\Omega)\big) \cup \big\{z\in \overline{\Omega}: \mathcal M(f^n)(z)> (c^* M h)^n\big\},
\end{equation}
where $c^* :=(\frac{\epsilon_0}{C})^{1/\tau}$, and 
\begin{equation*}
\mathcal M(F)(z) :=\sup_{t\leq \hat c}
\frac{1}{|S_{\phi}(z, t)|}\int_{S_\phi(z,t)}|F(y)|\, dy\quad \forall z\in \overline{\Omega}.
\end{equation*}
Indeed, since otherwise  there exists $\bar x\in
S_\phi(x,t_{x})\cap G_h(u,\Omega)$ such that $\mathcal M
(f^n)(\bar x)\leq (c^* M h)^n$. Note also that  $ t_{x} \leq   \hat c$. Then by
Lemma~\ref{lm:furthercriticaldensitytwo} applied to $u$ we get 
\begin{align*}
\big|S_\phi(x,t_{x})\cap G_{h M}(u,\Omega)\big| > (1-2 \epsilon_0) \, |S_\phi(x,t_{x})|
\end{align*}
yielding
\[
\big|\big(\Omega\setminus G_{h M}(u,\Omega)\big)\cap
S_\phi(x,t_{x})\big| \leq \big|S_\phi(x,t_{x})\setminus
G_{h M}(u,\Omega)\big| < 2\epsilon_0 \,|S_\phi(x,t_{x})|.
\]
This is a contradiction with \eqref{eq:generalcriticaldensity} and so
\eqref{eq:section-contained} is proved. We infer from \eqref{eq:generalcriticaldensity}, 
\eqref{eq:section-contained}
and  Theorem~\ref{thm:covering} that
\begin{align}\label{eq:generalpowerdecay}
|\Omega\setminus G_{h M}(u,\Omega)|
\leq  \sqrt{2\epsilon_0}\left[
|\Omega\setminus G_{h}(u,\Omega)|
 +  \big|\{x\in \Omega:
\mathcal M (f^n)(x)> (c^* M h)^n\}\big|\right],
\end{align}
as long as $1/h \leq \hat c$.

For $k=0,1,\dots$, set
\[
a_k:= |\Omega\setminus G_{M^k}(u,\Omega)| \quad \text{and}\quad b_k:=\big|\{x\in \Omega:
\mathcal M(f^n)(x)> (c^* M M^k)^n\}\big|. 
\]
 Let $h=M$, then we get from \eqref{eq:generalpowerdecay} that
$a_2\leq \sqrt{2\epsilon_0}(a_1 + b_1)$.
Next let $h=M^2$, then
$a_3\leq \sqrt{2\epsilon_0}(a_2 + b_2)
\leq 2\epsilon_0 a_1 +  2\epsilon_0 b_1 + \sqrt{2\epsilon_0}\, b_2$.
Continuing in this way we conclude that
\begin{equation}\label{improved-est}
|\Omega\setminus G_{M^{k+1}}(u,\Omega)|
=a_{k+1}\leq (\sqrt{2\epsilon_0})^k a_1 + \sum_{i=1}^{k}{(\sqrt{2\epsilon_0})^{(k+1)-i} b_i}\quad \mbox{for}\quad k=1,2,\dots
\end{equation}

We are now ready to prove \eqref{reduction}. We have 
\begin{align*}\label{Lp-distribution-fn}
&\int_{\Omega}{|D_{i j} u|^p ~dx}
= p\int_0^{\infty}{t^{p-1} \big|\{x\in \Omega: \, |D_{i j} u(x)| >t \}\big| ~dt}\\
&= p \int_0^{M^{\frac{q}{p}}}{t^{p-1} \big|\{x\in \Omega: |D_{i j} u(x)| >t \}\big| ~dt}
+ p\sum_{k=1}^{\infty}\int_{M^{\frac{qk}{p}}}^{M^{\frac{q(k+1)}{p}}}{t^{p-1} \big|\{x\in \Omega: |D_{i j} u(x)| >t \}\big| ~dt}\nonumber\\
&\leq |\Omega| M^{q} +\big(M^{q} -1 \big)  \sum_{k=1}^{\infty}{M^{qk} \big|\{x\in \Omega: ~|D_{i j} u(x)| > M^{\frac{qk}{p}} \}\big|} \nonumber\\
&\leq |\Omega| M^{q}
+\big(M^{q} -1 \big) \left[ \sum_{k=1}^{\infty}{M^{qk} \big|\Omega\setminus \lA_{(c M^{\frac{k(q-p)}{2p}})^{\frac{-2}{n-1}}}\big|}  +  \sum_{k=1}^{\infty}{M^{qk} \big|\Omega\setminus G_{M^k}(u,\Omega)\big|}\right]\nonumber\\
&\leq |\Omega| M^{q}
+\big(M^{q} -1 \big) \left[ C(n,\epsilon,\Omega) \sum_{k=1}^{\infty}{ M^{k\left(q + (\frac{q}{p}-1)\frac{\ln{\sqrt{C \epsilon}}}{C}\right)}}   + \sum_{k=1}^{\infty}{M^{qk} \big|\Omega\setminus G_{M^k}(u,\Omega)\big|}\right],\nonumber
\end{align*}
where we used \eqref{eq:distributionsetofsecondderivatives}
with $m =q/p>1$ and $\beta =M^k$ in the second inequality and used \eqref{eq:exponentialdecay} in the last inequality.
Since $\epsilon>0$ is small, the first summation in the last expression  is finite
and hence \eqref{reduction}  will follow if we can show that $\sum_{k=1}^{\infty}{M^{kq} |\Omega\setminus G_{M^k}(u,\Omega)|}\leq C$. For this, let us employ 
 \eqref{improved-est} to obtain
\begin{align*}
\sum_{k=1}^{\infty}{M^{kq} |\Omega\setminus G_{M^k}(u,\Omega)|}
&\leq a_1 \sum_{k=1}^{\infty} {M^{kq} (\sqrt{2\epsilon_0})^{k-1} } + \sum_{k=1}^{\infty} \sum_{i=0}^{k-1}{M^{kq} (\sqrt{2\epsilon_0})^{k-i}  b_i}\\
&=\frac{ a_1}{\sqrt{2\epsilon_0}} \sum_{k=1}^{\infty} {\big(M^{q} \sqrt{2\epsilon_0}\big)^{k} } +
\Big[\sum_{j=1}^{\infty}{\big(M^{q} \sqrt{2\epsilon_0}\big)^j}\Big]
 \Big[\sum_{i=0}^{\infty} { M^{iq} b_i}\Big]\\
 &=\frac{ a_1}{\sqrt{2\epsilon_0}} \sum_{k=1}^{\infty} {2^{-k} } +
\Big[\sum_{j=1}^{\infty}{2^{-j}}\Big]
 \Big[\sum_{i=0}^{\infty} { M^{iq} b_i}\Big]
= \frac{ a_1}{\sqrt{2\epsilon_0}}  +
 \sum_{i=0}^{\infty} { M^{iq} b_i}.
\end{align*}
But as $f^n \in L^{\frac{q}{n}}(\Omega)$ and $q>n$, by the strong-type estimate in Theorem~\ref{strongtype} we have
\begin{align*}
\int_{\Omega} \abs{\mathcal{M}(f^n)(x)}^{\frac{q}{n}} dx \leq C(n, q, \rho) \int_{\Omega}\abs{f^{n}(x)}^{\frac{q}{n}} dx \leq C(n, q, \rho)\|f\|^{q}_{L^{q}(\Omega)}\leq C(n, q, \rho)
\end{align*}
implying
$ \sum_{i=0}^{\infty} { (M^n)^{i \frac{q}{n}} b_i}\leq C$.
Thus   $\sum_{k=1}^{\infty}{M^{kq} |\Omega\setminus G_{M^k}(u,\Omega)|}\leq C$  and \eqref{reduction} is proved.
\end{proof}
Finally, we prove Theorem~\ref{main-result}.
\begin{proof}[Proof of Theorem \ref{main-result}] 
It suffices to prove the theorem for the case $\varphi=0$ since $\tilde u:= u-\varphi \in C(\overline{\Omega})\cap W^{2,n}_{loc}(\Omega)$ is the solution to the linearized
Monge-Amp\`ere equation
\begin{equation*}
\calL_\phi \tilde u =\tilde f \,\mbox{ in }\, \Omega,
\quad\mbox{and}\quad
\tilde u  = 0\,\mbox{ on }\, \partial \Omega,
\end{equation*}
where $\tilde f:=f - \Phi^{ij}\varphi_{ij}\in L^{q}(\Omega).$ Indeed, since $g\in C(\overline{\Omega})$, we have $\phi\in W^{2, \frac{(n-1)q s}{s -q}}(\Omega) $ by Savin's global
$W^{2,p}$ estimates \cite{S3}. Thus $\Phi^{ij}\in L^{\frac{q s }{s -q}}(\Omega)$ for all $i, j$ and hence
$\tilde f:= f-\Phi^{ij}\varphi_{ij}\in L^{q}(\Omega)$.

In view of Theorem \ref{main-result-constant} and the interior $W^{2,p}$ estimates obtained in \cite{GN2}, the theorem follows by localizing boundary 
sections of $\phi$ using Theorem~\ref{main_loc}. 
 For completeness, we sketch the proof.

The assumptions on $\Omega$ and $\phi$ imply that $\Omega$ satisfies \eqref{global-tang-int} for some $\rho>0$
and $\phi$ satisfies \eqref{global-sep}. 
Let $\e$ be the small  constant given by an analogous version of Theorem \ref{main-result-constant} which will be explained later.
In particular, $\e$ depends only on  $n, p, q, \lambda, \Lambda, \rho$ and $\alpha$.
 Let $c$ be as in Remark \ref{choosingc}.

Since $g\in C(\overline{\Omega})$, we can find $m\leq c$ depending only on $\e$, $\lambda$ and the modulus of continuity of $g$ such that
$$\abs{g(x)-g(y)}\leq \lambda \e~\text{ for all }~ x, y\in \overline{\Omega}~\text{ satisfying }~\abs{x-y}\leq m.$$
Hence it follows from \eqref{small-sec} that for $s\leq m^3$ and any  boundary point $y\in\p\Omega$, we have
\begin{equation}\label{modulus-continuity}
\abs{g(x)-g(y)}\leq \lambda \e~\text{ for all } ~ x\in S_{\phi}(y, s).
\end{equation}

Let us consider a boundary point  $y\in\partial \Omega$ and for simplicity we assume that $y=0$. We can assume further that $\Omega$ satisfies 
\eqref{0grad},  $\phi(0)=0$ and $\nabla\phi(0) =0$.
Then by the Localization Theorem, there is a linear  map $T_s = s^{-1/2} A_s$ such that 
\begin{equation}\label{eq:Big-Section}
\overline{\Omega} \cap B_{k}\subset T_s(S_\phi(0, s))\subset \overline{\Omega}\cap B_{k^{-1}},
\end{equation}
where $\det A_s =1$ and $\|A_s\|, \, \|A_s^{-1}\|\leq k^{-1} |\log{s}|$.
By working with the function $g(0)^{\frac{-1}{n}}\phi(x)$ instead of 
$\phi(x)$ and using \eqref{modulus-continuity}, we can also assume that $g(0) =1$ and
\[
1-\e \leq g\leq 1 + \e
\quad\mbox{in}\quad S_\phi(0, s).
\]
We now define the rescaled domains $U_s :=T_s(S_\phi(0,s))$, $\Omega_{s} :=T_s(\Omega)$ and the rescaled functions $\phi_s$, $u_s := u\circ T^{-1}_s$, $f_s$ as in
 Subsection~\ref{rescale-sec} {\it that preserve the $L^{\infty}$-norm of $u$}. We claim that
\begin{align}\label{eq:localized-Lp-est}
\|D^2 u_s\|_{L^p\big( S_{\phi_s}(0, c^9)\big)}
&\leq C
\Big(\| u_s\|_{L^\infty(U_s)}
+\|f_s\|_{L^{q}(U_s)} \Big),
\end{align}
where $C>0$ depends only on $p, q, n,  \rho,\lambda, \Lambda, \alpha$,  the uniform convexity of $\partial \Omega$, $\|\partial \Omega\|_{C^{2,\alpha}}$ and $\|\phi\|_{C^{2,\alpha}(\p\Omega)}$.
 Then by rescaling back as in the proof of Lemma~\ref{w2p-small-local} we obtain
\begin{align}\label{half-lp}
\|D^2 u\|_{L^p\big( S_{\phi}(y, c^9 s)\big)} 
&\leq C s^{\frac{n}{2p}-1} |\log{s}|^2 \|u\|_{L^\infty(\Omega)} + C s^{\frac{n}{2}(\frac{1}{p}-\frac{1}{q})} |\log{s}|^2 \|f\|_{L^{q}(\Omega)}\\
&\leq C(s) \, \Big(   \|u\|_{L^\infty(\Omega)}
+ \|f\|_{L^{q}(\Omega)}\Big)\qquad \forall y\in \partial\Omega.\nonumber
\end{align}

Let $\delta := c^9 s$. By \eqref{small-sec}, we know that
$S_{\phi}(y,\delta)\supset \overline{\Omega}\cap B(y, \delta^{2/3}).$
Therefore if we let
$$\Omega_{\delta^{2/3}} :=\{x\in\Omega: \dist(x,\p\Omega)>\delta^{2/3}\},$$
then we can cover the $\delta^{2/3}$ neighborhood of $\Omega$, that is $\Omega\setminus \Omega_{\delta^{2/3}}$, 
by a finite number of boundary sections $\{S_{\phi}(y_j, \delta)\}_{j=1}^{N}$. Then by adding \eqref{half-lp} over the family $\{S_{\phi}(y_j, \delta)\}_{j=1}^{N}$, we arrive at
the $W^{2,p}$ estimate at the boundary
$$\|D^2 u\|_{L^{p}(\Omega\setminus \Omega_{\delta^{2/3}})}\leq C(\|u\|_{L^{\infty}(\Omega)} + \|f\|_{L^{q}(\Omega)}).$$
On the other hand, by the interior estimate in \cite[Theorem 1.1]{GN2}, we also have
$$\|D^2 u\|_{L^{p}( \Omega_{\delta^{2/3}})}\leq C(\|u\|_{L^{\infty}(\Omega)} + \|f\|_{L^{q}(\Omega)}).$$
Our Theorem~\ref{main-result} follows from the above inequalities.

We now indicate how to obtain the  claim \eqref{eq:localized-Lp-est}. 
The proof consists of reviewing the proof of Theorem~\ref{main-result-constant}. By \eqref{small-sec}, we have
$$S_{\phi_s}(0, c^9)\subset U_{s}\cap B_{c^3}.$$
We use Lemma~\ref{covering_rk} to cover $U_{s}\cap B_{c^2}$.  We restrict our estimates on the distribution function for the second derivatives in Lemma~\ref{distribution-function} to
$U_s\cap B_{c^2}$. Lemma~\ref{lm:improved-density-I} holds with obvious changes for the data $(\Omega_s, \phi_s, U_s)$. So does 
Lemma~\ref{lm:furthercriticaldensitytwo} provided that
 we have an analogous version of  Lemma~\ref{lm:Holder-on-bottom-boundary}
 for our data $(\Omega_s, \phi_s, U_s)$. Precisely,
 let $S_{\phi_s}(y_0, h)$ be a section of $\phi_s$ in $U_s$ such that $y_0\in \p U_s\cap B_{c^3}$ and $S_{\phi_s}(y_0, h)\cap G_\gamma(u_s, U_s, \phi_s) \neq \emptyset$ for some $\gamma>0$
 (say, $\bar{y}\in S_{\phi_s}(y_0, h)\cap G_\gamma(u_s, U_s, \phi_s)\,$). By  Lemma~\ref{sep-lem} and the Localization Theorem~\ref{main_loc}, there exists an affine map $\tilde T_h$ such that 
 \[
  \tilde T_h (y_0) = y_0 \quad \mbox{and}\quad \overline{U_s} \cap B_{k}(y_0)\subset \tilde{U}_h := \tilde T_h(S_{\phi_s}(y_0, \theta h) )\subset \overline{U_s}\cap B_{k^{-1}}(y_0).
 \]
 Here $\theta>1$ is the same constant at the beginning of the proof of Lemma \ref{lm:furthercriticaldensitytwo}.
We need to show that the $C^{2,\alpha}$ norm on the boundary
$\p \tilde{U}_h \cap B_k(y_0)$ of the following function 
\begin{equation*}
\label{rescaled-v}
\tilde v(z) := \dfrac{1 }{\theta \gamma h }\left[ u_s(\tilde T_h^{-1} z)-  u_s(\bar y) - \nabla  u_s(\bar
y)\cdot (\tilde T_h^{-1} z-\bar y)\right],\quad z\in \tilde T_h(U_s)
\end{equation*}
 is  bounded by a constant which is independent of the uniform convexity of $U_s$. The function $\tilde v$  is defined in a similar way to  the definition of the function $v$ in \eqref{affine-function-l}.
We note that  the uniform convexity of the 
boundary $\p\Omega$   plays a key role
in the proof of Lemma~\ref{lm:Holder-on-bottom-boundary}. 
Thus we can not obtain the desired result by repeating the proof of Lemma~\ref{lm:Holder-on-bottom-boundary}  for our data $(\Omega_s, \phi_s, U_s)$ since the uniform convexity of $\p\Omega_s$ deteriorates as $s\rightarrow 0$. However, we can get away from this  as follows.

Let $T := \tilde T_h \circ  T_s$. Then $T$ normalizes the section $S_\phi(T_s^{-1}y_0, \theta hs)$, and 
\[
\|T\| \leq k^{-2} (\theta h s)^{-1/2} \, |\log{(\theta h)}| \, |\log{s}|,
\quad 
\|T^{-1}\| \leq  k^{-2} (\theta h s)^{1/2} \, |\log{(\theta h)}| \, |\log{s}|.
\]
Moreover, $\bar{x} := T_s^{-1}(\bar{y})\in S_\phi(T_s^{-1}y_0, \theta hs) \cap G_{\gamma s^{-1} }(u, \Omega,\phi)$ and 
\[T\big(S_\phi(T_s^{-1}y_0, \theta hs)\big)= \tilde T_h (S_{\phi_s}(y_0, \theta h))=\tilde U_h.
\]
Therefore, by reviewing the proof of  Lemma~\ref{lm:Holder-on-bottom-boundary} we see that the function 
\[
v(y) := \frac{1}{\theta (\gamma s^{-1}) hs}\left[ u(T^{-1} y)-  u(\bar x) - \nabla  u(\bar
x)\cdot (T^{-1} y-\bar x)\right],\quad y\in T(\Omega)
\]
satisfies 
\begin{equation}\label{bound-for-v}
\|v\|_{C^{2,\alpha}\big(\partial \tilde{U}_h \cap B_k(\tilde T_h(y_0))\big)}\leq C_\alpha
\end{equation}
with $C_\alpha$ depending on the uniform convexity of $\Omega$. But since $\tilde T_h(y_0) = y_0$ and $\tilde v \equiv v$ on $\tilde U_h$ as $u_s(y) = u(T_s^{-1} y)$, 
we conclude that the  $C^{2,\alpha}$ norm of $\tilde v$ on $\partial \tilde{U}_h \cap B_k(y_0)$ is bounded by the same constant $C_\alpha$ in \eqref{bound-for-v}. Hence the claim \eqref{eq:localized-Lp-est} follows as explained above.
\end{proof}
{\bf Acknowledgements.} T.~Nguyen gratefully acknowledges the support provided by  NSF grant DMS-0901449.

\bibliographystyle{plain}

\end{document}